\newcommand\reallywidehat[1]{%
\savestack{\tmpbox}{\stretchto{%
  \scaleto{%
    \scalerel*[\widthof{\ensuremath{#1}}]{\kern-.6pt\bigwedge\kern-.6pt}%
    {\rule[-\textheight/2]{1ex}{\textheight}}
  }{\textheight}%
}{0.5ex}}%
\stackon[1pt]{#1}{\tmpbox}%
}
\definecolor{myred}{rgb}{0.75,0,0}
\definecolor{mygreen}{rgb}{0,0.5,0}
\definecolor{myblue}{rgb}{0,0,0.65}
\theoremstyle{plain}
\newtheorem{theorem}[subsubsection]{Theorem}
\newtheorem{proposition}[subsubsection]{Proposition}
\newtheorem{lemma}[subsubsection]{Lemma}
\newtheorem{corollary}[subsubsection]{Corollary}
\theoremstyle{definition}
\newtheorem{definition}[subsubsection]{Definition}
\newtheorem{remark}[subsubsection]{Remark}
\newtheorem{example}[subsubsection]{Example}
\newtheorem{question}[subsubsection]{Question}
\theoremstyle{remark}
\newtheorem{notation}[subsubsection]{Notation}
\numberwithin{equation}{section}
\newcommand\nc{\newcommand}
\nc\on{\operatorname}
\nc\renc{\renewcommand}
\DeclareMathOperator\rk{rk}
\newcommand*{\sheafhom}{\mathscr{H}\kern -2pt om}
\newcommand*{\sheafend}{\mathscr{E}\kern -1pt nd}
\title{Geometric local systems on very general curves and isomonodromy}
\author{Aaron Landesman, Daniel Litt}
\date{\today}
\begin{document}

\begin{abstract}
We show that the minimum rank of a non-isotrivial local system of geometric origin on a
suitably general $n$-pointed curve of genus $g$ is at least $2\sqrt{g+1}$. We apply this result to resolve conjectures of Esnault-Kerz and Budur-Wang. The main input is an analysis of stability
properties of flat vector bundles under isomonodromic deformations, which
additionally answers questions of Biswas, Heu, and Hurtubise.
\end{abstract}

\maketitle

\section{Introduction}
\label{section:introduction}
\subsection{Overview}
\label{subsection:overview}
We work over the complex numbers $\mathbb{C}$. 
The main result of this paper, \autoref{theorem:very-general-VHS}, is that an
analytically very general $n$-pointed curve of genus $g$ 
(defined in \autoref{definition:general})
does not carry any
non-isotrivial polarizable integral variations of Hodge structure of rank less than $2\sqrt{g+1}$. In particular, an
analytically very general $n$-pointed curve of genus $g$ carries no geometric local
systems of rank less than $2\sqrt{g+1}$ with infinite monodromy, as we show in
\autoref{corollary:geometric-local-systems}. This is a strong restriction on the
topology of smooth proper maps to an analytically very general curve, and, as pointed out by H\'el\`ene Esnault, contradicts conjectures of Esnault-Kerz \cite[Conjecture 1.1]{esnault2021local} and Budur-Wang \cite[Conjecture 10.3.1]{budur2020absolute}, as explained in \autoref{corollary:non-density-of-geometric-local-systems}. 

The above results rely on an analysis of stability properties of isomonodromic deformations of flat vector bundles with regular singularities, and require correcting a number of errors in the literature on this topic.
We next state our main results on stability properties of isomonodromic
deformations of flat vector bundles.
Let $C_0$ be the central fiber of a family of curves $\mathscr{C}\to \Delta$
with $\Delta$ a contractible domain, and let $(E_0, \nabla_0)$ be a vector
bundle with flat connection
on $C_0$. Recall that, loosely speaking, the isomonodromic deformation of $(E_0, \nabla_0)$  is the deformation $(\mathscr{E}, \nabla)$  of $(E_0, \nabla_0)$ to $\mathscr{C}/\Delta$, such that the monodromy of the connection is constant.

In \autoref{corollary:counterexample}, we construct a flat vector bundle on a smooth proper curve over $\mathbb{C}$, whose isomonodromic deformations 
to a nearby curve
are never semistable. 
(See \autoref{definition:nearby} for precise definitions.)
The construction arises from the ``Kodaira-Parshin trick," and contradicts  earlier claimed theorems of Biswas, Heu, and Hurtubise (\cite[Theorem 1.3]{BHH:logarithmic},
\cite[Theorem 1.3]{BHH:irregular}, and
\cite[Theorem 1.2]{BHH:parabolic}), which imply that such a construction is impossible. See \autoref{remark:bhh-error} for a discussion of the errors in those papers. 

As a complement to this example, we show in
\autoref{theorem:hn-constraints-parabolic} that any logarithmic flat vector
bundle admits an isomonodromic deformation to a nearby curve which is \emph{close} to semistable,
in a suitable sense,
and moreover is (parabolically) semistable if the rank is small compared to the genus of the curve.
While our results contradict those of \cite{BHH:logarithmic, BHH:irregular,
BHH:parabolic}, our methods owe those papers a substantial debt. 
Biswas, Heu,
and Hurtubise 
pitch the question of isomonodromically deforming a vector bundle to a
semistable vector bundle (see \autoref{question:bhh})
as an
analogue of Hilbert's 21st problem, also known as the Riemann-Hilbert problem.

The semistability property of \autoref{theorem:hn-constraints-parabolic} is also the main
input to our Hodge-theoretic main results, mentioned above.
The applications to polarizable variations of Hodge structures come from the fact that flat vector bundles underlying polarizable variations are rarely (parabolically) semistable, due to well-known curvature properties of Hodge bundles.

\subsection{Main Hodge-theoretic results}
\label{subsection:hodge-intro}
Results from this subsection, \autoref{subsection:hodge-intro}, as well as the
next, \autoref{subsection:isomonodromy-intro}, will be proven later in the
paper, as detailed in \autoref{subsection:organization}.
For convenience, throughout the paper, out main results will primarily be stated for hyperbolic curves.
\begin{definition}
	\label{definition:hyperbolic}
	Let $C$ be a curve over $\mathbb C$ of genus $g$ and $D \subset C$ a
	reduced effective divisor of degree $n$. 
	Call $(C, D)$ {\em hyperbolic} if $C$ is a smooth proper connected curve
	and
	either $g \geq 2$ and $n\geq 0$, $g = 1$ and $n > 0$, or $g =0$ and $n > 2$.
	We call an $n$-pointed curve $(C, x_1, \ldots, x_n)$ {\em hyperbolic} if
	$(C, x_1 +\cdots + x_n)$ is hyperbolic.
\end{definition}
\begin{remark}
	\label{remark:}
	Equivalently, $(C,D)$ is hyperbolic if and only if it has no infinitesimal automorphisms,
i.e., $H^0(C, T_C(-D)) = 0$.
\end{remark}

We will also work with the following analytic notion of a (very) general general point.

\begin{definition}
	\label{definition:general}
	A property holds for an {\em analytically general} point of a complex orbifold $X$, 
	if there exists a nowhere dense
	closed analytic subset $S \subset X$ so that the property holds
	on $X - S$.
	We say that a property holds for an
	{\em analytically very general} point if, locally on $X$, there exists a countable collection of nowhere dense closed analytic subsets such that the property holds on the complement of their union. If $\mathscr{M}_{g,n}$ is the analytic moduli stack of
$n$-pointed curves of genus $g$, we say that a property holds for an
analytically (very) general $n$-pointed curve if it holds for an analytically
(very) general point of $\mathscr{M}_{g,n}$.
\end{definition}
\begin{remark}\label{remark:}
	From the definition, it may appear that ``analytically very general'' is
	a local notion, while ``analytically general'' is a global notion.
	However, being ``analytically general'' also has the following
	equivalent local definition, which is more similar to the definition of
	``analytically very general'':
	locally on $X$, there exists a nowhere dense closed analytic subset such
	that the property holds on the complement of this subset.
\end{remark}

The main geometric consequence of this work is the following constraint on the
rank of non-isotrivial polarizable variations of Hodge structure (defined in
\autoref{section:hodge-theoretic-preliminaries}) on an analytically very general curve:
\begin{theorem}\label{theorem:very-general-VHS}
	Let $K$ be a number field with ring of integers
$\mathscr{O}_K$. Suppose 
$(C, x_1, \cdots, x_n)$ 
is an analytically very general $n$-pointed
hyperbolic curve of genus $g$, and $\mathbb{V}$ is a $\mathscr{O}_K$-local system on $C\setminus\{x_1, \cdots, x_n\}$ with infinite monodromy. 
Suppose additionally that for each embedding $\iota: \mathscr{O}_K\to \mathbb{C}$, $\mathbb{V}\otimes_{\mathscr{O}_K, \iota}\mathbb{C}$ underlies a polarizable complex variation of Hodge structure. 
Then, $$\on{rk}_{\mathscr{O}_K}(\mathbb{V})\geq 2\sqrt{g+1}.$$
\end{theorem}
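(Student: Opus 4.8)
The plan is to derive a contradiction from $\on{rk}_{\mathscr{O}_K}(\mathbb{V})=:r<2\sqrt{g+1}$ by combining three ingredients: (i) the Hodge-theoretic fact that the flat bundle underlying a non-isotrivial polarizable complex variation is never parabolically semistable; (ii) \autoref{theorem:hn-constraints-parabolic}, which deforms any logarithmic flat bundle of rank $<2\sqrt{g+1}$ isomonodromically to a nearby curve on which it \emph{is} parabolically semistable; and (iii) a countability argument using the $\mathscr{O}_K$-structure together with the analytic very-generality of $C$. First, though, I would dispose of the degenerate case in which the Higgs field of $\mathbb{V}\otimes_{\mathscr{O}_K,\iota}\mathbb{C}$ vanishes identically for \emph{every} embedding $\iota$. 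In that case each $\mathbb{V}\otimes_{\mathscr{O}_K,\iota}\mathbb{C}$ is unitary (a polarizable Higgs bundle with zero Higgs field is a polystable degree-zero vector bundle, hence unitary by Narasimhan--Seshadri), so every eigenvalue of every monodromy matrix $\rho(\gamma)$, for $\rho\colon\pi_1(C\setminus\{x_1,\dots,x_n\})\to\on{GL}_r(\mathscr{O}_K)$, is an algebraic integer all of whose archimedean absolute values equal $1$, hence a root of unity by Kronecker's theorem; since polarizable variations are semisimple each $\rho(\gamma)$ then has finite order, and Schur's theorem (the monodromy group is finitely generated) forces finite monodromy, contradicting the hypothesis. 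So I may fix an embedding $\iota$ with $\mathbb{V}_\iota:=\mathbb{V}\otimes_{\mathscr{O}_K,\iota}\mathbb{C}$ a \emph{non-isotrivial} polarizable complex VHS.

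For step (i): let $(E,\nabla)$ be the canonical (Deligne) parabolic extension of $\mathbb{V}_\iota$ to $(C,x_1+\cdots+x_n)$, so $\on{pardeg}E=0$, and extend the Hodge filtration to a filtration of $E$ by parabolic subbundles whose associated graded carries a Higgs field $\theta$ lowering the filtration by one step. Choose $p_0$ maximal with $\theta$ nonzero on $\on{gr}^{p_0}_F$ (possible since $\mathbb{V}_\iota$ is non-isotrivial); then $F^{p_0}$ is a \emph{proper} holomorphic parabolic subbundle of $E$, and telescoping the Chern--Weil curvature identity for Hodge bundles in Simpson's logarithmic/parabolic form gives $\on{pardeg}F^{p_0}=c\,\|\theta|_{\on{gr}^{p_0}_F}\|_{L^2}^2>0$ for a positive constant $c$. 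Thus $F^{p_0}$ destabilizes $(E,\nabla)$, and a flat bundle underlying a non-isotrivial polarizable complex VHS on a hyperbolic curve is never parabolically semistable.

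For step (iii): cover the analytic stack $\mathscr{M}_{g,n}$ by countably many contractible charts $\Delta$; over each, the punctured fibers form a topological fiber bundle, so their fundamental group is a fixed finitely generated group, and hence there are only countably many rank-$r$ $\mathscr{O}_K$-local systems $\mathbb{W}$ on the fibers. For each such $\mathbb{W}$ and each of the finitely many embeddings $\iota$ one obtains a family of rank-$r$ logarithmic flat bundles over $\mathscr{C}/\Delta$ --- the isomonodromic deformation of $\mathbb{W}\otimes_{\mathscr{O}_K,\iota}\mathbb{C}$ --- and the locus $U_{\mathbb{W},\iota}\subseteq\Delta$ where this flat bundle is \emph{not} parabolically semistable is closed analytic (upper semicontinuity of the maximal parabolic slope in a family) and has empty interior by \autoref{theorem:hn-constraints-parabolic}, applied at each point of $\Delta$ and using $r<2\sqrt{g+1}$; hence $U_{\mathbb{W},\iota}$ is nowhere dense. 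The union of the $U_{\mathbb{W},\iota}$ over all charts, all $\mathbb{W}$, and all $\iota$ is thus a countable union of nowhere dense closed analytic subsets, so the analytically very general curve $(C,x_1,\dots,x_n)$ of the hypothesis avoids it; for it, every $\mathbb{C}$-local system of the form $\mathbb{W}\otimes_{\mathscr{O}_K,\iota}\mathbb{C}$ has parabolically semistable canonical parabolic extension. Taking $\mathbb{W}=\mathbb{V}$, this contradicts step (i), so $r\geq 2\sqrt{g+1}$.

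I expect the main obstacle to be \autoref{theorem:hn-constraints-parabolic} itself --- producing, for a logarithmic flat bundle of rank $<2\sqrt{g+1}$, an isomonodromic deformation to a nearby curve that is parabolically semistable --- which occupies the bulk of the paper and which I am taking as a black box here. Granting it, the remaining points are routine: the parabolic-degree computation for Hodge subbundles on a punctured curve (Simpson's theory of tame harmonic bundles, cf.\ the Hodge-theoretic preliminaries), the relative Harder--Narasimhan theory needed to see that each $U_{\mathbb{W},\iota}$ is closed analytic, and the elementary number theory (Kronecker, Schur) handling the degenerate unitary case.
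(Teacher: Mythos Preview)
Your proposal is correct and follows essentially the same route as the paper: both combine \autoref{cor:stable-parabolic} (semistability of the Deligne canonical extension on an analytically general nearby curve when the rank is below $2\sqrt{g+1}$), the Hodge-theoretic fact that a non-unitary polarizable VHS has non-semistable canonical extension (your step (i), the paper's \autoref{corollary:unstable}/\autoref{lemma:unitary}), and a countability argument over $\mathscr{O}_K$-representations. The only notable difference is in the finiteness step: you argue via Kronecker's theorem plus the Burnside--Schur theorem, whereas the paper gives the shorter observation (\autoref{lemma:integral-and-unitary-implies-finite}) that the image of $\prod_\iota \rho_\iota$ is simultaneously discrete (since $\mathscr{O}_K\hookrightarrow\prod_\iota\mathbb{C}$ is discrete) and compact (since each $\rho_\iota$ is unitary), hence finite.
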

\begin{remark}
	\label{remark:}
	Note that a result analogous to \autoref{theorem:very-general-VHS} does
	not hold for variations without an underlying $\mathscr{O}_K$-structure. 
	Indeed, every smooth proper curve of genus at least $2$ admits a polarizable complex  variation of Hodge structure of rank $2$ with infinite monodromy, arising from uniformization (see e.g.~\cite[bottom of p.~870]{simpson:constructing-VHS}).
\end{remark}

Let $X$ be a smooth variety. We say a complex local system $\mathbb{V}$ on $X$ is \emph{of geometric origin} if there exists a dense open $U\subset X$, and a smooth proper morphism $f: Y\to U$ such that $\mathbb{V}|_U$ is a direct summand of $R^if_*\mathbb{C}$ for some $i\geq 0$. As local systems of geometric origin satisfy the hypotheses of \autoref{theorem:very-general-VHS}, we have:
\begin{corollary}\label{corollary:geometric-local-systems}
Let $(C, x_1, \cdots, x_n)$ be an analytically very general hyperbolic $n$-pointed curve of genus $g$. If $\mathbb{V}$ is a local system on $C\setminus\{x_1, \cdots, x_n\}$ of geometric origin and with infinite monodromy, then $\dim_{\mathbb{C}}\mathbb{V}\geq 2\sqrt{g+1}$.
\end{corollary}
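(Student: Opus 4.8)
The plan is to deduce this directly from Theorem \ref{theorem:very-general-VHS} by exhibiting, for a local system of geometric origin, the arithmetic and Hodge-theoretic structure that Theorem \ref{theorem:very-general-VHS} requires as input. First, let $\mathbb{V}$ be a complex local system on $U := C \setminus \{x_1, \ldots, x_n\}$ which is of geometric origin with infinite monodromy; shrinking is harmless since on a curve the complement of the puncture locus of a further dense open still has infinite monodromy for the restriction (a finite-index subgroup of $\pi_1$ surjects onto a finite-index subgroup of the image), so after replacing $U$ by a smaller dense open we may assume $\mathbb{V}$ is a direct summand of $R^i f_* \mathbb{C}$ for a smooth proper $f : Y \to U$. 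Here one must be slightly careful: ``analytically very general'' is about the point of $\mathscr{M}_{g,n}$, and shrinking $U$ does not change the curve $C$, so this is legitimate.

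Next I would invoke the standard structure theory for local systems of geometric origin. The summand $\mathbb{V}$ of $R^i f_* \mathbb{C}$ underlies a polarizable complex variation of Hodge structure — the one cut out by the Hodge decomposition on the cohomology of the fibers, together with the cup-product polarization (for the direct summand, one uses that summands of polarizable VHS in the category of VHS are again polarizable, which follows from semisimplicity of the category of polarizable VHS). Moreover, the summand can be taken to be defined over $\overline{\mathbb{Q}}$: the decomposition of $R^i f_* \mathbb{C}$ into isotypic pieces for the monodromy action, or more simply the fact that $R^i f_* \mathbb{Q}$ is a $\mathbb{Q}$-local system and its summands are defined over a number field, shows that $\mathbb{V}$ descends to a $K$-local system $\mathbb{V}_K$ for some number field $K$, which after clearing denominators yields an $\mathscr{O}_K$-local system $\mathbb{V}_{\mathscr{O}_K}$ with $\mathbb{V}_{\mathscr{O}_K} \otimes \mathbb{C} \cong \mathbb{V}$. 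The key point is then that for \emph{every} embedding $\iota : \mathscr{O}_K \hookrightarrow \mathbb{C}$, the conjugate local system $\mathbb{V}_{\mathscr{O}_K} \otimes_{\mathscr{O}_K, \iota} \mathbb{C}$ again underlies a polarizable complex VHS: this is because the conjugate of a summand of $R^i f_* \mathbb{C}$ is a summand of $R^i f^\sigma_* \mathbb{C}$ for the conjugate family $f^\sigma : Y^\sigma \to U^\sigma$, which still carries its geometric VHS. (One should check $\mathbb{V}$ still has infinite monodromy as an $\mathscr{O}_K$-local system, which is immediate since its complexification does.)

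With $\mathbb{V}_{\mathscr{O}_K}$ in hand, the hypotheses of Theorem \ref{theorem:very-general-VHS} are satisfied — it is an $\mathscr{O}_K$-local system on $C \setminus \{x_1, \ldots, x_n\}$ with infinite monodromy, all of whose complex conjugates underlie polarizable complex VHS — so the theorem gives $\operatorname{rk}_{\mathscr{O}_K}(\mathbb{V}_{\mathscr{O}_K}) \geq 2\sqrt{g+1}$, and since $\dim_{\mathbb{C}} \mathbb{V} = \operatorname{rk}_{\mathscr{O}_K}(\mathbb{V}_{\mathscr{O}_K})$ we are done. I expect the main obstacle to be the descent/arithmeticity step: making precise that a complex summand of $R^i f_* \mathbb{C}$, which a priori is only a complex sub-local-system, can be chosen to arise from an $\mathscr{O}_K$-local system all of whose Galois conjugates remain of geometric origin (hence polarizable VHS). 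The cleanest route is to first decompose $R^i f_* \overline{\mathbb{Q}}$ into its semisimple constituents over $\overline{\mathbb{Q}}$, observe these are permuted by $\operatorname{Gal}(\overline{\mathbb{Q}}/\mathbb{Q})$ with the conjugate of a constituent of $R^i f_*$ being a constituent of $R^i f^\sigma_*$, and note that the complex local system $\mathbb{V}$, being a summand of the complexification, contains at least one such $\overline{\mathbb{Q}}$-constituent with infinite monodromy; that constituent, defined over a number field $K$, is the desired $\mathbb{V}_K$. Everything else is a citation to standard Hodge theory (existence of the geometric VHS, semisimplicity and polarizability for summands) plus the elementary observation that shrinking $U$ preserves infiniteness of monodromy.
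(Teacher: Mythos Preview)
Your overall strategy---reduce to Theorem~\ref{theorem:very-general-VHS} by exhibiting an $\mathscr{O}_K$-structure and checking that every complex embedding underlies a polarizable VHS---is exactly the paper's, but two steps are faulty.

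First, the argument via the conjugate family $f^\sigma: Y^\sigma \to U^\sigma$ is wrong. The curve $C$ is analytically very general and need not be defined over any number field, so $U^\sigma$ has no meaning; even when it does, it is a different space, whereas Theorem~\ref{theorem:very-general-VHS} requires all the $\mathbb{W}_\iota$ to be local systems on the \emph{same} $C\setminus\{x_1,\ldots,x_n\}$. The correct (and simpler) mechanism is that $R^if_*\mathbb{C}$ carries the $\mathbb{Z}$-structure $R^if_*\mathbb{Z}$, so it is fixed by $\on{Aut}(\mathbb{C})$ acting on coefficients; hence every Galois conjugate $\mathbb{W}_\iota$ is again a summand of the \emph{same} $R^if_*\mathbb{C}$ on the same $U$, and \autoref{prop:basic-facts}(2) supplies the polarizable VHS. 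Second, you have not bridged the gap between $U$ and $C\setminus\{x_1,\ldots,x_n\}$. Your ``shrinking'' either adds marked points---but those extra punctures are forced by where $f$ degenerates and are not generic, so the resulting pointed curve is not analytically very general in the larger moduli space---or it keeps the original pointed curve but only yields a VHS on $U$. The paper does the latter and then invokes Schmid's extension theorem to extend the polarizable VHS on $\mathbb{W}_\iota|_U$ across the extra punctures to all of $C\setminus\{x_1,\ldots,x_n\}$; this extension step is essential and is missing from your sketch.
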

We will prove \autoref{corollary:geometric-local-systems} 
in \autoref{subsubsection:geometric-origin-proof}.
As a consequence of \autoref{corollary:geometric-local-systems} 
we obtain the following concrete geometric corollary:
\begin{corollary}
	\label{corollary:abelian-schemes}
	If $(C, x_1, \ldots, x_n)$ is an analytically
very general hyperbolic $n$-pointed genus $g$ curve, then any non-isotrivial
abelian scheme over $C\setminus\{x_1, \cdots, x_n\}$  has relative dimension at least $\sqrt{g+1}$.
	Similarly, any relative smooth proper curve over $C\setminus\{x_1, \cdots, x_n\}$
	has genus at least $\sqrt{g+1}$.
\end{corollary}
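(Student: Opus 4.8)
The plan is to deduce both assertions from \autoref{corollary:geometric-local-systems} by passing to the variation of Hodge structure on the first cohomology of the fibers. Write $U := C\setminus\{x_1,\ldots,x_n\}$, and suppose $f\colon A\to U$ is a non-isotrivial abelian scheme of relative dimension $d$. Then $\mathbb V := R^1f_*\mathbb C$ is a local system of rank $2d$ on $U$, of geometric origin by definition (being $R^1f_*\mathbb C$ for the smooth proper morphism $f$). Granting that $\mathbb V$ has infinite monodromy, \autoref{corollary:geometric-local-systems} immediately gives $2d = \dim_{\mathbb C}\mathbb V \ge 2\sqrt{g+1}$, i.e.\ $d \ge \sqrt{g+1}$, which is the first assertion.

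So the crux is the classical fact that a non-isotrivial abelian scheme has infinite monodromy on its first cohomology, which I would prove by contraposition. Suppose the monodromy of $\mathbb V$ is finite. After pulling back along a suitable connected finite \'etale cover $U'\to U$ we may assume that $R^1f'_*\mathbb Z$ is the trivial local system, where $f'\colon A'\to U'$ is the base change; in particular $f'$ has trivial local monodromy at infinity, so it extends to an abelian scheme $\overline{f}'\colon\overline{A}'\to\overline{U}'$ over a smooth projective completion $\overline{U}'$ of $U'$. The weight-one polarizable variation of Hodge structure on $R^1\overline{f}'_*\mathbb Q$ then has all of its fibers monodromy-invariant, so Deligne's theorem of the fixed part shows it is constant; concretely, $F^1\subset R^1\overline{f}'_*\mathcal O_{\overline{U}'}$ is spanned by flat sections. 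Hence $(R^1\overline{f}'_*\mathbb Z, F^1)$ is a constant integral polarized Hodge structure of weight one, and by the equivalence between such Hodge structures and polarized abelian varieties $\overline{f}'$ is a constant abelian scheme. Therefore $A\to U$ is isotrivial, contrary to hypothesis.

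For the statement about relative curves, I would reduce to the abelian case via Jacobians. Let $h\colon X\to U$ be a non-isotrivial relative smooth proper curve of genus $g_X$. Its relative Jacobian $J := \operatorname{Pic}^0_{X/U}\to U$ is an abelian scheme of relative dimension $g_X$, and I claim it is non-isotrivial: if it were isotrivial, then after a connected finite \'etale base change $J$ would become a constant principally polarized abelian variety, whence by the Torelli theorem all geometric fibers of $X$ would be isomorphic after the same base change, so $X\to U$ would be isotrivial. Applying the first assertion to $J$ yields $g_X \ge \sqrt{g+1}$.

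The main obstacle is the middle step --- upgrading ``non-isotrivial'' to ``infinite monodromy.'' Although well known, making it precise requires the theorem of the fixed part (applied here, most cleanly, over the projective base $\overline{U}'$ after extending the abelian scheme), together with the standard but not entirely formal passage from a constant weight-one integral polarized variation of Hodge structure to a genuinely constant abelian scheme --- as opposed to merely a constant isogeny class, or a constant family of complex tori that ignores the polarization. The remaining ingredients, namely the rank count and the appeal to Torelli, are routine.
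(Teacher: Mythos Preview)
Your proposal is correct and follows essentially the same route as the paper: apply \autoref{corollary:geometric-local-systems} to $R^1f_*\mathbb{C}$, then use the theorem of the fixed part to show that finite monodromy forces the abelian scheme to be isotrivial, and reduce the curve case to the abelian case via Jacobians and Torelli. The only cosmetic difference is that you first extend the abelian scheme over a smooth projective completion (via good reduction from trivial local monodromy) before invoking the fixed part theorem, whereas the paper applies it directly over the open curve; both are valid.
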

We will prove \autoref{corollary:abelian-schemes} 
in \autoref{subsubsection:abelian-scheme-proof}.
In \autoref{proposition:hilbert-modular},
we prove a variant of the above corollary
with a stronger bound on the genus
when the abelian scheme has real multiplication, corresponding to a map from 
$C\setminus\{x_1, \cdots, x_n\}$ to a Hilbert modular stack.

\begin{remark}
	\label{remark:remove-analytic}
	Najmuddin Fakhruddin has pointed out to us that one may replace ``analytically very general" with "very general" 
	in the usual algebraic sense in \autoref{corollary:abelian-schemes}. Indeed, if a very general curve carried a non-isotrivial Abelian scheme of relative dimension less than $\sqrt{g+1}$, the same would be true for an analytically very general curve, by spreading out. 
	We do not know how to analogously strengthen \autoref{theorem:very-general-VHS}---see  \autoref{question:non-abelian-Hodge}.
\end{remark}

\begin{remark}
    It is a well-known conjecture that integral local systems underlying a polarizable variation of Hodge structure are of geometric origin---see e.g.~\cite[Conjecture 12.4]{simpson62hodge} for a precise statement. \autoref{theorem:very-general-VHS} verifies this conjecture for local systems of rank less than $2\sqrt{g+1}$ on a analytically very general $n$-pointed hyperbolic curve of genus $g$, as local systems with finite monodromy arise from geometry.
\end{remark}	
	
We are grateful to H\'el\`ene Esnault for pointing out the following consequence of \autoref{corollary:geometric-local-systems} to us. We let $$\mathscr{M}_{B,r}(C\setminus \{x_1, \cdots, x_n\}):=\on{Hom}(\pi_1(C\setminus\{x_1, \cdots, x_n\}), \on{GL}_r(\mathbb{C}))\sslash \on{GL}_r(\mathbb{C})$$ be the \emph{character variety} parametrizing conjugacy classes of semisimple representations of $\pi_1(C\setminus \{x_1, \cdots, x_n\})$ into $\on{GL}_r(\mathbb{C})$. See e.g.~\cite{sikora2012character} for a useful primer on character varieties.
\begin{corollary}
    \label{corollary:non-density-of-geometric-local-systems}
    Let $(C, x_1, \cdots, x_n)$ be an analytically very general hyperbolic $n$-pointed curve of genus $g$. Then if $1<r<2\sqrt{g+1}$, the local systems of geometric origin are not Zariski-dense in the character variety $\mathscr{M}_{B,r}(C\setminus \{x_1, \cdots, x_n\})$.
\end{corollary}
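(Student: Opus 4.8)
The plan is to deduce \autoref{corollary:non-density-of-geometric-local-systems} from \autoref{corollary:geometric-local-systems} by a dimension count on the character variety. First I would recall that for a very general point of $\mathscr{M}_{g,n}$, the character variety $\mathscr{M}_{B,r}(C\setminus\{x_1,\dots,x_n\})$ is independent of the chosen complex structure up to isomorphism, since it depends only on the topological fundamental group $\pi_1(C\setminus\{x_1,\dots,x_n\})$, which is the fundamental group of an $n$-punctured genus $g$ surface. Next I would observe that this character variety is positive-dimensional (indeed of dimension roughly $(2g-2+n)r^2 + 2$ on its top-dimensional components, which is positive in the range $g\ge 2$ or $g=1,n>0$ or $g=0,n>2$ — exactly the hyperbolicity hypothesis), and in particular contains irreducible representations forming a Zariski-dense open subset of some component.

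The key step is then the following: by \autoref{corollary:geometric-local-systems}, for an analytically very general $(C,x_1,\dots,x_n)$, every local system of geometric origin on $C\setminus\{x_1,\dots,x_n\}$ of rank $r$ with $1<r<2\sqrt{g+1}$ has finite monodromy. Finite-monodromy representations form a countable union of subvarieties of $\mathscr{M}_{B,r}$, each of which is a point up to the (finite) action in question, hence $0$-dimensional; more precisely, representations with finite image factor through finite quotients of $\pi_1$, and there are countably many such quotients, each contributing finitely many conjugacy classes of representations. Thus the locus of geometric-origin local systems is contained in a countable union of $0$-dimensional (in fact finite) subvarieties, which cannot be Zariski-dense in a positive-dimensional variety over $\mathbb{C}$ (since $\mathbb{C}$ is uncountable, a countable union of proper closed subvarieties is not the whole space, and here even the Zariski closure of the union is a proper closed subset as it is contained in the locus of representations of bounded-order finite image... one must be slightly careful, but the countability of finite quotients and uncountability of $\mathbb C$ suffices). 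Therefore the geometric-origin locus is not Zariski-dense.

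There is one subtlety I would need to address carefully: \autoref{corollary:geometric-local-systems} is stated for a single analytically very general curve, but ``analytically very general'' is defined via countably many nowhere-dense analytic subsets, so the conclusion genuinely holds for a very general point; I would invoke that a very general point in the sense of \autoref{definition:general} avoids all the relevant bad loci simultaneously, so that for such a curve the full strength of \autoref{corollary:geometric-local-systems} applies to \emph{all} geometric-origin local systems of the given rank at once. I would also note that we only need the statement for semisimple representations, since the character variety parametrizes exactly those, and local systems of geometric origin are semisimple (being summands of cohomology of smooth proper families, hence underlying polarizable Hodge structures, hence semisimple by Deligne's semisimplicity theorem), so no information is lost by working in $\mathscr{M}_{B,r}$.

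The main obstacle, such as it is, is purely bookkeeping: making precise that ``finite monodromy'' cuts out a set that is not Zariski-dense. The cleanest argument is: the set of $\mathbb{C}$-points of $\mathscr{M}_{B,r}$ corresponding to finite-image representations is a countable set (it injects into the countable set of pairs consisting of a finite quotient group of $\pi_1(C\setminus\{x_1,\dots,x_n\})$ together with a conjugacy class of faithful $r$-dimensional representations of that quotient, of which there are finitely many for each quotient), and a countable subset of an irreducible positive-dimensional complex variety is never Zariski-dense. Combining this with the fact that every geometric-origin local system of rank $r\in(1,2\sqrt{g+1})$ on a very general curve has finite monodromy yields the corollary. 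I expect the write-up to be short.
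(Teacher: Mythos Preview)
Your reduction to \autoref{corollary:geometric-local-systems} is correct, and so is the observation that every geometric local system of the relevant rank on a very general curve has finite monodromy. The gap is in the final step: the assertion that a countable set of closed points in a positive-dimensional complex variety is never Zariski-dense is \emph{false}. For instance, $\mathbb{Z}\subset\mathbb{A}^1_{\mathbb{C}}$ is Zariski-dense, since any proper closed subset of $\mathbb{A}^1$ is finite. More to the point, in $\mathscr{M}_{B,1}\cong(\mathbb{C}^*)^{2g+n-1}$ the finite-image representations are exactly the torsion points, which \emph{are} Zariski-dense; so for $r=1$ the finite-monodromy locus is dense, and any correct argument must genuinely use the hypothesis $r>1$. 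Your countability argument does not.

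The paper closes this gap with Jordan's theorem: there is a constant $m_r$ such that every finite subgroup of $\on{GL}_r(\mathbb{C})$ contains an abelian normal subgroup of index at most $m_r$. Setting $s=m_r!$, it follows that every finite-image representation $\rho$ satisfies $[\rho(x)^s,\rho(y)^s]=1$ for all $x,y\in\pi_1$. These relations cut out a single Zariski-closed subvariety $V_s$ of the (framed) representation variety, and one then checks by hand (using two unipotent matrices in $\on{GL}_2$ embedded in $\on{GL}_r$) that $V_s$ is proper. This is where $r>1$ enters. The moral is that one needs a \emph{uniform} algebraic constraint on all finite subgroups of $\on{GL}_r$, not merely the countability of the set of such subgroups.
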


\begin{remark}
     \autoref{corollary:non-density-of-geometric-local-systems} contradicts conjectures of Esnault-Kerz \cite[Conjecture 1.1]{esnault2021local} and Budur-Wang \cite[Conjecture 10.3.1]{budur2020absolute}, which imply the density of geometric local systems in the character variety of any smooth complex variety.
\end{remark}

We will prove \autoref{corollary:non-density-of-geometric-local-systems} in 
\autoref{subsubsection:non-density-proof}.

In what follows, we say a flat vector bundle has \emph{unitary monodromy} if the associated monodromy representation
$\rho:\pi_1(C)\to \on{GL}_n(\mathbb{C})$
has image with compact closure.
We will deduce the above results from
\autoref{theorem:isomonodromic-deformation-CVHS} below,
using that a discrete subset of the image of a unitary $\rho$ is finite.

\begin{theorem}\label{theorem:isomonodromic-deformation-CVHS}
Let $(C, x_1, \cdots, x_n)$ be an $n$-pointed hyperbolic curve of genus $g$.
Let $({E}, \nabla)$ be a flat vector bundle on
$C$ with $\on{rk}{E}<2\sqrt{g+1}$ and with regular singularities at the $x_i$. If an isomonodromic
deformation of ${(E, \nabla)}$ to an analytically general nearby $n$-pointed curve underlies a polarizable complex variation of Hodge structure, then $({E},\nabla)$ has unitary monodromy.
\end{theorem}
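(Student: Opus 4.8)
The plan is to combine \autoref{theorem:hn-constraints-parabolic} with the classical curvature positivity of Hodge bundles. Fix a family $\mathscr{C}\to\Delta$ over a contractible $\Delta$ realizing the isomonodromic deformation of $(E,\nabla)$, with central fiber $(C,x_1,\dots,x_n)$. Since $\rk E<2\sqrt{g+1}$ and every fiber of $\mathscr{C}\to\Delta$ has genus $g$, \autoref{theorem:hn-constraints-parabolic} provides that, away from a nowhere dense closed analytic subset of $\Delta$, the isomonodromic deformation $(E_t,\nabla_t)$ is parabolically semistable for the parabolic structure determined by the residues of $\nabla_t$. By hypothesis, away from another nowhere dense closed analytic subset, $(E_t,\nabla_t)$ underlies a polarizable complex variation of Hodge structure. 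The union of two nowhere dense closed analytic subsets is again one, so we may pick a single $t$ for which $(E_t,\nabla_t)$ is simultaneously parabolically semistable and underlies a polarizable complex variation of Hodge structure. Since an isomonodromic deformation preserves the conjugacy class of the monodromy representation, it suffices to show that such an $(E_t,\nabla_t)$ has unitary monodromy.

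We are thereby reduced to a statement with no reference to isomonodromy: if a flat vector bundle $(V,\nabla)$ on a hyperbolic $(C,D)$ with regular singularities has parabolically semistable underlying parabolic bundle and underlies a polarizable complex variation of Hodge structure, then it has unitary monodromy. We argue by induction on the number $k$ of nonzero graded pieces of the Hodge filtration $F^\bullet$ on the Deligne canonical extension of $V$. If $k=1$, the variation is pure of a single Hodge type, so its polarization is a flat definite Hermitian form and the monodromy is unitary. If $k\geq 2$, let $F^l\subsetneq V$ denote the top step of the Hodge filtration, a nonzero proper parabolic subbundle. Because $\deg_{\mathrm{par}}V=0$, parabolic semistability forces $\deg_{\mathrm{par}}F^l\leq 0$; on the other hand, the curvature of the Hodge metric gives $\deg_{\mathrm{par}}F^l\geq 0$, with equality if and only if the Higgs field $\theta$ of the variation vanishes on $\on{gr}^l_F$. (This equality case is the parabolic refinement of the classical positivity of the top Hodge bundle; the boundary contributions are essential in low genus, where the genuine logarithmic statement is too weak.) Hence $\deg_{\mathrm{par}}F^l=0$, the Higgs field vanishes on $\on{gr}^l_F$, and therefore, by Griffiths transversality, $\nabla$ preserves $F^l$.

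Consequently $(F^l,\nabla|_{F^l})$ is a flat subbundle with regular singularities underlying a pure-type sub-variation, so it has unitary monodromy; and the quotient $(V/F^l,\overline{\nabla})$ is a flat bundle with regular singularities underlying a polarizable complex variation whose Hodge filtration has $k-1$ nonzero graded pieces, has $\deg_{\mathrm{par}}(V/F^l)=0$, and, being the quotient of a parabolically semistable bundle by a parabolic subbundle of the same slope, is parabolically semistable. By the inductive hypothesis, $V/F^l$ has unitary monodromy. Finally, the local system underlying $V$ is semisimple, since it underlies a polarizable variation of Hodge structure; so the flat subbundle $F^l$ is a direct summand of $V$ as a local system, $V\cong F^l\oplus (V/F^l)$, and $V$ has unitary monodromy as a direct sum of unitary local systems. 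This completes the induction.

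I expect the main obstacle to be the equality case of the parabolic positivity of the top Hodge bundle: one must pin down the parabolic boundary terms precisely, and for a complex (possibly non-quasi-unipotent) variation this requires the parabolic Hodge-theoretic formalism rather than the naive logarithmic curvature computation. A secondary point to check is that the parabolic structure induced on the quotient flat bundle $V/F^l$ matches the one attached to the quotient variation of Hodge structure, so that the induction genuinely applies; this amounts to a compatibility of the parabolic Deligne extension with quotients.
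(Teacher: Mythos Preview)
Your proof is correct and uses the same two key inputs as the paper: parabolic semistability of the isomonodromic deformation when $\rk E<2\sqrt{g+1}$, and the parabolic positivity of the top Hodge bundle. The organization differs. The paper first reduces to the case of irreducible monodromy via the canonical decomposition of a polarizable complex variation (\autoref{prop:basic-facts}(2)); in that case, once semistability forces $\deg_{\mathrm{par}}F^l\le 0$ and positivity forces $\theta_l=0$, irreducibility immediately gives $F^l=\overline{E}_\star$, so the Hodge filtration has a single step and the polarization is definite. You instead induct on the length of the Hodge filtration, peeling off $F^l$ and invoking semisimplicity only at the end to split $V\cong F^l\oplus(V/F^l)$.

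The paper's route is a bit more economical precisely because it sidesteps the two compatibility checks you flag: by reducing to irreducible monodromy up front, one never needs to verify that the quotient parabolic structure on $V/F^l$ agrees with the one coming from its own Deligne canonical extension, nor that the quotient inherits a polarizable variation. Your inductive approach is perfectly valid, but those checks (while routine) are genuine extra work. One small citation fix: you invoke \autoref{theorem:hn-constraints-parabolic}, which as stated assumes irreducible monodromy; what you actually need is its consequence \autoref{cor:stable-parabolic}, which drops that hypothesis by observing that an extension of parabolically semistable bundles of the same slope is again semistable.
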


\subsection{Main results on isomonodromic deformations}
\label{subsection:isomonodromy-intro}
As remarked in \autoref{subsection:overview}, the Hodge-theoretic results of
\autoref{subsection:hodge-intro} arise from an analysis of the Harder-Narasimhan filtrations of isomonodromic deformations of flat vector bundles on curves. Our first such result is a counterexample to 
\cite[Theorem 1.3]{BHH:logarithmic},
\cite[Theorem 1.3]{BHH:irregular}, and
\cite[Theorem 1.2]{BHH:parabolic}, which demonstrates that the situation is somewhat more complicated than was previously believed --- there exist irreducible flat vector bundles whose isomonodromic deformations are never semistable. 

Specifically, \cite{BHH:logarithmic} ask the following
question. 
\begin{question}[\protect{\cite[p. 123]{BHH:logarithmic}}]
	\label{question:bhh}
	Let $X$ be a smooth proper curve, and $D\subset X$ a reduced effective divisor. Given a flat vector bundle $(E, \nabla)$ on $X$, with regular singularities along $D$, let $(E', \nabla')$ be the isomonodromic deformation of $(E,\nabla)$ to an analytically general nearby curve $(X', D')$. Is $E'$ semistable?
\end{question}

The main claim of \cite{BHH:logarithmic} is that \autoref{question:bhh} has a positive answer if $(E,\nabla)$ has irreducible monodromy and the genus of $X$ is at least $2$. However, the following results answer \autoref{question:bhh} in the negative, even in this case. See \autoref{remark:bhh-error} for a discussion of the errors in previous claims that \autoref{question:bhh} had a positive answer.

We use 
$\mathscr{M}_{g,n}$ to denote the analytic moduli stack of smooth proper curves with
geometrically connected fibers and $n$ distinct marked points.
\begin{theorem}
	\label{theorem:counterexample}
Let $g\geq 2$ be an integer. There exists a vector bundle with flat connection
$(\mathscr{F}, \nabla)$ on $\mathscr{M}_{g,1}$ such that for each fiber $C$ of
the forgetful morphism $\mathscr{M}_{g,1} \to \mathscr{M}_g$, the restriction of $(\mathscr{F}, \nabla)$ to $C$
\begin{enumerate}
    \item has semisimple monodromy and
    \item is not semistable.
\end{enumerate}
\end{theorem}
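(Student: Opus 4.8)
The plan is to realize $(\mathscr{F},\nabla)$ via the Kodaira--Parshin construction. Fix $g \geq 2$. Over the universal curve $\mathscr{C}_{g,1} \to \mathscr{M}_{g,1}$ (equivalently $\mathscr{M}_{g,2}\to\mathscr{M}_{g,1}$), there is a tautological family of curves; applying the Kodaira--Parshin trick to a fiber $(C,x)$ produces a smooth proper family of curves $\pi \colon Y_{(C,x)} \to C$ whose construction is compatible across $\mathscr{M}_{g,1}$, hence assembles into a smooth proper morphism $\pi\colon \mathscr{Y} \to \mathscr{C}_{g,1}$. One then sets $(\mathscr{F},\nabla) = (R^1\pi_*\mathscr{O}_{\mathscr{Y}}, \text{Gauss--Manin})$ restricted appropriately, or more precisely takes $\mathscr{F}$ to be a direct summand of $R^1\pi_*\mathbb{C}\otimes\mathscr{O}$ carrying the Gauss--Manin connection, together with its Hodge filtration. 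Because Gauss--Manin connections are flat and the family is defined over all of $\mathscr{M}_{g,1}$, this gives the desired flat vector bundle on $\mathscr{M}_{g,1}$.

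Next I would verify property (1), semisimplicity of the monodromy on each fiber $C$. The monodromy representation of $R^1\pi_*\mathbb{C}$ is a summand of the one coming from a family of smooth proper curves, so it is a polarizable $\mathbb{Z}$-variation of Hodge structure, and such monodromy representations are semisimple by Deligne's semisimplicity theorem (the category of polarizable VHS being semisimple). If one passes to a summand, one keeps semisimplicity, so (1) holds automatically. The key point to get right here is simply to choose the Kodaira--Parshin cover so that the resulting local system is nontrivial and so that the relevant summand is well-defined in families --- e.g. by taking an isotypic piece for a finite group action on the cover.

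For property (2), non-semistability of $\mathscr{F}|_C$, the idea is that the Hodge filtration gives a sub-bundle whose degree is too large for $\mathscr{F}|_C$ to be semistable. Concretely, $\mathscr{F}|_C = R^1\pi_*\mathscr{O}$ sits in the Hodge exact sequence with sub-bundle $\pi_*\omega_{\mathscr{Y}/\mathscr{C}}$ (dualizing appropriately), and the second fundamental form / Kodaira--Spencer map controls the degree of this Hodge sub-bundle. The Arakelov-type inequality, or a direct computation of the degree of the Hodge bundle of the Kodaira--Parshin family in terms of the ramification data, shows that the maximal destabilizing piece has slope strictly exceeding $\mu(\mathscr{F}|_C)$; since the Kodaira--Parshin family has nonzero Kodaira--Spencer (the family is genuinely varying, as $C$ itself varies), this Hodge sub-bundle is a genuine destabilizer. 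The fact that this happens on \emph{every} fiber $C$, not just a general one, follows because the construction and the degree computation are uniform in $C$.

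The main obstacle I expect is twofold: first, arranging the Kodaira--Parshin construction to genuinely work \emph{in families} over $\mathscr{M}_{g,1}$ (the trick normally requires choices --- a suitable cyclic cover branched along $x$ and extra points, or a level structure --- and one must check these can be made consistently, perhaps after an étale base change that does not affect the statement, or by working with the appropriate moduli of covers); and second, pinning down the precise summand $\mathscr{F}$ of $R^1\pi_*$ and proving the degree inequality that forces instability on every fiber. The cleanest route to the latter is probably to invoke the curvature/negativity properties of Hodge bundles alluded to in \autoref{subsection:overview}: a polarizable VHS that is nontrivial has a Hodge sub-bundle of positive degree, which for a local system of small rank on a high-genus curve is incompatible with semistability --- essentially the contrapositive of \autoref{theorem:hn-constraints-parabolic} applied on the central curve itself (where no isomonodromic deformation is needed since the bundle already lives over the whole moduli space). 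I would present this as the conceptual heart, and relegate the explicit Kodaira--Parshin bookkeeping to a construction lemma.
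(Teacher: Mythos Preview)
Your plan matches the paper's proof: take $f:\mathscr{X}\to\mathscr{M}_{g,1}$ to be a Kodaira--Parshin family (fiber over $[(C,p)]$ a $G$-cover of $C$ branched at $p$), set $(\mathscr{F},\nabla)=(R^1f_*\mathbb{C}\otimes\mathscr{O},\text{Gauss--Manin})$, get (1) from Deligne's semisimplicity theorem, and get (2) because $f_*\omega_f\subset\mathscr{F}|_C$ has positive degree while $\mathscr{F}|_C$ has degree zero. The paper cites a theorem of Catanese--Dettweiler for this positivity and gives your Hodge-theoretic route, via \autoref{corollary:unstable}, as an alternative in \autoref{remark:alternate-hodge-proof}. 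One cosmetic point: the family already lives over $\mathscr{M}_{g,1}$ directly, so your intermediate passage through $\mathscr{C}_{g,1}$ and the vague ``restricted appropriately'' are unnecessary and slightly muddle the picture.

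There is, however, one genuine gap. Positivity of the Hodge bundle on a fixed fiber $C$ of $\mathscr{M}_{g,1}\to\mathscr{M}_g$ requires the restricted family $f^{-1}(C)\to C$ to be non-isotrivial. Your justification --- ``the family is genuinely varying, as $C$ itself varies'' --- addresses the wrong direction of variation: once you restrict to a fiber, $C$ is fixed, and what you must show is that the covers of $C$ branched at \emph{different points} $p\in C$ are not all isomorphic. The paper closes this with de~Franchis' theorem (there are only finitely many nonconstant maps between two fixed curves of genus $\geq 2$, so the branched covers must move in moduli as the branch point moves). Your closing appeal to ``the contrapositive of \autoref{theorem:hn-constraints-parabolic}'' is also misdirected: that theorem says low-rank flat bundles become semistable after generic isomonodromic deformation, so its contrapositive would only let you conclude that an everywhere-unstable example has large rank --- it cannot help you \emph{establish} instability. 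The instability must come entirely from Hodge-bundle positivity together with the non-isotriviality supplied by de~Franchis.
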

%

We also have the following variant, where the vector bundle has irreducible
monodromy, instead of just semisimple monodromy.
\begin{corollary}\label{corollary:counterexample}
Let $C$ be a smooth projective curve of genus at least $2$. There exists an irreducible flat
vector bundle $(E, \nabla)$ on $C$, whose isomonodromic deformations to
a nearby curve are never semistable.
\end{corollary}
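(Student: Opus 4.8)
The plan is to deduce \autoref{corollary:counterexample} from \autoref{theorem:counterexample} by passing to an irreducible direct summand of the semisimple flat bundle produced there. Fix a smooth projective curve $C$ of genus $g \geq 2$; it arises as the fiber of the forgetful morphism $\mathscr{M}_{g,1} \to \mathscr{M}_g$ over $[C]$. Let $(\mathscr{F}, \nabla)$ be the flat vector bundle on $\mathscr{M}_{g,1}$ given by \autoref{theorem:counterexample}, and choose a contractible chart $\Delta$ around $[C] \in \mathscr{M}_g$, so that the universal curve $\mathscr{C} \to \Delta$ is the preimage of $\Delta$ in $\mathscr{M}_{g,1}$ and contains the fiber $C$. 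The first step is to observe that the restriction $(\mathscr{E}, \nabla) := (\mathscr{F}, \nabla)|_{\mathscr{C}}$ is \emph{the} isomonodromic deformation of $(E, \nabla) := (\mathscr{F}, \nabla)|_C$ along $\mathscr{C}/\Delta$: it is a genuine flat connection on the total space $\mathscr{C}$, and since $\Delta$ is contractible the inclusion $C \hookrightarrow \mathscr{C}$ is a homotopy equivalence, so the monodromy is constant along $\Delta$. In particular the restrictions of $(\mathscr{E}, \nabla)$ to the fibers $C_t$ realize the isomonodromic deformations of $(E, \nabla)$ to all nearby curves, and by \autoref{theorem:counterexample}(2) none of them is semistable.

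Next I would split off an irreducible piece. By \autoref{theorem:counterexample}(1) the monodromy of $(E, \nabla)$ is semisimple, so the underlying local system decomposes as $\mathbb{L} = \bigoplus_{i=1}^{k} \mathbb{L}_i$ with each $\mathbb{L}_i$ irreducible; the corresponding flat sub-bundles $(E_i, \nabla_i) \subset (E, \nabla)$ are irreducible, and each $E_i$ has degree $0$ since it carries a flat connection on the proper curve $C$, hence slope $0$. Two routine facts then do the work: isomonodromic deformation is compatible with direct sums — on $\mathscr{C}$ the local system still splits as $\bigoplus \mathbb{L}_i$, so $\mathscr{E}|_{C_t} = \bigoplus_i \mathscr{E}_i|_{C_t}$, with $\mathscr{E}_i|_{C_t}$ the isomonodromic deformation of $E_i$ to $C_t$ — and a direct sum of slope-$0$ vector bundles on a curve is semistable if and only if each summand is. Combining these with the previous paragraph, for every $t \in \Delta$ at least one summand $\mathscr{E}_i|_{C_t}$ fails to be semistable.

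Finally I would pin down a single summand that is bad uniformly in $t$. Let $Z_i \subseteq \Delta$ be the locus of $t$ for which $\mathscr{E}_i|_{C_t}$ is not semistable; since semistability is an open condition in families of vector bundles, each $Z_i$ is closed, and the previous step shows $\bigcup_{i=1}^{k} Z_i = \Delta$. As $\Delta$ is irreducible, some $Z_{i_0}$ equals all of $\Delta$, i.e.\ the isomonodromic deformation of $(E_{i_0}, \nabla_{i_0})$ to \emph{every} nearby curve fails to be semistable; necessarily $\operatorname{rk} E_{i_0} \geq 2$, since a line bundle is always semistable, so $Z_{i_0}$ would otherwise be empty. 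Then $(E_{i_0}, \nabla_{i_0})$ is the desired irreducible flat vector bundle. I expect the only real subtlety to be bookkeeping rather than conceptual: one must check carefully that the restriction of the global flat connection is honestly the isomonodromic deformation appearing in the statement, so that \autoref{theorem:counterexample} applies, and that a \emph{single} bad index can be chosen over the whole base — the latter being exactly where openness of semistability and irreducibility of $\Delta$ are used. (If one reads ``never semistable'' in the weaker sense of ``not semistable for an analytically general nearby curve'', the same closed-subset argument applies verbatim after shrinking $\Delta$ to a dense open.)
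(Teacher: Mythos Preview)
Your proof is correct and follows the same approach as the paper's: restrict $(\mathscr{F},\nabla)$ from \autoref{theorem:counterexample} to $C$, decompose into irreducible flat summands of degree zero, and observe that one summand must fail semistability everywhere. The paper's proof compresses this to two sentences and leaves the ``pin down a single bad summand uniformly in $t$'' step implicit; your use of openness of semistability together with irreducibility of the contractible base makes that step explicit and is the right way to justify it.
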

\begin{proof}
	The restriction $(\mathscr{F}, \nabla)|_C$ from \autoref{theorem:counterexample} provides a semisimple flat vector bundle, each of whose flat summands has degree zero; by \autoref{theorem:counterexample}(2), its
isomonodromic deformation to a nearby curve is never semistable. Hence one of
the irreducible summands of
$(\mathscr{F}, \nabla)|_C$
satisfies the statement of the corollary.
\end{proof}

In a positive direction, we have have the following result, showing 
that the 
isomonodromic deformation of any semisimple flat vector bundle to an
analytically general nearby curve is close to being semistable, and moreover it is
semistable if the rank is small.

\begin{theorem}
	\label{theorem:hn-constraints}
	Let $(C,D)$ be hyperbolic of genus $g$ and let
	$({E}, \nabla)$ be a flat
	vector bundle on $C$ with regular singularities along $D$,
	and irreducible
monodromy.
Suppose
 $(E',\nabla')$ 
is an isomonodromic deformation
of $({E}, \nabla)$ to an analytically general nearby curve,
with Harder-Narasimhan filtration $0 = (F')^0 \subset (F')^1 \subset \cdots \subset
(F')^m =
E'$. For $1 \leq i \leq m$, let $\mu_i$ denote the slope of
$\on{gr}^{i}_{HN}E' := (F')^i/(F')^{i-1}$.
Then the following two properties hold.
\begin{enumerate}
	\item If $E'$ is not semistable, then for every $0 < i < m$, there
	exists $j < i < k$ with $$\rk \on{gr}^{j+1}_{HN}E'\cdot \rk
	\on{gr}^k_{HN}E'\geq g+1.$$
\item We have $0<\mu_i-\mu_{i+1}\leq 1$ for all $i<m$.
\end{enumerate}
\end{theorem}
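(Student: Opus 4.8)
The plan is to study the Higgs-field-like second fundamental forms of the Harder--Narasimhan filtration of $E'$ under the variation of the curve, and to exploit the fact that along an isomonodromic deformation the connection $\nabla'$ "moves" a sub-bundle by an $\mathcal{O}_{C'}$-linear map of degree related to $K_{C'}(D')$. First I would set up the standard picture: let $(\mathscr{E}, \nabla)$ be the isomonodromic family over a disk $\Delta$ with central fiber $C$, and let $(F')^\bullet$ be the HN filtration of the general fiber $E' = \mathscr{E}|_{C'}$. Since $\nabla'$ is a flat connection with log poles along $D'$, it does \emph{not} preserve $(F')^i$ in general; the obstruction is the $\mathcal{O}_{C'}$-linear "Kodaira--Spencer" or second-fundamental-form map
\[
\psi_i : (F')^i \longrightarrow (E'/(F')^i)\otimes \Omega^1_{C'}(\log D'),
\]
and the key point is that if $\nabla'$ preserved $(F')^i$ then $(F')^i$ would be a flat sub-bundle, contradicting irreducibility of the monodromy (a flat proper nonzero sub-bundle of an irreducible flat bundle is impossible), so each $\psi_i \neq 0$. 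This already gives part (1) in skeletal form: nonvanishing of $\psi_i$ forces $\mathrm{Hom}(\mathrm{gr}^j_{HN}, \mathrm{gr}^k_{HN}\otimes \Omega^1(\log D)) \neq 0$ for some $j \le i < k$, and by slope reasons ($\mu_j > \mu_k$, so the underlying Hom-bundle has negative slope) a nonzero section forces its rank times the relevant genus-type quantity to be large — this is where the bound $\rk \mathrm{gr}^{j+1}_{HN} \cdot \rk \mathrm{gr}^k_{HN} \ge g+1$ should come from, after a Clifford-type / Bogomolov-instability estimate on the degree of $\Omega^1_{C'}(\log D')$ relative to the slope gap. The genus-general hypothesis is used to ensure the variation of the curve is "generic enough" that the second fundamental forms are as nondegenerate as possible, i.e., that the relevant Hom spaces are not merely nonzero but large; I expect this genericity argument — controlling \emph{which} graded pieces can be linked — to be the main technical obstacle, and it is presumably where the bulk of the paper's isomonodromy machinery (and the correction of the BHH arguments) is deployed.

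For part (2), the slope gaps: the lower bound $\mu_i - \mu_{i+1} > 0$ is automatic from the definition of the Harder--Narasimhan filtration (slopes strictly decrease), so the content is the \emph{upper} bound $\mu_i - \mu_{i+1} \le 1$. Here I would again use the second fundamental form, but now between consecutive graded pieces: the composite $\mathrm{gr}^i_{HN} \to \mathrm{gr}^{i+1}_{HN}\otimes \Omega^1_{C'}(\log D')$ induced by $\nabla'$ must be \emph{nonzero} — this requires a sharper statement than in part (1), namely that $\nabla'$ connects adjacent steps, which one gets by a minimality argument on the isomonodromic deformation or by the fact that the full second fundamental form $\psi := \bigoplus \psi_i$ of the HN filtration, viewed as a Higgs field, cannot kill any consecutive gap without the deformation failing to equidistribute the slopes (this is the quantitative heart of "the deformation makes $E'$ close to semistable"). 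Given nonvanishing of $\mathrm{gr}^i_{HN} \to \mathrm{gr}^{i+1}_{HN}\otimes \Omega^1_{C'}(\log D')$, a stability/semistability comparison of the two graded pieces (each is semistable by construction) yields
\[
\mu_i \;\le\; \mu_{i+1} + \deg \Omega^1_{C'}(\log D')\Big/ \big(\text{something}\big),
\]
and one needs the normalization to come out to exactly $\le 1$; I believe the right normalization is that the relevant twist is by a \emph{line bundle of slope $1$} after accounting for the log structure and the fact that we measure slopes per unit rank, though pinning down this constant is the delicate part — getting $1$ rather than $2g-2+n$ or similar must use that the isomonodromic flow spreads the slopes out as much as the geometry allows, not just that $\psi \neq 0$.

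The overall structure I would follow: (i) reduce to the geometric statement about second fundamental forms of the HN filtration of the isomonodromic family; (ii) prove nonvanishing of each $\psi_i$ from irreducibility of monodromy; (iii) for part (1), combine nonvanishing with slope inequalities and a Clifford/instability bound to extract the product bound $\ge g+1$, using genus-genericity to rule out the degenerate linking patterns; (iv) for part (2), upgrade nonvanishing to the \emph{adjacent} graded pieces and extract the slope-gap bound $\le 1$ by a direct semistability comparison. The main obstacle, as flagged, is step (iii)–(iv)'s use of the analytically general hypothesis: making precise that a \emph{general} nearby curve realizes the strongest possible nondegeneracy of the isomonodromic second fundamental form, which is exactly the subtlety that the erroneous papers \cite{BHH:logarithmic, BHH:irregular, BHH:parabolic} got wrong, so I would expect to lean heavily on \autoref{theorem:hn-constraints-parabolic} and the parabolic bookkeeping behind it rather than reproving it from scratch.
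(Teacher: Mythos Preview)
Your starting point is right: irreducibility of the monodromy forces the connection-induced map $T_{C'}(-D') \to \sheafhom(\on{gr}^{j+1}_{HN}E', \on{gr}^k_{HN}E')$ to be nonzero for suitable $j<i<k$. But the mechanism you propose for extracting the bounds from mere nonvanishing does not work. A nonzero map between semistable bundles $\on{gr}^{j+1}_{HN}E' \to \on{gr}^k_{HN}E' \otimes \Omega^1_{C'}(\log D')$ gives only the slope inequality $\mu_{j+1} \le \mu_k + (2g-2+n)$; it carries no information about ranks, and no Clifford-type input applies to a single section. The missing idea is that genericity is used differently than you suggest: on an analytically general nearby curve the Harder--Narasimhan filtration \emph{extends to the first-order neighborhood} in the isomonodromic family (this is the deformation-theoretic content, via the Atiyah bundle), and this forces the map $T_{C'}(-D') \to \sheafhom(\on{gr}^{j+1}_{HN}E', \on{gr}^k_{HN}E')$ to \emph{vanish on $H^1$}. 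Serre duality then converts nonzero-on-$H^0$-but-zero-on-$H^1$ into the statement that the semistable bundle $\sheafhom(\on{gr}^{j+1}_{HN}E', \on{gr}^k_{HN}E')^\vee \otimes \omega_{C'}(D')$, of slope $\mu_{j+1}-\mu_k + (2g-2+n) > 2g-2+n$, is \emph{not generically globally generated}. The rank bound $\ge g+1$ then comes from a Clifford-for-vector-bundles argument showing that a semistable bundle of slope $>2g-2$ failing generic global generation must have rank $\ge g+1$.

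For part (2) your direct comparison is off by the wrong constant: a nonzero second fundamental form gives only $\mu_i - \mu_{i+1} \le 2g-2+n$, and no normalization rescues this to $1$. The constant $1$ again comes through the non-generic-global-generation route: the same Serre-dual bundle is semistable of slope $>2g-2+n$ but not globally generated, while any semistable bundle of slope $>2g-1+n$ \emph{is} globally generated; hence its slope is $\le 2g-1+n$, i.e.\ $\mu_{j+1}-\mu_k \le 1$, and monotonicity of HN slopes squeezes this down to $\mu_i - \mu_{i+1} \le 1$. In particular the paper never establishes (and does not need) that \emph{consecutive} graded pieces are linked by the second fundamental form.
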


In other words, the consecutive associated graded pieces of the generic
Harder-Narasimhan filtration have slope differing by at most one, and, if there
are multiple pieces of the generic Harder-Narasimhan filtration, many of them must have large rank relative to $g$.  This result is due to Bolibruch \cite[Proposition 5.6]{bolibrukh1990riemann} in the case that $g=0$ and $\on{rk}(E)=2$.

\autoref{theorem:hn-constraints} is a special case of
\autoref{theorem:hn-constraints-parabolic} below, where we allow certain
parabolic structures on the vector bundle $E$. These more general results are required for our Hodge-theoretic applications.
\begin{remark}
	\label{remark:}
	\autoref{theorem:hn-constraints} also holds without the hyperbolicity
	assumption, as we will explain. Nevertheless, it is convenient to make the assumption 
	so that curves have no infinitesimal automorphisms. In this case
	isomonodromic deformations are somewhat better behaved, see 
	\cite[p. 518]{Heu:universal-isomonodromic}.

	We now explain the proof of \autoref{theorem:hn-constraints} in the case
	$(C, D)$ is not hyperbolic.
	Suppose $(C, D)$ is not hyperbolic, so either $g = 1, n = 0$ or $g = 0, n \leq 2$. In
this case the fundamental group $\pi_1(C - \{x_1, \ldots, x_n\})$ is abelian.
This implies any irreducible representation of $\pi_1(C - \{x_1, \ldots, x_n\})$
is $1$-dimensional, so the corresponding flat vector bundle is a line bundle. In
this case, $E$ and $E'$ are semistable, so
\autoref{theorem:hn-constraints} still holds.
\end{remark}

As a corollary, we are able to salvage the main theorem of \cite{BHH:logarithmic} for flat vector bundles whose rank is small relative to $g$, using the AM-GM inequality.
The following corollary can be deduced directly from
\autoref{theorem:hn-constraints-parabolic} and AM-GM. 
It is also a special
case of \autoref{cor:stable-parabolic}.
\begin{corollary}\label{cor:stable}
	Let $(C, D)$ be a hyperbolic curve of genus $g$.
Let $({E}, \nabla)$ be an irreducible flat
vector bundle on $C$ with regular singularities along $D$, and suppose that
$\on{rk}(E)<2\sqrt{g+1}$. 
Then an isomonodromic deformation of 
$E$
to an analytically general nearby curve is  semistable. 
\end{corollary}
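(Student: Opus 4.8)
The plan is to deduce \autoref{cor:stable} from the Harder--Narasimhan constraints of \autoref{theorem:hn-constraints} --- more precisely its parabolic refinement \autoref{theorem:hn-constraints-parabolic} --- together with the AM--GM inequality, the point being that $2\sqrt{g+1}$ is exactly the AM--GM threshold attached to the numerical condition appearing there. First I would dispose of the case in which $(E,\nabla)$ has irreducible monodromy. Let $(E',\nabla')$ be an isomonodromic deformation to an analytically general nearby curve; since isomonodromic deformation preserves the rank, $\rk E' = \rk E =: r < 2\sqrt{g+1}$. Suppose for contradiction that $E'$ is not semistable, and let $0 = (F')^0 \subsetneq (F')^1 \subsetneq \cdots \subsetneq (F')^m = E'$ be its Harder--Narasimhan filtration, so $m \geq 2$; put $r_i := \rk \on{gr}^i_{HN}E'$. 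Applying \autoref{theorem:hn-constraints}(1) with $i=1$ (permissible since $0<1<m$) yields indices $j<1<k$, forcing $j=0$, so that $r_1\cdot r_k = \rk\on{gr}^1_{HN}E'\cdot \rk\on{gr}^k_{HN}E' \geq g+1$. Then AM--GM gives
\[
r \;=\; \sum_{i=1}^{m} r_i \;\geq\; r_1 + r_k \;\geq\; 2\sqrt{r_1 r_k} \;\geq\; 2\sqrt{g+1},
\]
contradicting $r < 2\sqrt{g+1}$; hence $E'$ is semistable.

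To remove the irreducibility assumption I would pass to a Jordan--Hölder filtration of the monodromy representation. As the monodromy is locally constant along the isomonodromic deformation, this filtration spreads out to a filtration of $(E',\nabla')$ by flat subbundles whose successive quotients are themselves isomonodromic deformations of flat bundles with regular singularities and irreducible monodromy, each of rank $\leq r < 2\sqrt{g+1}$; by the previous paragraph each such quotient is semistable. To conclude that the iterated extension $E'$ is semistable one needs the quotients to share a common slope, and this is exactly the bookkeeping for which the parabolic formulation is the right framework: a flat bundle with regular singularities, in its canonical (Deligne) extension, has parabolic degree $0$, and parabolic degree is additive in short exact sequences, so every Jordan--Hölder constituent has parabolic slope $0$, and this is preserved under isomonodromy. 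In particular \autoref{cor:stable} is a special case of the parabolic statement \autoref{cor:stable-parabolic}.

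The real content of the argument lies entirely in \autoref{theorem:hn-constraints}/\autoref{theorem:hn-constraints-parabolic} and its analysis of how the Harder--Narasimhan filtration deforms along the isomonodromic deformation; granting that, the corollary is essentially immediate. Within the corollary itself the only genuinely delicate point is the slope bookkeeping in the reduction to the irreducible case; the AM--GM step is forced, and it simultaneously shows the constant $2\sqrt{g+1}$ is sharp for this method, being attained exactly when the destabilized bundle has two Harder--Narasimhan graded pieces whose ranks multiply to $g+1$.
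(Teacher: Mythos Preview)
Your proof is essentially the paper's. The paper deduces \autoref{cor:stable} from \autoref{cor:stable-parabolic} by taking the trivial parabolic structure, and proves \autoref{cor:stable-parabolic} by first reducing to irreducible monodromy (via ``an extension of semistable parabolic bundles of the same slope is semistable'') and then applying AM--GM to the rank inequality $\rk\on{gr}^{j+1}_{HN}E'_\star\cdot\rk\on{gr}^{k}_{HN}E'_\star\geq g+1$ from \autoref{theorem:hn-constraints-parabolic}(1)---exactly your argument, with your choice $i=1$ being one concrete way to extract the needed pair $(j{+}1,k)$.
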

 As remarked above, \cite{BHH:logarithmic} claims an analogous theorem with no bound on the rank of $E$; our \autoref{corollary:counterexample} implies such a bound is necessary. Our methods are heavily inspired by those of \cite{BHH:logarithmic}, but our
technique requires some new input from Clifford's theorem for vector bundles.

These results appear to be new even for vector bundles with finite monodromy; we give some example applications in this case.
\begin{example}
	\label{example:splitting-type}
	In this example, we describe what 
	\autoref{theorem:hn-constraints}(2) tells us about splitting types
	of certain vector bundles on $\mathbb P^1$.
	Suppose we are given a finite group $G$ and
	a finite $G$-cover $f: X \to \mathbb P^1$.
	Consider the flat vector bundle $E := f_* \mathscr O_X$ on $\mathbb P^1$, with the connection $\nabla$ induced by the exterior derivative $d: \mathscr{O}_X\to \Omega^1_X$,
	which has regular singularities along the branch locus of $f$.
	Let $F$ be a summand of $E$ with irreducible monodromy
	and let $F'$ be an isomonodromic deformation of $F$ to a very general
	nearby pointed genus $0$ curve.
	Since every vector bundle on $\mathbb P^1$ is a sum of line bundles,
	we can write $F' = \mathscr O_{\mathbb P^1}(a_1)^{b_1} \oplus \cdots
	\oplus \mathscr O_{\mathbb P^1}(a_m)^{b_m}$,
	with $a_1 < a_2 < \cdots < a_m$ and $b_i > 0$.
	Then \autoref{theorem:hn-constraints}(1) tells us nothing, but
	\autoref{theorem:hn-constraints}(2) tells us that the $a_i$ are
	consecutive, i.e., $a_{i + 1} = a_i + 1$ for $1 \leq i \leq m-1$.

	Such $F'$ appear as a summand in $f'_* \mathscr O_{X'}$, where $f': X'
	\to \mathbb P^1$ is a general $G$-cover of $\mathbb P^1$.
\end{example}

\begin{example}
	\label{example:tschirnhausen}
	We now give a sample application of \autoref{cor:stable} to
	semistability of Tschirnhausen bundles of finite covers.
	Consider a family $\mathscr X \xrightarrow{\alpha} \mathscr{Z}
	\xrightarrow{\beta} \mathscr Y
	\xrightarrow{\gamma} B$
	where $\mathscr X \to B, \mathscr Y \to B$ and $\mathscr Z \to B$ are smooth proper curves
	with geometrically connected fibers,
	$\beta$ is finite locally free of degree $d$, $\beta\circ \alpha$ is an $S_d$ cover
	which is the Galois closure of $\beta$. 
	Suppose further $\beta \circ \alpha$ is branched
	over a divisor $\mathscr D \subset \mathscr Y$ consisting of $n$ disjoint sections over $B$ so that
	$(\mathscr Y, \mathscr D)$ is a relative hyperbolic curve of genus $g$ over $B$.
	Assume the map $B \to \mathscr M_{g,n}$ induced by $\gamma$
	is dominant.

	We obtain a flat vector bundle $E := (\beta \circ \alpha)_* \mathscr
	O_{\mathscr X}$ on $\mathscr Y$ with regular singularities along
	$\mathscr D$.
	One can decompose $E = \bigoplus\limits_{S_d \text{-irreps }
	\rho} E_\rho^{\dim \rho}$ as a sum of flat bundles with irreducible
	monodromy. Let $F$ denote one such
	summand corresponding to the standard representation of dimension $d-1$. 
	The flat vector bundle $\beta_* \mathscr O_{\mathscr Z}$ decomposes as
	$\mathscr O_{\mathscr Y} \oplus F$.
	The dual $T$ of $F$ is known
	as the Tschirnhausen bundle.
	By \autoref{cor:stable},
	the restriction of $T$ to a general fiber of $\mathscr Y \to B$
	is semistable whenever $d -1 < 2 \sqrt{g+1}$.

	Previous results on stability of $T$ were established in 
	\cite[Theorem 1.5]{deopurkarP:vector-bundles-and-finite-covers}.
	If $h$ denotes the genus of $\mathscr Z \to B$, they proved the
	restriction of $T$ to a general fiber of $\mathscr Y \to B$ is
	semistable whenever $h \geq dg + d(d-1)^2 g$
	\cite[Remark 3.16]{deopurkarP:vector-bundles-and-finite-covers}. After this paper appeared on the arXiv, it was shown by Coskun, Larson, and Vogt \cite{coskun2022stability} that the restriction of $T$ to a general fiber of $\mathscr Y \to B$
	is semistable when $g\geq 1$, and stable if $g\geq 2$.
\end{example}

\subsection{Motivation}
 Our main motivation comes from the following question. Let $f: X\to Y$ be a map
 of algebraic varieties. What are the restrictions on the topology of $f$? Our
 \autoref{corollary:geometric-local-systems} places a very strong restriction on
 the topology of morphisms to an analytically very general curve $C$ of genus
 $g$. For example, it implies that if $f: X\to C$ is a proper morphism with smooth generic fiber and bad reduction at $n$ analytically very general points of $C$, then any non-isotrivial monodromy representation occurring in the cohomology of $X/C$ has dimension at least $2\sqrt{g+1}$.
 
We became interested in this question and its connection to isomonodromy while
trying to understand \cite{BHH:logarithmic}. In that paper Biswas, Heu, and
Hurtubise raise \autoref{question:bhh}, asking whether it is possible to isomonodromically deform irreducible flat vector bundles to achieve semistability, by analogy to Hilbert's 21st problem (also known as the Riemann-Hilbert problem). 

Hilbert's 21st problem, as answered by  Bolibruch \cite{bolibruch1995riemann} (correcting earlier work of Plemelj), poses the question of whether every monodromy representation can be realized by a Fuchsian system. Esnault and Viehweg generalize this question to higher genus in \cite{esnault1999semistable}: they ask when an irreducible representation can be realized as the monodromy of a flat vector bundle $(E, \nabla)$ with regular singularities at infinity, with $E$ semistable.  
 
 In Esnault-Viehweg's formulation, the complex structure on the underlying curve
 is fixed, and the residues of the differential equation at regular singular
 points are modified to achieve semistability. Flipping this around,
 Biswas-Heu-Hurtubise's analogue asks if semistability can be achieved by
 modifying the complex structure and fixing the residues. They claim that this
 is always possible in the logarithmic, parabolic, and irregular settings, in
 \cite{BHH:logarithmic, BHH:parabolic, BHH:irregular}. After discovering the
 Hodge-theoretic counterexample to these claims in
 \autoref{corollary:counterexample}, we proved \autoref{theorem:hn-constraints}
 as an attempt  (1) to understand to what extent Biswas, Heu, and Hurtubise's
 \autoref{question:bhh} has a positive answer, and (2) to apply the cases when there is a positive answer to the analysis of variations of Hodge structure on curves.

\subsection{Idea of proof}
\label{subsection:idea-of-proof}

To prove \autoref{theorem:very-general-VHS}, we first reduce to proving
\autoref{theorem:isomonodromic-deformation-CVHS}, using that discrete compact
spaces are finite.
We then prove \autoref{theorem:isomonodromic-deformation-CVHS} by showing that
any flat vector bundle satisfying the hypotheses of the theorem is forced to be (parabolically) semistable on an analytically general curve, whence the Hodge
filtration consists of a single piece by \autoref{corollary:unstable}.
The polarization then gives a definite Hermitian form preserved by the monodromy, and hence
the monodromy is unitary.
The key issue, which follows from \autoref{theorem:hn-constraints-parabolic},
is therefore to show that low rank flat vector bundles 
are parabolically semistable
on an analytically general curve.

To prove \autoref{theorem:hn-constraints-parabolic}, we assume we have a flat
vector bundle $(E, \nabla)$ on our hyperbolic curve $(C, D)$, and consider an
isomonodromic deformation to a nearby curve.
To this end, we use the deformation theory of this flat vector bundle with its Harder-Narasimhan filtration,
which is governed by a variant of the Atiyah bundle.
We show that if the Harder-Narasimhan filtration does not satisfy the conclusion of \autoref{theorem:hn-constraints-parabolic}, then there is a direction along which we can deform the curve so that
the filtration is destroyed.
Indeed, if the filtration persisted, deformation theory provides us with a map
from $T_C(-D)$ to a certain
parabolically semistable subquotient of $\mathrm{End}(E)$ which vanishes on $H^1$.
Taking the Serre duals gives a semistable coparabolic vector bundle of low rank
and large coparabolic slope
which is not generically globally generated.
In the end, we rule this out by a variant of Clifford's theorem for vector
bundles.

\subsection{Organization of the paper}
\label{subsection:organization}
In \autoref{section:parabolic}, we review background on parabolic bundles.
In \autoref{section:deformation-theory}, we give background on Atiyah bundles,
parabolic Atiyah bundles, and isomonodromic deformations. 
In \autoref{section:hodge-theoretic-preliminaries} we give background on complex variations of Hodge structures and their associated parabolic Higgs bundles. Experts can likely skip these three sections. 
In \autoref{section:counterexample}, we prove \autoref{theorem:counterexample} and \autoref{corollary:counterexample}, providing counterexamples to earlier published claims about semistability of isomonodromic deformations. 
In \autoref{section:hn-filtration} we prove the main results on isomonodromic
deformations, \autoref{theorem:hn-constraints} and \autoref{cor:stable} (and their generalizations \autoref{theorem:hn-constraints-parabolic} and \autoref{cor:stable-parabolic}). This is the technical heart of the paper. 
In \autoref{section:hodge-theoretic-results}, we prove the main consequences for
variations of Hodge structure, \autoref{theorem:very-general-VHS}, \autoref{corollary:geometric-local-systems}, \autoref{corollary:non-density-of-geometric-local-systems}, and \autoref{theorem:isomonodromic-deformation-CVHS}. 
Finally, \autoref{section:questions} lists some questions motivated by our results.

For readers unfamiliar with the theory of parabolic bundles, we suggest first considering the case of \autoref{theorem:very-general-VHS} where $\mathbb{V}$ is assumed to have unipotent monodromy at infinity. In this case one may replace parabolic stability with the usual notion of stability for vector bundles,  simplifying the proof;  in particular, one may use \autoref{cor:stable} in place of \autoref{cor:stable-parabolic}.
\subsection{Acknowledgments}
This material is based upon work supported by the Swedish Research Council under
grant no. 2016-06596 while the authors were in residence at Institut
Mittag-Leffler in Djursholm, Sweden during the fall of 2021. 
Landesman was supported by the National Science
Foundation under Award No. DMS-2102955;
Litt was supported by NSF grant DMS-2001196 and an NSERC grant, “Anabelian methods in arithmetic and algebraic geometry.” 
We would like to thank multiple anonymous referees for helpful comments.
We are grateful for useful conversations with 
Donu Arapura,
Indranil Biswas, 
Marco Boggi, 
Juliette Bruce, 
Anand Deopurkar,
H\'el\`ene Esnault, 
Najmuddin Fakhruddin,
Joe Harris,
Viktoria Heu,
Jacques Hurtubise, 
David Jensen, 
Jean Kieffer,
Matt Kerr,
Eric Larson, 
Haohao Liu,
Anand Patel,
Alexander Petrov,
Andrew Putman,
Will Sawin,
Ravi Vakil,
and Isabel Vogt.

\section{Background on parabolic bundles}
\label{section:parabolic}

We now review some basics on parabolic sheaves and parabolic bundles, primarily following the notation of
\cite[\S2.3]{BHH:parabolic}.
Some useful references include 
\cite[Part 3, \S1]{seshadri:fibres-vectoriels-sur-les-courbes-algebriques},
\cite[\S1 and \S3]{yokogawa:infinitesimal-deformation}, and
\cite[\S2]{bodenY:moduli-spaces-of-parabolic-higgs-bundles}.
Let $C$ be a smooth proper curve and $D\subset C$ a reduced divisor. Loosely speaking, a parabolic bundle is a vector bundle on $C$ together with an additional
filtration of the fibers over points of the given divisor, weighted by an increasing sequence of real numbers in $[0,1)$.

\subsection{Definition of parabolic bundles}
\label{subsubsection:definition-parabolic}

\begin{definition}
	\label{definition:parabolic-bundle}
	Let $E$ be a vector bundle over a curve $C$.
	Let $D = x_1 + \cdots + x_n \subset C$ denote a divisor, with the $x_i$ distinct.
	A {\em quasiparabolic structure} on $E$ over $D$ is a strictly decreasing
	filtration of subspaces
	\begin{align*}
		E_{x_j} = E_j^1 \supsetneq E_j^2 \supsetneq \cdots \supsetneq E_j^{n_j+1}
		= 0
	\end{align*}
	for each $1 \leq j \leq n$.
	A {\em parabolic structure} on $E$ over $D$ is a quasiparabolic
	structure together with
	$n$ sequences of real numbers
	\begin{align*}
		0 \leq \alpha^1_j < \alpha^2_j \cdots < \alpha^{n_j}_j < 1
	\end{align*}
	for $1 \leq j \leq n$.
	A {\em parabolic bundle} is a vector bundle with a parabolic structure.
	The collection $\{\alpha^i_j\}$ are called the {\em weights} of the
	parabolic bundle. We say the parabolic bundle has {\em rational weights} if
	all $\alpha^i_j$ are rational numbers.
	We often notate the data of a parabolic bundle simply as $E_\star$
	instead of $(E, \{E^i_j\}, \{\alpha^i_j\})$.
\end{definition}

\begin{definition}[Parabolic degree and slope]
	\label{definition:}
	Let $E_\star := (E, \{E^i_j\}, \{\alpha^i_j\})$. We define the {\em
	parabolic degree} of $E_\star$ as
\begin{align*}
	\on{par-deg}(E_\star) := \deg(E) + \sum_{j=1}^n \sum_{i=1}^{n_j}
	\alpha^i_j \dim(E_j^i/E_j^{i+1}).
\end{align*}
We define the {\em parabolic slope} as $\mu_\star(E_\star) :=
\on{par-deg}(E_\star)/\rk(E_\star)$.
\end{definition}

\subsection{Definition of parabolic sheaves}
\label{subsubsection:parabolic-sheaf}

In order to later apply Serre duality, we will not only need parabolic bundles, but also so-called
coparabolic bundles. Coparabolic bundles are examples of parabolic sheaves,
which we define next.
The definitions in this subsection follow
\cite{yokogawa:infinitesimal-deformation} and
\cite{bodenY:moduli-spaces-of-parabolic-higgs-bundles}.

\begin{definition}
	\label{definition:filtered-O_X-module}
	Let $X$ be a scheme and $D \subset X$ an effective Cartier divisor.
	Let $\mathbb R$ denote the category whose objects are real numbers with
	a single morphism $i^{\alpha,\beta}:\alpha\to\beta$ if $\alpha \geq \beta$ and no
	morphisms otherwise.
	Let $\mathscr M_X$ denote the category of sheaves of $\mathscr{O}_X$-modules.
A {\em $\mathbb R$-filtered $\mathscr O_X$-module} is a functor $E: \mathbb R \to \mathscr M_X$.
Notationally, we use $E_\star$ to denote the functor $E$, so $E_\alpha := E(\alpha)$.
We write $i_E^{\alpha,\beta} := E(i^{\alpha,\beta})$.
\end{definition}
\begin{example}
	\label{example:}
	Define $E[\alpha]_\star$ as the $\mathbb{R}$-filtered $\mathscr O_X$-module given by
$E[\alpha]_\beta := E_{\alpha + \beta}$
and $i^{\beta,\gamma}_{E[\alpha]} := i_E^{\beta + \alpha, \gamma + \alpha}$.
Let $i^{[\alpha,\beta]}_E : E[\alpha]_\star \to E[\beta]_\star$ denote the
natural transformation whose value on $\gamma$ is $i_E^{\alpha + \gamma, \beta +
\gamma}$.
For $f: E_\star \to F_\star$, we use $f[\alpha] : E[\alpha]_\star \to F[\alpha]_\star$ for
the natural induced map.
By abuse of notation, we will frequently write $E$ in place of $E_0$.
\end{example}
\begin{definition}
	\label{definition:parabolic-sheaf}
A {\em parabolic sheaf} on $X$ with respect to $D$ is
a $\mathbb R$-filtered $\mathscr O_X$-module $E_\star$ equipped with an isomorphism
\begin{align*}
	j_E : E_\star \otimes \mathscr O_X(-D) \overset{\sim}{\to} E[1]_\star
\end{align*}
 such that $i_E^{[1,0]} \circ j_E = \on{id}_{E_\star}
\otimes i_D: E_\star \otimes \mathscr O_X(-D) \to E_\star$, where $i_D: \mathscr{O}_X(-D)\to \mathscr{O}_X$ is the natural inclusion.

A natural transformation of parabolic $\mathscr O_X$-modules $f: E_\star \to
F_\star$ is a {\em parabolic morphism} if
\begin{equation}
	\label{equation:}
	\begin{tikzcd} 
		E_\star \otimes \mathscr O_X(-D) \ar {r}{f\otimes
		\on{id}} \ar {d}{j_E} & F_\star \otimes \mathscr
		O_X(-D) \ar {d}{j_F} \\
		E[1]_\star \ar {r}{f[1]} & F[1]_\star
\end{tikzcd}\end{equation}
commutes.

Let $\on{Hom}(E_\star, F_\star)$ denote the set of parabolic morphisms.
We let $\sheafhom(E_\star, F_\star)$ denote the sheaf of homomorphisms 
defined by taking $\sheafhom(E_\star, F_\star)(U):= \on{Hom}(E_\star|_U,
F_\star|_U).$
We also define a parabolic sheaf
$\sheafhom(E_\star, F_\star)_\star$ by taking
$\sheafhom(E_\star, F_\star)_\alpha := \sheafhom(E_\star, F[\alpha]_\star)$
and the $i_{\sheafhom(E_\star, F_\star)}^{\alpha,\beta}$ for $\alpha \geq \beta$
to be the natural maps
induced by $i_F^{[\alpha, \beta]}: F[\alpha]_\star \to F[\beta]_\star$.

We define $\on{End}(E_\star) := \on{Hom}(E_\star, E_\star)$
and use $\sheafend(E_\star) := \sheafhom(E_\star, E_\star)$.
\end{definition}

\begin{example}
	\label{example:parabolic}
	Any parabolic vector bundle $(E, \{E^i_j\}, \{\alpha^i_j\})$ defines a
	parabolic sheaf as follows. For $0 \leq \alpha < 1$,
	define
	\begin{align*}
	E_\alpha := \cap_{j=1}^n \ker(E \to
	E_{x_j}/E^{\beta(\alpha, j)}_j).
	\end{align*}
	where $\beta(\alpha, j)=\min(i: \alpha^i_j\geq \alpha),$ for $\alpha\leq \max_i(\alpha^i_j)$ and taking $\beta(\alpha, j)=n_j+1$ for $1>\alpha>\max_i(\alpha^i_j)$.
	For $n\in \mathbb{Z}$ set $E_{x+n} := E_x(-nD)$ and take $i^{\alpha,\beta}_E$ as the
	natural inclusions. We will refer to parabolic vector bundles and their associated parabolic sheaves interchangeably.
\end{example}

\begin{remark}
	\label{remark:end-definition}
	It follows from the definition of $\on{End}(E_\star)$ that 
	$\sheafend(E_\star) \subset \sheafend(E)$ is the coherent subsheaf
corresponding to those endomorphisms preserving the quasiparabolic filtration
$\{E^i_j\}$ at each point $x_j$ of $D$.
\end{remark}

\begin{example}
	\label{example:vector-bundle}
	Every vector bundle $E$ defines a parabolic bundle on $X$ with respect
	to $D$ by taking the quasiparabolic structure
	$E_{x_j} = E_j^1 \subset E_{j}^2 = 0$ with $\alpha_j^1 = 0$.
	In turn, this defines a parabolic sheaf by \autoref{example:parabolic}.
	We say such a parabolic bundle has trivial parabolic structure.
\end{example}
\begin{remark}
	\label{remark:map-vector-bundle-to-parabolic}
	If $V$ is a vector bundle on a scheme $X$ and $E_\star$ is a parabolic
	sheaf, we write a map $V \to E_\star$ to denote a map $V_\star \to
	E_\star$, where $V_\star$ is the parabolic sheaf corresponding to $V$ as
	in \autoref{example:vector-bundle}.
\end{remark}

In addition to parabolic vector bundles, we will also require coparabolic vector
bundles, in order to use Serre duality.
The essential idea is that while the parabolic sheaves associated to parabolic vector bundles are unchanged in intervals of
the form $(\alpha^i, \alpha^{i+1}]$, coparabolic vector bundles are unchanged in
intervals of the form $[\alpha^i, \alpha^{i+1})$.
I.e., parabolic vector bundles can be viewed as lower semicontinuous functors, taking
the discrete topology on the set of isomorphism classes of vector bundles, while coparabolic vector
bundles 
can be viewed as upper semicontinuous functors, see \cite[Figure
1]{bodenY:moduli-spaces-of-parabolic-higgs-bundles}.
\begin{definition}[Coparabolic vector bundles,
	\protect{\cite[Definition 2.3]{bodenY:moduli-spaces-of-parabolic-higgs-bundles}}]
	\label{definition:coparabolic-vector-bundle}
	Let $E_\star$ denote a parabolic vector bundle, viewed as a parabolic
	sheaf.
	The associated {\em coparabolic vector bundle}, denoted
	$\widehat{E}_\star$, is the parabolic sheaf defined by
		$$\widehat{E}_\alpha :=
			\on{colim}_{\beta > \alpha} E_{\beta} $$
	The colimit above is the union taken over the inclusions given by
	$i^{\alpha,\beta}_E$.
\end{definition}

\begin{definition}[Coparabolic degree and slope]
	\label{definition:}
	If $F_\star$ is a coparabolic bundle of the form $F_\star = \widehat{E}_\star$,
for $E_\star$ a parabolic vector bundle, then the {\em coparabolic degree} of
$F_\star$ is defined by $\on{copar-deg}(F_\star) := \on{par-deg}(E_\star)$ and the
coparabolic slope of $F_\star$ is defined by $\mu_\star(F_\star) := \mu_\star(E_\star)$.
\end{definition}
\begin{example}
	\label{example:vector-bundle-co}
	Given a vector bundle $V$, one can define an associated parabolic bundle
	$E_\star$ with the trivial parabolic structure, as in
	\autoref{example:vector-bundle}. We call $\widehat{E}_\star$
	as the coparabolic bundle associated with trivial coparabolic structure.
\end{example}

\subsection{Induced subbundles and quotient bundles}

\label{subsubsection:induced-subbundle}
Let $E_\star := (E, \{E^i_j\}, \{\alpha^i_j\})$ be a parabolic bundle.
	Any subbundle $F \subset E$ has an induced parabolic structure $F_\star$ as
	follows, see
\cite[Part 3, \S1.A, Definition 4]{seshadri:fibres-vectoriels-sur-les-courbes-algebriques}.
	The quasiparabolic structure over $x_j$ on $F$ is obtained
	from the filtration $$F_{x_j} = E^1_j \cap F_{x_j} \supset E^2_j \cap
	F_{x_j} \supset \cdots \supset E^{n_j+1}_j \cap F_{x_j} = 0$$
	by removing redundancies. For the weight associated to $F^i_j \subset
	F_{x_j}$ one takes $$\max_{\substack{k, 1 \leq k \leq n_j}} \{ \alpha^k_j : F^i_j = E^k_j \cap
	F_{x_j}\}.$$
	Similarly, any quotient $E \to Q$ with kernel $F$ has an induced parabolic structure
	given as follows.
	The quasiparabolic structure over $x_j$ on $Q$ is obtained from
	$$Q_{x_j} = (E^1_j+F_{x_j})/F_{x_j} \supset (E^2_j+F_{x_j})/F_{x_j}\supset\cdots\supset 
	(E^{n_j+1}_j+F_{x_j})/F_{x_j} = 0$$
	by removing redundancies. For the weight associated to a subspace $Q^i_j \subset
Q_{x_j}$ one takes $$\max_{\substack{k, 1 \leq k \leq n_j}} \{ \alpha^k_j : Q^i_j = (E^k_j + F_{x_j})
/F_{x_j}\}.$$

\subsection{Stability of parabolic and coparabolic bundles}
\label{subsubsection:stability-parabolic}

\begin{definition}[Parabolic stability]
	\label{definition:}
	A parabolic vector bundle $E_\star$ is {\em parabolically semistable}
	(respectively, parabolically stable) if $\mu_\star(F_\star) \leq
\mu_\star(E_\star)$ (respectively $\mu_\star(F_\star)< \mu_\star(E_\star)$) for all parabolic subbundles $F_\star \subset E_\star$ (with the
induced parabolic structure as described above).
\end{definition}
\begin{definition}[Coparabolic stability]
	\label{definition:coparabolic-stability}
	A coparabolic bundle $E_\star$ is {\em coparabolically semistable}
	if $E_\star$ is of the form $\widehat{F}_\star$ for $F$ a semistable
	parabolic bundle.
\end{definition}
\begin{remark}
	\label{remark:}
	Note that parabolic and coparabolic stability are both defined with
	respect to parabolic bundles.
\end{remark}
\begin{remark}
	\label{remark:coparabolic-stability}
	This remark will not be needed in what follows.
	It turns out that if $n > 0$,
	a coparabolic bundle $E_\star=\widehat{F}_\star$ is coparabolically semistable if for any
	injection from a parabolic vector bundle $G_\star \hookrightarrow E_\star$, $\mu_\star(G_\star) <
	\mu_\star(E_\star)$.
	That is, although the definition 
	only gives $\mu_\star(G_\star) \leq \mu_\star(E_\star)$, equality of
	slopes is not possible when $n > 0$.

	The reason for this is that any such map $G_\star \to E_\star$ also
	induces a map $G[-\varepsilon]_\star \to E_\star\to F_\star$ for some sufficiently
	small $\varepsilon > 0$. 
	We then have $\mu_\star(G_\star) < \mu_\star(G[-\varepsilon]_\star) \leq
	\mu_\star(F_\star)=\mu_\star(E_\star)$.
\end{remark}

\begin{lemma}
	\label{lemma:quotient-semistable}
	Suppose $E_\star$ is a parabolic bundle and $F_\star \subset E_\star$ is
	a parabolic subbundle with the induced subbundle structure. If $Q_\star
	= E_\star/F_\star$ then $\on{par-deg}(E_\star) = \on{par-deg}(Q_\star) +
	\on{par-deg}(F_\star)$.
	In particular, a parabolic bundle $E_\star$ is semistable (respectively,
	stable) if and only if for every quotient
	bundle $Q_\star$, $\mu_\star(E_\star) \leq \mu_\star(Q_\star)$, (respectively $<
	\mu_\star(Q_\star)$).
\end{lemma}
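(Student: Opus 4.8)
The statement to prove is Lemma (quotient-semistable): additivity of parabolic degree in a short exact sequence of parabolic bundles, and the consequent reformulation of (semi)stability in terms of quotients. The plan is to prove the additivity formula first, and then deduce the stability reformulation formally.

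For the additivity formula, I would work pointwise over the divisor $D$. Fix a point $x_j$ and write $F_\star, E_\star, Q_\star$ for the induced parabolic structures, recalling from \autoref{subsubsection:induced-subbundle} that $F$ gets the filtration $E^i_j \cap F_{x_j}$ (with redundancies removed and weights taken as the maximum over coincident subspaces), and $Q$ gets $(E^i_j + F_{x_j})/F_{x_j}$ similarly. The key observation is that for each fixed real weight $\alpha \in [0,1)$, the ``$\alpha$-dimension'' $\dim E_\alpha/E_{\text{just after }\alpha}$—i.e. the total dimension of graded pieces of the quasiparabolic filtration with weight exactly $\alpha$, which is what gets multiplied by $\alpha$ in the parabolic degree—is additive: the multiplicity of the weight $\alpha$ in $E_{x_j}$ equals its multiplicity in $F_{x_j}$ plus its multiplicity in $Q_{x_j}$. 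This is just the statement that for the filtration $E^\bullet_j$ of $E_{x_j}$, intersecting with the subspace $F_{x_j}$ and projecting to the quotient $E_{x_j}/F_{x_j}$ splits each graded piece $E^i_j/E^{i+1}_j$ into a sub and a quotient of dimensions summing correctly; one has to check that the ``remove redundancies, take max weight'' recipe exactly records these multiplicities, weight by weight. Combining this over all $\alpha$ and all $j$ with the ordinary additivity $\deg E = \deg F + \deg Q$ gives $\on{par-deg}(E_\star) = \on{par-deg}(F_\star) + \on{par-deg}(Q_\star)$.

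For the stability reformulation: given the additivity, subbundles and quotient bundles correspond to each other via $F_\star \mapsto E_\star/F_\star$, and the inequality $\mu_\star(F_\star) \le \mu_\star(E_\star)$ is equivalent to $\mu_\star(E_\star) \le \mu_\star(Q_\star)$ by the standard ``mediant'' manipulation: from $\on{par-deg}(E_\star) = \on{par-deg}(F_\star) + \on{par-deg}(Q_\star)$ and $\rk(E) = \rk(F) + \rk(Q)$, the slope $\mu_\star(E_\star)$ lies between $\mu_\star(F_\star)$ and $\mu_\star(Q_\star)$, and it is $\ge$ one exactly when it is $\le$ the other (with strict versions matching). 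One should note that the induced parabolic structure on $Q = E/F$ from the quotient recipe is the one making the sequence exact in the relevant sense, so that every parabolic quotient bundle arises from a parabolic subbundle and vice versa; this is exactly what \autoref{subsubsection:induced-subbundle} sets up.

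The main obstacle is the bookkeeping in the pointwise additivity: carefully tracking how the ``remove redundancies and assign the max coincident weight'' operation behaves under $\cap F_{x_j}$ and under $/F_{x_j}$, and verifying that it faithfully records, for each weight value $\alpha$, the dimension of the weight-$\alpha$ graded piece. Once that combinatorial identity at a point is nailed down, summing over points and invoking ordinary degree additivity is immediate, and the stability corollary is a formal consequence. I would phrase the pointwise step as: for any subspace $W \subset E_{x_j}$ with the induced filtration, $\sum_\alpha \alpha \cdot (\text{mult. of }\alpha\text{ in }W) + \sum_\alpha \alpha\cdot(\text{mult. of }\alpha\text{ in }E_{x_j}/W) = \sum_\alpha \alpha\cdot(\text{mult. of }\alpha\text{ in }E_{x_j})$, which reduces to additivity of multiplicities weight-by-weight.
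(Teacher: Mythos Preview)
Your proposal is correct and follows essentially the same approach as the paper. The paper simply cites Seshadri for the additivity of parabolic degree and then gives the identical mediant argument for the (semi)stability reformulation; your pointwise multiplicity argument is exactly the standard direct proof underlying Seshadri's remark, so you have just unpacked the citation.
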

\begin{proof}
	The second statement follows from the first, because
	$\frac{\on{par-deg}(F_\star)}{\rk(F_\star)} <
	\frac{\on{par-deg}(E_\star)}{\rk(E_\star)}$
	is equivalent to 
	$\frac{\on{par-deg}(E_\star)- \on{par-deg}(F_\star)}{\rk(E_\star) -
	\rk(F_\star)} >
	\frac{\on{par-deg}(E_\star)}{\rk(E_\star)}$, and similarly where one
	replaces the inequalities with equalities.

	The first statement is stated in
	\cite[Part 3, \S1.A, p. 69, Remark
	3]{seshadri:fibres-vectoriels-sur-les-courbes-algebriques},
	together with the definition of exact sequence of parabolic bundles
	\cite[Part 3, \S1.A, p. 68]{seshadri:fibres-vectoriels-sur-les-courbes-algebriques}.
%
\end{proof}

\subsection{Harder-Narasimhan filtrations}
\label{subsubsection:harder-narasimhan-parabolic}
It is a standard fact that parabolic vector bundles have a Harder-Narasimhan
filtration, and its proof is similar to the construction of Harder-Narasimhan
filtrations of vector bundles, see 
\cite[Part 3, \S1.B, Theorem 8]{seshadri:fibres-vectoriels-sur-les-courbes-algebriques}.
	
\subsection{Serre duality}

If $E_\star$ is a parabolic sheaf on a scheme
$X$, we have $H^0(C, E_\star) := \on{Hom}(\mathscr O_X, E_\star) =
\on{Hom}(\mathscr O_X, E_0) = \Gamma(X, E)$, and one can define the
higher cohomology groups by taking the corresponding right derived functor, as in 
\cite[p. 130]{yokogawa:infinitesimal-deformation}, where, more generally,
$\on{Ext}^i(E_\star, F_\star)$ is defined for parabolic $\mathscr O_X$ modules.
In general we have $H^i(X, E_\star)=H^i(X, E_0)$.

To state Serre duality, we need the notion of parabolic tensor product and
duality.
\begin{definition}
	\label{definition:dualization}
	For $F_\star$ a parabolic sheaf, define $F^\vee_\star :=
	\sheafhom(F_\star, \mathscr O_X)_\star$.
\end{definition}
The following gives a useful alternate description of parabolic dualization.
\begin{lemma}[Parabolic dualization, cf. \protect{\cite[(3.1)]{yokogawa:infinitesimal-deformation}}]
	\label{lemma:parabolic-dual-formula}
	If $F_\star$ arises from a parabolic vector bundle, we have
	$F^\vee_\alpha \simeq (\widehat{F}_{-\alpha})^\vee (-D)$.
\end{lemma}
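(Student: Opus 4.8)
The plan is to unwind the definitions of the parabolic $\sheafhom$ and the coparabolic operation, and to reduce the statement to a purely local computation near each point $x_j$ of $D$ — away from $D$, both sides are just $F^\vee$ with trivial parabolic structure, so there is nothing to check there. First I would fix $0 \leq \alpha < 1$ and recall that by \autoref{definition:dualization} and the definition of the parabolic $\sheafhom_\star$ following \autoref{definition:parabolic-sheaf}, we have $F^\vee_\alpha = \sheafhom(F_\star, \mathscr O_X[\alpha]_\star)$, i.e. parabolic morphisms $F_\star \to \mathscr O_X[\alpha]_\star$. Since $\mathscr O_X$ has the trivial parabolic structure, $\mathscr O_X[\alpha]_\beta = \mathscr O_X(-\lceil \alpha + \beta\rceil D)$ in the relevant range, and a parabolic morphism $F_\star \to \mathscr O_X[\alpha]_\star$ is determined by its value $f_0: F_0 \to \mathscr O_X[\alpha]_0 = \mathscr O_X(-\lceil\alpha\rceil D)$ subject to the compatibility with the $j$-maps; the content is that $f_0$ must kill the appropriate subspace of the fiber $F_{x_j}$, equivalently it factors through a quotient governed by the weights. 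This identifies $F^\vee_\alpha$ with a subsheaf of $F^\vee \otimes \mathscr O_X(-\lceil\alpha\rceil D)$ cut out by fiberwise conditions at the $x_j$.

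Next I would compute the right-hand side. By \autoref{definition:coparabolic-vector-bundle}, $\widehat{F}_{-\alpha} = \on{colim}_{\beta > -\alpha} F_\beta$, which near each $x_j$ agrees with $F_\gamma$ for $\gamma$ slightly larger than $-\alpha$; writing $-\alpha = -\lceil\alpha\rceil + (\lceil\alpha\rceil - \alpha)$ with $\lceil\alpha\rceil - \alpha \in [0,1)$, this is $F_{(\lceil\alpha\rceil-\alpha)+\varepsilon}(-\lceil\alpha\rceil D)$ — an elementary modification of $F$ determined by the weights $\alpha^i_j$ and their relation to $\lceil\alpha\rceil - \alpha$. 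Dualizing and twisting by $\mathscr O_X(-D)$ then gives, at each $x_j$, a lattice between $F^\vee(-\lceil\alpha\rceil D)$ and $F^\vee(-(\lceil\alpha\rceil+1)D)$. The heart of the proof is to check that these two lattices — the one describing $F^\vee_\alpha$ from the Hom side, and the one describing $(\widehat{F}_{-\alpha})^\vee(-D)$ — coincide. This is a linear-algebra bookkeeping at the level of the fiber $F_{x_j}$ with its flag $E^1_j \supsetneq \cdots \supsetneq E^{n_j+1}_j$ and weights: the condition "$f_0$ annihilates $E^i_j$" on the Hom side must be matched against the dual of the condition "sections of $F$ allowed to have a pole of a prescribed order along the $i$-th graded piece" on the coparabolic side, and the shift by $(-D)$ is exactly what converts the open interval $[\alpha^i, \alpha^{i+1})$ convention of coparabolic bundles into the $(\alpha^i,\alpha^{i+1}]$ convention of parabolic bundles (cf. the discussion before \autoref{definition:coparabolic-vector-bundle}).

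I would carry this out by reducing to the case $n = 1$ and working in a local trivialization $F \cong \mathscr O_X^{\,r}$ near $x = x_1$ adapted to the flag, so that the parabolic structure is diagonal with weights $\alpha^1 \leq \cdots$ on the graded pieces; then both sides become explicit direct sums of $\mathscr O_X(-m_k D)$ and one simply matches the exponents $m_k$. I expect the main obstacle to be purely notational: keeping straight the four different conventions in play simultaneously — the ceiling/floor arising from the $\mathscr O_X[\alpha]$ shift, the direction of the colimit defining $\widehat{F}$, the "remove redundancies / take max of weights" rule for induced structures, and the half-open-interval flip between parabolic and coparabolic — and confirming that the single twist by $\mathscr O_X(-D)$ on the right reconciles all of them. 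Once the local dictionary is set up correctly, the isomorphism is immediate and visibly natural, so it globalizes with no further work; I would cite \cite[(3.1)]{yokogawa:infinitesimal-deformation} for the analogous statement and remark that the proof is a direct local verification.
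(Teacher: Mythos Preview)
The paper does not actually supply a proof of this lemma: it is stated with the attribution ``cf.~\cite[(3.1)]{yokogawa:infinitesimal-deformation}'' and then the text moves directly on to the definition of the parabolic tensor product. So there is nothing in the paper to compare your argument against beyond the citation itself.

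Your proposed approach --- unwind the definition of $F^\vee_\alpha = \sheafhom(F_\star,\mathscr O_X[\alpha]_\star)$, compute $\widehat{F}_{-\alpha}$ from the colimit, and then match the two resulting lattices by a local computation in a trivialization adapted to the flag at each $x_j$ --- is correct and is exactly the kind of direct verification the citation points to. Your identification $(\mathscr O_X)_\beta = \mathscr O_X(-\lceil\beta\rceil D)$ for the trivial parabolic structure is right under the paper's conventions (\autoref{example:parabolic}, \autoref{example:vector-bundle}), and your diagnosis that the twist by $\mathscr O_X(-D)$ is what converts the half-open interval convention of $\widehat{F}$ into that of $F^\vee$ is the correct conceptual point. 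One small caution: you fix $0\le\alpha<1$ at the outset, but since both sides satisfy the same $[\,\cdot\,](1)\simeq(\,\cdot\,)(-D)$ periodicity this is harmless; it would be worth saying so explicitly.
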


\begin{definition}[Parabolic tensor product, cf. \protect{\cite[Example
	3.2]{yokogawa:infinitesimal-deformation}}]
	\label{definition:parabolic-tensor}
	Let $\tau: X - D \to X$ denote the inclusion.
	Suppose $E_\star, F_\star$ are both parabolic modules so that each $F_\alpha$
	and $E_\alpha$ is locally free,
	define $(E_\star \otimes F_\star)_\alpha := \sum_{\alpha_1 + \alpha_2 =
	\alpha} E_{\alpha_1} \otimes F_{\alpha_2}$, viewed as a subbundle of
	$\tau_* \tau^*(E \otimes F)$. We take $i^{\alpha,\beta}_{E_\star \otimes F_\star}$ to be the natural inclusion map.
\end{definition}
\begin{remark}
	\label{remark:}
	In \cite[p. 136-137]{yokogawa:infinitesimal-deformation}, the tensor
	product of two arbitrary parabolic sheaves is defined, but the
	definition is more difficult to state, and we will not require this
	greater generality.
\end{remark}

The next lemma states that parabolic tensor products and duals interact in the usual way with degree.
We omit the proof, which is a matter of unwinding definitions.
\begin{lemma}
	\label{lemma:degree-in-duals}
	For $E_\star$ and $F_\star$ two parabolic bundles, $\deg E_\star \otimes
	F_\star= \deg E_\star \rk F_\star + \deg F_\star \rk E_\star$ and $\deg
	E_\star^\vee = - \deg E_\star$.
\end{lemma}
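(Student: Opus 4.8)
The statement to prove is Lemma~\ref{lemma:degree-in-duals}: for parabolic bundles $E_\star$ and $F_\star$, we have $\deg(E_\star \otimes F_\star) = \deg E_\star \cdot \rk F_\star + \deg F_\star \cdot \rk E_\star$ and $\deg E_\star^\vee = -\deg E_\star$. The key facts: parabolic degree = ordinary degree of the bundle $E_0$ plus a weight-correction term; tensor product is defined via $(E_\star \otimes F_\star)_\alpha = \sum_{\alpha_1+\alpha_2=\alpha} E_{\alpha_1}\otimes F_{\alpha_2}$; duality is described by Lemma~\ref{lemma:parabolic-dual-formula}: $F^\vee_\alpha \simeq (\widehat F_{-\alpha})^\vee(-D)$.

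The plan is to reduce both identities to local computations at the points of $D$ and then invoke the classical degree formulas for vector bundles away from $D$. Write $D = x_1 + \cdots + x_n$, and to a parabolic bundle $E_\star$ with quasiparabolic data $\{E^i_j\}$ and weights $\{\alpha^i_j\}$ attach, at each $x_j$, the \emph{weight multiset} $W_E(x_j)$ consisting of $\alpha^i_j$ with multiplicity $\dim(E^i_j/E^{i+1}_j)$; thus $\#W_E(x_j) = \rk E$ and the definition of parabolic degree reads $\deg E_\star = \deg E + \sum_{j} \sum_{a \in W_E(x_j)} a$. Since both sides of each asserted identity are visibly a sum of a term involving only $\deg E$, $\deg F$ (governed by the usual formulas $\deg(E \otimes F) = \rk F \cdot \deg E + \rk E \cdot \deg F$ and $\deg E^\vee = -\deg E$ on $C$) and a term that is a sum over the $x_j$ of contributions depending only on the quasiparabolic filtrations and weights at $x_j$, it suffices to compute those local contributions.

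For the dual, I would first use \autoref{lemma:parabolic-dual-formula} at $\alpha = 0$ to identify the underlying bundle: $(E^\vee_\star)_0 = (\widehat{E}_0)^\vee(-D)$. Unwinding \autoref{definition:coparabolic-vector-bundle} shows $\widehat{E}_0$ equals $E$ away from the $x_j$ at which $0$ is a weight, and has colength at $x_j$ equal to the multiplicity $z_j$ of $0$ in $W_E(x_j)$; hence $\deg \widehat{E}_0 = \deg E - \sum_j z_j$ and $\deg (E^\vee_\star)_0 = -\deg E + \sum_j z_j - n \rk E$. On the other hand the weight multiset of $E^\vee_\star$ at $x_j$ is $\{(1 - a) \bmod 1 : a \in W_E(x_j)\}$, whose sum is $(\rk E - z_j) - \sum_{a \in W_E(x_j)} a$. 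Summing, the $z_j$'s and the $n \rk E$ cancel, leaving $\deg E^\vee_\star = -\deg E - \sum_j \sum_{a \in W_E(x_j)} a = -\deg E_\star$.

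For the tensor product, the formula $\deg(E \otimes F) = \rk F \cdot \deg E + \rk E \cdot \deg F$ takes care of $C \setminus D$, so it remains to analyze $E_\star \otimes F_\star$ near each $x_j$. Choosing a basis of $E_{x_j}$ adapted to $\{E^i_j\}$ with weights $a_1, \ldots, a_r$ and a basis of $F_{x_j}$ adapted to $\{F^i_j\}$ with weights $b_1, \ldots, b_s$, one unwinds \autoref{definition:parabolic-tensor} --- in particular the description of $(E_\star \otimes F_\star)_\alpha$ as the subsheaf $\sum_{\alpha_1 + \alpha_2 = \alpha} E_{\alpha_1} \otimes F_{\alpha_2}$ of $\tau_* \tau^*(E \otimes F)$ --- to see that the pure tensors $e_k \otimes f_l$ realize the weight multiset $\{(a_k + b_l) \bmod 1\}_{k,l}$ at $x_j$, and that $(E_\star \otimes F_\star)_0$ contains $E \otimes F$ with colength exactly $N_j := \#\{(k,l) : a_k + b_l \geq 1\}$, each such ``wrap-around'' pair contributing a local twist by $\mathscr{O}(-x_j)$. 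Thus $\deg(E_\star \otimes F_\star)_0 = \rk F \cdot \deg E + \rk E \cdot \deg F + \sum_j N_j$, while $\sum_{k,l} \big((a_k + b_l) \bmod 1\big) = \rk F \sum_k a_k + \rk E \sum_l b_l - N_j$; assembling $\deg(E_\star \otimes F_\star)$ from these two pieces, the two occurrences of each $N_j$ cancel and the claimed identity falls out.

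The step I expect to be the real (if routine) obstacle is the local analysis of $E_\star \otimes F_\star$ at $D$ in the previous paragraph: because \autoref{definition:parabolic-tensor} builds the tensor product as a sum of subbundles of $\tau_* \tau^*(E \otimes F)$ rather than fibrewise, verifying that its weight multiset at $x_j$ is precisely the mod-$1$ reduction of the Minkowski sum $W_E(x_j) + W_F(x_j)$ and that the underlying bundle gains degree exactly $\sum_j N_j$ requires working with a frame simultaneously adapted to both quasiparabolic filtrations and carefully tracking which combinations $E_{\alpha_1} \otimes F_{\alpha_2}$ contribute. Everything downstream of that is elementary arithmetic with the weight multisets.
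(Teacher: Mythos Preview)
Your proposal is correct and is precisely the ``unwinding of definitions'' that the paper alludes to: the paper omits the proof entirely, stating only that it is a matter of unwinding definitions. Your local analysis at each point of $D$---tracking the weight multisets under tensor and dual, and accounting for the wrap-around contributions $N_j$ and the zero-weight multiplicities $z_j$ to the underlying bundle's degree---is the standard way to carry this out, and the arithmetic you describe checks out.
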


\begin{proposition}[Serre duality]
	\label{proposition:serre-duality}
	Suppose $X$ is a smooth projective $n$-dimensional variety over an
	algebraically closed field $k$ and let $\omega_X$ denote the dualizing
	sheaf on $X$. For all parabolic vector bundles $E_\star$, we have a canonical isomorphism
\begin{align*}
	H^i(X, E_\star) \simeq H^{n-i}(X, \widehat{E_\star^\vee} \otimes
	\omega_X(D))^\vee.
\end{align*}
\end{proposition}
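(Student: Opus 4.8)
The plan is to reduce parabolic Serre duality to ordinary Serre duality on $X$ by identifying the relevant cohomology groups with cohomology of honest coherent sheaves, and then tracking how the parabolic dual, coparabolicization, and the twist by $\omega_X(D)$ conspire to produce the right answer. Recall from the discussion preceding \autoref{definition:dualization} that $H^i(X, E_\star) = H^i(X, E_0)$, so the left-hand side is simply $H^i(X, E_0)$, the cohomology of the underlying vector bundle (i.e.\ $E$ in the notation of \autoref{example:parabolic}). Likewise the right-hand side is $H^{n-i}(X, (\widehat{E_\star^\vee} \otimes \omega_X(D))_0)^\vee$. So the entire content of the proposition is the claim that $(\widehat{E_\star^\vee} \otimes \omega_X(D))_0$ is the Serre-dual sheaf to $E_0$ on $X$, i.e.\ that there is a canonical isomorphism of coherent sheaves
\begin{align*}
	(\widehat{E_\star^\vee})_0 \otimes \omega_X(D) \simeq E_0^\vee \otimes \omega_X,
\end{align*}
after which ordinary Serre duality $H^i(X, E_0) \simeq H^{n-i}(X, E_0^\vee \otimes \omega_X)^\vee$ finishes the proof.

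To establish that sheaf isomorphism, first I would compute $(\widehat{E_\star^\vee})_0$. By \autoref{lemma:parabolic-dual-formula}, $E^\vee_\alpha \simeq (\widehat{E}_{-\alpha})^\vee(-D)$, so in particular for $\alpha$ slightly larger than $0$ we get $E^\vee_\alpha \simeq (\widehat{E}_{-\alpha})^\vee(-D)$; taking the colimit over $\beta > 0$ in \autoref{definition:coparabolic-vector-bundle} and using that the coparabolicization only changes things on a finite set of weights, one finds $(\widehat{E_\star^\vee})_0 = \on{colim}_{\beta > 0} E^\vee_\beta \simeq \on{colim}_{\beta>0}(\widehat{E}_{-\beta})^\vee(-D)$. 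Since $\widehat{E}_{-\beta}$ for small $\beta > 0$ stabilizes to $\widehat{E}_0^- := \on{colim}$, and in fact for the sheaf at level exactly $0$ one has $\widehat{E}_0 = \on{colim}_{\beta>0}E_\beta$, careful bookkeeping with the two semicontinuity conventions (parabolic = lower semicontinuous, coparabolic = upper semicontinuous, as in the discussion before \autoref{definition:coparabolic-vector-bundle}) yields $(\widehat{E_\star^\vee})_0 \simeq E_0^\vee(-D) = E^\vee(-D)$. Then twisting by $\omega_X(D)$ gives $(\widehat{E_\star^\vee})_0 \otimes \omega_X(D) \simeq E^\vee \otimes \omega_X$, exactly as needed. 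Here I am using \autoref{definition:parabolic-tensor} to see that $(\widehat{E_\star^\vee} \otimes \omega_X(D))_0 = (\widehat{E_\star^\vee})_0 \otimes \omega_X(D)$, since $\omega_X(D)$ carries the trivial parabolic structure so the only term in the sum $\sum_{\alpha_1+\alpha_2=0}$ that contributes at level $0$ is $\alpha_1 = \alpha_2 = 0$.

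With the sheaf identification in hand, the proof concludes: $H^i(X, E_\star) = H^i(X, E_0) \simeq H^{n-i}(X, E_0^\vee \otimes \omega_X)^\vee = H^{n-i}(X, (\widehat{E_\star^\vee} \otimes \omega_X(D))_0)^\vee = H^{n-i}(X, \widehat{E_\star^\vee} \otimes \omega_X(D))^\vee$, where the middle isomorphism is ordinary Serre duality on the smooth projective variety $X$ and the final equality is again the fact that parabolic cohomology equals the cohomology of the level-$0$ piece. Canonicity follows from the canonicity of ordinary Serre duality together with the canonicity of the isomorphisms in \autoref{lemma:parabolic-dual-formula} and \autoref{definition:coparabolic-vector-bundle}.

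The main obstacle I anticipate is getting the bookkeeping of weights exactly right in the step identifying $(\widehat{E_\star^\vee})_0$ with $E^\vee(-D)$: one must be careful that the parabolic dual $F^\vee_\star$ defined via $\sheafhom(F_\star, \mathscr O_X)_\star$ has its filtration indexed so that \autoref{lemma:parabolic-dual-formula} applies with the correct sign on $\alpha$, and that coparabolicizing then shifts back precisely the twist $\mathscr O_X(-D)$ appearing there — in particular confirming that the $(-D)$ from \autoref{lemma:parabolic-dual-formula} is not cancelled or doubled by the coparabolicization. Since \autoref{lemma:parabolic-dual-formula} is quoted from \cite{yokogawa:infinitesimal-deformation}, I would double-check the normalization against the case of trivial parabolic structure (\autoref{example:vector-bundle}, \autoref{example:vector-bundle-co}), where the statement must reduce to classical Serre duality $H^i(X,E) \simeq H^{n-i}(X, E^\vee \otimes \omega_X)^\vee$ with no extra twist; this forces the $(-D)$ in $\widehat{E_\star^\vee}$ to exactly cancel the $(D)$ in $\omega_X(D)$, which is a useful consistency check on the whole computation.
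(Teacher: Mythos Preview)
Your approach is correct and genuinely different from the paper's. The paper does not compute the level-$0$ sheaf directly; instead it quotes Yokogawa's general parabolic $\on{Ext}$-duality $\on{Ext}^i_X(E_\star, F_\star \otimes \omega_X(D)) \simeq \on{Ext}^{n-i}_X(F_\star, \widehat{E}_\star)^\vee$, specializes to $E_\star = \mathscr{O}_X$, and then substitutes to reach the stated form. Your argument is more elementary and self-contained: you unwind the definitions to identify $(\widehat{E_\star^\vee}\otimes\omega_X(D))_0$ with $E^\vee\otimes\omega_X$ and then invoke classical Serre duality once. This has the advantage of making transparent exactly why the twist by $D$ and the coparabolicization cancel, and your sanity check against the trivial parabolic structure is a good way to pin down the normalizations. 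The paper's route is shorter on the page because it outsources the work to \cite{yokogawa:infinitesimal-deformation}, and it sits inside a more general $\on{Ext}$ framework, but conceptually your computation is what is happening underneath.

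One small point: your justification that ``the only term in the sum $\sum_{\alpha_1+\alpha_2=0}$ that contributes at level $0$ is $\alpha_1=\alpha_2=0$'' is slightly imprecise. The other terms $G_{\alpha_1}\otimes F_{\alpha_2}$ are not zero; rather, when $F_\star=\omega_X(D)$ has trivial parabolic structure, each of them is already contained in $G_0\otimes F_0$ as a subsheaf of $\tau_*\tau^*(G\otimes F)$, so the sum equals $G_0\otimes F_0$. This does not affect the conclusion.
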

\begin{proof}
	The version in \cite[Proposition
	3.7]{yokogawa:infinitesimal-deformation} states
	$\on{Ext}^i_X(E_\star, F_\star \otimes \omega_X(D)) \simeq
	\on{Ext}^{n-i}_X(F_\star, \widehat{E}_\star)^\vee.$
	Using \cite[Lemma 3.6]{yokogawa:infinitesimal-deformation}, and taking
	$E_\star = \mathscr O_X$, we find
	$H^i(X, F_\star \otimes \omega_X(D)) \simeq H^{n-i}( F_\star^\vee
		\otimes
	\widehat{\mathscr O_X})^\vee \simeq H^{n-i}(X, \widehat{F_\star^\vee})^\vee$.
	Now, taking $E_\star$ as in the statement to be 
	$F_\star \otimes \omega_X(D)$, we find $F_\star^\vee = E_\star^\vee
	\otimes \omega_X(D)$ and so
$H^i(X, E_\star) \simeq H^{n-i}(X, \reallywidehat{E_\star^\vee \otimes
\omega_X(D)})^\vee \simeq 
H^{n-i}(X, \widehat{E_\star^\vee} \otimes
\omega_X(D))^\vee.$
\end{proof}

\section{Background on Atiyah bundles and isomonodromic deformations}
\label{section:deformation-theory}

\subsection{The Atiyah bundle of a filtered vector bundle}
\label{subsection:atiyah-bundle}

We begin by defining the Atiyah bundle.
Let $C$ be a smooth projective curve.
Following 
\cite[16.8.1]{EGAIV.4},
for $E$ a vector bundle on $C$, define $\on{Diff}^1(E,E)$ as follows: for
$U\subset C$ open, $\on{Diff}^1({E}, {E})(U)$ is the set of
$\mathbb{C}$-linear endomorphisms $\tau$ of $E(U)$, such that for each $f\in
\mathscr{O}_C(U), v\in E(U)$, we have that $$\tau_f: v\mapsto
\tau(fv)-f\tau(v)$$ is $\mathscr{O}_C$-linear.
Here $\tau_f$ measures the failure of $\tau$ to be $\mathscr{O}_C$-linear, in
that $\tau_f$ is zero for all $f$ if and only if $\tau$ is $\mathscr{O}_C$-linear.

\begin{definition}[The Atiyah bundle, see
	\protect{\cite[p. 5]{BHH:very-stable}}] 
	\label{definition:atiyah}	
	Let $E$ be a
vector bundle on a curve $C$. 
Define the Atiyah bundle $$\on{At}_C(E)\subset \on{Diff}^1({E}, {E})$$ as the
subsheaf with sections on an open set $U \subset C$ given as follows.
Let $\on{At}_C(E)(U)$ consist of those $\mathbb{C}$-endomorphisms $\tau \in
\on{Diff}^1({E}, {E})(U)$ such that for each $f\in \mathscr{O}_C(U)$, the endomorphism of ${E}$ defined by $$\tau_f: v\mapsto \tau(fv)-f\tau(v)$$ is multiplication by a section $\delta_\tau(f)\in \mathscr{O}_C(U)$. 
\end{definition}

One can also construct Atiyah bundles associated to filtered bundles.
\begin{definition}[Atiyah bundle of a filtered vector bundle]
	\label{definition:atiyah-filtered}
Let $P^\bullet := (0 = P^0 \subset P^1
\subset \cdots \subset P^m = E)$ be a filtration on $E$.
We define $$\on{At}_C(E, P^\bullet)\subset \on{At}_C(E)$$ to be the subsheaf
consisting of those endomorphisms that preserve $P^\bullet$. 
\end{definition}

\begin{remark}
	\label{remark:atiyah-exact-sequence}
	From the definition, (see also \cite[(2.7)]{BHH:logarithmic}), there is a 
short exact sequence 
\begin{equation} \label{equation:non-logarithmic-atiyah-sequence} 0\to
	\sheafend(E, P^\bullet)\overset{\iota}{\to} \on{At}_C(E,
P^\bullet)\overset{\delta}{\to} T_C\to 0,\end{equation} where
$\sheafend(E, P^\bullet)\subset \sheafend(E)$ is the subsheaf of $\mathscr{O}_C$-linear endomorphisms preserving $P^\bullet$, $\iota$ is the evident inclusion, 
and $\delta$ sends a differential operator $\tau$ to the derivation
$$\delta_\tau: f\mapsto \delta_\tau(f)$$ defined in 
\autoref{definition:atiyah}.
\end{remark}

\begin{remark}
    There is an alternate, perhaps more geometric, description of $\on{At}_C(E,
    P^\bullet)$. Namely, the filtration $P^\bullet$ gives a restriction of
    the structure group of ${E}$ to a parabolic subgroup ${P}\subset
    \on{GL}_n$, and hence gives rise to a natural ${P}$-torsor $p:\Pi\to C$
    over $C$, which is a subscheme of the frame bundle of ${E}$ (i.e.~it
    consists of those frames which are compatible with $P^\bullet$). The tangent
    exact sequence $$0\to T_{\Pi/C}\to T_\Pi\to p^*T_C\to 0$$ naturally admits a ${P}$-linearization (for the ${P}$-action on $\Pi$) 
    and hence descends to a short exact sequence on
    $C$, which is precisely \eqref{equation:non-logarithmic-atiyah-sequence}.
\end{remark}

Next, we introduce Atiyah bundles with respect to divisors.
\begin{definition}[Atiyah bundle of a filtered vector bundle with respect to a divisor]
	\label{definition:atiyah-bundle-divisor}
Let $D\subset C$ be a reduced effective divisor.
The Atiyah bundle $\on{At}_{(C,D)}(E,P^\bullet)$ is defined as the preimage 
$$\on{At}_{(C,D)}(E,P^\bullet):=\delta^{-1}(T_C(-D)),$$ where $\delta$ is the map appearing in Sequence \eqref{equation:non-logarithmic-atiyah-sequence} and where $T_C(-D)\hookrightarrow T_C$ is the natural inclusion. 

If $P^\bullet = (0 = P^0 \subset P^1 = E)$ is the trivial filtration, we omit it from the notation, e.g.~we will use the notation $\on{At}_{(C,D)}(E)$ 
in place of $\on{At}_{(C,D)}(E, 0 \subset E)$
when convenient.
\end{definition}
The
following alternate viewpoint on connections will be useful.
\begin{proposition}\label{proposition:atiyah-splittings}
Suppose $E$ is a vector bundle on $C$ and $D \subset C$ a reduced
effective divisor.
    There is a natural bijection between splittings of the Atiyah exact sequence
    \begin{equation}
	    \label{equation:Atiyah-exact-sequence-normal}
	    0\to \sheafend(E, P^\bullet)\to \on{At}_{(C,D)}(E, P^\bullet)\to T_C(-D)\to
    0.
    \end{equation}
	and flat connections on $E$ with
    regular singularities along $D$ and preserving $P^\bullet$, given by
    adjointness. That is, given a connection $$\nabla: E\to E\otimes
    \Omega^1_C(\log D)$$ preserving $P^\bullet$, we may by adjointness view
    $\nabla$ as a map $q^\nabla: T_C(-D)\to
    \sheafend_{\mathbb{C}}(E)$. 
    This map factors through $\on{At}_{(C,D)}(E)$ and
    yields a splitting of \eqref{equation:Atiyah-exact-sequence-normal}.
    Moreover, this correspondence between flat connections and splittings is bijective.
\end{proposition}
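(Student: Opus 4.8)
The plan is to establish the bijection in stages, moving between four descriptions of the same data: (i) a flat logarithmic connection $\nabla$ on $E$ with poles along $D$ preserving $P^\bullet$; (ii) its adjoint $q^\nabla : T_C(-D) \to \sheafend_{\mathbb C}(E)$; (iii) an $\mathscr O_C$-linear section $s: T_C(-D) \to \on{At}_{(C,D)}(E, P^\bullet)$ of the sequence \eqref{equation:Atiyah-exact-sequence-normal}; and (iv), after Serre duality is not needed here, just the flatness condition translating to $s$ being a Lie-algebra (bracket-preserving) section. I would first treat the purely local/linear-algebra content — a connection is the same as an $\mathscr O_C$-linear lift of derivations to first-order differential operators — and only afterwards impose the two ``integrability'' conditions (regular singularities and flatness).

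\textbf{Step 1: from a connection to a map into the Atiyah sheaf.} Given $\nabla : E \to E \otimes \Omega^1_C(\log D)$ preserving $P^\bullet$, contracting with a local vector field $v \in T_C(-D)(U)$ — note $v$ pairs with $\Omega^1_C(\log D)$ to land in $\mathscr O_C$, which is exactly where the logarithmic poles get absorbed by $D$ — produces a $\mathbb C$-linear operator $\nabla_v : E(U) \to E(U)$. The Leibniz rule $\nabla_v(fe) = f\nabla_v(e) + v(f)e$ says precisely that $(\nabla_v)_f$ is multiplication by the section $v(f) \in \mathscr O_C(U)$, i.e. $\nabla_v \in \on{At}_C(E)(U)$ with $\delta_{\nabla_v}(f) = v(f)$; since $\nabla$ preserves $P^\bullet$ the operator lands in $\on{At}_C(E,P^\bullet)$, and since $\delta_{\nabla_v} = v \in T_C(-D)$ it lands in $\on{At}_{(C,D)}(E,P^\bullet)$ by \autoref{definition:atiyah-bundle-divisor}. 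The assignment $v \mapsto \nabla_v$ is $\mathscr O_C$-linear in $v$ (again by the Leibniz rule applied to $f\cdot v$), so it defines a map of sheaves $q^\nabla: T_C(-D) \to \on{At}_{(C,D)}(E,P^\bullet)$, and $\delta \circ q^\nabla = \on{id}$ by construction: this is the claimed splitting.

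\textbf{Step 2: the inverse construction.} Conversely, given an $\mathscr O_C$-linear splitting $s$ of \eqref{equation:Atiyah-exact-sequence-normal}, define $\nabla$ by $\nabla_v := s(v)$ for $v \in T_C(-D)$; the defining property of $\on{At}_C(E)$ gives $s(v)(fe) - f\,s(v)(e) = \delta_{s(v)}(f)\cdot e = v(f)\cdot e$ (the last equality because $s$ splits $\delta$), which is exactly the Leibniz rule, so $\nabla$ is a connection; it has regular singularities along $D$ precisely because its adjoint factors through $T_C(-D) \hookrightarrow T_C$ rather than all of $T_C$, and it preserves $P^\bullet$ because $s$ lands in $\on{At}_C(E,P^\bullet)$, whose sections preserve $P^\bullet$ by \autoref{definition:atiyah-filtered}. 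One checks these two constructions are mutually inverse, which is immediate from unwinding the definitions.

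\textbf{Step 3: matching flatness with the splitting condition.} The sequence \eqref{equation:Atiyah-exact-sequence-normal} is a sequence of sheaves of Lie algebras (the bracket on $\on{At}_C(E) \subset \on{Diff}^1(E,E)$ is the commutator, which restricts to the bracket on $\sheafend(E)$ and maps under $\delta$ to the Lie bracket of vector fields); the curvature of $\nabla$ is the $\mathscr O_C$-linear defect $[\nabla_v, \nabla_w] - \nabla_{[v,w]}$, which is $[s(v), s(w)] - s([v,w])$, an element of $\sheafend(E)$. Thus $\nabla$ is flat exactly when $s$ is a homomorphism of sheaves of Lie algebras. The proposition as stated only asserts the bijection with splittings-that-come-from-flat-connections, so in fact I only need Steps 1--2 plus the observation that the connections produced in Step 2 are automatically flat iff the splitting is bracket-compatible — but since ``flat'' is part of the hypothesis on the left-hand side, the cleanest writeup restricts attention throughout to brackets-preserving splittings and notes the correspondence is visibly bijective. \textbf{The main obstacle} is bookkeeping rather than conceptual: making sure the logarithmic twist is handled correctly so that a log connection (pole along $D$) corresponds to a section over $T_C(-D)$ and not over $T_C$, and confirming that ``preserves $P^\bullet$'' is transported correctly on both sides; none of this is deep, but it is where a careless argument would go wrong.
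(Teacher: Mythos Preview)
Your proposal is correct and follows exactly the approach the paper takes: the paper's proof is the single sentence ``This is a matter of unwinding definitions,'' and your Steps 1--2 carry out that unwinding carefully and correctly.

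One minor remark: Step 3 is unnecessary in this setting. Since $C$ is a curve, $\Omega^2_C = 0$ and every connection is automatically flat, so the bijection established in Steps 1--2 between connections (with regular singularities along $D$, preserving $P^\bullet$) and splittings of \eqref{equation:Atiyah-exact-sequence-normal} is already the full statement. Your suggestion at the end of Step 3 to ``restrict attention throughout to brackets-preserving splittings'' would, if taken literally, weaken the proposition (it asserts a bijection with \emph{all} splittings); fortunately it is moot, since on a curve every splitting is automatically bracket-preserving for the same reason.
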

\begin{proof}
    This is a matter of unwinding definitions.
\end{proof}
We will pass freely between $\nabla$ and $q^\nabla$ and refer to each as a connection.
\subsection{The parabolic Atiyah bundle}
\label{subsection:parabolic-atiyah}

We next recall the generalization of the Atiyah bundle to the parabolic setting.

	Let $C$ be a curve with reduced divisor $D=x_1+\cdots+x_n$ and let $E_\star$ be a
	parabolic vector bundle on $(C,D)$.
	It will useful to recall the explicit description of $\on{End}(E_\star)$ given in
	\autoref{remark:end-definition}.
	To define the parabolic Atiyah bundle, we also need the following
	definition.

\begin{definition}
	\label{definition:}
	Let
$\on{End}_j(E_\star)$ denote the image of the natural inclusion
$\sheafend(E_\star)_{x_j} \to \sheafend(E)_{x_j}$, viewed as a subspace of $\sheafend(E)_{x_j}$.
\end{definition}

\begin{remark}
	\label{remark:}
	We can write $\sheafend(E)/\sheafend(E_\star)$ as a direct sum of skyscraper
sheaves supported on the $x_j$.
More precisely, for $\on{End}_j(E_\star)$ the image of
$\sheafend(E_\star)_{x_j} \to \sheafend(E)_{x_j}$ in the fiber over $x_j$,
there is an exact sequence
\begin{equation}
	\label{equation:}
	\begin{tikzcd}
		0 \ar {r} & \sheafend(E_\star) \ar {r} & \sheafend(E) \ar {r} &
		\oplus_{j= 1}^n \sheafend(E)_{x_j}/\on{End}_j(E_\star) \ar {r} &
		0.
\end{tikzcd}\end{equation}
\end{remark}

\subsubsection{The residue homomorphism}
\label{subsubsection:residue}
To define the Atiyah bundle associated to $E_\star$, we essentially want to
carve it out from the Atiyah bundle by requiring our differential operators to preserve the quasiparabolic
filtration over each $x_j$. For this, we need a certain homomorphism
${\phi}_j: \on{At}_{(C,D)}(E)_{x_j}
\to \sheafend(E)_{x_j}$,
referred to as a {\em residue homomorphism}.
The residue homomorphism is
defined in, for
example, \cite[(2.9)]{BHH:parabolic}, and we also recall it now.

Given any smooth proper curve $C$ and reduced divisor $D \subset C$ with $x_j$ in the support of $D$,
the residue homomorphism at $x_j$ is a map ${\phi}_j: \on{At}_{(C,D)}(E)_{x_j}
\to \sheafend(E)_{x_j}$ defined as follows.
There is a commutative diagram of vector spaces
\begin{equation}
	\label{equation:}
	\begin{tikzcd}
		0 \ar {r} & \sheafend(E)_{x_j} \ar {r}{\nu_j} \ar {d} &
		\on{At}_{(C,D)}(E)_{x_j} \ar {r} \ar {d}{\alpha_j} & T_C(-D)_{x_j}
		\ar {r} \ar {d}{\beta_j} & 0 \\
		0 \ar {r} & \sheafend(E)_{x_j} \ar {r}{\mu_j} & \on{At}_C(E)_{x_j} \ar
		{r}{\gamma_j} & (T_C)_{x_j} \ar {r} & 0.
\end{tikzcd}\end{equation}
The map $\beta_j$ is induced from the natural inclusion of invertible sheaves, and
hence vanishes.
Therefore, $\gamma_j \circ \alpha_j = 0$, which means $\alpha_j$ factors through
$\sheafend(E)_{x_j}$. This produces the desired map
${\phi}_j: \on{At}_{(C,D)}(E)_{x_j} \to \sheafend(E)_{x_j}$,
satisfying the property that $\mu_j\circ \phi_j  = \alpha_j$.
By definition of $\alpha_j$, the restriction of $\alpha_j$ to $\sheafend(E)_{x_j}$ is the identity.
That is, $\phi_j\circ \nu_j   = \mathrm{id}$.
This gives us a splitting 
$\on{At}_{(C,D)}(E)_{x_j} \simeq  \sheafend(E)_{x_j} \oplus T_C(-D)_{x_j}$.

Now, let $z$ denote a uniformizer of the local ring of $C$ 
at $x_j$ and let $z \frac{\partial
}{\partial z}$ denote the corresponding section of $T_C(-D)$ at $x_j$.
Observe that this is independent of the choice of $z$.
Given $q^\nabla: T_C(-D) \to \on{At}_{(C,D)}(E)$ a connection with regular
singularities,
let $q^\nabla_{x_j}(z \frac{\partial }{\partial z}) \in \on{At}_{(C,D)}(E) \simeq  \sheafend(E)_{x_j} \oplus T_C(-D)_{x_j}$ denote the image of $z
\frac{\partial }{\partial z}$
under $q^\nabla$ restricted to the fiber $x_j$.
Define the {\em residue} $\on{Res}(\nabla)(x_j) \in \sheafend(E)_{x_j}$ as the
projection of 
$q^\nabla_{x_j}(z \frac{\partial }{\partial z}) \in \sheafend(E)_{x_j} \oplus T_C(-D)_{x_j}$ to $\sheafend(E)_{x_j}$.

Having defined the map $\phi_j$ above, we next define the Atiyah bundle
associated to a parabolic bundle.
\begin{definition}[Atiyah bundle of a parabolic bundle]
	\label{definition:parabolic-atiyah-bundle}
Let $D\subset C$ be a reduced effective divisor 
and let $E_\star = (E, \{E^i_j\}, \{\alpha^i_j\})$ be a parabolic bundle on $(C,
D)$.
For $\phi_j$ the residue homomorphism, as defined above in
\autoref{subsubsection:residue},
let $\widehat{\phi}_j: \on{At}_{(C, D)}(E) \to \sheafend(E)_{x_j}/\on{End}_j(E_\star)$ denote the composition
\begin{align*}
	\on{At}_{(C, D)}(E) \to \on{At}_{(C,D)}(E)_{x_j} \xrightarrow{\phi_j}
	\sheafend(E)_{x_j} \to \sheafend(E)_{x_j}/\on{End}_j(E_\star).
\end{align*}
Define $\on{At}_{(C, D)}(E_\star)$ as the coherent subsheaf of $\on{At}_{(C, D)}(E)$ given by
\begin{align*}
	\on{At}_{(C,D)}(E_\star) := \ker\left( \on{At}_{(C,D)}(E)
	\xrightarrow{\oplus_{j=1}^n \widehat{\phi}_j } \oplus_{j=1}^n
\sheafend(E)_{x_j}/\on{End}_j(E_\star) \right).
\end{align*}
Similarly, for $P^\bullet := (0 = P^0 \subset P^1
\subset \cdots \subset P^m = E)$ a filtration on $E$,
we let 
$\on{At}_{(C,D)}(E_\star, P^\bullet) \subset \on{At}_{(C, D)}(E_\star)$
denote the coherent subsheaf
consisting of those endomorphisms that preserve $P^\bullet$. 
\end{definition}

\begin{remark}
	\label{remark:}
Using \autoref{definition:parabolic-atiyah-bundle}
	and \eqref{equation:non-logarithmic-atiyah-sequence},
	we find that $\on{At}_{(C,D)}(E_\star, P^\bullet)$ fits into a short exact sequence
\begin{equation}\label{equation:Atiyah-exact-sequence}
	0\to \sheafend(E_\star, P^\bullet)\to \on{At}_{(C,D)}(E_\star, P^\bullet)\to T_C(-D)\to
    0.
\end{equation}

By comparing \eqref{equation:Atiyah-exact-sequence} for a filtration $P^\bullet$
and the trivial filtration, we obtain 
the short exact sequence
\begin{equation}
	\label{equation:quotient-atiyah-bundles}
	\begin{tikzcd}
		0 \ar {r} &  \on{At}_{(C,D)}(E_\star, P^\bullet) \ar {r} &
		\on{At}_{(C,D)}(E_\star) \ar {r}
		& \sheafend(E_\star)/\sheafend(E_\star, P^\bullet)\ar {r} & 0,
\end{tikzcd}\end{equation}
where $\sheafend(E_\star, P^\bullet) \subset \sheafend(E_\star)$ is the coherent subsheaf
consisting of those endomorphisms preserving the filtration $P^\bullet$.
\end{remark}

\subsection{Parabolic structure and connections}
\label{subsection:atiyah-properties}
We continue  our review of Atiyah bundles by recalling the parabolic bundle
associated to a connection, and a constraint on irreducibility of connections.

\begin{definition}
	\label{definition:associated-parabolic}
	Let $q^\nabla: T_C(-D) \to \on{At}_{(C,D)}(E)$ be a connection on $E$
	with regular singularities along $D$.
	Let $\on{Res}(\nabla)(x_j) \in \sheafend(E)_{x_j}$ denote the residue
	of $\nabla$ at $x_j$, described in \autoref{subsubsection:residue}.
	Suppose the eigenvalues of $\on{Res}(\nabla)(x_j)$ are $\eta_j^1, \ldots,
	\eta^{s_j}_j$. Let $$\lambda^i_j:=\on{Re}(\eta^i_j)-\lfloor \on{Re}(\eta^i_j)\rfloor \in[0,1)$$ denote the fractional part of the real
	part of $\eta^i_j$. Reorder the $\lambda^i_j$ and remove repetitions so that
	\begin{align*}
		0 \leq \lambda^1_j < \lambda^2_j < \cdots < \lambda_j^{n_j} < 1.
	\end{align*}
	Define $E^i_j \subset E_{x_j}$ as the sum of all generalized eigenspaces
	of $\on{Res}(\nabla)(x_j)$
	such that the fractional part of the real part of the associated
	eigenvalue is $\geq \lambda^i_j$.
	The data $(E, \{E^i_j\}, \{\lambda^i_j\})$ is the {\em parabolic bundle
	associated to the connection} $\nabla$.
	By \cite[Lemma 4.1]{BHH:parabolic}, the connection $q^\nabla: T_C(-D) \to
	\on{At}_{(C, D)}(E)$
	factors through $\on{At}_{(C, D)}(E_\star) \subset \on{At}_{(C, D)}(E)$
	and we denote the induced map by $q^\nabla : T_C(-D) \to \on{At}_{(C,
	D)}(E_\star)$ as well.
	If the real part of each $\eta^i_j$ lies in $[0,1)$, then $(E_\star,
	\nabla)$ is
	called the {\em Deligne canonical extension} of $(E|_{C \setminus D},
	\nabla_{C \setminus D})$.
\end{definition}

\begin{proposition}
	\label{proposition:irreduciblity-splitting-condition}
    Let $(E, \nabla: {E}\to {E}\otimes \Omega^1_C(\log D))$ be a flat
    vector bundle on $C$ with regular singularities along $D$, 
    and let $E_\star$ denote the parabolic bundle associated to $\nabla$, as in
    \autoref{definition:associated-parabolic}.
    Suppose the monodromy representation $\rho$ 
    associated to $(E, \nabla)|_{C\setminus D}$ via the Riemann-Hilbert
    correspondence is irreducible. Let $q^\nabla:
    T_C(-D)\to \on{At}_{(C,D)}(E_\star)$ be the corresponding splitting of the Atiyah
    exact sequence via \autoref{proposition:atiyah-splittings} and
    \autoref{definition:associated-parabolic}. Then for any
    nontrivial filtration $P^\bullet$ of $E$, the composition
    $$T_C(-D)\overset{q^\nabla}{\to} \on{At}_{(C,D)}(E_\star){\to}
    \on{At}_{(C,D)}(E_\star)/\on{At}_{(C,D)}(E_\star, P^\bullet)\simeq
    \sheafend(E_\star)/\sheafend(E_\star,P^\bullet)$$ is nonzero.
\end{proposition}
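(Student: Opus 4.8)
The plan is to argue by contradiction, showing that the vanishing of the displayed composition would force $\nabla$ to preserve the filtration $P^\bullet$, hence produce a proper nonzero flat subbundle, contradicting irreducibility of $\rho$.

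So suppose the composition $T_C(-D)\xrightarrow{q^\nabla}\on{At}_{(C,D)}(E_\star)\to\sheafend(E_\star)/\sheafend(E_\star,P^\bullet)$ vanishes. By \eqref{equation:quotient-atiyah-bundles}, $\on{At}_{(C,D)}(E_\star,P^\bullet)$ is the kernel of the second map, so $q^\nabla$ factors as $T_C(-D)\to\on{At}_{(C,D)}(E_\star,P^\bullet)\hookrightarrow\on{At}_{(C,D)}(E_\star)$. Since the composite of this factorization with the projection in \eqref{equation:Atiyah-exact-sequence} is still the identity of $T_C(-D)$, the map $T_C(-D)\to\on{At}_{(C,D)}(E_\star,P^\bullet)$ is a splitting of \eqref{equation:Atiyah-exact-sequence}. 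Composing with the inclusion $\on{At}_{(C,D)}(E_\star,P^\bullet)\subset\on{At}_{(C,D)}(E,P^\bullet)$ and applying \autoref{proposition:atiyah-splittings}, this splitting corresponds to a flat connection on $E$ with regular singularities along $D$ preserving $P^\bullet$; by the bijectivity asserted there, and by further composing with $\on{At}_{(C,D)}(E,P^\bullet)\subset\on{At}_{(C,D)}(E)$ to recover $q^\nabla$, this connection is $\nabla$. Thus $\nabla$ preserves $P^\bullet$.

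Next I would finish as follows. Since $P^\bullet$ is nontrivial, there is $0<i<m$ with $0\neq P^i\subsetneq E$; as $\nabla$ preserves $P^\bullet$, the subbundle $P^i$ is $\nabla$-invariant, and the induced connection on $P^i$ again has regular singularities along $D$. Restricting to $C\setminus D$ and applying the Riemann--Hilbert correspondence, $P^i|_{C\setminus D}$ gives a subrepresentation of $\rho$ of rank $\rk P^i$, which is neither $0$ nor $\rk E$, contradicting the irreducibility of $\rho$. Hence the composition is nonzero.

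The only genuinely delicate point is the bookkeeping with the various Atiyah bundles: one must use that $\on{At}_{(C,D)}(E_\star,P^\bullet)$ sits inside both $\on{At}_{(C,D)}(E_\star)$ and $\on{At}_{(C,D)}(E,P^\bullet)$, all compatibly with the surjections to $T_C(-D)$, so that a factorization of $q^\nabla$ through the parabolic Atiyah bundle of $(E_\star,P^\bullet)$ really does exhibit $\nabla$ itself, rather than merely some other connection, as preserving $P^\bullet$. This is immediate from the definitions but worth spelling out; once it is in place, the rest is the standard dictionary between flat invariant subbundles and subrepresentations.
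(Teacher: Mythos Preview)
Your proof is correct and follows essentially the same approach as the paper: argue by contradiction, use the exact sequence \eqref{equation:quotient-atiyah-bundles} to factor $q^\nabla$ through $\on{At}_{(C,D)}(E_\star,P^\bullet)$, apply \autoref{proposition:atiyah-splittings} to conclude that $\nabla$ preserves $P^\bullet$, and then extract a proper flat subbundle contradicting irreducibility of $\rho$. Your version is slightly more careful in spelling out why the connection so obtained is $\nabla$ itself, and you pick a general $P^i$ rather than $P^1$, but otherwise the arguments are the same.
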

\begin{proof}
Assume not. Then $q^\nabla$ has image in $\on{At}_{(C,D)}(E_\star, P^\bullet)$, and
hence yields a splitting of \eqref{equation:Atiyah-exact-sequence}. 
Using \autoref{proposition:atiyah-splittings},
the corresponding connection with regular singularities on $E$ preserves $P^1$ and hence yields a flat connection on $P^1$ with regular singularities along
$D$, whose monodromy is a sub-representation of the monodromy representation
$\rho$ associated to $({E}, \nabla)|_{C\setminus D}$. 
But this contradicts the
assumption that $\rho$ is irreducible. (See \cite[Proof of Proposition
5.3]{BHH:logarithmic} for a similar argument in the non-parabolic setting.)
\end{proof}
\subsection{Isomonodromic deformations}
\label{subsection:isomonodromic-deformations}

We next recall the notion of  isomonodromic deformation.
We also define the notion of an ``isomonodromic deformation to an analytically
general nearby curve'' which appears in many of our main results.

\begin{notation}
	\label{notation:curve-and-sections}
	Let $\mathscr{C}, \Delta$ be complex manifolds, and let $\pi: \mathscr{C}\to \Delta$ 
be a proper holomorphic submersion with connected fibers of relative dimension
one, with $\Delta$ contractible.  Let $$s_1, \cdots, s_n: \Delta \to
\mathscr{C}$$ be disjoint sections to $\pi$, and let $\mathscr{D}$ be the union
$$\mathscr{D}:=\bigcup_i \on{im}(s_i).$$ Given a point $0\in \Delta$, let
$(C, D) := (\pi^{-1}(0), \pi^{-1}(0) \cap \mathscr D)$ and further assume
$(C, D)$ is hyperbolic.
\end{notation}

\begin{lemma}
	\label{lemma:iso-extension}
	With notation as in \autoref{notation:curve-and-sections}, let $$(E, \nabla: E \to E\otimes \Omega^1_C(\log
	D))$$ be a flat vector bundle on $C$ with regular singularities along
	$D$. Such a logarithmic flat vector bundle extends canonically to a
	logarithmic flat vector bundle $$(\mathscr{E}, \widetilde{\nabla}:
	\mathscr{E}\to \mathscr{E}\otimes \Omega^1_{\mathscr{C}}(\log
\mathscr{D}))$$ on $\mathscr{C}$ with regular singularities along $\mathscr{D}$.
\end{lemma}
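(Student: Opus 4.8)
The plan is to use the Riemann--Hilbert correspondence to transport the problem to the topological side, where "isomonodromic" becomes "locally constant in the deformation parameter." First I would recall that since $\Delta$ is contractible and $\pi$ is a proper submersion with connected fibers, the inclusion $C = \pi^{-1}(0) \hookrightarrow \mathscr C$ of the central fiber, and more relevantly the inclusion $C \setminus D \hookrightarrow \mathscr C \setminus \mathscr D$, is a homotopy equivalence (Ehresmann's theorem applied to the pair $(\mathscr C, \mathscr D)$, using that the $s_i$ are disjoint sections). In particular $\pi_1(\mathscr C \setminus \mathscr D) \cong \pi_1(C \setminus D)$. The flat vector bundle $(E, \nabla)|_{C \setminus D}$ corresponds under Riemann--Hilbert to a representation $\rho: \pi_1(C \setminus D) \to \on{GL}_r(\mathbb C)$; via the isomorphism of fundamental groups this gives a representation of $\pi_1(\mathscr C \setminus \mathscr D)$, hence a local system $\mathbb V$ on $\mathscr C \setminus \mathscr D$, hence (again by Riemann--Hilbert, now in the relative/logarithmic form) a flat vector bundle on $\mathscr C \setminus \mathscr D$ with an integrable connection, restricting to $(E,\nabla)|_{C\setminus D}$ on the central fiber.

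The second step is to extend this flat bundle across $\mathscr D$ with regular singularities, canonically. Here I would invoke Deligne's canonical extension: given the residue data of $\nabla$ along $D$ — whose eigenvalues have prescribed real parts, say normalized to lie in $[0,1)$, or more generally whatever residues $(E,\nabla)$ already has — one chooses the section of the logarithmic extension along $\mathscr D$ compatible with this same eigenvalue normalization. Concretely, near each section $s_i(\Delta)$ one works in a local model $\Delta \times \mathbb D$ with coordinate $t$ on the disk $\mathbb D$, and the Deligne extension is generated by $t^{-N} e^{-(\log t)\cdot \on{Res}}$-type frames; the point is that the residue along $\mathscr D_i$ is determined by monodromy around the loop, which is locally constant in $\Delta$, so the extension built fiberwise glues. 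This produces $(\mathscr E, \widetilde\nabla)$ on all of $\mathscr C$ with regular singularities along $\mathscr D$ and with $\widetilde\nabla$ integrable (not merely fiberwise flat) — the integrability is automatic because the connection came from a genuine local system on the total space $\mathscr C \setminus \mathscr D$. Restriction to the central fiber recovers $(E,\nabla)$, because on $C \setminus D$ it does by construction and both sides are the Deligne extension with the same residue normalization along $D$.

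The main obstacle — and the reason this is stated as a lemma rather than taken for granted — is the word \emph{canonically}: one must pin down which extension across $\mathscr D$ is meant and check it restricts correctly to the given $(E,\nabla)$, which forces a choice of normalization of the residue eigenvalues that is consistent between $C$ and $\mathscr C$. The natural choice is: keep exactly the residue eigenvalues that $(E,\nabla)$ has on $C$ (equivalently, the logarithm-of-monodromy branch determined by $\on{Res}(\nabla)(x_i)$), and use the corresponding branch uniformly along each $\mathscr D_i$; since the local monodromy is locally constant over the contractible $\Delta$, this is well defined. A secondary technical point is verifying that the extension is locally free (a vector bundle, not just a reflexive sheaf) — this holds because $\mathscr D$ is smooth and the Deligne extension across a smooth divisor is locally free — and that it is independent of auxiliary choices (local trivializations of $\pi$, coordinates), which follows from the uniqueness clause in the Deligne extension theorem. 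Beyond setting up these normalizations carefully, the proof is a matter of assembling standard facts; I would not expect any genuinely new difficulty.
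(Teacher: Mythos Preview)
Your proposal is correct and follows essentially the same approach as the paper: extend the local system $\ker(\nabla)$ from $C\setminus D$ to $\mathscr C\setminus\mathscr D$ via the homotopy equivalence coming from contractibility of $\Delta$, then invoke Deligne/Manin's extension across the smooth divisor $\mathscr D$ with residue normalization matching that of $(E,\nabla)$. The paper is terser---citing \cite[Proposition 5.4]{deligne:regular-singular} for the local extension step and \cite[Theorem 3.4]{Heu:universal-isomonodromic} (following Malgrange) for the global statement---but your more explicit treatment of the canonicity and residue-matching issues is exactly the content behind those citations.
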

\begin{proof}
	This follows from Deligne's work on differential equations
with regular singularities \cite{deligne:regular-singular} and is explained in
\cite[Theorem 3.4]{Heu:universal-isomonodromic}, following work of Malgrange
\cite{malgrange:I, malgrange:II}.
In particular, \cite[Theoreme 2.1]{malgrange:I} explains the case where $C$ has
genus zero, and the general case is similar. 
We now recapitulate the proof.

The restriction
of $({E}, \nabla)$ to $C\setminus D$ is a flat vector bundle and hence
gives rise to a locally constant sheaf of $\mathbb{C}$-vector spaces
$$\mathbb{V}:=\ker(\nabla)$$ on $C\setminus D$. As $\Delta$ is contractible, the
inclusion $$C\setminus D\hookrightarrow \mathscr{C}\setminus \mathscr{D}$$ is a
homotopy equivalence; thus $\mathbb{V}$ extends uniquely (up to canonical
isomorphism) to a local system $\widetilde{\mathbb{V}}$ on $\mathscr{C}\setminus
\mathscr{D}$. 

Manin's local results on extending flat vector bundles across divisors
\cite[Proposition 5.4]{deligne:regular-singular} imply that there is a
canonical extension, which is unique, up to unique isomorphism,
$$(\widetilde{\nabla}:
\mathscr{E}\to \mathscr{E}\otimes \Omega^1_{\mathscr{C}}(\log \mathscr{D}))$$ of
$$(\widetilde{\mathbb V}\otimes_{\mathbb{C}}\mathscr{O}_{\mathscr{C}\setminus
\mathscr{D}}, \on{id}\otimes d)$$ to a flat vector bundle on $\mathscr{C}$ with
regular singularities along $\mathscr{D}$, equipped with an isomorphism
$({\mathscr{E}}, \widetilde{\nabla})|_C\simeq (E,\nabla).$
\end{proof}

Using the above, we are ready to define isomonodromic deformations.
\begin{definition}[Isomonodromic Deformation]
	\label{definition:isomonodromy}
	With notation as in \autoref{notation:curve-and-sections}, let $D=x_1+ \cdots+ x_n$, so that $(C, D)$ is an $n$-pointed hyperbolic curve of genus $g$. Let $(E, \nabla)$ be a flat vector bundle on $C$ with regular singularities at the $x_i$.
We call the extension $(\mathscr{E}, \widetilde{\nabla})$ as in
\autoref{lemma:iso-extension} \emph{the isomonodromic deformation} of $({E}, \nabla)$.
If $\Delta=\mathscr{T}_{g,n}$ is the universal cover of
of the analytic stack $\mathscr{M}_{g,n}$, and $\mathscr{C}\to \Delta$ is the universal curve,
we call the isomonodromic deformation over such $\Delta$  \emph{the universal
isomonodromic deformation}.
\end{definition}
\begin{definition}
	\label{definition:nearby}
	With notation as in \autoref{definition:isomonodromy}, let $\Delta$ be
	the universal cover of $\mathscr{M}_{g,n}$.
We use \emph{an isomonodromic deformation to a nearby curve} to denote the
restriction of $(\mathscr{E}, \widetilde{\nabla})$ to any fiber of $\mathscr{C}
\to \Delta$.
We use 
\emph{an isomonodromic deformation to an analytically general nearby curve}
to denote the restriction of $(\mathscr{E}, \widetilde{\nabla})$ to a
general fiber of $\mathscr{C}
\to \Delta$, i.e., a fiber in the complement of a nowhere dense closed analytic
subset.
\end{definition}

\begin{remark}
	\label{remark:}
	The construction of \autoref{lemma:iso-extension} is functorial: given a commutative diagram 
$$\xymatrix{
D \ar@{^(->}[d] \ar@{^(->}[r] & \mathscr{D} \ar[r]\ar@{^(->}[d] & \mathscr{D}'\ar@{^(->}[d] \\
C \ar@{^(->}[r] \ar[d]& \mathscr{C} \ar[r] \ar[d]^\pi & \mathscr{C}' \ar[d]^{\pi'}\\
0 \ar@{^(->}[r] & \Delta \ar[r] & \Delta'
}$$
and a flat vector bundle $({E}, \nabla)$ on $C$ with regular
singularities along $D$, the isomonodromic deformation over $\Delta'$ pulls back
to the isomonodromic deformation over $\Delta$.
\end{remark}
\begin{example}[Families of families, essentially in \cite{doran:isomonodromic}]
	With notation as in \autoref{notation:curve-and-sections}, suppose $\mathscr{D}=\emptyset$, and let $\tilde h:
\mathscr{X}\to \mathscr{C}$ be a proper holomorphic submersion. Let
$X=h^{-1}(C)$, and let $h=\tilde h|_X$. Then for each $i\geq 0,$ $R^i\tilde
h_*\Omega^\bullet_{dR, \mathscr{X}/\mathscr{C}}$ with its Gauss-Manin connection
is the isomonodromic deformation of $R^ih_*\Omega^\bullet_{dR, X/C}$ with its
Gauss-Manin connection.
\end{example}

\begin{remark}
	\label{remark:parabolic-structure-on-isomonodromic-deformation}
	Using residues of the connection, we were able to associate to $(E,
	\nabla)$ a certain parabolic bundle $E_\star$ in
	\autoref{definition:associated-parabolic}.
	This induces the structure of a relative parabolic bundle on the
	isomonodromic deformation $(\mathscr E, \widetilde{\nabla})$ of $(E,
	\nabla)$, which we denote $\mathscr E_\star$, as explained in
	\cite[\S4.3]{BHH:parabolic}.
	Let $\mathscr C_t$ denote the fiber of $\mathscr C \to \Delta$
	over the point $t \in \Delta$ defining the isomonodromic deformation.
	By \cite[Lemma 4.2]{BHH:parabolic}, the parabolic weights and the
	associated dimensions of the graded parts of the quasiparabolic
	structure corresponding to those weights on $\mathscr E|_{\mathscr C_t}$ are independent
	of the point $t \in \Delta$. Indeed, the parabolic structure on $\mathscr{E}|_{\mathscr{C}_t}$ is exactly the one associated to the natural connection on $\mathscr{E}|_{\mathscr{C}_t}$ obtained by restricting $\widetilde{\nabla}$.
\end{remark}
We next introduce a notion of refinement of a parabolic structure. Loosely speaking, $F_\star$ is refined by $E_\star$ if $E_\star, F_\star$ have the same underlying vector bundle, and the parabolic structure on $F_\star$ is obtained by forgetting part of the parabolic structure on $E_\star$. The most
important case of this which we will use is that the bundle $E_\star$ is a refinement of $E_0$, with its trivial parabolic structure.

\begin{definition}
	\label{definition:refinements}
	Given a parabolic bundle $E_\star$ on $(C,D)$, with $D=x_1+ \cdots+ x_n$ a reduced effective divisor, we may write $$E_\star= (E, \{E^i_j\}_{1 \leq j \leq n,
	1\leq i \leq n_j+1}, \{\alpha^i_j\}_{1 \leq j \leq n,
1\leq i \leq n_j}).$$
Let $D'\subset D$ be an effective divisor, i.e.~there exists $I\subset \{1, \cdots, n\}$ so that $D'=\sum_{j\in I} x_j$. 

We say a parabolic bundle $F_\star$ on $(C, D')$ is {\em refined by} $E_\star$ if for each $j\in I$ there is a subset $\{i_1, \ldots, i_{m_j}\} \subset
\{2, \ldots, n_j\}$ such that there is an isomorphism of parabolic bundles
$$F_\star \simeq  
(E, \{E^i_j\}_{j \in I, i \in \{1, i_1, \ldots, i_{m_j}\} },
\{\alpha^i_j\}_{j \in I, i \in \{1, i_1, \ldots, i_{m_j}\} }).$$
\end{definition}

\begin{example}
	\label{example:refinement}
	Consider the case that our base curve is $X$, $D = x_1 + x_2$, and
	the parabolic bundle $E_\star$ has underlying bundle $E = \mathscr O_X
	\oplus \mathscr O_X$.
	Suppose the parabolic structure of $E_\star$ at $x_1$ is given by
	$E_{x_1} = E_1^1 \subsetneq E_1^2 \subsetneq 0$ with weights $\alpha_1^1
	= 1/3, \alpha_1^2 = 2/3$ and parabolic structure at $x_2$ given by $E_{x_2} = E_2^1 \subsetneq 0$ with weight
	$\alpha_2^1 = 1/2$.

	There are $6 = 3 \cdot 2$ parabolic bundles $F_\star$ refined by $E_\star$.
	Since $F_\star$ is a parabolic bundle with respect to a divisor $D'
	\subset D$, we can take $D' = \emptyset, x_1, x_2,$ or $x_1+
	x_2$. In the first case, $F_\star$ has trivial parabolic structure and
	corresponds to the vector bundle $E_0$. In the case $D = x_1$, either
	the parabolic structure at $x_1$ is the same as that of $E_\star$, or we take 
	$F_{x_1} = E_1^1 \subsetneq 0$ with $\beta_1^1 = 1/3$.
	When $D' = x_2$, $F_\star$ must have the same parabolic structure as
	$E_\star$ at $x_2$. Finally, when $D' = x_1+ x_2$, the parabolic
	structure at $x_2$ must agree with that of $E_\star$ and the parabolic
	structure at $x_1$ can either agree with that of $E_\star$ or be given
	by $E_{x_1} = E_1^1 \subsetneq 0$ with $\beta_1^1 = 1/3$, as in the
	case $D = x_1$.

	Generalizing this, there are $\prod_{j=1}^n (2^{n_j-1}+1)$ vector bundles
	refined by a bundle $E_\star$ with respect to a divisor $D = x_1 +
	\cdots + x_n$.
\end{example}

\begin{remark}
	\label{remark:refinement}
	Continuing with the notation of
	\autoref{remark:parabolic-structure-on-isomonodromic-deformation},
	for any parabolic bundle $F_\star$ refined by $E_\star$,
	there is a corresponding
	isomonodromic deformation of $(F_\star,\nabla)$ over $\Delta$ given by taking the
	parabolic structure from 
	\autoref{remark:parabolic-structure-on-isomonodromic-deformation}
	and forgetting
	part of the parabolic structure on $\mathscr{E}_\star$.
	As an important special case, the trivial parabolic structure on $E$ described in \autoref{example:vector-bundle} is refined by the parabolic bundle  $E_\star$ arising from \autoref{definition:associated-parabolic}.
\end{remark}

\subsection{Deformation theory of isomonodromic deformations}\label{subsection:filtered-bundle-deformations}
We now analyze the infinitesimal deformation theory of isomonodromic
deformations. 
\begin{notation}\label{notation:deformation-theory-iso}
    Let $\on{Art}_{\mathbb{C}}$ be the category of local Artin
$\mathbb{C}$-algebras. Let $C$ be a smooth proper curve over $\mathbb{C}$,
$D\subset C$ a reduced effective divisor with $D=x_1+\cdots+x_n$, and $$({E}, \nabla: {E}\to
{E}\otimes\Omega^1_C(\log D))$$ a flat vector bundle on $C$ with regular 
singularities along $D$. Let $P^\bullet$ be a filtration of $E$.
\end{notation}

\begin{definition}[Deformations of a curve with divisor]
	\label{definition:curve-deformation}
	Let $$\on{Def}_{(C,D)}: \on{Art}_{\mathbb{C}}\to
\on{Set}$$ be the functor sending a local Artin $\mathbb{C}$-algebra
$(A,\mathfrak m, \kappa)$ (so $\mathfrak m$ is the maximal ideal and $\kappa$ is
the residue field) to the
set of flat deformations of $(C,D)$ over $A$.
More precisely, it assigns to $A$ the set of those
$(\mathscr C, \mathscr D, q, f)$ where
$q: \mathscr C \to \on{Spec} A$ is a flat morphism, $\mathscr D \subset \mathscr C$
is a relative Cartier divisor over $\on{Spec} A$ and $f: C \to \mathscr C$ is a map
inducing an isomorphism $C \to \mathscr C \times_{\on{Spec} A} \on{Spec} \kappa$
taking $D$ isomorphically to $\mathscr D \times_{\on{Spec} A} \on{Spec} \kappa$.
\end{definition}
\begin{proposition}\label{proposition:curve-deformation}
    With notation as in \autoref{definition:curve-deformation}, there is a
    canonical and functorial bijection
    $$\on{Def}_{(C,D)}(\mathbb{C}[\varepsilon]/\varepsilon^2)\overset{\sim}{\to} H^1(C, T_C(-D)).$$
\end{proposition}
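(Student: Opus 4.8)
The plan is to prove \autoref{proposition:curve-deformation} by the classical \v{C}ech-cocycle argument that identifies first-order deformations of a smooth proper variety $X$ with $H^1(X, T_X)$, carried out here for the pair $(C,D)$ with $T_X$ replaced by the subsheaf $T_C(-D) \subset T_C$ of vector fields vanishing along $D$ --- equivalently the logarithmic tangent sheaf $T_C(-\log D)$, which coincides with $T_C(-D)$ since $D$ is a reduced divisor on a smooth curve.

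First I would fix a finite affine open cover $\mathfrak U = \{U_i\}$ of $C$ such that each $D \cap U_i$ is the vanishing locus of a single $t_i \in \mathscr O_C(U_i)$, possible since $D$ is a reduced effective Cartier divisor. Given $(\mathscr C, \mathscr D, q, f) \in \on{Def}_{(C,D)}(\mathbb C[\varepsilon]/\varepsilon^2)$, the essential local input is that the induced first-order deformation of each pair $(U_i, D\cap U_i)$ is trivial: a smooth affine curve has no nontrivial first-order deformations, and any two first-order embedded deformations of the reduced subscheme $D \cap U_i$ inside $U_i \times \on{Spec}\mathbb C[\varepsilon]/\varepsilon^2$ differ by a section of $T_C/T_C(-D)$ that extends to a global vector field on $U_i$ (using $H^1(U_i, T_C(-D)) = 0$ together with $0 \to T_C(-D) \to T_C \to T_C|_{D} \to 0$), hence are related by an automorphism. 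So I may choose isomorphisms $\phi_i : \mathscr C|_{U_i} \xrightarrow{\sim} U_i \times \on{Spec}\mathbb C[\varepsilon]/\varepsilon^2$ carrying $\mathscr D$ to the constant divisor and compatible with $f$. On each overlap $U_{ij}$, the automorphism $\phi_i \circ \phi_j^{-1}$ of $U_{ij} \times \on{Spec}\mathbb C[\varepsilon]/\varepsilon^2$ is the identity modulo $\varepsilon$ and preserves the divisor, so it has the form $\on{id} + \varepsilon\,\theta_{ij}$ for a derivation $\theta_{ij}$ of $\mathscr O_C(U_{ij})$; the condition that it fixes $\mathscr D$, i.e. preserves the ideal $(t_j)$, translates exactly into $\theta_{ij}(t_j) \in (t_j)$, that is $\theta_{ij} \in \Gamma(U_{ij}, T_C(-D))$. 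Working modulo $\varepsilon^2$, the relation $\phi_i\phi_j^{-1}\cdot \phi_j\phi_k^{-1} = \phi_i\phi_k^{-1}$ becomes $\theta_{ij} + \theta_{jk} = \theta_{ik}$, so $\{\theta_{ij}\}$ is a \v{C}ech $1$-cocycle defining a class in $\check H^1(\mathfrak U, T_C(-D)) = H^1(C, T_C(-D))$.

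Then I would check independence of the choices: replacing $\phi_i$ by $(\on{id} + \varepsilon\psi_i)\circ\phi_i$ with $\psi_i \in \Gamma(U_i, T_C(-D))$ changes $\{\theta_{ij}\}$ by the coboundary of $\{\psi_i\}$, so the cohomology class is well defined, and $\mathbb C$-linearity follows the same way. Conversely, a \v{C}ech $1$-cocycle valued in $T_C(-D)$ glues the trivial deformations $U_i \times \on{Spec}\mathbb C[\varepsilon]/\varepsilon^2$ (with their constant divisors) along $\on{id} + \varepsilon\theta_{ij}$ into a deformation of $(C,D)$; the two constructions are mutually inverse, giving the bijection, and functoriality in $(C,D)$ is immediate since a morphism of the relevant data transports cocycles to cocycles. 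The one step I expect to require genuine (if standard) argument rather than pure bookkeeping is the local triviality of $(U_i, D\cap U_i)$ over $\mathbb C[\varepsilon]/\varepsilon^2$, and, relatedly, the identification of ``preserves $\mathscr D$'' with ``lies in $T_C(-D)$,'' which is exactly what pins down the coefficient sheaf as $T_C(-D)$ rather than $T_C$.
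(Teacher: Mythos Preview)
Your argument is correct and is precisely the standard \v{C}ech-cocycle computation; the paper does not give its own proof but simply cites \cite[Proposition 3.4.17]{sernesi:deformations-of-algebraic-schemes}, where essentially the same argument is carried out. So there is nothing to compare---you have supplied the details the paper defers to the reference.
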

\begin{proof}
This is standard, see 
\cite[Proposition 3.4.17]{sernesi:deformations-of-algebraic-schemes}.
\end{proof}
We next generalize the above to describe the deformation theory of filtered
vector bundles on curves.
\begin{definition}[Deformations of a parabolic filtered vector bundle]
	\label{definition:}
	Let
$$\on{Def}_{(C,D, {E_\star}, P^\bullet)}:\on{Art}_{\mathbb{C}}\to \on{Set}$$ be the functor
sending $A$ to the set of flat deformations of $(C, D, E_\star, P^\bullet)$ over $A$.
More precisely, it assigns to $A$ the set of
$(\mathscr C, \mathscr D, q, f, \mathscr E, \{\mathscr E^i_j\}, \mathscr{P}^\bullet, \psi)$
where $(\mathscr C, \mathscr D, q, f)$ is a flat deformation of $(C,D)$ over $A$
as in \autoref{definition:curve-deformation}, $\mathscr E$ is a vector bundle on $\mathscr C$, 
$\oplus_j\mathscr E^i_j$ are sub-bundles of $\mathscr{E}|_\mathscr D$,
$\mathscr{P}^\bullet$ is a filtration of $\mathscr{E}$ by sub-bundles, 
and $\psi: f^* (\mathscr E,  \mathscr{P}^\bullet) \to (E,
 P)$ is an isomorphism of filtered vector bundles on $C$ inducing an isomorphism $\mathscr{E}^i_j|_{x_j}\overset{\sim}{\to} E^i_j$ for each $i,j$.
\end{definition}
\begin{proposition}\label{proposition:filtered-bundle-deformation}
    Let $(E_\star, P^\bullet)$ be a filtered parabolic vector bundle on a curve $C$. Let $D\subset C$ be a reduced effective divisor. 
    There is a canonical and functorial bijection $$\on{Def}_{(C, D, E_\star,
    P^\bullet)}(\mathbb{C}[\varepsilon]/\varepsilon^2)\overset{\sim}{\to} H^1(C,
    \on{At}_{(C,D)}(E_\star,P^\bullet)).$$
\end{proposition}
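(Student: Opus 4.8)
The plan is to establish the bijection by a \v Cech-cocycle argument, exactly parallel to the proof of \autoref{proposition:curve-deformation} and to the classical classification of first-order deformations of a vector bundle on a curve (cf.\ \cite{sernesi:deformations-of-algebraic-schemes}), with the sheaf $\on{At}_{(C,D)}(E_\star,P^\bullet)$ playing the role of the infinitesimal automorphism sheaf. The first and most delicate input is the reinterpretation of $\on{At}_{(C,D)}(E_\star,P^\bullet)$ as a sheaf of infinitesimal symmetries: I want to identify its sections over an open $U\subset C$ with the $\mathbb C$-linear first-order differential operators $\tau$ on $E|_U$ whose symbol is a scalar vector field lying in $T_C(-D)|_U$ and which preserve both the filtration $P^\bullet$ and the quasiparabolic filtration $\{E^i_j\}$ at each point of $D\cap U$. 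For the symbol and $P^\bullet$ conditions this is immediate from \eqref{equation:Atiyah-exact-sequence} and \autoref{remark:end-definition}; the subtle point is that, under a first-order thickening of $(C,D)$, preserving the quasiparabolic subspaces of $\mathscr E|_{\mathscr D}$ is precisely the residue condition $\widehat\phi_j(\tau)=0$ that cuts out $\on{At}_{(C,D)}(E_\star)$ inside $\on{At}_{(C,D)}(E)$ in \autoref{definition:parabolic-atiyah-bundle}. I would verify this by a local computation in a uniformizer $z$ at each $x_j$: an infinitesimal automorphism covering $z\,\partial/\partial z$ acts on the fiber $E_{x_j}$ through the exponential of $\varepsilon$ times an endomorphism, and compatibility with the filtration $\{E^i_j\}$ is exactly the vanishing of the corresponding class in $\sheafend(E)_{x_j}/\on{End}_j(E_\star)$; this is the parabolic analogue of \cite[Lemma 4.1]{BHH:parabolic}.

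Granting this identification, the next step is to choose a Stein (or affine) open cover $\{U_i\}$ of $C$ fine enough that over each $U_i$ both the deformation of the curve and that of the filtered parabolic bundle are trivial. Such covers exist because over an affine open the deformation of $(C,D)$ is unobstructed and trivial, and any vector bundle on $U_i\times\on{Spec}\mathbb C[\varepsilon]/\varepsilon^2$ restricting to $E|_{U_i}$, with an extension of the filtrations, is the pulled-back one — the relevant classifying and obstruction groups are higher cohomology of coherent sheaves on an affine scheme, hence vanish. Then, given a first-order deformation $(\mathscr C,\mathscr D,q,f,\mathscr E,\{\mathscr E^i_j\},\mathscr P^\bullet,\psi)$, I would pick for each $i$ a trivialization, i.e.\ an isomorphism of $(\mathscr C,\mathscr D,\mathscr E,\{\mathscr E^i_j\},\mathscr P^\bullet)|_{U_i}$ with the trivial deformation of $(C,D,E_\star,P^\bullet)|_{U_i}$ restricting to the identity (and to $\psi$) modulo $\varepsilon$. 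On each overlap $U_{ij}$ the two trivializations differ by an automorphism of the trivial deformation that is the identity modulo $\varepsilon$, hence of the form $\on{id}+\varepsilon\,\tau_{ij}$ for a unique $\tau_{ij}\in\on{At}_{(C,D)}(E_\star,P^\bullet)(U_{ij})$ by the first paragraph, and the cocycle relation for the trivializations forces $\{\tau_{ij}\}$ to be a \v Cech $1$-cocycle. Changing the trivializations changes $\{\tau_{ij}\}$ by a coboundary, so this produces a well-defined class in $H^1(C,\on{At}_{(C,D)}(E_\star,P^\bullet))$.

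For the inverse, given a $1$-cocycle $\{\tau_{ij}\}$ I would glue the trivial local deformations $U_i\times\on{Spec}\mathbb C[\varepsilon]/\varepsilon^2$ (each carrying the pulled-back parabolic bundle and filtration) along the automorphisms $\on{id}+\varepsilon\,\tau_{ij}$; the cocycle condition guarantees the gluing data descends to a genuine deformation of $(C,D,E_\star,P^\bullet)$, and cohomologous cocycles give isomorphic deformations. These two constructions are mutually inverse by inspection, giving the bijection, and functoriality is immediate since pulling back a deformation along a map of Artin rings (or of the ambient geometric data) pulls back the local trivializations and hence the cocycle. As a consistency check one should note that composing with $H^1(\on{At}_{(C,D)}(E_\star,P^\bullet))\to H^1(T_C(-D))$ from \eqref{equation:Atiyah-exact-sequence} recovers the forgetful map to $\on{Def}_{(C,D)}$ of \autoref{proposition:curve-deformation}, and that the deformations with the curve held fixed form a torsor under $H^1(C,\sheafend(E_\star,P^\bullet))$ in the usual way, matching the long exact sequence of \eqref{equation:Atiyah-exact-sequence}. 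The main obstacle will be the first paragraph — making the symmetry interpretation of the parabolic Atiyah bundle precise, especially the residue condition — together with the routine but necessary check that local deformations are genuinely unobstructed and trivial, so that the \v Cech gluing is legitimate; everything after that is bookkeeping.
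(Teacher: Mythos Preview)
Your proposal is correct and follows the standard \v Cech-cocycle approach; the paper itself does not give a self-contained argument but simply cites \cite[\S2.2]{BHH:logarithmic} for the non-parabolic case and \cite[Lemma 3.2]{BHH:parabolic} for the parabolic case with a one-step filtration, noting that arbitrary filtrations are analogous. Your outline is essentially what those references carry out, and your emphasis on the residue condition $\widehat\phi_j(\tau)=0$ as the infinitesimal incarnation of preserving the quasiparabolic structure is exactly the right point to check.
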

\begin{proof}
	In the non-parabolic case, this is explained in \cite[\S2.2]{BHH:logarithmic}.
	The parabolic case is described in \cite[Lemma 3.2]{BHH:parabolic} for
	the case of a $1$-step filtration $P^\bullet = (0 \subset P^1 \subset
	E)$, and the case of arbitrary length filtrations is analogous.
\end{proof}
\begin{remark}
	\label{remark:}
	If $P^\bullet$ is trivial, we omit it from the notation. In particular,
	$$\on{Def}_{(C, D,
	E_\star)}(\mathbb{C}[\varepsilon]/\varepsilon^2)\overset{\sim}{\to} H^1(C,
	\on{At}_{(C,D)}(E_\star)).$$
\end{remark}

Now, begin with a flat vector bundle $(E, \nabla)$ with regular singularities
along $D \subset C$, and let $E_\star$ denote the associated parabolic bundle defined in
\autoref{definition:associated-parabolic}.
There is an evident natural transformation $$\on{Forget}: \on{Def}_{(C, D,
{E_\star})}\to \on{Def}_{(C,D)}$$ given by forgetting ${E_\star}$. The construction of
isomonodromic deformations yields a section $$\on{iso}: \on{Def}_{(C,D)}\to
\on{Def}_{(C,D,E_\star)}$$ to this map (which depends on $\nabla$), as we now spell
out.

\begin{proposition}
	\label{proposition:connection-h1}
	With notation as above, let $\delta: \on{At}_{(C,D)}(E_\star)\to T_C(-D)$ be the natural quotient map, and
$$q^\nabla: T_C(-D)\to\on{At}_{(C,D)}({E_\star})$$ be the section to $\delta$ 
described in \autoref{definition:associated-parabolic}
arising
from $\nabla$ via \autoref{proposition:atiyah-splittings} and
\cite[Lemma 4.1]{BHH:parabolic}.
    Under the natural identifications
    $$\on{Def}_{(C,D)}(\mathbb{C}[\varepsilon]/\varepsilon^2)\overset{\sim}{\to}
    H^1(C, T_C(-D))$$ and $$\on{Def}_{(C,D,
    {E_\star})}(\mathbb{C}[\varepsilon]/\varepsilon^2)\overset{\sim}{\to} H^1(C,
    \on{At}_{(C,D)}({E_\star}))$$
    arising from \autoref{proposition:curve-deformation} and \autoref{proposition:filtered-bundle-deformation}, the two squares below commute:
    $$\xymatrix@C=.8em{
    \on{Def}_{(C,D, {E_\star})}(\mathbb{C}[\varepsilon]/\varepsilon^2) \ar[r]^-\sim
    \ar[d]_{\on{Forget}}&  H^1(C, \on{At}_{(C,D)}({E_\star})) \ar[d]_{\delta_*} &
    \on{Def}_{(C,D, {E_\star})}(\mathbb{C}[\varepsilon]/\varepsilon^2) \ar[r]^-\sim &
    H^1(C, \on{At}_{(C,D)}({E_\star})) \\
    \on{Def}_{(C,D)}(\mathbb{C}[\varepsilon]/\varepsilon^2) \ar[r]^-\sim & H^1(C,
    T_C(-D))  & \on{Def}_{(C,D)}(\mathbb{C}[\varepsilon]/\varepsilon^2) \ar[r]^-\sim
    \ar[u]_{\on{iso}} & H^1(C, T_C(-D)) \ar[u]_{(q^\nabla)_*}.
    }$$
   \end{proposition}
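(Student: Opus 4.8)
The plan is to show both squares commute by reducing everything to an explicit Čech-cocycle computation, using the deformation-theoretic descriptions from \autoref{proposition:curve-deformation} and \autoref{proposition:filtered-bundle-deformation} and tracking how each first-order deformation is built. The left square, involving $\on{Forget}$ and $\delta_*$, is essentially tautological: by construction the bijection of \autoref{proposition:filtered-bundle-deformation} sends a first-order deformation $(\mathscr C, \mathscr D, \mathscr E_\star)$ to the class in $H^1(C, \on{At}_{(C,D)}(E_\star))$ obtained by gluing local trivializations of $\mathscr E_\star$ over $C$, and the induced map to $H^1(C, T_C(-D))$ records precisely the gluing data of the underlying curve $(\mathscr C, \mathscr D)$, which is the image under $\on{Forget}$. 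So the left square commutes by compatibility of the two gluing constructions with the quotient map $\delta$ of the Atiyah sequence \eqref{equation:Atiyah-exact-sequence}; this is a matter of unwinding definitions, and I would state it as such after setting up notation for an affine cover $\{U_\alpha\}$ of $C$.

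For the right square, the content is that the section $\on{iso}$ of $\on{Forget}$ — the first-order isomonodromic deformation — corresponds under the identifications to the map $(q^\nabla)_*: H^1(C, T_C(-D)) \to H^1(C, \on{At}_{(C,D)}(E_\star))$ induced by the connection-splitting $q^\nabla: T_C(-D) \to \on{At}_{(C,D)}(E_\star)$ of \autoref{definition:associated-parabolic}. I would argue this as follows. Fix a first-order deformation $(\mathscr C, \mathscr D)$ of $(C,D)$ over $\mathbb C[\varepsilon]/\varepsilon^2$, represented by a Čech $1$-cocycle $\{\theta_{\alpha\beta}\} \in Z^1(\{U_\alpha\}, T_C(-D))$. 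By \autoref{lemma:iso-extension}, over the contractible base the isomonodromic deformation is characterized by flatness: the locally constant sheaf $\mathbb V = \ker \nabla$ extends, and the extended connection $\widetilde{\nabla}$ is flat with the prescribed monodromy. Concretely, to build the isomonodromic $\mathscr E_\star$ one trivializes $\mathscr E_\star$ over each $U_\alpha \times \on{Spec}\mathbb C[\varepsilon]/\varepsilon^2$ compatibly with the flat structure, and the transition functions differ from those of $E_\star$ by the first-order term which is exactly the image of $\theta_{\alpha\beta}$ under the connection: parallel transport along the deformation direction. This identifies the gluing cocycle of $\on{iso}(\mathscr C, \mathscr D)$ in $\on{At}_{(C,D)}(E_\star)$ with $\{q^\nabla(\theta_{\alpha\beta})\}$, which is precisely $(q^\nabla)_*$ applied to the class of $\{\theta_{\alpha\beta}\}$. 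Since $q^\nabla$ is a section of $\delta$, this cocycle maps back to $\{\theta_{\alpha\beta}\}$ in $T_C(-D)$, which also re-proves commutativity of the left square on the image of $\on{iso}$.

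The main obstacle is making precise the claim that the first-order term in the transition functions of the isomonodromic deformation is $q^\nabla$ applied to the deformation class. This requires knowing that $\on{iso}$, as constructed via Deligne's canonical extension in \autoref{lemma:iso-extension}, agrees infinitesimally with the "flat parallel transport" description, and that this agrees with the Atiyah-bundle formalism: a splitting of the Atiyah sequence is a connection, and the induced first-order deformation of the bundle-with-filtration in the direction $\theta$ is gotten by transporting via that splitting. I would handle this by appealing to \autoref{proposition:atiyah-splittings} (which identifies $q^\nabla$ with $\nabla$) and to the functoriality remark following \autoref{definition:nearby}: pulling back the universal isomonodromic deformation along the map $\on{Spec}\mathbb C[\varepsilon]/\varepsilon^2 \to \Delta$ classifying $\{\theta_{\alpha\beta}\}$ gives the first-order isomonodromic deformation, and the Kodaira–Spencer-type computation of its class is then the standard one for a connection, as in \cite[\S2.2]{BHH:logarithmic} and \cite[Lemma 4.1]{BHH:parabolic}. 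With the parabolic bookkeeping already packaged into $\on{At}_{(C,D)}(E_\star)$ via \autoref{definition:parabolic-atiyah-bundle}, the parabolic case requires no new ideas beyond the non-parabolic one. I would close by noting that, as in several nearby proofs, this is ultimately "a matter of unwinding definitions."
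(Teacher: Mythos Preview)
Your proposal is correct and takes essentially the same approach as the paper: the paper's proof consists entirely of citing \cite[Lemma 3.1 and Lemma 4.3]{BHH:parabolic} and \cite[\S2.2 and \S4.1]{BHH:logarithmic}, and your sketch is precisely an unpacking of what those references contain (the \v{C}ech-cocycle description of the deformation classes and the identification of the isomonodromic section with $(q^\nabla)_*$). If anything, you have written out more than the paper does; your closing remark that this is ``a matter of unwinding definitions'' is exactly the spirit of the paper's one-line proof-by-citation.
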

\begin{proof}
	This is explained in \cite[Lemma 3.1 and Lemma 4.3]{BHH:parabolic}.
	Also see \cite[\S2.2 and \S 4.1]{BHH:logarithmic} for the non-parabolic
	case.
\end{proof}
We recall one additional result describing when a filtration extends to a
deformation.

The proof of the following lemma in the non-parabolic case is explained following the proof of 
\cite[Lemma 3.1]{BHH:logarithmic}.

\begin{lemma}
	\label{lemma:parabolic-extension-obstruction}
	Suppose we are given $(C,D,E_\star,\nabla)$ as in
	\autoref{proposition:connection-h1}
	and a filtration $P^\bullet$ of $E$ as in
	\autoref{notation:deformation-theory-iso}.
	Assume
	further we have a first-order deformation $(\mathscr C, \mathscr D)$ of $(C, D)$
	corresponding to an element $s \in \on{Def}_{(C,D)}(\mathbb{C}[\varepsilon]/\varepsilon^2)\overset{\sim}{\to} H^1(C, T_C(-D))$.
	With $q^\nabla$ as in 
	\autoref{proposition:connection-h1},
	suppose $q^\nabla(s)$
	corresponds to a deformation $(\mathscr C, \mathscr D, \mathscr E_\star)$ of
	$(C,D,E_\star)$
in which $P^\bullet \subset E$ admits an extension to a filtration $\mathscr
P^\bullet$
of $\mathscr E$.
Then $$q^\nabla(s) \in \ker
\left(H^1(C, \on{At}_{(C,D)}(E_\star)) \to H^1(C,
\sheafend(E_\star)/\sheafend(E_\star, P^\bullet))\right).$$
\end{lemma}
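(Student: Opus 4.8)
The plan is to unwind the deformation-theoretic meaning of the two exact sequences already in hand and match up the obstruction classes. Concretely, recall from \eqref{equation:quotient-atiyah-bundles} the short exact sequence
\[
0 \to \on{At}_{(C,D)}(E_\star, P^\bullet) \to \on{At}_{(C,D)}(E_\star) \xrightarrow{\pi} \sheafend(E_\star)/\sheafend(E_\star, P^\bullet) \to 0,
\]
so that the long exact sequence in cohomology gives a connecting map and, in particular, a map $\pi_*: H^1(C, \on{At}_{(C,D)}(E_\star)) \to H^1(C, \sheafend(E_\star)/\sheafend(E_\star, P^\bullet))$. The claim is precisely that $q^\nabla(s)$ lies in $\ker(\pi_*)$, equivalently that $q^\nabla(s)$ lifts to a class in $H^1(C, \on{At}_{(C,D)}(E_\star, P^\bullet))$. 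By \autoref{proposition:filtered-bundle-deformation} applied with the filtration $P^\bullet$, such a lift is exactly the data of a first-order deformation of the \emph{filtered} object $(C, D, E_\star, P^\bullet)$, i.e.\ a deformation $(\mathscr C, \mathscr D, \mathscr E_\star)$ together with an extension $\mathscr P^\bullet$ of $P^\bullet$ to a sub-filtration of $\mathscr E$. But this is exactly what we have assumed exists: the hypothesis says that the deformation $(\mathscr C, \mathscr D, \mathscr E_\star)$ classified by $q^\nabla(s)$ admits an extension of $P^\bullet$ to a filtration $\mathscr P^\bullet$ of $\mathscr E$.

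So the steps, in order, are: (1) identify the target $H^1(C, \sheafend(E_\star)/\sheafend(E_\star, P^\bullet))$ and the map $\pi_*$ coming from the forgetful map on Atiyah bundles \eqref{equation:quotient-atiyah-bundles}; (2) use \autoref{proposition:filtered-bundle-deformation} (in the $P^\bullet$-filtered form) together with the compatible version for the trivial filtration to identify the forgetful natural transformation $\on{Def}_{(C,D,E_\star,P^\bullet)} \to \on{Def}_{(C,D,E_\star)}$ on tangent spaces with the map $H^1(C, \on{At}_{(C,D)}(E_\star, P^\bullet)) \to H^1(C, \on{At}_{(C,D)}(E_\star))$ — the point being that forgetting $\mathscr P^\bullet$ is literally the geometric incarnation of this cohomology map; (3) observe that $q^\nabla(s)$, being by construction the class of the isomonodromic deformation $(\mathscr C, \mathscr D, \mathscr E_\star)$, lies in the image of this forgetful map exactly when $\mathscr P^\bullet$ extends, which is the assumption; (4) conclude by exactness that $q^\nabla(s) \in \ker(\pi_*)$, since the image of $H^1(C, \on{At}_{(C,D)}(E_\star, P^\bullet))$ is the kernel of $\pi_*$ in the long exact sequence.

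The one genuinely careful point — and the step I'd expect to need the most attention — is (2): matching the forgetful natural transformation of deformation functors with the naturality of $H^1$ applied to the inclusion $\on{At}_{(C,D)}(E_\star, P^\bullet) \hookrightarrow \on{At}_{(C,D)}(E_\star)$, so that ``$q^\nabla(s)$ comes from a filtered deformation'' translates precisely into ``$q^\nabla(s)$ is in the image of $H^1$ of the sub-Atiyah-bundle.'' This is the content of (the filtered analogue of) the compatibility in \cite[Lemma 3.1 and Lemma 4.3]{BHH:parabolic} and of \autoref{proposition:connection-h1}; the argument is essentially a diagram chase through the identifications of \autoref{proposition:filtered-bundle-deformation}, and parallels the non-parabolic discussion following \cite[Lemma 3.1]{BHH:logarithmic}. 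Once that identification is in place, the rest is a one-line appeal to exactness of the long exact sequence attached to \eqref{equation:quotient-atiyah-bundles}.
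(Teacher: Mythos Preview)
Your proposal is correct and follows essentially the same route as the paper: both use the short exact sequence \eqref{equation:quotient-atiyah-bundles}, interpret the hypothesis as saying $q^\nabla(s)$ lies in the image of $H^1(C,\on{At}_{(C,D)}(E_\star,P^\bullet))\to H^1(C,\on{At}_{(C,D)}(E_\star))$, and conclude by exactness of the associated long exact sequence. The only difference is that you spell out more explicitly the compatibility check in your step (2), which the paper takes for granted.
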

\begin{proof}
	Note that the map
	$$H^1(C, \on{At}_{(C,D)}(E_\star)) \to
	H^1(C,\sheafend(E_\star)/\sheafend(E_\star, P^\bullet))$$
is induced by the surjection of sheaves
$\on{At}_{(C,D)}(E_\star)
\to \sheafend(E_\star)/\sheafend(E_\star, P^\bullet)$
from \eqref{equation:quotient-atiyah-bundles}.
In the above situation,
	$q^\nabla(s) \in H^1(C, \on{At}_{(C,D)}(E_\star))$ is in the image of the natural map
	$$H^1(C, \on{At}_{(C,D)}(E_\star, P^\bullet))\to H^1(C,
	\on{At}_{(C,D)}(E_\star))$$ and the composition
	$$H^1(C, \on{At}_{(C,D)}(E_\star, P^\bullet)) \to H^1(C,
	\on{At}_{(C,D)}(E_\star)) \to H^1(C,
\sheafend(E_\star)/\sheafend(E_\star, P^\bullet))$$
vanishes. Indeed, this composition is part of the long exact sequence in cohomology induced by the short exact
sequence of sheaves (\ref{equation:quotient-atiyah-bundles}).
Therefore, 
$q^\nabla(s) \in \ker
\left(H^1(C, \on{At}_{(C,D)}(E_\star)) \to H^1(C,
\sheafend(E_\star)/\sheafend(E_\star, P^\bullet))\right).$
\end{proof}

\section{Hodge-theoretic preliminaries}\label{section:hodge-theoretic-preliminaries}

We briefly recall the definition of a variation of Hodge structure, and some standard positivity and semistability results for the (parabolic Higgs) bundles associated to such variations.
In particular, \autoref{lemma:positive-Hodge-bundle}, which shows the first
filtered piece of the Hodge filtration tends to have positive parabolic degree, is crucial
to our semistability arguments.
The properties in \autoref{prop:basic-facts} will also be used repeatedly in this
paper

\subsection{Complex variations of Hodge structure}
Let $X$ be a smooth irreducible complex variety.
\begin{definition}[Polarizable complex variations of Hodge structure]
	\label{definition:complex-variation}
	A {\em complex variation of Hodge structure} on $X$ is a triple $(V,
	V^{p,q}, D)$, where $V$ is a $C^\infty$ complex vector bundle on $X$,
	$V=\oplus V^{p,q}$ is a direct sum decomposition, and $D$ is a flat
	connection satisfying Griffiths transversality: $$D(V^{p,q})\subset
	A^{1,0}(V^{p,q})\oplus A^{0,1}(V^{p,q})\oplus A^{1,0}(V^{p-1,q+1})\oplus
	A^{0,1}(V^{p+1, q-1}).$$
	A {\em polarization} on $(V, V^{p,q}, D)$ is a flat Hermitian form $\psi$ on $V$ such that the $V^{p,q}$ are orthogonal to one another under $\psi$, and such that $(-1)^p\psi$ is positive definite on each $V^{p,q}$. 
	A {\em polarizable complex variation of Hodge structure} is a complex
	variation of Hodge structure which admits a polarization.
    
    We call the holomorphic flat vector bundle $(E,
    \nabla):=(\ker(D)\otimes_{\mathbb{C}} \mathscr{O}, \on{id}\otimes d)$ the
    {\em holomorphic flat vector bundle associated to the complex variation of
    Hodge structure}. The filtration $F^pV:=\oplus_{j\geq p} V^{j,q}$ of $V$ induces a decreasing \emph{Hodge filtration} $F^\bullet V$ by holomorphic sub-bundles, such that \begin{equation}\label{eqn:griffiths-transversality} \nabla(F^p)\subset F^{p-1}\otimes \Omega^1_X.\end{equation}
    
    If $\mathbb{V}$ is a complex local system on $X$ which is isomorphic to
    $\ker(D)$ for some polarizable complex variation of Hodge structure $(V, V^{p,q}, D)$, we say that $\mathbb{V}$ \emph{underlies a polarizable complex variation of Hodge structure}.
\end{definition}

For the next definition, recall that in, in the case of curves, we defined the
residues of a connection with regular singularities in
\autoref{subsubsection:residue}.
In the case of higher dimensional varieties see 
\cite[p. 53]{deligne:regular-singular}.

\begin{definition}[Deligne canonical extension \protect{\cite[Remarques 5.5(i)]{deligne:regular-singular}}]
	   Let $\overline{X}$ be a smooth projective variety containing $X$ as a dense open subvariety with simple normal crossings complement $Z$.
   Let $(E, \nabla)$ be a flat holomorphic vector bundle on $X$. The \emph{Deligne canonical extension}
     $(\overline{E}, \overline{\nabla}: \overline{E}\to \overline{E}\otimes
     \Omega^1_{\overline{X}}(\log Z))$ of $(E, \nabla)$ to $\overline{X}$ is the
     unique flat vector bundle on $\overline{X}$ with regular singularities
     along $Z$, equipped with an isomorphism $(\overline{E},
     \overline{\nabla})|_X\overset{\sim}{\to}(E, \nabla)$, characterized by the
     property that all eigenvalues of its residues 
     along components of $Z$ 
     have real parts lying in $[0,1)$.
\end{definition}

\begin{definition}[The associated Higgs bundle]
    Let $(E, F^\bullet, \nabla)$ be a holomorphic vector bundle $E$ on a smooth variety $\overline{X}$, with a flat connection $\nabla$ with regular singularities along a simple normal crossings divisor $Z\subset\overline{X}$, and a decreasing filtration $F^\bullet$ by holomorphic sub-bundles satisfying the Griffiths transversality condition \begin{equation}\label{eqn:griffiths-transversality-2} \nabla(F^p)\subset F^{p-1}\otimes \Omega^1_{\overline{X}}(\log Z).\end{equation} The \emph{associated Higgs bundle} is the pair $(\oplus_i \on{gr}^i_{F^\bullet}E, \theta)$, where the \emph{Higgs field} $$\theta:=\bigoplus_i (\theta_i: \on{gr}^i_{F^\bullet}E\to \on{gr}^{i-1}_{F^\bullet}E\otimes\Omega^1_{\overline{X}}(\log Z))$$ is the $\mathscr{O}_{\overline{X}}$-linear map induced by $\nabla.$
    
    The vector bundle $E$ canonically has the structure of a parabolic bundle
    $E_\star$
    (if $X$ is a curve, this structure is described in
    \autoref{definition:associated-parabolic}, and it is
    described in general in \cite[Proposition 5.4]{arapura2019vanishing}).
    This structure induces the structure of a parabolic bundle on $\oplus_i
    \on{gr}^i_{F^\bullet} E_\star$, via
    \autoref{subsubsection:induced-subbundle} as a direct sum of subquotients of $E$,
    preserved by $\theta$. We refer to the pair $(\oplus_i
    \on{gr}^i_{F^\bullet}E_\star, \theta)$ with its parabolic structure as the \emph{parabolic Higgs bundle} associated to the variation of Hodge structure.
\end{definition}
We collect some basic facts about polarizable complex variations of Hodge structure, the canonical extensions thereof, and their associated Higgs bundles:
\begin{proposition}\label{prop:basic-facts}
Let $\overline{X}$ be a smooth projective curve, $Z\subset
\overline{X}$ a reduced divisor, and let
$X=\overline{X}\setminus Z$. 
Let $(V, V^{p,q}, D)$ be a polarizable complex variation of Hodge structure on
$X$, 
and let $(E, F^\bullet,
\nabla)$ be the holomorphic flat vector bundle associated to this variation of
Hodge structure, with its Hodge filtration. Let $(\overline{E}, \overline{\nabla})$ be its Deligne canonical extension.
Let $\overline{E}_\star$ be the parabolic bundle associated to $(\overline{E}, \nabla)$, as defined in
\autoref{definition:associated-parabolic}. 
\begin{enumerate}
    \item The local system $\mathbb{V}:=\ker(\nabla)$ associated to $(E, \nabla)$ is semisimple.
    \item The local system $\mathbb{V}$ may be canonically decomposed as $$\mathbb{V}\simeq \bigoplus_i \mathbb{L}_i\otimes
	    W_i,$$ where the $\mathbb{L}_i$ are pairwise non-isomorphic
	    irreducible complex local systems on $X$, and each $W_i$ is a complex vector
	    space. Each $\mathbb{L}_i$ underlies a polarizable complex
	    variation of Hodge structure, and each $W_i$ carries a complex polarized Hodge structure, both unique up to shifting the grading, and compatible with the variation carried by $\mathbb{V}$.
    \item $\overline{E}_\star$ has parabolic degree zero.
    \item There exists a canonical extension of $F^\bullet$ to $\overline{E}$, such that $(\overline{E}, F^\bullet, \overline{\nabla})$ satisfies the Griffiths transversality condition (\ref{eqn:griffiths-transversality-2}).
    \item The parabolic Higgs bundle $(\oplus_i
	    \on{gr}^i_{F^\bullet}\overline{E}_\star, \theta)$ associated
	    to $(\overline{E}_\star, F^\bullet, \overline{\nabla})$ is
	    parabolically polystable of degree zero. 
	    That is, there exist a collection of parabolic vector bundles
	    $E^j_\star$ with
	    $\deg(E^j_\star) = 0$ and maps
	    $\theta^j : E^j_\star \to E^j_\star \otimes \Omega^1_{\overline{X}}(\log Z)$ so that
	    both
	    $(\oplus_i\on{gr}^i_{F^\bullet}\overline{E}_\star,
	    \theta)=\oplus_j (E^j_\star, \theta^j)$
	    and for any
	    $\theta^j$-stable proper sub-bundle $H_\star \subset
	    E^j_\star$ with its induced parabolic structure, $\on{par-deg}
	    H_\star < 0$.
\end{enumerate}
\end{proposition}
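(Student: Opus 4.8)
The plan is to prove (1) and (2) as statements about the local system together with its variation, using semisimplicity of the category of polarizable complex variations of Hodge structure; to deduce (3) from the residue theorem for logarithmic connections on a curve; to obtain (4) from Schmid's nilpotent orbit theorem; and to obtain (5) from Simpson's correspondence between tame harmonic bundles on a punctured curve and parabolically polystable Higgs bundles of parabolic degree zero. Note that (3) and the ``parabolic degree zero'' half of (5) amount to a single computation, since passing to the associated graded of a filtration preserves parabolic degree.

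For (1) I would use the standard fact (due to Deligne and Schmid) that any sub-local system $\mathbb{W}\subset\mathbb{V}$ is automatically a sub-variation, i.e.\ $W^{p,q}:=W\cap V^{p,q}$ gives $W=\bigoplus W^{p,q}$ and $(W,W^{p,q},D|_W)$ is again a polarizable complex variation. Since $(-1)^p\psi$ is positive definite on each $W^{p,q}$ and the $W^{p,q}$ are $\psi$-orthogonal, $\psi|_W$ is nondegenerate, so $\mathbb{V}=\mathbb{W}\oplus\mathbb{W}^{\perp_\psi}$ as flat bundles with both summands again sub-variations; induction on the rank gives semisimplicity. For (2), semisimplicity lets us write $\mathbb{V}\simeq\bigoplus_i\mathbb{L}_i\otimes W_i$ as local systems with $W_i:=\on{Hom}_{\pi_1}(\mathbb{L}_i,\mathbb{V})$, and each isotypic summand $\mathbb{V}_i:=\mathbb{L}_i\otimes W_i$ is canonically defined, hence (applying the cited theorem to the corresponding flat idempotent) a sub-variation; so $\mathbb{L}_i$, being a summand of $\mathbb{V}_i$, carries a polarizable complex variation, unique up to tensoring with a rank-one constant Hodge structure (i.e.\ an overall shift of grading) since $\on{Hom}(\mathbb{L}_i,\mathbb{L}_i)=\mathbb{C}$ by irreducibility. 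Fixing such a variation on $\mathbb{L}_i$, the variation $\mathbb{L}_i^\vee\otimes\mathbb{V}_i$ has constant part $W_i$, which therefore acquires a polarized Hodge structure, and the evaluation map $\mathbb{L}_i\otimes W_i\to\mathbb{V}_i$ is an isomorphism of variations; uniqueness and compatibility are then formal.

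For (3) I would apply the residue theorem to $\det\overline{\nabla}$ on $\det\overline{E}$ to get $\deg\overline{E}=-\sum_j\on{tr}\on{Res}_{x_j}(\overline{\nabla})$. Since $(\overline{E},\overline{\nabla})$ is the Deligne canonical extension, the eigenvalues of $\on{Res}_{x_j}(\overline{\nabla})$ have real part in $[0,1)$, so those real parts, weighted by the dimensions of the corresponding generalized eigenspaces, are exactly the parabolic-weight data defining $\overline{E}_\star$ in \autoref{definition:associated-parabolic}; hence $\on{par-deg}(\overline{E}_\star)=\deg\overline{E}+\sum_j\on{Re}\big(\on{tr}\on{Res}_{x_j}(\overline{\nabla})\big)=\deg\overline{E}-\deg\overline{E}=0$, using that $\deg\overline{E}$ is real. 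For (4), the claim is that the holomorphic sub-bundles $F^p\subset E$ extend to holomorphic sub-bundles of $\overline{E}$ satisfying $\overline{\nabla}(F^p)\subset F^{p-1}\otimes\Omega^1_{\overline{X}}(\log Z)$; the monodromy at each $x_j$ is quasi-unipotent, and Schmid's nilpotent orbit theorem shows that in the distinguished frame of the canonical extension each $F^p$ extends holomorphically across $x_j$, transversality with a logarithmic pole being inherited from the transversality already valid on $X$.

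For (5) I would equip $E$ with the Hodge metric $h:=\bigoplus_{p,q}(-1)^p\psi|_{V^{p,q}}$; the classical computation (see e.g.\ \cite{simpson:constructing-VHS}) shows that $(E,h,\nabla)$ is a harmonic bundle, which is tame since $\nabla$ has regular singularities along $Z$. By Simpson's correspondence for tame harmonic bundles on a punctured curve, the associated parabolic Higgs bundle is a direct sum of parabolically stable parabolic Higgs bundles of parabolic degree zero, which --- together with the decomposition from (1)--(2) --- is the polystable decomposition $\bigoplus_j(E^j_\star,\theta^j)$ asserted in (5). The step I expect to be the main obstacle is identifying this parabolic Higgs bundle with $(\bigoplus_i\on{gr}^i_{F^\bullet}\overline{E}_\star,\theta)$: one must check that the parabolic structure produced by the harmonic metric coincides with the one built from $F^\bullet$ and from the residues of $\overline{\nabla}$ as in \autoref{definition:associated-parabolic} (cf.\ \cite{arapura2019vanishing}). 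Once that bookkeeping is done, the parabolic-degree-zero statement is (3) applied to the associated graded, and (5) follows.
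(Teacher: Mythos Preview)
Your proposal is correct and follows essentially the same route as the paper, which simply cites Deligne \cite[1.11--1.13]{deligne1987theoreme} for (1)--(2), Esnault--Viehweg \cite[B.3]{esnault1986logarithmic} for (3), Brunebarbe \cite[\S7]{brunebarbe2017semi} for (4), and Simpson \cite[Theorem~5]{simpson1990harmonic} for (5); your sketches are faithful outlines of those cited arguments, including the residue computation for (3) and the identification of the harmonic-metric parabolic structure with the residue-defined one for (5).
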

\begin{proof}
	The proof of 
(1) is explained in \cite[1.11-1.12]{deligne1987theoreme} and (2) is
\cite[1.13]{deligne1987theoreme}. The proof of (3) follows from
\cite[B.3]{esnault1986logarithmic} 
while (4) is explained in e.g.~\cite[Section 7]{brunebarbe2017semi}. 
Finally,
(5) is due to Simpson \cite[Theorem 5]{simpson1990harmonic}. 
See the discussion in the introduction of \cite{arapura2019vanishing} for a nice summary of this and related topics.
\end{proof}

The next lemma is crucial in the proof of our main result
\autoref{theorem:isomonodromic-deformation-CVHS} since the positivity it
gives for $F^i \overline{E}_\star$ will contradict our later results on
semistability, unless the Hodge filtration has only a single part.
The connection to semistability is spelled out below in
\autoref{corollary:unstable}.
\begin{lemma}\label{lemma:positive-Hodge-bundle}
    Let $C$ be a smooth proper curve, $Z\subset C$ a reduced effective divisor,
    and $(V, V^{p,q}, D)$ a  a polarizable complex variation of Hodge structure
    on $C\setminus Z$. Let $(\overline{E}_\star, F^\bullet, \nabla)$ be the Deligne canonical
    extension of the associated flat holomorphic vector bundle to $C$ with its
    canonical parabolic structure.  Let $i$ be maximal such that
    $F^i\overline{E}_\star$ is non-trivial, where $F^\bullet$ is the Hodge
    filtration, and suppose that the Higgs field $$\theta_i:
    F^i\overline{E}_\star\to
    \on{gr}^{i-1}_{F^{\bullet}}\overline{E}_\star\otimes \Omega^1_C(\log Z)$$ is
    non-zero. Then $F^i\overline{E}_\star$ has positive parabolic degree.
\end{lemma}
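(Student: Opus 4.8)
The plan is to realize $F^i\overline{E}_\star$ as a \emph{Higgs quotient} of the parabolic Higgs bundle associated to the variation, and then play this off against the polystability in \autoref{prop:basic-facts}(5). Since $i$ is maximal with $F^i\overline{E}_\star$ nonzero, we have $F^{i+1}\overline{E}_\star = 0$, so $F^i\overline{E}_\star = \on{gr}^i_{F^\bullet}\overline{E}_\star$ is the top graded piece of the parabolic Higgs bundle $(\mathscr G_\star,\theta)$, where $\mathscr G_\star := \bigoplus_j \on{gr}^j_{F^\bullet}\overline{E}_\star$ and $\theta = \bigoplus_j\theta_j$. Because $\theta$ strictly lowers the grading, the parabolic subbundle $S_\star := \bigoplus_{j\le i-1}\on{gr}^j_{F^\bullet}\overline{E}_\star$ is $\theta$-invariant, and the induced Higgs field on the quotient $\mathscr G_\star/S_\star \cong \on{gr}^i_{F^\bullet}\overline{E}_\star$ vanishes, since $\theta_i$ takes values in $\on{gr}^{i-1}_{F^\bullet}\overline{E}_\star\otimes\Omega^1_C(\log Z)\subseteq S_\star\otimes\Omega^1_C(\log Z)$. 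By \autoref{prop:basic-facts}(3) and (5), $\mathscr G_\star$ has parabolic degree $0$ and is parabolically polystable as a Higgs bundle; in particular it is semistable, so the $\theta$-invariant subbundle $S_\star$ has $\on{par-deg}(S_\star)\le 0$. As $\on{par-deg}$ is additive over the direct sum $\mathscr G_\star = S_\star \oplus \on{gr}^i_{F^\bullet}\overline{E}_\star$ of parabolic bundles (\autoref{lemma:quotient-semistable}), we obtain $\on{par-deg}(F^i\overline{E}_\star) = -\on{par-deg}(S_\star) \ge 0$.

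It remains to rule out $\on{par-deg}(F^i\overline{E}_\star) = 0$. Assume it vanishes; then $\on{par-deg}(S_\star) = 0$, so $S_\star$ is a $\theta$-invariant subbundle of $(\mathscr G_\star,\theta)$ whose parabolic slope equals that of the ambient polystable Higgs bundle. A standard property of polystable objects then forces $S_\star$ to be a direct summand of $(\mathscr G_\star,\theta)$ as a parabolic Higgs bundle; equivalently, the projection $\pi\colon \mathscr G_\star \to \on{gr}^i_{F^\bullet}\overline{E}_\star$ admits a section $s$ that is a morphism of parabolic Higgs bundles. Writing $s = \iota + \delta$, where $\iota\colon \on{gr}^i_{F^\bullet}\overline{E}_\star \hookrightarrow \mathscr G_\star$ is the inclusion of the top summand and $\delta$ maps $\on{gr}^i_{F^\bullet}\overline{E}_\star$ into $S_\star$, the fact that $s$ is a Higgs morphism out of the zero-Higgs-field bundle $\on{gr}^i_{F^\bullet}\overline{E}_\star$ reads $\theta\circ s = 0$, i.e.\ $\theta_i + \theta\circ\delta = 0$. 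But $\theta_i$ takes values in $\on{gr}^{i-1}_{F^\bullet}\overline{E}_\star\otimes\Omega^1_C(\log Z)$ while $\theta\circ\delta$ takes values in $\bigoplus_{j\le i-2}\on{gr}^j_{F^\bullet}\overline{E}_\star\otimes\Omega^1_C(\log Z)$, and these are complementary direct summands; hence $\theta_i = 0$, contradicting the hypothesis. Therefore $\on{par-deg}(F^i\overline{E}_\star) > 0$.

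The one point demanding care --- and the \textbf{main obstacle} --- is the splitting used at the start of the second paragraph: that a $\theta$-invariant parabolic subbundle of a parabolically polystable Higgs bundle, of the same parabolic slope, is a direct summand. This is the parabolic-Higgs analogue of the elementary fact that a subobject of a semisimple object of an abelian category is a direct summand, applied in the abelian category of parabolically semistable Higgs bundles of slope $0$ on $(C,Z)$; one can quote it from the literature on parabolic Higgs bundles, or deduce it from the decomposition of $(\mathscr G_\star,\theta)$ into $\theta$-stable parabolic pieces of parabolic degree $0$ furnished by \autoref{prop:basic-facts}(5). Everything else is bookkeeping with the induced parabolic structures on the graded pieces and additivity of parabolic degree.
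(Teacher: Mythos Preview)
Your argument is correct and follows the same route as the paper's proof: realize $F^i\overline{E}_\star$ (with zero Higgs field) as a Higgs quotient of the polystable parabolic Higgs bundle of degree zero, deduce non-negativity, and use that equality of slopes in a polystable object forces a direct-sum splitting, which is incompatible with $\theta_i\neq 0$. You have simply spelled out the final contradiction more explicitly than the paper does (via the decomposition $s=\iota+\delta$ and the grading argument), which is fine.
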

\begin{remark}
	\label{remark:}
	\autoref{lemma:positive-Hodge-bundle} is essentially due to Griffiths in the case of real variations of
	Hodge structure on a
	smooth proper curve. In that case it follows from the curvature formula \cite[Theorem
	5.2]{griffiths1970periods}, and is observed there in some special cases
	\cite[Corollary 7.10]{griffiths1970periods}. For a more precise
	reference, see \cite[Corollary 2.2]{peters2000arakelov}, which
	immediately implies the claim for real variations of Hodge structure on a smooth proper curve. 
	However, as we were unable to find a precise reference in the case of complex
	variations on a quasi-projective curve, we now give a simple proof.
	A similar argument is given in the proof of 
	\cite[Theorem 3.8]{esnaultK:d-modules-and-finite-monodromy}.
\end{remark}
\begin{proof}[Proof of \autoref{lemma:positive-Hodge-bundle}]
The parabolic vector bundle $F^i\overline{E}_\star$ with the zero Higgs field is a quotient of
the parabolic Higgs bundle  $(\oplus_i \on{gr}^i_{F^\bullet}\overline{E}_\star, \theta)$, and
hence has non-negative parabolic degree by the fact that the latter is polystable, hence semistable, of
degree zero, by \autoref{prop:basic-facts}(5). It has degree zero if and only if
it is a direct summand of $(\oplus_i \on{gr}^i_{F^\bullet}\overline{E}_\star, \theta)$ by polystability; but this is ruled out by the nonvanishing of $\theta_i$.
\end{proof}
\begin{remark}
    By the construction of the Higgs field $\theta$, the condition that
    $\theta_i$ is non-zero in \autoref{lemma:positive-Hodge-bundle} is
    equivalent to the statement that $F^i\overline{E}_\star$ is not preserved by
    $\overline{\nabla}$. For example, it is automatically nonzero if
    $(\overline{E}_\star, \overline{\nabla})$ has irreducible monodromy and
    $F^i\overline{E}_\star$ is a proper sub parabolic bundle of
    $\overline{E}_\star$.
\end{remark}

We now spell out how \autoref{lemma:positive-Hodge-bundle}
relates to semistability.
\begin{corollary}\label{corollary:unstable}
Let $(\overline{E}_\star, F^\bullet, \nabla)$ be as in
\autoref{lemma:positive-Hodge-bundle}. Then the parabolic bundle
$\overline{E}_\star$ is not semistable.
\end{corollary}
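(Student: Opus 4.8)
The plan is to use $F^i\overline{E}_\star$ itself as the destabilizing sub-object, so that the entire corollary falls out of \autoref{lemma:positive-Hodge-bundle} together with \autoref{prop:basic-facts}(3). First I would record that, since $i$ is maximal with $F^i\overline{E}_\star$ nontrivial, we have $F^{i+1}\overline{E}_\star = 0$ and hence $\on{gr}^i_{F^\bullet}\overline{E}_\star = F^i\overline{E}_\star$; consequently the parabolic structure appearing in \autoref{lemma:positive-Hodge-bundle} (the subquotient structure on $\on{gr}^i_{F^\bullet}\overline{E}_\star$) coincides with the parabolic structure that $F^i\overline{E}_\star$ inherits as a subbundle of $\overline{E}_\star$ via \autoref{subsubsection:induced-subbundle}. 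With this identification in hand, \autoref{lemma:positive-Hodge-bundle} gives $\on{par-deg}(F^i\overline{E}_\star) > 0$, and therefore $\mu_\star(F^i\overline{E}_\star) > 0$ since $\rk F^i\overline{E}_\star > 0$.

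Next I would observe that $\mu_\star(\overline{E}_\star) = 0$, which is immediate from $\on{par-deg}(\overline{E}_\star) = 0$ in \autoref{prop:basic-facts}(3). It remains to check that $F^i\overline{E}_\star$ is a \emph{proper} nonzero parabolic subbundle of $\overline{E}_\star$: it is nonzero by the choice of $i$, and it is proper because the nonvanishing of the Higgs field $\theta_i \colon F^i\overline{E}_\star \to \on{gr}^{i-1}_{F^\bullet}\overline{E}_\star \otimes \Omega^1_C(\log Z)$ forces $\on{gr}^{i-1}_{F^\bullet}\overline{E}_\star \neq 0$, so $F^i\overline{E}_\star \subsetneq F^{i-1}\overline{E}_\star \subseteq \overline{E}_\star$. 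Combining these points, $F^i\overline{E}_\star$ is a proper nonzero parabolic subbundle with parabolic slope strictly larger than that of $\overline{E}_\star$, which by the definition of parabolic semistability shows $\overline{E}_\star$ is not semistable. There is no substantive obstacle here; the only step that merits a sentence of justification is the identification of the two parabolic structures on $F^i\overline{E}_\star$ noted above, and that is immediate once $F^{i+1}\overline{E}_\star = 0$.
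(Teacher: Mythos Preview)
Your proposal is correct and follows exactly the paper's approach: use \autoref{prop:basic-facts}(3) to see that $\overline{E}_\star$ has parabolic degree zero, and then \autoref{lemma:positive-Hodge-bundle} to exhibit $F^i\overline{E}_\star$ as a destabilizing subbundle. The paper's proof is a two-line version of what you wrote; your extra care in checking properness of $F^i\overline{E}_\star$ and the identification of parabolic structures is fine but not strictly needed, since a subbundle of positive parabolic degree inside a bundle of parabolic degree zero is automatically proper.
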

\begin{proof}
    The parabolic vector bundle $\overline{E}_\star$ has parabolic degree zero
    by \autoref{prop:basic-facts}(3). But by
    \autoref{lemma:positive-Hodge-bundle}, $F^i\overline{E}_\star$ has positive degree, and is hence a destabilizing subsheaf.
\end{proof}

\section{Variations of Hodge structure and the Kodaira-Parshin trick}
\label{section:counterexample}

In this section we find that
variations of Hodge structure on $\mathscr{M}_{g,1}$ with monodromy which is
``big" in a suitable sense provide examples of flat vector bundles on curves
whose isomonodromic deformation to a nearby curve is never semistable. We then produce such variations of
Hodge structure via the Kodaira-Parshin trick. 
This will be used to prove
\autoref{theorem:counterexample} and contradicts earlier claimed theorems of
Biswas, Heu, and Hurtubise 
\cite[Theorem 1.3]{BHH:logarithmic},
\cite[Theorem 1.3]{BHH:irregular}, and
\cite[Theorem 1.2]{BHH:parabolic},
as described further in \autoref{remark:bhh-error}.

In \autoref{section:hodge-theoretic-results}, we will use that
variations of Hodge structure with suitably large monodromy yield flat vector
bundles which do not have isomonodromic deformations to semistable bundles.
This will be used to analyze variations of Hodge structure on an analytically very general curve.
\subsection{Construction of the counterexample}
We now set up the proof of \autoref{theorem:counterexample} and
\autoref{corollary:counterexample}. We will construct a variation of Hodge
structure over the analytic moduli stack $\mathscr{M}_{g,1}$ whose restriction to each fiber of the forgetful map $\mathscr{M}_{g,1}\to \mathscr{M}_g$ satisfies the hypotheses of \autoref{lemma:positive-Hodge-bundle}. We will do this via the Kodaira-Parshin trick
(see
\cite[Proposition 7]{parshin:algebraic-curves-over-function-fields-i}
and also
\cite[Proposition 7.1]{lawrenceV:diophantine-problems}), 
which produces a family of curves over $\mathscr{M}_{g,1}$ which is non-isotrivial when restricted to each fiber of the natural forgetful map $\mathscr{M}_{g,1}\to \mathscr{M}_g$. 
We give a proof appealing to \cite{cataneseD:answer-to-a-question-by-fujita},
but one can also prove it using \autoref{prop:basic-facts}  and
\autoref{corollary:unstable}, as we mention in
\autoref{remark:alternate-hodge-proof}
Because we had difficulty finding a suitable reference, we now present a version
of the Kodaira-Parshin trick in families.

\begin{proposition}[Kodaira-Parshin trick]
	\label{proposition:kodaira-parshin}
	Let $Y$ denote a Riemann surface of genus $g\geq 1$ with a point $p \in Y$ and
	let $Y^{\circ} := Y - p$. Choose a basepoint $y \in Y^{\circ}$.
	Suppose $G$ is a finite center-free group with a surjection
	$\phi: \pi_1(Y^{\circ},y) \twoheadrightarrow G$ which sends the loop around the puncture $p \in Y$ to 
	a non-identity element of $G$.
	Then there exists a smooth proper relative dimension $1$ map of
	analytic stacks $f: \mathscr{X} \to
	\mathscr{M}_{g,1}$ so that the fiber over a geometric point $[(C,p)] \in
	\mathscr{M}_{g,1}$ is a finite disjoint union of $G$-covers of $C$
	ramified at $p$.
\end{proposition}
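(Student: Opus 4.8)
The plan is to realize $\mathscr X$ as a finite covering of the universal punctured curve over $\mathscr M_{g,1}$, obtained by grouping together the finitely many $G$-covers that the mapping class group permutes. Let $\pi\colon\mathscr C\to\mathscr M_{g,1}$ be the universal curve with its tautological section $\sigma$ picking out the marked point, and set $\mathscr C^\circ:=\mathscr C\setminus\sigma(\mathscr M_{g,1})$, the universal punctured curve, whose fiber over $[(C,p)]$ is $C^\circ:=C\setminus\{p\}$; one may identify $\mathscr C^\circ\cong\mathscr M_{g,2}$. Since Teichm\"uller space is contractible and the fibers of $\mathscr C^\circ\to\mathscr M_{g,1}$ are aspherical, this orbifold fibration yields the Birman exact sequence
\[
1\longrightarrow\pi_1(C^\circ)\longrightarrow\pi_1^{\mathrm{orb}}(\mathscr C^\circ)\longrightarrow\pi_1^{\mathrm{orb}}(\mathscr M_{g,1})=\mathrm{Mod}_{g,1}\longrightarrow 1,
\]
in which the monodromy acts on $\pi_1(C^\circ)$ through a group of automorphisms preserving the conjugacy class of the loop $\gamma_p$ around the puncture (a diffeomorphism of $C$ fixing $p$ carries a small loop about $p$ to a freely homotopic one).

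First I would introduce the finite set $X:=\{\rho\in\operatorname{Epi}(\pi_1(C^\circ),G):\rho(\gamma_p)\neq 1\}$, equipped with the $\pi_1^{\mathrm{orb}}(\mathscr C^\circ)$-action induced by the conjugation action on the normal subgroup $\pi_1(C^\circ)$. This set is preserved because automorphisms arising from $\pi_1^{\mathrm{orb}}(\mathscr C^\circ)$ carry surjections to surjections and fix the conjugacy class of $\gamma_p$, and it is nonempty since $\phi\in X$ by hypothesis. The monodromy homomorphism $\pi_1^{\mathrm{orb}}(\mathscr C^\circ)\to\operatorname{Sym}(X)$ then defines a finite covering $\mathscr X^\circ\to\mathscr C^\circ$ of analytic stacks. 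Its fiber over $[(C,p)]\in\mathscr M_{g,1}$ is the covering of $C^\circ$ whose monodromy is the restriction of this homomorphism to $\pi_1(C^\circ)$, so its connected components correspond to the $\pi_1(C^\circ)$-orbits on $X$, i.e.\ to the $G$-conjugacy classes of surjections $\rho$; the stabilizer of such a $\rho$ is $\ker\rho$, since $\rho$ is surjective and $G$ is center-free. Hence each component is the connected $G$-cover of $C^\circ$ attached to $\rho$, ramified over $p$ precisely because $\rho(\gamma_p)\neq 1$.

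Finally I would compactify: let $\mathscr X$ be the relative normalization of $\mathscr C$ in $\mathscr X^\circ$, so that $\mathscr X\to\mathscr C$ is finite with $\mathscr X$ normal. Since we are in characteristic zero, this morphism is tamely ramified along $\sigma(\mathscr M_{g,1})$, so $\mathscr X$ is smooth; as $\mathscr C$ is regular, $\mathscr X$ is Cohen--Macaulay, and $\mathscr X\to\mathscr C$ is finite, the morphism is flat by miracle flatness. Therefore the composite $f\colon\mathscr X\to\mathscr C\to\mathscr M_{g,1}$ is flat and proper, with fiber over $[(C,p)]$ the finite disjoint union of smooth $G$-covers of $C$ ramified at $p$ identified above; being flat with smooth fibers over the smooth base $\mathscr M_{g,1}$, it is smooth, and it has relative dimension one. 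This is the desired family.

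The step I expect to require the most care is the first one: setting up the Birman/homotopy exact sequence in the orbifold category and verifying that the monodromy of $\mathscr M_{g,1}$ preserves the conjugacy class of the puncture loop, so that $X$ is genuinely a $\pi_1^{\mathrm{orb}}(\mathscr C^\circ)$-set. The conceptual point is that one should not expect a connected such family---the $G$-cover genuinely moves under monodromy---so the right object is the covering attached to the entire finite monodromy orbit of $\phi$. The remaining verifications in the compactification step (tameness, smoothness, miracle flatness, smoothness of fibers) are routine.
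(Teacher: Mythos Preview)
Your proof is correct and follows essentially the same strategy as the paper: both use the Birman exact sequence for $\mathscr{M}_{g,2}\to\mathscr{M}_{g,1}$ to build a finite \'etale cover of $\mathscr{M}_{g,2}$ from the conjugation action on surjections $\pi_1(C^\circ)\twoheadrightarrow G$, then compactify by normalization in the universal curve. The only packaging difference is that the paper singles out the subgroup $\widetilde{\Gamma}=\ker\bigl(\widetilde{\gamma}:\Gamma\to G\bigr)$, where $\Gamma$ is the stabilizer of $\phi$ up to $G$-conjugacy, and takes the corresponding connected cover (so the fiber consists of the $G$-covers in the mapping-class-group orbit of $\phi$), whereas you take the permutation cover attached to the full $\pi_1^{\mathrm{orb}}(\mathscr{C}^\circ)$-set $X$ of all such surjections; your family is therefore potentially larger but equally valid, and your argument spells out the smoothness/flatness of the compactification more explicitly than the paper does.
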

We will prove this below in \autoref{subsubsection:proof-kodaira-parshin}.
\begin{remark}
	\label{remark:}
	In the finite disjoint union of $G$-covers appearing at the end of the
	statement of \autoref{proposition:kodaira-parshin},
	we can explicitly identify the finite set of $G$-covers.
	Namely, suppose $h \in \pi_1(\mathscr M_{g,2})$, viewed as an
	automorphism of the fundamental group of a $2$-pointed genus $g$ curve $\pi_1(Y^{\circ},y)$. (See \autoref{subsubsection:setup} below for an explanation of the action of $\pi_1(\mathscr{M}_{g,2})$ on $\pi_1(Y^\circ, y)$.)
	There is one cover associated to each map
	of the form $\phi_h: \pi_1(Y^{\circ},y) \to G$, with $\phi_h(g) :=
	\phi(hgh^{-1})$,
	modulo the following equivalence relation:
	we identify $\phi_h \sim \phi_{g}$ if they
	are conjugate, i.e., if there exists $m \in G$ with $\phi_h(s) = m
	\phi_g(s)m^{-1}$ for all $s \in \pi_1(Y^{\circ},y)$.
\end{remark}

\subsubsection{Setup to prove \autoref{proposition:kodaira-parshin}}
\label{subsubsection:setup}
Let $$\pi:\mathscr{M}_{g, 2}\to \mathscr{M}_{g,1}$$ be the natural forgetful map.
Let $x\in \mathscr{M}_{g,2}$ be a point. Let $\bar{x} := \pi(x) \in
\mathscr{M}_{g,1}$ and let $C^\circ\subset \mathscr{M}_{g,2}$
denote the fiber of $\pi^{-1}(\bar{x})$.

Note that $C^\circ$ is the complement of a point in a smooth proper connected curve of genus $g$. 
There is a natural short exact sequence
\begin{align}
1\to \pi_1(C^\circ, x) \to \pi_1(\mathscr M_{g,2}, x) \to \pi_1(\mathscr
M_{g,1}, \bar x)\to 1
\end{align}
associated to the map $\mathscr{M}_{g,2} \to \mathscr{M}_{g,1}$ with fiber
$C^{\circ}$.
We may obtain this from the Birman exact sequence \cite[Theorem
4.6]{farbM:a-primer} for mapping class groups,
after identifying the fundamental group for $\mathscr M_{g,n}$ with the mapping
class group of an $n$-times punctured genus $g$ surface.
(The case $n = 0$ follows from contractibility of the universal cover of
$\mathscr M_g$ \cite[Theorem 10.6]{farbM:a-primer} with covering group given by
the mapping class group,
and the case of general $n$ can be deduced from the Birman exact sequence
\cite[Theorem 4.6]{farbM:a-primer}.)

Let $G$ be a center-free finite group and suppose further there
is a surjection $$\gamma: \pi_1(C^\circ, x)\twoheadrightarrow G.$$
We assume that $\gamma$ takes the conjugacy class of the loop around the puncture of $C^\circ$ to a non-identity conjugacy class of $G$.

Define $\Gamma\subset \pi_1(\mathscr{M}_{g,2}, x)$ as the set of $h \in
\pi_1(\mathscr{M}_{g,2},x)$ such that there exists $\widetilde{\gamma}(h)\in G$ with $$\gamma(hgh^{-1})=\widetilde{\gamma}(h)\gamma(g)\widetilde{\gamma}(h)^{-1}$$ for all $g\in \pi_1(C^\circ, x).$ 

\begin{lemma}
	\label{lemma:center-free-group}
	Keeping notation as in \autoref{subsubsection:setup},
	the map $\gamma$ determines a well-defined surjective homomorphism 
	\begin{align*}
		\widetilde{\gamma}: \Gamma  & \rightarrow G\\
		h & \mapsto \widetilde{\gamma}(h).
	\end{align*}
	Moreover, $\Gamma \subset\pi_1(\mathscr{M}_{g,2},x)$ has finite index.
\end{lemma}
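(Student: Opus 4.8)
The plan is to verify the three assertions — well-definedness of $\widetilde{\gamma}$, that it is a homomorphism, and that it is surjective — directly from the definitions, and then to prove finite index via a counting argument using the finiteness of $G$ and the fact that $G$ is center-free. First I would check well-definedness: if $h \in \Gamma$ and both $a, b \in G$ satisfy $\gamma(hgh^{-1}) = a\gamma(g)a^{-1} = b\gamma(g)b^{-1}$ for all $g \in \pi_1(C^\circ, x)$, then $b^{-1}a$ centralizes $\gamma(\pi_1(C^\circ,x)) = G$ (using surjectivity of $\gamma$), hence $b^{-1}a \in Z(G) = \{1\}$, so $a = b$. This shows $\widetilde\gamma(h)$ is unambiguous. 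For the homomorphism property, given $h_1, h_2 \in \Gamma$, one computes $\gamma(h_1 h_2 g h_2^{-1} h_1^{-1}) = \widetilde\gamma(h_1)\widetilde\gamma(h_2)\gamma(g)\widetilde\gamma(h_2)^{-1}\widetilde\gamma(h_1)^{-1}$, so by uniqueness $\widetilde\gamma(h_1 h_2) = \widetilde\gamma(h_1)\widetilde\gamma(h_2)$; the same bookkeeping shows $\Gamma$ is a subgroup (closed under inverses and products).

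Next, surjectivity of $\widetilde\gamma$: I would use that $\pi_1(C^\circ, x) \hookrightarrow \pi_1(\mathscr M_{g,2}, x)$, and that for $g_0 \in \pi_1(C^\circ, x)$, conjugation by $g_0$ preserves the normal subgroup $\pi_1(C^\circ,x)$ and satisfies $\gamma(g_0 g g_0^{-1}) = \gamma(g_0)\gamma(g)\gamma(g_0)^{-1}$ by functoriality; hence $g_0 \in \Gamma$ with $\widetilde\gamma(g_0) = \gamma(g_0)$. Since $\gamma$ is surjective onto $G$, so is $\widetilde\gamma$.

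The main obstacle is the finite-index claim. Here the plan is to observe that $\Gamma$ is precisely the stabilizer, under an action of $\pi_1(\mathscr M_{g,2}, x)$, of the point $[\gamma]$ in the finite set $S$ of $G$-conjugacy classes of surjections $\pi_1(C^\circ,x) \twoheadrightarrow G$ sending the puncture loop to the prescribed conjugacy class. Concretely, $\pi_1(\mathscr M_{g,2},x)$ acts on $\mathrm{Hom}(\pi_1(C^\circ,x), G)$ by $h \cdot \phi := \phi \circ (\mathrm{conj}_{h^{-1}})$ (this uses that $\pi_1(C^\circ,x)$ is normal, so outer conjugation by $h$ is an honest automorphism up to inner automorphisms — more precisely the Birman sequence gives a map $\pi_1(\mathscr M_{g,2},x) \to \mathrm{Out}(\pi_1(C^\circ,x))$, which suffices since we work modulo $G$-conjugacy), and this action descends to the finite set $S$ because automorphisms of $\pi_1(C^\circ,x)$ coming from the mapping class group preserve the puncture conjugacy class. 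Then $h \in \Gamma$ exactly when $h\cdot[\gamma] = [\gamma]$: the condition $\gamma(hgh^{-1}) = \widetilde\gamma(h)\gamma(g)\widetilde\gamma(h)^{-1}$ says precisely that $\gamma \circ \mathrm{conj}_h$ is $G$-conjugate to $\gamma$. Since $S$ is finite (there are finitely many homomorphisms from a finitely generated group to a finite group), the orbit-stabilizer theorem gives $[\pi_1(\mathscr M_{g,2},x) : \Gamma] = |\text{orbit of }[\gamma]| \leq |S| < \infty$. I expect the only delicate point to be setting up the action carefully enough that the "$\mathrm{Out}$ versus $\mathrm{Aut}$" ambiguity is harmless — which it is, precisely because $\Gamma$ and $\widetilde\gamma$ are defined up to inner automorphisms of $G$ (equivalently, $Z(G) = 1$ makes the lift $\widetilde\gamma(h)$ canonical once it exists).
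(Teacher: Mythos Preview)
Your proposal is correct and follows essentially the same approach as the paper: uniqueness via $Z(G)=1$, surjectivity via $\pi_1(C^\circ,x)\subset\Gamma$, and finite index via the action of $\pi_1(\mathscr{M}_{g,2},x)$ on the finite set of homomorphisms $\pi_1(C^\circ,x)\to G$ (the paper simply notes $\Gamma$ contains the stabilizer of $\gamma$ itself, while you identify $\Gamma$ as the stabilizer of the conjugacy class $[\gamma]$ --- both give the same conclusion). One small simplification: since $\pi_1(C^\circ,x)$ is normal in $\pi_1(\mathscr{M}_{g,2},x)$, conjugation by any $h$ is already an honest automorphism of $\pi_1(C^\circ,x)$, so there is no need to pass through $\mathrm{Out}$ or to restrict to surjections preserving the puncture class --- the set of all homomorphisms to $G$ is already finite and carries the action you need.
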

\begin{proof}
	We first claim that $\Gamma$ contains $\pi_1(C^\circ, x)$ and surjects
	onto $G$. Indeed, for $h\in
\pi_1(C^\circ, x)$, one may take $\widetilde{\gamma}(h)=\gamma(h)$.
Therefore, the surjectivity of $\gamma$ implies that $\widetilde{\gamma}$ is
also surjective.

	Next, we claim that for each $h$, $\widetilde{\gamma}(h)$ is uniquely determined. 
	Indeed, suppose $\widetilde{\gamma}(h)$ may be either
	$\alpha$ and $\beta$. Then we would have $\alpha \gamma(g) \alpha^{-1} =
	\beta \gamma(g) \beta^{-1}$.
	Since $\gamma$ is surjective, as shown above, we find $\alpha
	\beta^{-1}$ lies in the center of $G$, and therefore is trivial. So
	$\alpha = \beta$.	

	The uniqueness of $\widetilde{\gamma}(h)$ just established shows that $\widetilde{\gamma}$ determines a well-defined map.
This is moreover a homomorphism by the above established uniqueness, because we
then obtain $\widetilde{\gamma}(h) \widetilde{\gamma}(h') =
\widetilde{\gamma}(hh')$.   

Finally, we claim $\Gamma$ has finite index in $\pi_1(\mathscr{M}_{g,2}, x)$.
To see this, observe that there is an action of $\pi_1(\mathscr{M}_{g,2}, x)$
on the set of surjective homomorphisms $\pi_1(C^{\circ},x) \to G$ sending $\phi:
\pi_1(C^{\circ},x) \to G$ to the map $\phi^h(g) := \phi(hgh^{-1})$.
By definition, we have $h \in \Gamma$ if and only if $\gamma^h$ is conjugate to $\gamma$.
In particular, $\Gamma$ contains the stabilizer of $\gamma$ under the action of 
$\pi_1(\mathscr{M}_{g,2}, x)$.
But this stabilizer has finite index in 
$\pi_1(\mathscr{M}_{g,2}, x)$ because 
$G$ is finite and $\pi_1(C^\circ, x)$ is finitely generated. 
Therefore,
there are only
finitely many 
homomorphisms $\pi_1(C^\circ, x) \to G$,
and, in particular, finitely many such surjective homomorphisms.

\end{proof}

\subsubsection{}\label{subsubsection:proof-kodaira-parshin}\begin{proof}[Proof of \autoref{proposition:kodaira-parshin}]
Let $\widetilde{\Gamma}$ be the kernel of the map $\widetilde{\gamma}$ from
\autoref{lemma:center-free-group}.
The subgroup $\widetilde{\Gamma}$ corresponds to a finite \'etale cover
$\mathscr X^\circ \to \mathscr M_{g,2}$. 
Observe that $\mathscr M_{g,2} \subset \mathscr{M}_{g,1} \times_{\mathscr M_g} \mathscr{M}_{g,1}$
can be viewed as a dense open substack, and
let $\mathscr{X}$ be the normalization
of $\mathscr{M}_{g,1} \times_{\mathscr M_g} \mathscr{M}_{g,1}$ in the function
field of $\mathscr{X}^\circ,$ forming the following cartesian diagram
\begin{equation}
	\label{equation:}
	\begin{tikzcd} 
		\mathscr X^\circ \ar {r} \ar {d} & \mathscr X \ar {d} \\
		\mathscr M_{g,2} \ar {r} & \mathscr{M}_{g,1} \times_{\mathscr
		M_g} \mathscr{M}_{g,1}.
\end{tikzcd}\end{equation}
Restricting the natural map $\mathscr{X}\to\mathscr{M}_{g,1} \times_{\mathscr M_g} \mathscr{M}_{g,1}$ to a fiber $C$ of the universal curve $\mathscr{M}_{g,1} \times_{\mathscr M_g} \mathscr{M}_{g,1} \to \mathscr M_{g,1}$ yields a finite disjoint union of $G$-covers of $C$, ramified only over the tautological marked point of $C$.
We then take our desired relative curve $f: \mathscr X \to \mathscr{M}_{g,1} \times_{\mathscr M_g}
\mathscr{M}_{g,1} \to \mathscr M_{g,1}$ as the resulting
composition.
\end{proof}

In order to use \autoref{proposition:kodaira-parshin}, we will need to know
there are groups $G$ satisfying its hypotheses. We now provide such an example.
\begin{example}
	\label{example:kodaira-parshin}
	As a concrete example of a group $G$ to which 
	\autoref{proposition:kodaira-parshin} applies, we can take $G = S_3$ to be the
	symmetric group on three letters and identify $\pi_1(Y^\circ, y)$
with the free group on the generators $a_1, \ldots, a_g, b_1, \ldots, b_g$. The group $\pi_1(Y, y)$
is generated by $a_1, \ldots, a_g, b_1, \ldots, b_g$ with the relation
$\prod_{i=1}^g \left[ a_i, b_i \right]$.
Consider the surjection $\phi: \pi_1(C^\circ, y)\twoheadrightarrow S_3$
sending $a_1\mapsto (12), b_1 \mapsto (13)$ and sending $a_i \mapsto
\mathrm{id}, b_i
\mapsto \mathrm{id}$ for $i > 1$.
The loop around the puncture maps to 
$\phi(\prod_{i=1}^g \left[ a_i, b_i \right]) = (12)(13)(12)^{-1}(13)^{-1} =
(123) \neq \mathrm{id}$.
\end{example}

\subsubsection{}
\label{subsubsection:proof-counterexample}
\begin{proof}[Proof of \autoref{theorem:counterexample}]
Let $f: \mathscr X \to \mathscr{M}_{g,1}$ denote the map from
\autoref{proposition:kodaira-parshin}. 
Concretely, we can take $G = S_3$ and the map $\phi$ as in 
\autoref{proposition:kodaira-parshin} to be that given in
\autoref{example:kodaira-parshin}.
Define the local system $\mathbb V := R^1 f_* \mathbb C$ on $\mathscr M_{g,1}$,
and define $\mathscr{F}$ to be the vector bundle $\mathbb{V}\otimes
\mathscr{O}$. Note that $\mathscr{F}$ admits a natural (Gauss-Manin) connection $\on{id}\otimes d$. The local system $\mathbb{V}$ evidently underlies a variation of Hodge structure.

 Let $C$ be a
 fiber of the forgetful morphism $\mathscr{M}_{g,1}\to \mathscr{M}_{g}$.
 Let $X := f^{-1}(C) \subset \mathscr X$. We claim that the flat vector bundle $(\mathscr{F}, \nabla)$ satisfies the
conditions of \autoref{theorem:counterexample}, i.e.~it has semisimple monodromy and $\mathscr{F}|_C$ is not semistable.

We first check that $(\mathscr{F}, \nabla)|_C$ has semisimple monodromy. This is
true for any flat vector bundle arising from the Gauss-Manin connection on the
cohomology of a family of smooth proper varieties, by work of Deligne
\cite[Th\'eor\`eme 4.2.6]{deligne:hodge-ii}.

We now check that $(\mathscr{F}, \nabla)|_C$ is not semistable.  
By \cite[Theorem
4]{cataneseD:answer-to-a-question-by-fujita},
if $X \to C$ is not isotrivial,
$f_* \omega_f$ is a destabilizing subsheaf of $\mathscr F$.
It remains to show $X \to C$ is not isotrivial.
The fiber of $f|_X$ over a point $x\in C$ is a finite disjoint union of
finite covers of $C$, branched only over $x$. These fibers must vary in moduli as $x$ varies,
as there are only finitely many non-constant maps between any
two curves over of genus at least $2$, 
by de Franchis' theorem. 
(See \cite{de1913teorema} or \cite[Corollary 3, p.~75]{samuel1966lectures}, for
example.)
\end{proof}
\begin{remark}
	\label{remark:alternate-hodge-proof}
	We can also give a somewhat more involved proof of
	\autoref{theorem:counterexample} using \autoref{corollary:unstable} in
	place of \cite[Theorem
	4]{cataneseD:answer-to-a-question-by-fujita}, as we now explain. This argument inspired the Hodge-theoretic results \autoref{theorem:very-general-VHS} and \autoref{corollary:geometric-local-systems}, proven in \autoref{section:hodge-theoretic-results}.

	With notation as in the proof of 
	\autoref{theorem:counterexample},
	$\mathscr{F}$ has degree $0$ since it admits a flat connection, by
	\autoref{prop:basic-facts}(3).
	Therefore, it suffices to show $\mathscr{F}$ has a subsheaf of positive
degree.
The Hodge filtration exhibits $F^1\mathscr F\simeq (f|_X)_* \omega_{(f|_X)}$ as a subsheaf
of $\mathscr F$,
which is destabilizing by
\autoref{corollary:unstable}
once we verify that
$\delta: F^1\mathscr F \to F^0 \mathscr F/ F^1 \mathscr F \simeq R^1 (f|_X)_* \mathscr{O}_{X}$
is nonzero.

We now check $\delta$ is nonzero.
Locally around a point of $C$, $\delta$ 
can be identified with the derivative of the period map
\cite[Theorem 5.3.4]{carlsonMP:period-mappings}
sending a curve corresponding to a fiber of 
$f|_X : X \to C$
to the corresponding Hodge structure on its first cohomology. To show this derivative is not identically zero it suffices to show that the period map is non-constant.
More concretely, by the Torelli theorem, we only need to check
$f|_X: X \to C$ is not isotrivial.
This follows by de Franchis' theorem, as explained in the proof of
\autoref{theorem:counterexample}.
\end{remark}

\begin{remark}
	\label{remark:bhh-error}
	As noted prior to its statement,
	\autoref{theorem:counterexample} contradicts
\cite[Theorem 1.3]{BHH:logarithmic},
\cite[Theorem 1.3]{BHH:irregular}, and
\cite[Theorem 1.2]{BHH:parabolic}, which claim, for example, that any irreducible flat vector bundle on a smooth proper  curve of genus at least $2$ admits a semistable isomonodromic deformation. We now explain the gaps in the proofs of
those results. The error in \cite[Theorem 1.3]{BHH:logarithmic} occurs in
\cite[Proposition 4.3]{BHH:logarithmic}; the issue is that the map denoted
$f^*\nabla$ in diagram (4.14) does not in general exist. The proof works
correctly if $G=\on{GL}_2$. An identical error occurs in \cite[Proposition
4.4]{BHH:irregular}. A different argument is given in \cite{BHH:parabolic}.
There, the error occurs in the proof of \cite[Proposition 5.1]{BHH:parabolic}, 
in which the large diagram claimed to be commutative does not in general commute.
\end{remark}

\section{Analysis of Harder-Narasimhan filtration}
\label{section:hn-filtration}

\subsection{Main results on isomonodromic deformations} In this section, we prove the following theorem and corollary, generalizing \autoref{theorem:hn-constraints}.
For the statement of this theorem, recall the notion of a refinement of a
parabolic bundle, introduced in \autoref{definition:refinements}.
Loosely speaking, $E_\star$ refines $F_\star$ if $F_\star$ is essentially obtained
from $E_\star$ by forgetting some of the parabolic structure.
The two most
important cases for us will be when $F_\star= E_\star$
or $F_\star = E_0$ with respect to the empty divisor, so there is trivial
parabolic structure at every point.

\begin{theorem}
	\label{theorem:hn-constraints-parabolic}
	Let $(C,D)$ be hyperbolic of genus $g$ with $D = x_1 + \cdots + x_n$, and let
	$({E}, \nabla)$ be a flat
vector bundle on $C$ with regular singularities along $D$ and irreducible
monodromy. Let $E_\star$ denote a parabolic structure on $E$ refined by the parabolic structure on $E$ associated to $\nabla$.
Suppose
 $(E_\star',\nabla')$ 
 is an isomonodromic deformation (which exists by \autoref{remark:refinement})
of $({E_\star}, \nabla)$ to an analytically general nearby curve,
with Harder-Narasimhan filtration $0 = (F_\star')^0 \subset (F_\star')^1 \subset \cdots \subset
(F_\star')^m =
E_\star'$. For $1 \leq i \leq m$, let $\mu_i$ denote the slope of
$\on{gr}^{i}_{HN}E_\star' := (F'_\star)^i/(F_\star')^{i-1}$.
Then the following two properties hold.
\begin{enumerate}
	\item If $E_\star'$ is not parabolically semistable, then for every $0 < i < m$, there
	exists $j < i < k$ with $$\rk \on{gr}^{j+1}_{HN}E_\star'\cdot \rk
	\on{gr}^k_{HN}E_\star'\geq g+1.$$
\item We have $0<\mu_i-\mu_{i+1}\leq 1$ for all $i<m$. 
\end{enumerate}
\end{theorem}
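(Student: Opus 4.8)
The plan is to follow the strategy of \autoref{subsection:idea-of-proof}: assume that the Harder--Narasimhan (HN) filtration of the generic isomonodromic deformation violates the stated constraints, and produce a deformation direction of $(C,D)$ along which that filtration cannot persist, contradicting that it is the \emph{generic} HN filtration.

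First I would reduce to a cohomological statement. We may assume $E'_\star$ is not parabolically semistable, since otherwise $m=1$ and both assertions are vacuous; write $P^\bullet := (F'_\star)^\bullet$ for its HN filtration, defined over the general fiber $C$ of the universal isomonodromic deformation $\mathscr C\to\Delta$. Because the HN polygon of a family of parabolic bundles is upper semicontinuous and $C$ corresponds to a general point $t\in\Delta$, $P^\bullet$ is the restriction of a relative HN filtration of $\mathscr E'_\star$ over a neighborhood of $t$; and since the isomonodromic deformation of $(\mathscr E'_\star|_C,\nabla')$ over $\Delta$ is again $(\mathscr E'_\star,\widetilde{\nabla'})$, every first-order deformation $s\in\on{Def}_{(C,D)}(\mathbb C[\varepsilon]/\varepsilon^2)\cong H^1(C,T_C(-D))$ is realized by a first-order isomonodromic deformation in which $P^\bullet$ extends. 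By \autoref{lemma:parabolic-extension-obstruction} and \autoref{proposition:connection-h1}, $q^{\nabla'}(s)$ then lies in the kernel of $H^1(C,\on{At}_{(C,D)}(E'_\star))\to H^1(C,\sheafend(E'_\star)/\sheafend(E'_\star,P^\bullet))$ for every $s$; equivalently, the sheaf map $\psi\colon T_C(-D)\to\sheafend(E'_\star)/\sheafend(E'_\star,P^\bullet)$ obtained by composing $q^{\nabla'}$ with the quotient of \eqref{equation:quotient-atiyah-bundles} induces the zero map on $H^1$, and the same holds for the analogous maps attached to the two-step coarsenings $0\subset(F'_\star)^i\subset E'_\star$. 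By \autoref{proposition:irreduciblity-splitting-condition} (irreducibility of the monodromy, nontriviality of the filtrations) these maps are all nonzero. The left inequality $\mu_i-\mu_{i+1}>0$ in (2) is immediate from the definition of the HN filtration, so what remains is an upper bound on $\mu_i-\mu_{i+1}$ and the rank statement (1).

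Next I would extract a semistable piece and apply Serre duality. Using that a parabolic $\sheafhom$ of parabolically semistable bundles is parabolically semistable, the associated graded of $\sheafend(E'_\star)/\sheafend(E'_\star,P^\bullet)$ (for the filtration induced by $P^\bullet$) is $\bigoplus_{a<b}\sheafhom(\on{gr}^a_{HN}E'_\star,\on{gr}^b_{HN}E'_\star)$, each summand parabolically semistable of slope $\mu_b-\mu_a<0$. A short filtration argument --- composing $\psi$, respectively the map attached to $0\subset(F'_\star)^i\subset E'_\star$, with quotient maps, which preserves vanishing on $H^1$ --- yields, for the given $i$, a pair $a\le i<b$ and a nonzero map $T_C(-D)\to S_\star := \sheafhom(\on{gr}^a_{HN}E'_\star,\on{gr}^b_{HN}E'_\star)$ that vanishes on $H^1$, with $S_\star$ parabolically semistable, $\mu_\star(S_\star)=\mu_b-\mu_a$ and $\rk S_\star=\rk\on{gr}^a_{HN}E'_\star\cdot\rk\on{gr}^b_{HN}E'_\star$. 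Applying parabolic Serre duality (\autoref{proposition:serre-duality}) converts this into a nonzero map $M_\star := \widehat{S_\star^\vee}\otimes\omega_C(D)\to\omega_C^{\otimes2}(D)$ whose induced map on global sections is zero; here $M_\star$ is coparabolically semistable of rank $\rk\on{gr}^a_{HN}E'_\star\cdot\rk\on{gr}^b_{HN}E'_\star$ and coparabolic slope $(\mu_a-\mu_b)+(2g-2+n)$, and vanishing on $H^0$ forces the image of the evaluation map $H^0(M_\star)\otimes\mathscr O_C\to M_\star$ into the proper kernel subsheaf --- in particular $M_\star$ is not generically globally generated.

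Finally I would invoke a Clifford-type inequality for coparabolic bundles: a coparabolically semistable bundle on a hyperbolic $(C,D)$ of genus $g$ which is not generically globally generated has coparabolic slope bounded above in terms of its rank (so that large coparabolic slope forces generic global generation). Feeding in $M_\star$ bounds $\mu_a-\mu_b$ in terms of $\rk\on{gr}^a_{HN}E'_\star\cdot\rk\on{gr}^b_{HN}E'_\star$; running this over the pairs straddling $i$ and using $\mu_a-\mu_b\ge\mu_i-\mu_{i+1}$ together with the relation $\sum_{a\le i<b}\rk\on{gr}^a_{HN}E'_\star\cdot\rk\on{gr}^b_{HN}E'_\star=\rk(F'_\star)^i\cdot\rk\bigl(E'_\star/(F'_\star)^i\bigr)$, a numerical argument yields both the upper bound $\mu_i-\mu_{i+1}\le1$ in (2) and the existence of a pair $j<i<k$ with $\rk\on{gr}^{j+1}_{HN}E'_\star\cdot\rk\on{gr}^k_{HN}E'_\star\ge g+1$ in (1). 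The main obstacle is this last step: establishing the sharp Clifford-type bound for coparabolic bundles that delivers exactly the constant $g+1$ (this is where the input from Clifford's theorem for vector bundles is essential), together with the bookkeeping in the previous step of selecting a semistable subquotient $\sheafhom(\on{gr}^a_{HN}E'_\star,\on{gr}^b_{HN}E'_\star)$ with $a\le i<b$ to which $\psi$ still maps nontrivially --- the HN steps $(F'_\star)^i$ are themselves not semistable --- and of handling the coparabolic weights, the hats, and the twist by $\omega_C(D)$ correctly throughout.
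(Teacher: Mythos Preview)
Your overall strategy is the paper's: pass to a general point where the HN filtration extends to first order, conclude that the composition $T_C(-D)\to\sheafend(E'_\star)/\sheafend(E'_\star,P^\bullet)$ vanishes on $H^1$, land in a single semistable piece $\sheafhom(\on{gr}^{j+1},\on{gr}^k)$, Serre-dualize to a coparabolic bundle that is not generically globally generated, and invoke a Clifford-type bound.

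There is one real gap. You describe obtaining the pair $(a,b)$ by ``composing $\psi$ \ldots\ with quotient maps, which preserves vanishing on $H^1$.'' But $\sheafhom(\on{gr}^a,\on{gr}^b)$ is a \emph{subquotient} of $\sheafend(E'_\star)/\sheafend(E'_\star,P^\bullet)$, not a quotient; passing to a subsheaf preserves $H^1$-vanishing only when $H^0$ of the cokernel vanishes, and that is not automatic. The paper handles this in two separate steps. First (\autoref{lemma:nonzero-induced-connection}) it \emph{selects} the pair using $\nabla'$ itself: take $j$ maximal with $\nabla'((F')^j)\subset (F')^i\otimes\Omega^1_C(D)$ and $k$ minimal with $\nabla'((F')^{j+1})\subset (F')^k\otimes\Omega^1_C(D)$; irreducibility forces $j<i<k$, and the induced map $T_C(-D)\to\sheafhom(\on{gr}^{j+1},\on{gr}^k)$ is nonzero by construction. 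Second (\autoref{proposition:filtered-h1-map}), a diagram chase shows $H^1$-vanishing for that specific piece, using that $H^0(\sheafhom((F')^j,E'/(F')^{k-1}))=0$ and $H^0(\sheafhom(\on{gr}^{j+1},E'/(F')^k))=0$; these $H^0$-vanishings come from the HN slope inequalities (\autoref{lemma:hn-sections-vanishing}, \autoref{lemma:h1-injection}) and are exactly where the HN hypothesis on $P^\bullet$ is used beyond semistability of the graded pieces.

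Your final paragraph is then more elaborate than needed. Once you have the single pair $(j+1,k)$, the coparabolic non-GGG lemma (\autoref{proposition:generic-parabolic-global-generation}) applied to $M_\star=\reallywidehat{((E'_\star)^{j+1,k})^\vee}\otimes\omega_C(D)$, which is coparabolically semistable of slope $>2g-2+n$, gives $\rk M_\star\ge g+1$ directly; that is (1). For (2), a separate one-line criterion (\autoref{lemma:coparabolic-global-generation}: coparabolically semistable of slope $>2g-1+n$ implies globally generated) forces $\mu_\star(M_\star)\le 2g-1+n$, i.e.\ $\mu_{j+1}-\mu_k\le 1$, whence $\mu_i-\mu_{i+1}\le\mu_{j+1}-\mu_k\le 1$. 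No summation over pairs and no use of $\sum_{a\le i<b}\rk\on{gr}^a\cdot\rk\on{gr}^b=\rk(F')^i\cdot\rk\bigl(E'/(F')^i\bigr)$ is required.
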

\begin{corollary}\label{cor:stable-parabolic}
	Let $(C, D)$ be a hyperbolic curve of genus $g$.
Let $({E}, \nabla)$ be a flat
vector bundle on $C$ with irreducible monodromy and with regular singularities 
along $D$, and suppose that
$\on{rk}(E)<2\sqrt{g+1}$. 
Then, for $E_\star$ any parabolic structure refined by the parabolic structure on $E$ associated to $\nabla$,
the isomonodromic deformation of $E_\star$
to an analytically general nearby curve is parabolically semistable. 
\end{corollary}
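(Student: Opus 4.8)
The plan is to deduce this directly from \autoref{theorem:hn-constraints-parabolic} together with the AM–GM inequality, so no new geometric input is needed. Suppose for contradiction that the isomonodromic deformation $E_\star'$ to an analytically general nearby curve is not parabolically semistable, and let $0 = (F_\star')^0 \subset (F_\star')^1 \subset \cdots \subset (F_\star')^m = E_\star'$ be its Harder–Narasimhan filtration, with $m \geq 2$. Set $r_i := \rk \on{gr}^i_{HN}E_\star'$, so that $r = \rk(E) = \sum_{i=1}^m r_i$. Since $m \geq 2$ there exists an index $i$ with $0 < i < m$, and \autoref{theorem:hn-constraints-parabolic}(1) applies: there exist $j < i < k$ with $r_{j+1} \cdot r_k \geq g+1$. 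The key point is that $j+1 \leq i < k$, so $j+1 \neq k$, i.e.\ these are \emph{two distinct} graded pieces of the Harder–Narasimhan filtration.

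The contradiction is now pure arithmetic. Because $r_{j+1}$ and $r_k$ are ranks of distinct graded pieces, both are positive integers and $r_{j+1} + r_k \leq r$. By AM–GM,
\[
	\sqrt{g+1} \leq \sqrt{r_{j+1} r_k} \leq \frac{r_{j+1} + r_k}{2} \leq \frac{r}{2},
\]
so $r \geq 2\sqrt{g+1}$, contradicting the hypothesis $\on{rk}(E) < 2\sqrt{g+1}$. Hence $m = 1$, i.e.\ $E_\star'$ is parabolically semistable, as claimed. (One should also remark that the hypotheses of \autoref{theorem:hn-constraints-parabolic} are exactly those in the statement here: $(C,D)$ is hyperbolic of genus $g$, $(E,\nabla)$ has regular singularities along $D$, and $E_\star$ is refined by the parabolic structure associated to $\nabla$; the irreducibility-of-monodromy hypothesis of \autoref{theorem:hn-constraints-parabolic} can be reduced to via decomposing into isotypic pieces, or alternatively one invokes \autoref{cor:stable} in the trivial-parabolic case and its parabolic analogue in general.)

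The only genuine subtlety — and the step I would be most careful about — is making sure that the two ranks produced by \autoref{theorem:hn-constraints-parabolic}(1) really do correspond to distinct graded pieces, so that their ranks sum to at most $r$ rather than merely each being at most $r$; this is guaranteed by the strict inequalities $j < i < k$, which force $j + 1 \le i < k$. Everything else is the one-line AM–GM estimate. If one instead wants to handle the reducible case directly, one applies the theorem to each irreducible flat summand $E^{(\ell)}$ of $(E,\nabla)$ with its induced parabolic structure, concludes each isomonodromic deformation $E^{(\ell)\prime}_\star$ is parabolically semistable of the appropriate slope, and then notes that a direct sum of parabolically semistable bundles of equal slope is parabolically semistable — but since all flat summands have parabolic degree zero (being flat), their slopes agree and this last step is immediate.
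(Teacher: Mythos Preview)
Your proof is correct and matches the paper's approach exactly: reduce to irreducible monodromy, apply \autoref{theorem:hn-constraints-parabolic}(1) to obtain two distinct graded pieces whose ranks multiply to at least $g+1$, and conclude via AM--GM. The only imprecision is in the reduction step: a general $(E,\nabla)$ need not have semisimple monodromy, so rather than a direct sum you should filter by flat subbundles with irreducible subquotients and use that an \emph{extension} of parabolically semistable bundles of the same (zero) slope is semistable---this is what the paper does.
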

The proofs are given in \autoref{subsection:proof-hn} and
\autoref{subsubsection:corollary-proof}.
This latter corollary salvages the main theorem of \cite{BHH:parabolic} in the
case of vector bundles of low rank when $E_\star$ is given the parabolic
structure associated to $\nabla$; it salvages the main theorem of
\cite{BHH:logarithmic} in low rank when $E_\star$ is given the trivial parabolic structure (in which case it agrees with \autoref{cor:stable}).

Crucial in this section will be the notion of generic global generation.

\begin{definition}[Generic global generation]
	\label{definition:}
	A vector bundle $V$ is {\em generically globally generated} if the
evaluation map $H^0(C, V) \otimes \mathscr O_C \to V$ does not factor through a proper
subbundle of $V$, i.e.~if the cokernel of this map is torsion.

We call a parabolic sheaf $E_\star$ {\em generically globally generated} if $E =
E_0$ is a vector bundle which is generically globally generated.
\end{definition}

The basic idea of the proof of \autoref{theorem:hn-constraints-parabolic} will be to show that any counterexample to
\autoref{theorem:hn-constraints-parabolic} will produce a certain semistable parabolic vector bundle of high
slope which is not generically globally generated. 
In order to see why this failure of generic global generation
leads to a contradiction, we will need some facts about (generic) global generation of vector bundles on curves, arising from Clifford's theorem.

\subsection{Preliminary results on high slope bundles with many sections}
\label{subsection:high-slope-lemmas}
We start with a bound on the dimension of the space of global sections of a vector bundle 
whose Harder-Narasimhan polygon has slopes between $0$ and $2g$.
\begin{lemma}
	\label{lemma:cohomology-bound}
	Suppose $V$ is a vector bundle on a smooth proper curve $C$ with Harder-Narasimhan filtration $0 = N^0 \subset N^1 \subset \cdots \subset N^m = V$. Suppose moreover that for each $i$, the slope of $\on{gr}^i_{N} V=N^i/N^{i-1}$ 
	satisfies $$0\leq \mu(\on{gr}^i_{N} V):=\frac{\deg(\on{gr}^i_{N}V)}{\on{rk}(\on{gr}^i_{N}V)}\leq 2g.$$ 
	Then $\dim H^0(C, V) \leq \frac{\deg V}{2} + \rk V$.
\end{lemma}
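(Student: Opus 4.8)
The plan is to reduce to the extreme cases of the Harder–Narasimhan slopes via the long exact sequence in cohomology, and then to handle each graded piece using Clifford's theorem for semistable bundles. First I would induct on the length $m$ of the HN filtration. For $m = 1$, $V$ is semistable of slope $\mu$ with $0 \le \mu \le 2g$; here I invoke Clifford's theorem for semistable bundles (see e.g.\ the bound $h^0(C,V) \le \deg V /2 + \rk V$, valid for semistable $V$ with $0 \le \mu(V) \le 2g$), which is exactly the desired inequality. For the inductive step, consider the short exact sequence $0 \to N^{m-1} \to V \to \on{gr}^m_N V \to 0$, giving $h^0(C,V) \le h^0(C, N^{m-1}) + h^0(C, \on{gr}^m_N V)$. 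The bundle $N^{m-1}$ has HN filtration of length $m-1$ with all slopes in $[0,2g]$ (they are the top $m-1$ slopes of $V$), so by induction $h^0(C,N^{m-1}) \le \deg(N^{m-1})/2 + \rk(N^{m-1})$; and $\on{gr}^m_N V$ is semistable with slope in $[0,2g]$, so $h^0(C,\on{gr}^m_N V) \le \deg(\on{gr}^m_N V)/2 + \rk(\on{gr}^m_N V)$ by Clifford. Adding these and using $\deg V = \deg N^{m-1} + \deg \on{gr}^m_N V$ and $\rk V = \rk N^{m-1} + \rk \on{gr}^m_N V$ gives the claim.

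The one subtlety I would want to pin down carefully is the precise form of Clifford's theorem being used for a semistable bundle $W$ on a curve of genus $g$ with $0 \le \mu(W) \le 2g$. The cleanest route: if $0 \le \mu(W) \le g$ one has $h^0(C,W) \le \deg W/2 + \rk W$ directly (this is the usual Clifford bound for semistable bundles, which follows from Clifford for line bundles applied after passing to a suitable quotient, or from the fact that a semistable bundle of slope $\le g$ has $h^0 \le \deg W /2 + \rk W$; note the endpoint $\mu = 0$ forces $h^0 \le \rk W$). If $g \le \mu(W) \le 2g$, then by Serre duality $W^\vee \otimes \omega_C$ is semistable of slope $2g - 2 - \mu(W) + \cdots$—more carefully, $\mu(W^\vee \otimes \omega_C) = 2g-2-\mu(W)$, which lies in $[-2, g-2] \subset (-\infty, g]$, and one controls $h^0(C,W)$ via $h^1(C,W) = h^0(C, W^\vee\otimes\omega_C)$ together with Riemann–Roch $h^0 - h^1 = \deg W + \rk W(1-g)$. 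I expect this case analysis—correctly invoking Clifford at the level of semistable bundles rather than line bundles, and checking the endpoint slopes $0$ and $2g$—to be the main obstacle, though it is standard once set up.

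With the semistable case in hand, the induction is purely formal, and the additivity of degree and rank along the HN filtration closes the argument. I would state the semistable Clifford bound as a sub-lemma (or cite it) to keep the exposition clean, since it is the only nontrivial input; everything else is bookkeeping with the long exact cohomology sequence.
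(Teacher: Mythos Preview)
Your proposal is correct and essentially matches the paper's argument: both bound $h^0(V)$ by the sum of $h^0$ over the Harder--Narasimhan graded pieces and then handle each semistable piece separately, citing Clifford's theorem for vector bundles (the paper cites \cite{brambila-pazGN:geography-of-brill-noether-loci}) for slopes $\leq 2g-2$ and using $h^1=0$ plus Riemann--Roch together with $\mu \leq 2g$ for slopes $> 2g-2$. The paper's case split at $2g-2$ is a bit cleaner than your proposed split at $g$ with a Serre-duality step, but the content is identical.
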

\begin{proof}
	For convenience set $W_i := \on{gr}^i_{N}V=N^i/N^{i-1}$.
Suppose $W_1, \ldots, W_k$ have slopes $> 2g-2$ and $W_{k+1}, \ldots,
	W_m$ have slopes $\leq 2g-2$.

	Using Clifford's theorem for vector bundles 
	\cite[Theorem 2.1]{brambila-pazGN:geography-of-brill-noether-loci}, 
	for $i > k$,	
	we have 
	\begin{align*}
	\dim H^0(C, W_i)
	\leq \frac{\deg W_i}{2} + \rk W_i.
	\end{align*}
	Also, for $i \leq k$, since $W_i$ are semistable, there are no maps $W_i \to
	\omega_C$.
	Therefore, $H^1(C, W_i) = 0$ when $i \leq k$.
	It follows from Riemann Roch that 
	\begin{align*}
	\dim H^0(C, W_i) = \deg W_i + (1-g) \rk W_i
	\end{align*}
	for $i \leq k$.
	Summing over $i$, we get
	\begin{align*}
		\dim H^0(C, W) &\leq \sum_{i=1}^m \dim H^0(C, W_i) \\
		&\leq \sum_{i=1}^k (\deg W_i + (1-g) \rk W_i)
+ \sum_{i=k+1}^m (\frac{\deg
	W_i}{2} + \rk W_i) \\
		&= \sum_{i=1}^m (\frac{\deg W_i}{2} + \rk W_i)
		+ \sum_{i=1}^k (\frac{\deg W_i}{2} -g \rk W_i) \\
		&= \frac{\deg W}{2} + \rk W+ \sum_{i=1}^k (\frac{\deg W_i}{2} -g \rk
		W_i).
	\end{align*}
	To conclude, it is enough to show 
	$\frac{\deg W_i}{2} -g \rk W_i \le 0$.
	However, since we were assuming the slope $\mu(W_i) \leq 2g$, we find
	$\deg W_i \leq 2g \rk W_i$ and so $\frac{\deg W_i}{2} \leq g \rk W_i$,
	as desired.
\end{proof}

The following lemma is a well known criterion for global generation, which we
spell out for completeness.
\begin{lemma}
	\label{lemma:global-generation}
	Let $V$ be a semistable vector bundle on a smooth proper curve $C$, such that the slope of $V$ satisfies $\mu(V)> 2g - 1$. Then
	$V$ is globally generated.
\end{lemma}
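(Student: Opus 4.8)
The statement to prove is \autoref{lemma:global-generation}: a semistable vector bundle $V$ on a smooth proper curve $C$ with $\mu(V) > 2g-1$ is globally generated. The plan is to reduce global generation to a vanishing statement, and then deduce that vanishing from semistability.

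First I would recall that $V$ is globally generated if and only if, for every closed point $p \in C$, the evaluation map $H^0(C, V) \to V_p$ is surjective onto the fiber, equivalently $H^0(C,V) \to H^0(C, V|_p)$ is surjective. Twisting the structure sequence $0 \to V(-p) \to V \to V|_p \to 0$ and taking the long exact sequence in cohomology, surjectivity of $H^0(C,V) \to H^0(C, V|_p)$ follows once we know $H^1(C, V(-p)) = 0$. So it suffices to show $H^1(C, V(-p)) = 0$ for all $p$.

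Next I would compute the slope of $V(-p)$: since $\deg V(-p) = \deg V - \rk V$, we get $\mu(V(-p)) = \mu(V) - 1 > 2g - 2$. Also $V(-p)$ is semistable because tensoring by a line bundle preserves semistability (it shifts all slopes of subbundles by the same amount). Now I would invoke Serre duality: $H^1(C, V(-p)) \simeq H^0(C, V(-p)^\vee \otimes \omega_C)^\vee$. The bundle $V(-p)^\vee \otimes \omega_C$ is semistable of slope $-\mu(V(-p)) + (2g-2) = 2g - 2 - \mu(V(-p)) < 0$. A semistable bundle of negative slope has no nonzero global sections: a nonzero section would give a map $\mathscr O_C \to V(-p)^\vee \otimes \omega_C$, hence a subsheaf, hence (after saturating) a subbundle of slope $\geq 0$, contradicting semistability with negative slope. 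Therefore $H^0(C, V(-p)^\vee \otimes \omega_C) = 0$, so $H^1(C, V(-p)) = 0$, and $V$ is globally generated.

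I do not anticipate a genuine obstacle here — this is the standard Mukai–Sakai / Butler-type global generation criterion, and every step is routine. The only point requiring minor care is the claim that a nonzero section of a semistable bundle of negative slope cannot exist: one should note that the image of $\mathscr O_C$ has a saturation which is a subbundle (line subbundle or higher rank) of degree at least $0$, contradicting the slope inequality. Everything else is Serre duality and slope bookkeeping.
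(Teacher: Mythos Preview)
Your proof is correct and takes essentially the same approach as the paper: reduce to showing $H^1(C, V(-p)) = 0$ via the short exact sequence $0 \to V(-p) \to V \to V|_p \to 0$, and deduce this vanishing from semistability of $V(-p)$ with slope $> 2g-2$. The only cosmetic difference is that the paper phrases the vanishing as ``any map $V(-p) \to \omega_C$ would be destabilizing,'' whereas you dualize first and argue that $V(-p)^\vee \otimes \omega_C$ is semistable of negative slope; these are the same argument.
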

\begin{proof}
	Let $p\in C$ be a point. It suffices to show $V|_p$ is generated by global sections of $V$. Indeed, $V(-p)$ is a semistable bundle with slope $\mu(V(-p)) > 2g-2$. Hence
	 $H^1(C, V(-p)) = 0$, as any map $V(-p) \to \omega_C$ would be
	destabilizing.
	Since $H^1(C, V(-p)) = 0$, the sequence
	\begin{equation}
		\label{equation:}
		\begin{tikzcd}
			0 \ar {r} & V(-p) \ar {r} & V \ar {r} & V|_p \ar {r}
			& 0 
	\end{tikzcd}\end{equation}
	is exact on global sections, so
	$H^0(C, V) \otimes \mathscr O \to V \to V|_p$ is surjective, as desired.
\end{proof}

The following result will be key to the proof of
\autoref{theorem:hn-constraints-parabolic}, via
\autoref{proposition:generic-parabolic-global-generation},
as it places a constraint on the rank of a
vector bundle which is not generically globally generated.

\begin{lemma}
	\label{lemma:abstract-lower-bound-on-rank}
	Suppose $V$ is a vector bundle on a smooth proper curve $C$ with 
	$\mu(V) \geq 2g - 2$ (respectively, $> 2g-2$.)
	Assume further $U \subset V$ is a proper subbundle 
	$\delta := h^0(C, V) - h^0(C, U)$,
	and either $U = 0$ or else both
	\begin{enumerate}
		\item 	$\mu(U) \leq \mu(V)$, and
		\item each graded piece 
	$\on{gr}^i_{\on{HN}}U$ of the Harder Narasimhan filtration of $U$
	satisfies $0 \leq \mu(\on{gr}^i_{\on{HN}}U) \leq 2g$.
	\end{enumerate}
	Then, $\rk V \geq g (\rk V - \rk U)-\delta$ (respectively, $> g (\rk V - \rk
	U)-\delta$).
	In particular, if 
	$h^0(C, U)  = h^0(C, V)$, $\rk V \geq g$ (respectively, $\geq g+1$).
\end{lemma}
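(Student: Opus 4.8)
The strategy is to run a Riemann–Roch / Clifford bookkeeping argument on the short exact sequence $0 \to U \to V \to V/U \to 0$. First I would record the consequence of $\mu(V)\geq 2g-2$: since $V$ is on a curve and its slope is at least $2g-2$, one does not immediately know $H^1(C,V)=0$ (that needs semistability), so instead I would apply Clifford's theorem for vector bundles, \cite[Theorem 2.1]{brambila-pazGN:geography-of-brill-noether-loci}, which for any $V$ with $0\le \mu(V)\le 2g-2$ bounds $h^0$; but here I actually want a \emph{lower} bound coming from Riemann–Roch in the form $h^0(C,V)\ge \deg V + (1-g)\rk V$, valid always, together with an \emph{upper} bound on $h^0(C,U)$ of the shape $h^0(C,U)\le \tfrac{\deg U}{2}+\rk U$, which is exactly what \autoref{lemma:cohomology-bound} gives under hypothesis (2) (the graded pieces of the HN filtration of $U$ have slopes in $[0,2g]$). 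If $U=0$ the claimed bound $\rk V\ge g\rk V-\delta$ with $\delta=h^0(C,V)$ reduces to $h^0(C,V)\ge (g-1)\rk V$, which follows from Riemann–Roch plus $\deg V\ge(2g-2)\rk V$ (and $h^0\ge\chi$); the strict version uses $H^1=0$, i.e. $\mu>2g-2$ forces $h^1=0$ when ... wait, not in general — but in the $U=0$ case one may argue directly via the estimate below, so I would handle $U=0$ as a degenerate instance of the main computation rather than separately.

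For the main case, combine: $\delta = h^0(C,V)-h^0(C,U)$, so
\[
h^0(C,V) = \delta + h^0(C,U) \le \delta + \tfrac{\deg U}{2} + \rk U
\]
by \autoref{lemma:cohomology-bound}. On the other hand Riemann–Roch gives $h^0(C,V)\ge \deg V + (1-g)\rk V$. Subtracting,
\[
\deg V + (1-g)\rk V \le \delta + \tfrac{\deg U}{2} + \rk U.
\]
Now use $\mu(V)\ge 2g-2$, i.e. $\deg V\ge (2g-2)\rk V$, and $\mu(U)\le\mu(V)$ together with $\rk U<\rk V$ to control $\tfrac{\deg U}{2}$: from $\mu(U)\le\mu(V)$ one gets $\deg U\le \mu(V)\rk U$, so $\tfrac{\deg U}{2}\le \tfrac12\mu(V)\rk U$. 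Plugging in and rearranging, the inequality should collapse (after using $\deg V=\mu(V)\rk V$ and $\deg U\le\mu(V)\rk U$) to
\[
\tfrac12\mu(V)(\rk V-\rk U) + (1-g)\rk V - \rk U \le \delta,
\]
and then $\mu(V)\ge 2g-2$ bounds $\tfrac12\mu(V)(\rk V-\rk U)\ge (g-1)(\rk V-\rk U)$, giving
\[
(g-1)(\rk V-\rk U) + (1-g)\rk V - \rk U \le \delta,
\]
i.e. $-(g-1)\rk U - \rk U \le \delta$, i.e. $-g\rk U\le\delta$; that is not quite the target, so the bookkeeping needs to be redone keeping the $+\rk U$ from $h^0(C,U)$ on the correct side. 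The correct target $\rk V\ge g(\rk V-\rk U)-\delta$ rearranges to $\delta\ge g\rk V - g\rk U - \rk V = (g-1)\rk V - g\rk U$, so I should aim to produce exactly this; the extra care is in tracking the $\rk U$ versus $\rk V$ coefficients from the ``$+\rk U$'' Clifford term and the ``$+(1-g)\rk V$'' Riemann–Roch term. In the strict case ($\mu(V)>2g-2$ or the strict Clifford-type bound) each $\le$ becomes $<$ in at least one place, yielding the strict conclusion.

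Finally, the ``in particular'' is immediate: if $h^0(C,U)=h^0(C,V)$ then $\delta=0$, and the inequality $\rk V\ge g(\rk V-\rk U)-\delta = g(\rk V-\rk U)\ge g$ since $\rk V-\rk U\ge 1$ ($U$ is a \emph{proper} subbundle); the strict hypothesis gives $\rk V\ge g+1$ in the same way. The main obstacle I anticipate is purely the arithmetic of matching coefficients of $\rk U$ and $\rk V$ — in particular making sure that the bound $\mu(U)\le\mu(V)$ is used in the direction that bounds $\deg U$ from above and that the Riemann–Roch inequality for $V$ is used in the direction $h^0\ge\chi$, so that the two estimates point the same way; there is no deep geometric input beyond \autoref{lemma:cohomology-bound} and Clifford's theorem, which are already available.
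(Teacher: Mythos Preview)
Your plan is exactly the paper's approach: Riemann--Roch gives $h^0(C,V)\ge \deg V+(1-g)\rk V$, \autoref{lemma:cohomology-bound} gives $h^0(C,U)\le \tfrac{\deg U}{2}+\rk U$, and combining via $\delta$ yields the target. Your only slip is in the algebra. When you pass from $\deg V+(1-g)\rk V\le \delta+\tfrac{\deg U}{2}+\rk U$ using $\deg U\le \mu(V)\rk U$, the left side is $\mu(V)\rk V+(1-g)\rk V$, not $\tfrac12\mu(V)(\rk V-\rk U)+\cdots$; you dropped a term $\tfrac12\mu(V)\rk V$. The clean way (and what the paper does) is to set $c:=\rk V-\rk U$, substitute $\rk U=\rk V-c$, and rearrange to
\[
(\mu(V)+2)\,c \;\le\; (2g-\mu(V))\rk V \;+\; 2\delta.
\]
Now the hypothesis $\mu(V)\ge 2g-2$ is used \emph{twice}: once as $\mu(V)+2\ge 2g$ on the left, and once as $2g-\mu(V)\le 2$ on the right, giving $2gc\le 2\rk V+2\delta$, i.e.\ $\rk V\ge gc-\delta$. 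The strict case just replaces the second use by $2g-\mu(V)<2$.

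For $U=0$ you do not need $H^1(C,V)=0$ in the strict case: Riemann--Roch alone gives $h^0(C,V)\ge \chi(V)=\deg V+(1-g)\rk V>(2g-2)\rk V+(1-g)\rk V=(g-1)\rk V$ when $\mu(V)>2g-2$, which is exactly the inequality $\rk V> g\rk V-\delta$.
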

\begin{proof}
	In the case $U = 0$, the inequality 
	$\rk V \geq g (\rk V - \rk U)-\delta$ (respectively, $\rk V> g (\rk V - \rk
	U)-\delta$)
	is equivalent to
	$h^0(C, V) \geq (g-1) \rk V$ (respectively $h^0(C, V) > (g-1) \rk V$).
	This holds by Riemann-Roch.

	We now assume $U \neq 0$.
	Applying \autoref{lemma:cohomology-bound}, we conclude
	\begin{align*}
	H^0(C,U) \leq \frac{\deg
	U}{2} + \rk U.
	\end{align*}
	Using Riemann-Roch and the definition of $\delta$,
	\begin{align*}
	\dim H^0(C,U) + \delta = \dim H^0(C, V) \geq \deg V + (1-g)
	\on{rk}
	V.
	\end{align*}

	Combining the above gives
	\begin{align}
		\label{equation:section-bound}
		\deg V + (1-g) \rk V \leq \frac{\deg U}{2} + \rk U + \delta.
	\end{align}

	To simplify notation, we use $c := \rk V - \rk U$ to denote the corank
	of $U$ in $V$.
	Rewriting \eqref{equation:section-bound}, and using $\rk U = \rk V - c$ and $\mu(U) \leq
	\mu(V)$ gives
	\begin{align*}
		\mu(V) \rk (V) + (1-g) \rk V \leq \frac{\mu(U) \rk U}{2} + \rk U
		+ \delta
		\leq \frac{\mu(V)}{2} (\rk V - c) + \rk V - c + \delta.
	\end{align*}
	Rearranging the terms,
	and multiplying both sides by $2$, we obtain
	\begin{align*}
		(\mu(V)+2) \cdot c \leq (2g-\mu(V)) \rk V + 2 \delta.
	\end{align*}
	Since $2g-2 \leq \mu(V)$, we find 
	$2g \leq \mu(V) + 2$ and $2g - \mu(V) \leq 2$, implying
	\begin{align*}
		2g \cdot c \leq (\mu(V) + 2)c \leq (2g-\mu(V)) \rk V +2\delta
		\leq 2 \rk V +2\delta.
	\end{align*}
	Therefore, $\rk V \geq gc - \delta$. 
	In particular, if $\delta =0$, $\rk V \geq g$ as $c \geq 1$.
	In the case $2g-2 < \mu(V)$, we similarly find $\rk V > gc - \delta$
	and $\rk V \geq g + 1$ when $\delta = 0$.
\end{proof}

\subsection{A constraint on global generation of parabolic bundles}

In this subsection, we show that semistable parabolic bundles with large slope which are not generically globally generated cannot have
small rank.
This is accomplished in
\autoref{proposition:generic-parabolic-global-generation}.
Although it is a special case of 
\autoref{proposition:generic-parabolic-global-generation},
we start off by stating and proving the 
following special, yet pivotal, case, in order to convey the main idea and orient the reader. We call it ``the non-GGG lemma," where GGG stands for ``generically globally generated."

\begin{proposition}[The non-GGG lemma]
	\label{proposition:lower-bound-on-rank}
	Suppose $V$ is a nonzero semistable vector bundle on a smooth proper curve $C$
	which is not generically globally generated.
	\begin{enumerate}
		\item[(a)] If $\mu(V) > 2g - 2$, then $\rk V \geq g+1$.
		\item[(b)] If $\mu(V) = 2g-2$, then $\rk V \geq g$.
	\end{enumerate}
\end{proposition}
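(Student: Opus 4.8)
The plan is to reduce immediately to Lemma~\ref{lemma:abstract-lower-bound-on-rank} by exhibiting the right subbundle $U\subset V$. I would take $U$ to be the subbundle of $V$ generated by its global sections, i.e.\ the saturation in $V$ of the image $W'$ of the evaluation map $H^0(C,V)\otimes\mathscr O_C\to V$. Since $V$ is not generically globally generated, $W'$ (and hence $U$) has rank strictly less than $\rk V$, so $U$ is a \emph{proper} subbundle of $V$. Moreover each global section $s$ of $V$ generates a rank-one subsheaf $\mathscr O_C\cdot s\subset W'\subseteq U$, so $H^0(C,V)\subseteq H^0(C,U)$ and thus $H^0(C,U)=H^0(C,V)$; in the notation of Lemma~\ref{lemma:abstract-lower-bound-on-rank} this says $\delta:=h^0(C,V)-h^0(C,U)=0$. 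It will therefore suffice to check that the pair $U\subset V$ satisfies the hypotheses of that lemma with $\mu(V)\ge 2g-2$ (strict in case (a)), since its ``in particular'' clause then yields exactly $\rk V\ge g$ (respectively $\ge g+1$).

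If $U=0$ we are already in the stated case of Lemma~\ref{lemma:abstract-lower-bound-on-rank}, so assume $U\ne 0$. First I would record that $\mu(V)\le 2g-1$: otherwise Lemma~\ref{lemma:global-generation} would make $V$ globally generated, hence generically globally generated, contradicting the hypothesis. Next, $\mu(U)\le\mu(V)$ because $V$ is semistable and $U\subset V$. It then remains to bound the Harder--Narasimhan slopes of $U$. For the upper bound, $\mu_{\max}(U)$ is the slope of the maximal destabilizing subbundle of $U$, which is also a subbundle of $V$, so $\mu_{\max}(U)\le\mu(V)\le 2g-1<2g$. For the lower bound I would use that $U$ is generically globally generated (it is generated by $H^0(C,V)=H^0(C,U)$), hence so is its minimal Harder--Narasimhan quotient $\on{gr}^m_{\on{HN}}U$, being a quotient of $U$; and a generically globally generated bundle $W$ on a curve has $\deg W\ge 0$ (the image of $H^0(C,W)\otimes\mathscr O_C\to W$ is a quotient of a trivial bundle, hence has nonnegative degree by passing to determinants, and $W$ differs from it by a torsion sheaf). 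Thus $\mu_{\min}(U)=\mu(\on{gr}^m_{\on{HN}}U)\ge 0$, and every intermediate Harder--Narasimhan slope lies in $[\mu_{\min}(U),\mu_{\max}(U)]\subseteq[0,2g]$.

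With these three inputs in hand, Lemma~\ref{lemma:abstract-lower-bound-on-rank} applies with this $U$ and $\delta=0$, giving both (a) and (b) simultaneously. The one genuinely delicate step --- the place where a careless argument could fail --- is the lower bound on the Harder--Narasimhan slopes of $U$: one must upgrade generic global generation of $U$ into nonnegativity of the degree of its \emph{most negative} Harder--Narasimhan piece, because Lemma~\ref{lemma:abstract-lower-bound-on-rank} requires the entire Harder--Narasimhan polygon of $U$ to lie in $[0,2g]$ and merely controlling the average slope $\mu(U)$ would not be enough. Everything else is bookkeeping.
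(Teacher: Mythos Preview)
Your proof is correct and follows essentially the same approach as the paper: take $U$ to be the saturation of the image of the evaluation map, observe $\delta=0$, use Lemma~\ref{lemma:global-generation} to get $\mu(V)\le 2g-1$, and then verify the slope bounds on the Harder--Narasimhan pieces of $U$ exactly as you do before invoking Lemma~\ref{lemma:abstract-lower-bound-on-rank}. The paper additionally notes that the $g=0$ case is trivial, but your reduction to Lemma~\ref{lemma:abstract-lower-bound-on-rank} already covers this.
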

\begin{proof}
	The statement is trivial when $g =0$, so we assume $g \geq 0$.
	Let $U\subset V$ be the saturation of the image of the evaluation map $$H^0(C, V)\otimes \mathscr{O}_C\to V.$$
	We aim to apply
	\autoref{lemma:abstract-lower-bound-on-rank}.
	If $V$ is not generically globally generated, $U
	\subset V$ is a proper sub-bundle of $V$, with $H^0(C,U) \to H^0(C, V)$
	an isomorphism. Hence, we will be done by the final statement of
	\autoref{lemma:abstract-lower-bound-on-rank}, once we verify
	hypotheses $(1)$ and $(2)$ of
	\autoref{lemma:abstract-lower-bound-on-rank}.

	Semistability of $V$ implies $\mu(U) \leq \mu(V)$, verifying $(1)$.

	We conclude by checking hypothesis $(2)$.
	Using \autoref{lemma:global-generation},
	we may assume $2g-2 \leq \mu(V) \leq 2g - 1$. 
	Since $\mu(V) \leq 2g-1$ and $V$ is semistable, each
	graded piece $\on{gr}^i_{\on{HN}}U$ of the Harder-Narasimhan filtration of $U$ must have slope
	at most $2g-1$. Let $j$ be maximal such that $\on{gr}^j_{\on{HN}}U$ is non-zero. Since $U$ is generically globally generated, $\on{gr}^j_{\on{HN}}U$ has a global section, and therefore has
	non-negative slope. By the definition of the Harder-Narasimhan filtration, the same is true for $\on{gr}^i_{\on{HN}}U$ for all $i$.
	This verifies the final hypothesis $(2)$ of 
	\autoref{lemma:abstract-lower-bound-on-rank}, so we conclude $\rk V \geq
	g + 1$ in case $(a)$ and $\rk V\geq g$ in case $(b)$.
\end{proof}

We next wish to generalize \autoref{proposition:lower-bound-on-rank} to the parabolic setting.
\begin{remark}
	\label{remark:}
	The main difficulty in generalizing to the parabolic setting will be that
the graded parts of the Harder-Narasimhan filtration of the bundle
$U$ appearing in the proof of \autoref{proposition:lower-bound-on-rank} need no
longer have slope bounded above by $2g-1$.
We will get around this second issue in the proof of
\autoref{proposition:generic-parabolic-global-generation}
by quotienting both $U$ and $V$ by the
part of the Harder-Narasimhan filtration with slope more than $2g-2$, and
applying the ensuing argument to the resulting quotients.
\end{remark}

Before taking up this generalization, we record a couple of preliminary lemmas which will allow us to
understand generic global generation of
parabolic and coparabolic bundles.

\begin{lemma}
	\label{lemma:pushforward-degree}
Suppose $W_\star= (W,\{ W^i_j\}, \{\alpha^i_j\})$ is a parabolic bundle on $C$ with respect to a reduced divisor $D=x_1+\cdots+x_n$.
	Let $\alpha := \max_{i,j} \alpha^i_j$.	
	Then 
	$\mu_\star(W_\star) - \mu(W) \leq n\alpha$.
	Further, equality holds if and only if all $\alpha^i_j$ are equal to
	$\alpha$.
\end{lemma}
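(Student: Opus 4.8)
The plan is to unwind the definition of parabolic degree and reduce the claim to a termwise comparison of each weight $\alpha^i_j$ with its maximum $\alpha$. First I would rewrite
\[
\mu_\star(W_\star) - \mu(W) = \frac{\on{par-deg}(W_\star) - \deg(W)}{\rk(W)} = \frac{1}{\rk(W)}\sum_{j=1}^n \sum_{i=1}^{n_j} \alpha^i_j \dim\big(W_j^i/W_j^{i+1}\big),
\]
using the definitions of parabolic degree and parabolic slope. The one elementary input is that, for each fixed $j$, the quasiparabolic filtration $W_{x_j} = W_j^1 \supsetneq \cdots \supsetneq W_j^{n_j+1} = 0$ is a strictly decreasing filtration of the fiber $W_{x_j}$, so the dimensions of its graded pieces sum to $\dim W_{x_j} = \rk(W)$; that is, $\sum_{i=1}^{n_j} \dim(W_j^i/W_j^{i+1}) = \rk(W)$.

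Granting this, since $\alpha^i_j \le \alpha$ for all $i,j$ I would bound, for each $j$,
\[
\sum_{i=1}^{n_j} \alpha^i_j \dim\big(W_j^i/W_j^{i+1}\big) \le \alpha \sum_{i=1}^{n_j} \dim\big(W_j^i/W_j^{i+1}\big) = \alpha\,\rk(W);
\]
summing over the $n$ marked points and dividing by $\rk(W)$ then yields $\mu_\star(W_\star) - \mu(W) \le n\alpha$. For the equality statement, equality in the chain above forces $\sum_{i=1}^{n_j}(\alpha - \alpha^i_j)\dim(W_j^i/W_j^{i+1}) = 0$ for every $j$; since the filtration is strictly decreasing each dimension $\dim(W_j^i/W_j^{i+1})$ is positive, and each coefficient $\alpha - \alpha^i_j$ is nonnegative, so this forces $\alpha^i_j = \alpha$ for all $i,j$. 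Conversely, if all weights equal $\alpha$ the bound is an equality.

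I do not expect any genuine obstacle: the argument is pure bookkeeping with the definitions of parabolic bundle and parabolic degree. The only subtlety worth recording is that, because the weights at a fixed point are strictly increasing in $i$, the equality condition ``all $\alpha^i_j$ equal $\alpha$'' forces $n_j = 1$ at every $x_j$, i.e.\ the quasiparabolic structure at each marked point is the single-step flag $W_{x_j} \supsetneq 0$ with weight $\alpha$.
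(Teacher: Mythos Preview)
Your proof is correct and follows essentially the same approach as the paper: both unwind the definition of parabolic slope, bound each weight $\alpha^i_j$ by $\alpha$, use that $\sum_i \dim(W_j^i/W_j^{i+1}) = \rk W$, and then handle the equality case termwise. Your closing remark that equality forces $n_j=1$ at each point is a correct extra observation not made explicit in the paper.
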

\begin{proof}
	By definition, this difference $\mu_\star(W_\star) - \mu(W)$ is
	$$\sum_{j=1}^n \sum_{i=1}^{n_j} \alpha^i_j \frac{\dim
	(W^i_j/W_j^{i+1})}{\rk W}.$$
	To verify the inequality, splitting the contribution from each point, 
	it suffices to show 
	$$\sum_{i=1}^{n_j} \alpha^i_j \frac{\dim
	(W^i_j/W_j^{i+1})}{\rk W} \leq \alpha.$$
	Indeed, this holds because, for all $j$,
	\begin{align*}
		\sum_{i=1}^{n_j} \alpha^i_j \frac{\dim
	(W^i_j/W_j^{i+1})}{\rk W} \leq \sum_{i=1}^{n_j} \alpha \frac{\dim
	(W^i_j/W_j^{i+1})}{\rk W} = \alpha \sum_{i=1}^{n_j}  \frac{\dim
	(W^i_j/W_j^{i+1})}{\rk W} = \alpha.
	\end{align*}
	Finally, equality holds in the above inequality for all $j$ if and only if all
	$\alpha^i_j$ are equal to $\alpha$.
\end{proof}

Recall from 
\autoref{definition:coparabolic-stability} that a coparabolic bundle $\widehat{E}_\star$ is defined to be
semistable if $E_\star$ is semistable.
\begin{lemma}
	\label{lemma:quotient-slope}
	Let $V_\star$ be a semistable coparabolic vector
	bundle or a semistable parabolic bundle on a curve $C$ with respect to a
	reduced divisor $D=x_1+\cdots +x_n$ and $\mu_\star(V_\star)=r+n$.
	Then any vector bundle $Q$ arising as a quotient of $V$ satisfies
	$\mu(Q)\geq r$. 
	Moreover, $\mu(Q) > r$ holds in the parabolic case if $n > 0$.
\end{lemma}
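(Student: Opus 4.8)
The plan is to treat the parabolic and coparabolic cases separately, reducing each to two facts already in hand: additivity of parabolic degree in short exact sequences (\autoref{lemma:quotient-semistable}) and the comparison $0 \le \mu_\star(W_\star) - \mu(W) \le n\alpha$ between parabolic and ordinary slope, where $\alpha < 1$ is the largest parabolic weight of $W_\star$ (\autoref{lemma:pushforward-degree}).

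Suppose first that $V_\star$ is a semistable parabolic bundle with $\mu_\star(V_\star) = r+n$, and let $q\colon V = V_0 \twoheadrightarrow Q$ be a surjection onto a vector bundle. Since $Q$ is locally free, $K := \ker q$ is a subbundle of $V$, and we give $Q$ the induced quotient parabolic structure $Q_\star = V_\star / K_\star$ of \autoref{subsubsection:induced-subbundle}. By \autoref{lemma:quotient-semistable}, semistability of $V_\star$ forces $\mu_\star(Q_\star) \ge \mu_\star(V_\star) = r+n$. The weights of $Q_\star$ are a subset of those of $V_\star$, hence all lie in $[0,1)$; writing $\alpha < 1$ for the largest of them (and $\alpha = 0$ if $Q_\star$ has the trivial parabolic structure), \autoref{lemma:pushforward-degree} gives $\mu_\star(Q_\star) - \mu(Q) \le n\alpha$, whence
\[
	\mu(Q) \;\ge\; \mu_\star(Q_\star) - n\alpha \;\ge\; (r+n) - n\alpha \;=\; r + n(1-\alpha) \;\ge\; r ,
\]
and the last inequality is strict as soon as $n > 0$, since then $n(1-\alpha) > 0$. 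This settles the parabolic case, including the refinement.

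For the coparabolic case, write $V_\star = \widehat{F}_\star$ with $F_\star$ a semistable parabolic bundle and $\mu_\star(F_\star) = r+n$, and let $q\colon \widehat{F}_0 \twoheadrightarrow Q$ be a surjection onto a vector bundle. Dualizing the short exact sequence $0 \to \ker q \to \widehat{F}_0 \to Q \to 0$ of vector bundles shows $Q^\vee$ is a subbundle of $(\widehat{F}_0)^\vee$. Evaluating the parabolic dualization formula of \autoref{lemma:parabolic-dual-formula} at $0$ gives $(F^\vee_\star)_0 \simeq (\widehat{F}_0)^\vee(-D)$, so twisting by $\mathscr{O}_C(-D)$ yields a subbundle $S := Q^\vee(-D) \hookrightarrow (F^\vee_\star)_0$. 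Now $F^\vee_\star$ is again a semistable parabolic bundle — the dual of a semistable parabolic bundle is semistable, as one sees by dualizing \autoref{lemma:quotient-semistable} — with $\mu_\star(F^\vee_\star) = -(r+n)$ by \autoref{lemma:degree-in-duals}. Equipping $S$ with the induced sub-parabolic structure $S_\star$ and using that $\mu(S) \le \mu_\star(S_\star)$ (immediate from the definition of parabolic degree), semistability of $F^\vee_\star$ gives $\mu(S) \le \mu_\star(S_\star) \le \mu_\star(F^\vee_\star) = -(r+n)$. Since $\deg S = \deg Q^\vee - n\,\rk Q$, we have $\mu(S) = -\mu(Q) - n$, so this rearranges to $\mu(Q) \ge r$, as desired.

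The routine points to pin down are that $\ker q$, respectively $Q^\vee(-D)$, is genuinely a subbundle, so that the induced parabolic structures and \autoref{lemma:quotient-semistable} apply, together with the degree bookkeeping in the final step. The only substantive ingredient is the parabolic duality used in the coparabolic case, so the main obstacle is to confirm that it behaves as expected: that $F^\vee_\star$ is again a parabolic bundle, that semistability is preserved under dualization, and that \autoref{lemma:parabolic-dual-formula} and \autoref{lemma:degree-in-duals} are being applied with compatible sign conventions. All of this is standard within the framework of \autoref{section:parabolic}, but it does require unwinding the definitions. One could instead try to avoid duality by placing a semistable parabolic structure of slope $r+n$ directly on the vector bundle $\widehat{F}_0$; but since $\widehat{F}_0$ differs from $F_0$ by an elementary modification at the points of $D$ where the smallest weight is $0$, this appears to require a parabolic weight equal to $1$, which \autoref{definition:parabolic-bundle} forbids, so the duality route is the cleaner one.
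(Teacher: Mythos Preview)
Your parabolic case is essentially identical to the paper's. For the coparabolic case, however, you take a genuinely different route. The paper argues by a limiting procedure: writing $V_\star=\widehat{W}_\star$, it uses the shifts $W[\varepsilon]_\star$ (which are honest parabolic bundles of slope $r+n-\varepsilon$ mapping to $\widehat{W}_\star$), applies the parabolic case to the image $Q^\varepsilon$ of $W[\varepsilon]_0\to Q$ to get $\mu(Q^\varepsilon)\ge r-\varepsilon$, and then lets $\varepsilon\to 0$. You instead dualize: the parabolic dual $F^\vee_\star$ is a semistable parabolic bundle of slope $-(r+n)$, and the quotient $\widehat{F}_0\twoheadrightarrow Q$ becomes, after twisting by $\mathscr{O}(-D)$, a subbundle $Q^\vee(-D)\hookrightarrow (F^\vee_\star)_0$, whence the slope bound follows directly.

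Both arguments are correct. The paper's approach is more self-contained in that it only uses the shift operation and \autoref{lemma:pushforward-degree}, avoiding any appeal to how parabolic duality interacts with semistability or with the induced sub/quotient structures. Your approach is cleaner once those duality facts are available---no $\varepsilon$, no torsion-cokernel step---but, as you correctly flag, it leans on the compatibilities that $(F^\vee_\star)^\vee\simeq F_\star$, that dualization exchanges induced sub- and quotient-parabolic structures, and that semistability is preserved under dualization. These are true in the framework of \autoref{section:parabolic}, so the argument goes through.
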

\begin{proof}
	First we deal with the case that $V_\star$ is a parabolic vector bundle.
	By \autoref{lemma:quotient-semistable}, any 
	parabolic quotient of $V_\star$ 
	(with the induced quotient structure of
	\autoref{subsubsection:induced-subbundle})	
	has parabolic slope at least $r+n$.
	Therefore, to complete the parabolic case, 
	it suffices to show that for any parabolic bundle $W_\star$ on $X$,
	$\mu_\star(W_\star) - \mu(W) \leq n$, with a strict inequality if $n >
	0$.
	The $n = 0$ case is trivial while the $n > 0$ case
	follows from \autoref{lemma:pushforward-degree} as $\alpha^i_j < 1$ for
	all $i, j$.

	Now, suppose $V_\star$ is a coparabolic bundle of the form $\widehat{W}_\star$ for $W_\star =
	(W, \{W^i_j\}, \{\alpha^i_j\})$ a parabolic bundle.
	For any $\varepsilon > 0$, there is a map $W[\varepsilon]_\star \to
	\widehat{W}_\star$ and hence a map $W[\varepsilon]_0 \to Q$.
	Let $Q^\varepsilon$ denote the image of this map, which we may endow
	with the associated quotient parabolic structure to obtain a quotient
	bundle
	$W[\varepsilon]_\star \to Q^\varepsilon_\star$.
	This implies 
	$\mu_\star(Q^\varepsilon_\star) \geq \mu_\star(W[\varepsilon]_\star) = r + n - \varepsilon$ 
	for all $\varepsilon > 0$, by \autoref{lemma:quotient-semistable}.
	By \autoref{lemma:pushforward-degree}, it follows that
        $\mu_\star(Q^\varepsilon_\star) - \mu(Q^\varepsilon) \leq n$, so
	$\mu(Q^\varepsilon) \geq r-\varepsilon$. Now $Q^\varepsilon\to Q$ has
	torsion cokernel, so $\mu(Q)\geq \mu(Q^\varepsilon)\geq r-\varepsilon$
	for all $\varepsilon>0$, giving the result.
\end{proof}

We also need the following fairly standard lemma, which has little to do with
parabolic bundles.
\begin{lemma}
	\label{lemma:cohomology-vanish-for-large-slope}
	Suppose $V$ is a vector bundle on a smooth proper curve $C$ so that each
	graded part of the Harder-Narasimhan filtration of $V$ has slope more
	than $2g - 2$. Then $H^1(C, V) = 0$.
\end{lemma}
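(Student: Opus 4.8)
The plan is to prove the contrapositive via Serre duality: it suffices to show that if every graded piece of the Harder-Narasimhan filtration of $V$ has slope $> 2g-2$, then $\on{Hom}(V, \omega_C) = 0$, since $H^1(C,V) \simeq \on{Hom}(V,\omega_C)^\vee$. First I would take the Harder-Narasimhan filtration $0 = N^0 \subset N^1 \subset \cdots \subset N^m = V$ and recall that its graded pieces $W_i := N^i/N^{i-1}$ are semistable with strictly decreasing slopes $\mu(W_1) > \mu(W_2) > \cdots > \mu(W_m) > 2g-2$; in particular the \emph{minimal} slope appearing in $V$, namely $\mu(W_m)$, exceeds $2g-2 = \deg \omega_C = \mu(\omega_C)$.

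The key step is the standard fact that there are no nonzero maps from a vector bundle whose slopes are all $> \mu(\omega_C)$ to the semistable bundle $\omega_C$. Concretely, given a nonzero map $\varphi: V \to \omega_C$, its image is a nonzero subsheaf of $\omega_C$, hence a line bundle $L$ with $\deg L \leq \deg \omega_C = 2g-2$ (saturating only increases the degree, so even the saturation has degree $\le 2g-2$). On the other hand, $\varphi$ factors through a nonzero quotient $V \twoheadrightarrow L$; restricting $\varphi$ to the last nonzero step $N^k$ of the HN filtration that surjects onto $L$ and passing to an appropriate graded piece, we obtain a nonzero map from some semistable $W_i$ (of slope $> 2g-2$) onto a subsheaf of $L$, which forces $2g-2 < \mu(W_i) \leq \deg(\text{image}) \leq \deg L \leq 2g-2$, a contradiction. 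Equivalently and more cleanly: a nonzero map between semistable bundles forces $\mu(\text{source}) \le \mu(\text{target})$ on the quotient, and chasing the HN filtration of $V$ reduces any nonzero $V \to \omega_C$ to a nonzero map out of some $W_i$ with $\mu(W_i) > 2g-2 = \mu(\omega_C)$, which is impossible since $\omega_C$ is semistable (every line bundle is) and a nonzero map of semistable sheaves with $\mu(\text{source}) > \mu(\text{target})$ cannot exist.

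The main obstacle — really the only point requiring care — is the bookkeeping in reducing a hypothetical nonzero map $V \to \omega_C$ to a nonzero map out of a single HN-graded piece $W_i$; this is handled by induction on the length $m$ of the filtration, using the short exact sequence $0 \to N^{m-1} \to V \to W_m \to 0$: any map $V \to \omega_C$ either kills $N^{m-1}$ (hence factors through $W_m$, contradicting $\mu(W_m) > 2g-2$ and semistability of $\omega_C$) or restricts to a nonzero map $N^{m-1} \to \omega_C$, and $N^{m-1}$ again has all HN slopes $> 2g-2$, so we conclude by the inductive hypothesis. The base case $m=1$ is exactly the statement that a semistable bundle of slope $> 2g-2$ has no nonzero map to $\omega_C$. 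Then $\on{Hom}(V,\omega_C) = 0$, so $H^1(C,V) = 0$ by Serre duality.
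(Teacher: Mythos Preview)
Your proof is correct and uses essentially the same idea as the paper: both reduce to the fact that a semistable bundle of slope $>2g-2$ admits no nonzero map to $\omega_C$, and then propagate this through the Harder--Narasimhan filtration. The only organizational difference is that the paper applies Serre duality to each graded piece $W_i$ first (getting $H^1(C,W_i)=0$) and then uses the long exact sequence in cohomology for $0\to N^{i-1}\to N^i\to W_i\to 0$ to conclude $H^1(C,V)=0$ by induction on $i$, whereas you apply Serre duality to $V$ once and do the induction on $\on{Hom}(-,\omega_C)$ instead; these are the same argument with the order of ``Serre duality'' and ``filtration induction'' swapped.
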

\begin{proof}
	Let $0= N^0 \subset N^1 \subset \cdots \subset N^s = V$ denote the
	Harder-Narasimhan filtration.
	From the exact sequence
	\begin{equation}
		\label{equation:}
		\begin{tikzcd}
			H^1(C, N^{i-1}) \ar {r} & H^1(C, N^i) \ar {r} & H^1(C,
			N^i/N^{i-1}) \ar {r} & 0 
	\end{tikzcd}\end{equation}
	by induction on $i$, it is enough to show $H^1(C, N^i/N^{i-1}) = 0$ for
	every $i \leq s$.
	Since $N^i/N^{i-1}$ is semistable of slope more than $2g-2$, there are
	no nonzero maps $N^i/N^{i-1} \to \omega_C$, and so 
	$H^1(C, N^i/N^{i-1}) = 0$.
\end{proof}

Combining the above, we are able to verify the parabolic analog of
\autoref{proposition:lower-bound-on-rank}, namely \autoref{proposition:generic-parabolic-global-generation} below.
In this paper, we will only require the case $\delta = 0, c = 1$
of \autoref{proposition:generic-parabolic-global-generation}
so we encourage the reader to focus on that case.
We include the more general version, as the proof is nearly the same, 
and will be useful in future work.

\begin{proposition}
	\label{proposition:generic-parabolic-global-generation}
	Suppose $C$ is a smooth proper connected genus $g$ curve and
	$E_\star = (E, \{E^i_j\}, \{\alpha^i_j\})$ is a nonzero parabolic bundle $C$
	with respect to $D = x_1 + \cdots + x_n$.
		Suppose $\widehat{E}_\star$ is coparabolically semistable. 
		Let $U \subset \widehat E_0$ be a (non-parabolic) subbundle with
		$c := \rk E_0 - \rk U$
		and
		$\delta := h^0(C, \widehat{E}_0) - h^0(C, U)$.
	\begin{enumerate}
		\item[(I)] If $\mu_\star(\widehat{E}_\star)> 2g-2 + n$, 
		then $\rk E >
			gc - \delta$.
		\item[(II)] If $\mu_\star(\widehat{E}_\star)= 2g-2+n$, then $\rk
			E \geq gc - \delta$.
	\end{enumerate}
	In particular, if $\widehat{E}_\star$
	fails to be generically globally generated, and 
	$\mu_\star(\widehat{E}_\star)> 2g-2 + n$, then $\rk E \geq g	+ 1$.
\end{proposition}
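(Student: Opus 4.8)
The plan is to deduce everything from the non-parabolic bound \autoref{lemma:abstract-lower-bound-on-rank} after passing to a quotient of $\widehat E_0$ that discards the part of the Harder--Narasimhan filtration of slope $>2g-2$, as anticipated in the remark following \autoref{proposition:lower-bound-on-rank}. The first step is to translate coparabolic semistability into ordinary vector-bundle language: writing $V:=\widehat E_0$ and $\mu_\star(\widehat E_\star)=r+n$, \autoref{lemma:quotient-slope} says every quotient \emph{bundle} of $V$ has slope $\geq r$, where the hypotheses of (I) and (II) read $r>2g-2$, resp.\ $r=2g-2$; in particular $\mu(V)\geq r$.

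Next I would reduce (I) and (II) to the case that $U$ is generically globally generated. Replacing $U$ by the saturation $U'$ of the image of $H^0(C,U)\otimes\mathscr O_C\to U$ leaves $H^0(C,U)$, hence $\delta$, unchanged and can only enlarge $c:=\rk E-\rk U$, so the bound $\rk E>gc-\delta$ (resp.\ $\geq$) for $U'$ yields it for $U$; and for such $U$ the lowest graded piece of its Harder--Narasimhan filtration, being a quotient of $U$, is generically globally generated, hence of nonnegative slope, so $U$ has all Harder--Narasimhan slopes $\geq 0$. Now let $N\subseteq U$ be the part of the Harder--Narasimhan filtration of $U$ of slope $>2g-2$ and put $\bar U:=U/N$, $\bar V:=V/N$. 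Then $\bar U$ has all Harder--Narasimhan slopes in $[0,2g-2]$; $\bar U\subseteq\bar V$ is a subbundle since $N$ is saturated in $V$; and $\bar V$ is a quotient bundle of $V$, so $\mu(\bar V)\geq r$. Hence $\mu(\bar U)\leq 2g-2\leq r\leq\mu(\bar V)$, so $\bar U\subseteq\bar V$ satisfies hypotheses (1) and (2) of \autoref{lemma:abstract-lower-bound-on-rank}, and $\mu(\bar V)\geq 2g-2$, strictly in case (I).

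Since $N$ has all slopes $>2g-2$, \autoref{lemma:cohomology-vanish-for-large-slope} gives $H^1(C,N)=0$, so $h^0(C,V)=h^0(C,N)+h^0(C,\bar V)$ and $h^0(C,U)=h^0(C,N)+h^0(C,\bar U)$; thus $\bar\delta:=h^0(C,\bar V)-h^0(C,\bar U)$ equals $\delta$, and $\bar c:=\rk\bar V-\rk\bar U=\rk V-\rk U=c$. Applying \autoref{lemma:abstract-lower-bound-on-rank} to $\bar U\subseteq\bar V$ gives $\rk\bar V>gc-\delta$ in case (I) and $\rk\bar V\geq gc-\delta$ in case (II), and since $\rk E=\rk V=\rk\bar V+\rk N\geq\rk\bar V$ this is exactly (I) and (II) (the case $\bar U=0$ uses instead the $U=0$ clause of \autoref{lemma:abstract-lower-bound-on-rank}, and $U=\widehat E_0$ is trivial). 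The final assertion is then immediate: if $\widehat E_\star$ is not generically globally generated and $\mu_\star(\widehat E_\star)>2g-2+n$, take $U$ to be the saturation of the image of $H^0(C,\widehat E_0)\otimes\mathscr O_C\to\widehat E_0$, a proper subbundle with $H^0(C,U)=H^0(C,\widehat E_0)$; then $c\geq 1$ and $\delta=0$, so (I) gives $\rk E>gc\geq g$, i.e.\ $\rk E\geq g+1$.

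The step I expect to be the main obstacle is verifying that $\bar U\subseteq\bar V$ really meets the hypotheses of \autoref{lemma:abstract-lower-bound-on-rank} while keeping the bookkeeping exact, so that the surplus ``$+n$'' in the coparabolic slope cancels. In \autoref{proposition:lower-bound-on-rank} the inequality $\mu(\bar U)\leq\mu(\bar V)$ was automatic from semistability of the ambient bundle; here it has to be squeezed out of the gap between $\mu(\bar V)\geq r$ (via \autoref{lemma:quotient-slope}) and $\mu(\bar U)\leq 2g-2<r$, and one must separately ensure the Harder--Narasimhan graded pieces of $\bar U$ are nonnegative --- exactly where generic global generation of $U$ is used --- and that $\bar U$ genuinely embeds in $\bar V$. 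Once these are secured, the identities $\bar c=c$ and $\bar\delta=\delta$ make the reduction lossless and the conclusion drops out.
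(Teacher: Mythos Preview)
Your proposal is correct and follows essentially the same route as the paper: reduce to $U$ generically globally generated, quotient out the part $N$ of the Harder--Narasimhan filtration of $U$ with slopes $>2g-2$, and apply \autoref{lemma:abstract-lower-bound-on-rank} to $\bar U\subset\bar V$, using \autoref{lemma:quotient-slope} for the lower bound on $\mu(\bar V)$ and \autoref{lemma:cohomology-vanish-for-large-slope} to preserve $\delta$. The only visible difference is that the paper first disposes of the cases $g\leq 1$ by hand before running the general argument for $g\geq 2$, whereas your argument does not separate these out; in fact your uniform treatment already covers them, since in those cases $\bar U$ is forced to be zero (the interval $[0,2g-2]$ is empty or a point) and the $U=0$ clause of \autoref{lemma:abstract-lower-bound-on-rank} applies directly.
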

\begin{proof}
	We first check the cases $g = 0$ and $g = 1$.
	The statement is trivial when $g = 0$.
	Case $(II)$ is trivial when $g = 1$, because
	$\rk E \geq c$. Similarly case $(I)$ is trivial when $\delta >
	0$.
	It remains to verify case $(I)$ when $g = 1$ and $\delta = 0$. 
	In this case, it is enough to show that $\rk U > 0$, 
	and since $H^0(C, U) = H^0(C, \widehat{E}_0)$,
	it is enough to show
	$H^0(C, \widehat{E}_0)\neq 0$.
	By \autoref{lemma:quotient-slope}, $\mu(\widehat{E}_0) > 0$,
	so Riemann-Roch implies $H^0(C, \widehat{E}_0) \neq 0$.

	We now assume $g \geq 2$.
	Let $V_\star := \widehat{E}_\star$ denote the given coparabolic bundle.
	As a first step, we reduce to the case that $U$ is generically globally
	generated.
	Indeed, let $U'$ denote the saturation of the image
	$H^0(C, U) \otimes \mathscr O_C \to U \to V$. 
	Since $h^0(C, U') \geq h^0(C, U)$ and $\rk V -
	\rk U' \geq \rk V - \rk U$, the result holds for $U$ if it holds for
	$U'$.
	We may therefore assume $U$ is generically globally generated.

	Let $0=N^0 \subset N^1 \subset \cdots \subset N^s = U$ denote the
	Harder-Narasimhan filtration of $U$.
	Let $t$ be the minimal index so that $\mu(N^{t+1}/N^{t}) \leq 2g-2$.
	If no such index exists, take $t = s$.
	We will show that in fact $\rk V/N^t > gc$ in case $(I)$ of the statement
	of this proposition
	and $\rk V/N^t \geq gc$ in case $(II)$.
	We will do so by verifying the hypotheses of
	\autoref{lemma:abstract-lower-bound-on-rank} applied to the subbundle
	$U/N^t \subset V/N^t$.
	To this end, using \autoref{lemma:quotient-slope},
	we find
	\begin{align}
		\label{equation:quotient-slope-bound}
		\mu(V/N^t) > 2g - 2 \text{ in case $(I)$ and }
	\mu(V/N^t) \geq 2g - 2 \text{ in case $(II)$.}
	\end{align}

	We next verify 
	$h^0(C, V)- h^0(C, U) = h^0(C,V/N^t) - h^0(C, U/N^t)$,
which implies that present value of $\delta$, $h^0(C, V)- h^0(C, U)$, agrees with the value of $\delta$ we will use
in our application of \autoref{lemma:abstract-lower-bound-on-rank},
	$h^0(C,V/N^t) - h^0(C, U/N^t)$.
	Indeed, $H^1(C, N^t) = 0$ by
	\autoref{lemma:cohomology-vanish-for-large-slope}.
	Therefore, 
	$h^0(C, U/N^t) = h^0(C, U) - h^0(C, N^t)$ and 
	$h^0(C, V/N^t) = h^0(C, V) - h^0(C, N^t)$. 
	Hence,
	\begin{align*}
		h^0(C, V/N^t)- h^0(C,U/N^t) = h^0(C, V) - h^0(C, U) = \delta.
	\end{align*}

	In the case $t = s$, so $N^t = U$, we have $U/N^t = 0$, and we have verified the hypotheses of
	\autoref{lemma:abstract-lower-bound-on-rank},
	so we now assume $N^t \neq U$.
	It remains to check hypotheses $(1)$ and $(2)$ of 
	\autoref{lemma:abstract-lower-bound-on-rank}.
	We first check $(2)$. Each 
	graded piece of the Harder-Narasimhan filtration of $U/N^t$
	has slope $\leq 2g - 2$ by construction of $N^t$, which verifies
	the upper bound in $(2)$.
	We claim the lower bound in $(2)$ follows from generic global generation of $U$.
	Indeed, recall we are assuming $U$ is generically globally generated, via
	the reduction made near the beginning of the proof.
	Therefore, 
	$H^0(C, U/N^{s-1})
	= H^0(C, N^s/N^{s-1}) \neq 0$,
	as any quotient bundle of a generically globally generated bundle is
	generically globally generated.
	This implies $\mu(N^s/N^{s-1}) \geq 0$, and so $\mu(N^j/N^{j-1}) \geq 0$ for
	all $1 \leq j \leq t$.

	Finally, we verify $(1)$, again in the case $t< s$, i.e., $N^t \neq U$.
	In the previous paragraph, we showed each graded piece of the
	Harder-Narasimhan filtration of $U/N^t$ has slope at most $2g -2$, so
	we also obtain $\mu(U/N^t) \leq 2g - 2$.
	On the other hand, we have already verified that $\mu(V/N^t) \geq 2g-2$ 
	in \eqref{equation:quotient-slope-bound}, above.
	Hence,
	$\mu(U/N^t) \leq 2g-2 \leq \mu(V/N^t)$,	
	verifying $(1)$.

	Applying \autoref{lemma:abstract-lower-bound-on-rank} to the vector bundle
	$V/N_t$ with the subbundle $U/N_t$ shows 
	\begin{align*}
	    \rk V/N_t &\geq g(\rk V/N_t -
	\rk U/N_t) - \delta
	= g(\rk V -
	\rk U)-\delta
	=gc - \delta,
	\end{align*}
	where the inequality is strict in case $(I)$ because $\mu(V/N^t) > 2g -
	2$.
	This proves cases $(I)$ and $(II)$ since $V = \widehat{E}_0$. 
	
	The final statement
	holds by taking $U$ to be the saturation of the image of
	$H^0(C, \widehat{E}_0) \otimes \mathscr O_C \to \widehat{E}_0$.
	In this case, $H^0(U) = H^0(C, \widehat{E}_0)$, $\delta = 0$,
	and $c \geq 1$
	when $\widehat{E}_0$ is not generically globally generated. So we get
	$\rk E \geq g + 1$.
\end{proof}

\subsection{Reduction for the proof of \autoref{theorem:hn-constraints-parabolic}}
\label{subsection:proving-hn}
We next prove some results in preparation for the proof of
\autoref{theorem:hn-constraints-parabolic}. 
Reviewing the idea of the proof, described in \autoref{subsection:idea-of-proof},
may be helpful.
%
%
\begin{notation}
	\label{notation:graded-E}
	Let $(C, D)$ be a hyperbolic curve.
Let $(E, \nabla)$ be a flat vector bundle on $C$ with regular singularities along $D$, whose
associated monodromy representation is irreducible. 
Let $E_\star$ be a parabolic structure on $E$ refined by the canonical parabolic structure associated to $\nabla$, as in \autoref{definition:associated-parabolic} and
\autoref{definition:refinements}. Let $q^\nabla: T_C(-D) \to \on{At}_{(C,D)}(E_\star)$ be the splitting of the Atiyah exact sequence associated to $\nabla$ and described in \autoref{definition:associated-parabolic}.
Let
$N^\bullet_\star$, given by $0 = N^0_\star \subset N^1_\star \subset \cdots
\subset N^m_\star = E_\star$, be a
nontrivial filtration of $E_\star$ by parabolic subbundles (with the induced parabolic structure). In particular, note $m > 1$.
Let $\on{gr}^i_{N_\star}(E_\star) := N_\star^i/N_\star^{i-1}$ denote the
quotient parabolic bundle with the induced parabolic quotient structure as described in
\autoref{subsubsection:induced-subbundle}.
The parabolic bundle $\sheafend(E_\star)_\star /\sheafend(E_\star, N^\bullet_\star)_\star$ has a filtration 
	by sheaves whose
associated graded sheaf is of the form
\begin{align*}
	\oplus_{1 \leq i< j \leq n} \sheafhom(\on{gr}^i_{N_\star}(E_\star),
	\on{gr}^{j}_{N_\star}(E_\star))_\star.
\end{align*}
For $i<j$ define $E_\star^{i,j} :=
\sheafhom(\on{gr}^i_{N_\star}(E_\star),\on{gr}^{j}_{N_\star}(E_\star))_\star.$

Let $\Delta=\mathscr{T}_{g,n}$ be the universal cover of the analytic stack
$\mathscr{M}_{g,n}$, and let $(\mathscr{C}, \mathscr{D})$ be the universal
marked curve over $\mathscr{T}_{g,n}$. Let $0\in \Delta$ be such that
$(\mathscr{C}, \mathscr{D})_0$ is isomorphic to $(C,D)$; fix such an
isomorphism. Let $(\mathscr{E_\star}, \widetilde{\nabla})$ be the universal
isomonodromic deformation of $(E_\star, \nabla)$ to $\mathscr{C}$.
\end{notation}
We will later take the filtration $N_\star^\bullet$ to be the Harder-Narasimhan
filtration (cf. \autoref{subsubsection:harder-narasimhan-parabolic}) of $E_\star$.

By \autoref{proposition:irreduciblity-splitting-condition},
the map $q^\nabla$ yields a non-zero map 
\begin{align}\label{q-nabla-map}
	T_C(-D)\overset{q^\nabla}\longrightarrow
\on{At}_{(C,D)}(E_\star) \to
\sheafend(E_\star)/\sheafend(E_\star, N_\star^\bullet).
\end{align}

We now observe that if $N_\star^\bullet$ extends to the universal isomonodromic deformation of
$(C,D, E_\star)$ on the first-order neighborhood of $(C,D)$, the induced map on first cohomology must vanish. The reader may find it useful to consider the case where the Harder-Narasimhan filtration of $E_\star$ has only two steps.

\begin{lemma}
	\label{lemma:h1-map}
	Retain notation as in \autoref{notation:graded-E}.
If the filtration $N_\star^\bullet$ extends to a filtration on the restriction of
$(\mathscr E_\star,\widetilde{\nabla})$ to the first-order neighborhood of $(C,D)=(\mathscr{C}, \mathscr{D})_0\subset (\mathscr{C}, \mathscr{D})$, 
then the composite map
\begin{align*}
	H^1(C,T_C(-D))
\overset{(q^\nabla)_*}\longrightarrow H^1(C, \on{At}_{(C,D)}(E_\star))
\to H^1(C, \sheafend(E_\star)_\star/\sheafend(E_\star, N^\bullet_\star)_\star).
\end{align*}
induced by (\ref{q-nabla-map}) is identically zero.
\end{lemma}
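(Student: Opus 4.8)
The plan is to deduce this from \autoref{lemma:parabolic-extension-obstruction} together with \autoref{proposition:connection-h1}, applied to one tangent direction at a time. Since the composite map in the statement is $\mathbb C$-linear, it suffices to show that it annihilates an arbitrary class $s \in H^1(C, T_C(-D))$. First I would use that $(C,D)$ is hyperbolic, so $H^0(C,T_C(-D)) = 0$ and $\mathscr C \to \Delta = \mathscr T_{g,n}$ is the universal family: thus the Kodaira--Spencer map is an isomorphism $T_0\Delta \xrightarrow{\sim} \on{Def}_{(C,D)}(\mathbb C[\varepsilon]/\varepsilon^2) \cong H^1(C,T_C(-D))$ by \autoref{proposition:curve-deformation}. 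Hence every $s$ is represented by a morphism $\iota_s \colon \on{Spec}\mathbb C[\varepsilon]/\varepsilon^2 \to \Delta$ supported at $0$, which factors through the first-order neighborhood of $0$ in $\Delta$.

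Next, for this fixed $s$, I would pull back the universal isomonodromic deformation $(\mathscr E_\star,\widetilde\nabla)$ along $\iota_s$. By the functoriality of the extension constructed in \autoref{lemma:iso-extension}, $\iota_s^*(\mathscr E_\star,\widetilde\nabla)$ is the isomonodromic deformation of $(E_\star,\nabla)$ over the first-order deformation of $(C,D)$ determined by $s$, that is, it is $\on{iso}(s)$; and by \autoref{proposition:connection-h1} (which already incorporates the identification of \autoref{proposition:filtered-bundle-deformation}), the corresponding class in $H^1(C,\on{At}_{(C,D)}(E_\star))$ is precisely $q^\nabla(s)$. By hypothesis, $N^\bullet_\star$ extends to a filtration of $(\mathscr E_\star,\widetilde\nabla)$ over the first-order neighborhood of $0$; restricting along $\iota_s$, the underlying subbundle filtration $N^\bullet$ of $E$ extends to a filtration of the bundle underlying $\iota_s^*\mathscr E_\star$. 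So $q^\nabla(s)$ corresponds to a first-order deformation of $(C,D,E_\star)$ in which $N^\bullet$ admits an extension, and \autoref{lemma:parabolic-extension-obstruction} applied with $P^\bullet := N^\bullet$ yields
\begin{align*}
q^\nabla(s)\in\ker\bigl(H^1(C,\on{At}_{(C,D)}(E_\star))\to H^1(C,\sheafend(E_\star)/\sheafend(E_\star,N^\bullet))\bigr).
\end{align*}

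To conclude, I would identify this kernel with the one in the statement. One has $\sheafend(E_\star,N^\bullet) = \sheafend(E_\star,N^\bullet_\star)$, since an endomorphism of $E$ preserving the quasiparabolic filtration and each $N^i$ automatically preserves the induced parabolic structure on each $N^i$; and the cohomology of the parabolic sheaf $\sheafend(E_\star)_\star/\sheafend(E_\star,N^\bullet_\star)_\star$ equals that of its degree-zero part $\sheafend(E_\star)/\sheafend(E_\star,N^\bullet)$. Thus the displayed condition says exactly that the composite of \eqref{q-nabla-map} on $H^1$ sends $s$ to $0$; as $s$ was arbitrary, this composite vanishes identically.

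The only substance beyond diagram chasing lies in the identifications of the second paragraph, which I expect to be the main thing to get right: namely (i) that the first-order neighborhood of $0 \in \Delta$ realizes \emph{all} of $H^1(C,T_C(-D))$, which is where hyperbolicity is used (smoothness of $\Delta$ with $T_0\Delta \cong H^1(C,T_C(-D))$), and (ii) that the pullback of the universal isomonodromic family along $\iota_s$ represents $q^\nabla(s)$, which is the content of the ``$\on{iso}$'' square in \autoref{proposition:connection-h1} together with functoriality of \autoref{lemma:iso-extension}. Once these are pinned down the result is immediate.
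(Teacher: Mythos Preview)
Your argument is correct and follows the same route as the paper: identify $(q^\nabla)_*(s)$ with the class of the isomonodromic first-order deformation via \autoref{proposition:connection-h1}, then feed the hypothesized extension of $N^\bullet$ into \autoref{lemma:parabolic-extension-obstruction}. You are simply more explicit than the paper about two points it leaves implicit---that hyperbolicity makes the Kodaira--Spencer map $T_0\Delta \to H^1(C,T_C(-D))$ an isomorphism (so every $s$ arises from a tangent direction in $\Delta$), and that $H^1$ of the parabolic sheaf $\sheafend(E_\star)_\star/\sheafend(E_\star,N^\bullet_\star)_\star$ coincides with $H^1$ of its degree-zero piece---but these are clarifications of the same proof, not a different strategy.
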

\begin{proof}
By \autoref{proposition:connection-h1}
the map $$(q^\nabla)_*: H^1(C,T_C(-D))\to
H^1(C, \on{At}_{(C,D)}(E_\star))$$
induced by the connection
sends a first-order deformation of the pointed curve $(C,D)$
to the corresponding first-order deformation of the triple $(C,D,E_\star)$
obtained from isomonodromically deforming the connection $\nabla$.
But given a first-order deformation $(\widetilde{C}, \widetilde{D} ,
\widetilde{E}_\star)$
of $(C, D, E_\star)$
such that $N_\star^\bullet \subset E_\star$ admits an extension $\widetilde N_\star^\bullet$ to
$\widetilde E_\star$, 
the corresponding element of
$H^1(C, \on{At}_{(C,D)}(E_\star))$ maps to $0$ in $H^1(C,
\sheafend(E_\star)_\star/\sheafend(E_\star, N^\bullet_\star)_\star)$, 
by \autoref{lemma:parabolic-extension-obstruction}. The assumption is precisely that this is true for all elements of $H^1(C, \on{At}_{(C,D)}(E_\star))$ in the image of $(q^\nabla)_*$.
\end{proof}

We now analyze the parabolic bundles $E_\star^{i,j}:=\sheafhom(\on{gr}^i_{N_\star}(E_\star),
\on{gr}^j_{N_\star}(E_\star))_\star,$ for $i<j.$

\begin{lemma}
	\label{lemma:nonzero-induced-connection}
	With notation as in \autoref{notation:graded-E},
	for every $0 < i < m$, there exists $j,k$ with $j < i$ and $k \geq
	i + 1$ so that the nonzero map $T_C(-D)\to
	\sheafend(E_\star)_\star/\sheafend(E_\star, N^\bullet_\star)_\star$ induces a nonzero map
$\psi_{j+1, k}: T_C(-D)\to E_\star^{j+1,k}$.
\end{lemma}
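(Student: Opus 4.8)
The plan is to prove the conclusion for each fixed $i$ with $0<i<m$ separately, by a two-step reduction that isolates the "block straddling level $i$". First I would pass to the coarse two-step filtration $P^\bullet := (0 \subset N^i \subset E)$ of $E$, which is a \emph{nontrivial} filtration precisely because $0<i<m$. Since any endomorphism preserving $N_\star^\bullet$ in particular preserves $N^i$, there is an inclusion $\sheafend(E_\star, N_\star^\bullet) \subseteq \sheafend(E_\star, P^\bullet)$ and hence a natural surjection of parabolic sheaves
\[
	\sheafend(E_\star)/\sheafend(E_\star, N_\star^\bullet) \twoheadrightarrow \sheafend(E_\star)/\sheafend(E_\star, P^\bullet) \simeq \sheafhom(N^i_\star,(E/N^i)_\star),
\]
the last isomorphism being the standard identification of the quotient by the endomorphisms preserving a two-step filtration with the corresponding off-diagonal $\sheafhom$-sheaf. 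Composing \eqref{q-nabla-map} with this surjection and invoking \eqref{equation:quotient-atiyah-bundles} (which compares the Atiyah sequences for $N_\star^\bullet$, $P^\bullet$ and the trivial filtration), I obtain a map $\phi_i : T_C(-D) \to \sheafhom(N^i_\star,(E/N^i)_\star)$ that agrees with the composite $T_C(-D) \xrightarrow{q^\nabla} \on{At}_{(C,D)}(E_\star) \to \on{At}_{(C,D)}(E_\star)/\on{At}_{(C,D)}(E_\star, P^\bullet) \simeq \sheafend(E_\star)/\sheafend(E_\star, P^\bullet)$; since $(E,\nabla)$ has irreducible monodromy and $P^\bullet$ is nontrivial, \autoref{proposition:irreduciblity-splitting-condition} applies and gives $\phi_i \neq 0$.

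Next I would extract a nonzero "leading term" of $\phi_i$. The parabolic sheaf $\sheafhom(N^i_\star,(E/N^i)_\star)$ carries a filtration by parabolic subsheaves induced by the filtration $\{N^a_\star\}_{a \le i}$ of $N^i_\star$ together with the filtration $\{N^b_\star/N^i_\star\}_{b \ge i}$ of $(E/N^i)_\star$; after refining this filtration if necessary, its successive quotients inject into the parabolic sheaves $\sheafhom(\on{gr}^a_{N_\star}(E_\star),\on{gr}^b_{N_\star}(E_\star))_\star = E_\star^{a,b}$ for the pairs with $a \le i < b$, these injections being isomorphisms away from $D$. Because $T_C(-D)$ is a line bundle and the target is torsion-free, the image of $\phi_i$ is a rank-one subsheaf, so there are consecutive steps $G' \subseteq G$ of the filtration with $\on{im}(\phi_i) \subseteq G$ but $\on{im}(\phi_i) \not\subseteq G'$; then $T_C(-D) \to G/G'$ is nonzero, and composing with $G/G' \hookrightarrow E_\star^{a,b}$ for the corresponding pair $(a,b)$ yields a nonzero map $T_C(-D) \to E_\star^{a,b}$ with $a \le i < b$. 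Setting $j := a-1$ and $k := b$ gives $j<i$, $k \ge i+1$, and the desired nonzero $\psi_{j+1,k} : T_C(-D) \to E_\star^{j+1,k}$. To see that this $\psi_{j+1,k}$ is the map genuinely "induced by" $T_C(-D) \to \sheafend(E_\star)/\sheafend(E_\star, N_\star^\bullet)$, I would observe that the filtration of the latter by distance from the diagonal (whose associated graded is $\bigoplus_{c<d} E_\star^{c,d}$, as in \autoref{notation:graded-E}) is compatible both with the surjection above and with the filtration on $\sheafhom(N^i_\star,(E/N^i)_\star)$ just used, inducing on associated gradeds the coordinate projections onto the $E_\star^{a,b}$; so $\psi_{j+1,k}$ is precisely the map obtained from $q^\nabla$ by passing to the relevant associated graded.

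The step I expect to be the main obstacle is not conceptual but the bookkeeping in the second paragraph: setting up the nested filtration on $\sheafhom(N^i_\star,(E/N^i)_\star)$ precisely and matching its graded pieces with the $E_\star^{a,b}$. Left-exactness of $\sheafhom$ only gives injections of the graded pieces into the $E_\star^{a,b}$ (isomorphisms on $C \setminus D$), and presenting $\bigoplus_{a \le i < b} E_\star^{a,b}$ as an iterated extension may force a refinement of the filtration; I would also want to double-check the analogous bookkeeping for the identification of $\sheafend(E_\star)/\sheafend(E_\star, P^\bullet)$ with $\sheafhom(N^i_\star,(E/N^i)_\star)$ in the parabolic category. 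None of this interferes with the nonvanishing argument, however, since a nonzero map out of a line bundle is always detected in a single graded subquotient; and the remaining ingredients are formal — \autoref{proposition:irreduciblity-splitting-condition} applies for every $0<i<m$, and the compatibility of the Atiyah and endomorphism exact sequences is exactly \eqref{equation:quotient-atiyah-bundles}.
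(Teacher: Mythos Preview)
Your argument is correct, but the paper takes a more direct route that avoids the filtration bookkeeping you flag as the main obstacle. Rather than abstractly filtering $\sheafhom(N^i_\star,(E/N^i)_\star)$ and extracting a leading term, the paper works with the action of $\nabla$ itself: it sets $j$ to be the \emph{maximal} index with $\nabla(N^j)\subset N^i\otimes\Omega^1_C(D)$ (so $j<i$ by irreducibility, since $N^i$ is not flat), and then $k$ to be the \emph{minimal} index with $\nabla(N^{j+1})\subset N^k\otimes\Omega^1_C(D)$ (so $k\ge i+1$ by maximality of $j$). These extremality choices immediately force the induced $\mathscr{O}_C$-linear map $N^{j+1}_\star/N^j_\star \to (N^k_\star/N^{k-1}_\star)\otimes\Omega^1_C(D)$ to be nonzero, giving $\psi_{j+1,k}$ in one stroke.

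The two approaches are closely related: if in your argument you first filter $\sheafhom(N^i,E/N^i)$ by the subsheaves $\sheafhom(N^i/N^a,E/N^i)$ and then by the image filtration $\sheafhom(N^{j+1}/N^j,N^b/N^i)$, the minimal steps pick out precisely the paper's $j$ and $k$. What the paper's phrasing buys is that the specific factorizations $\nabla(N^j)\subset N^{k-1}$ and $\nabla(N^{j+1})\subset N^k$ are built into the definition of $j,k$; these are exactly the properties invoked in the diagram chase of \autoref{proposition:filtered-h1-map}, where the proof explicitly recalls this construction. Your abstract argument proves the existence claim cleanly and sidesteps any question of $\mathscr{O}_C$-linearity, but you would want to record which filtration you use so that the factorizations needed downstream are visible.
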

\begin{proof}
	First, recall the non-zero map
$T_C(-D)\to
\sheafend(E_\star)/\sheafend(E_\star,N_\star^\bullet)$ induced by $q^\nabla$, produced  in
	\autoref{proposition:irreduciblity-splitting-condition}.
Let $j$ be maximal such that $\nabla(N^j)\subset N^i\otimes \Omega^1_C(D)$. 
Note that $j< i$ as the monodromy of $(E,\nabla)|_{C\setminus D}$ is irreducible, so $N^i$ is
not a proper flat subbundle of $(E, \nabla)$, implying $\nabla(N^i) \not \subset
N^i\otimes \Omega^1_C(D)$.
Let $k$ be minimal such that $\nabla(N^{j+1})\subset N^k\otimes\Omega^1(D)$. 
Note that $k\geq i+1$ by the definition of $j$. 
By construction, the connection induces a nonzero $\mathscr{O}_C$-linear map of parabolic bundles
\begin{align*}
	N_\star^{j+1}/N_\star^j  \to (N_\star^k/N_\star^{i}) \otimes
\Omega_C^1(D)\to (N_\star^k/N_\star^{k-1}) \otimes
\Omega_C^1(D), 
\end{align*}
or equivalently a nonzero map
\[\psi_{j+1, k}: T_C(-D) \to \sheafhom(\on{gr}^{j+1}_{N_\star}(E_\star),
\on{gr}^k_{N_\star}(E_\star))_\star = E_\star^{j+1,k}.\qedhere\]
\end{proof}

We have shown that for each $i$, there exist $j<i< k$, and a non-zero map $$T_C(-D)\to E_\star^{j+1,k} =
\sheafhom(\on{gr}^{j+1}_{N_\star}(E_\star),
\on{gr}^{k}_{N_\star}(E_\star)).$$ 

We next refine \autoref{lemma:h1-map} by showing that if its hypotheses are satisfied and if in addition $N_\star^\bullet$ is the Harder-Narasimhan filtration of $E_\star$, the  map on $H^1$ 
induced by 
$\psi_{j+1, k}: T_C(-D)\to E_\star^{j+1,k}$
must also vanish.

%
%
\begin{lemma}
	\label{proposition:filtered-h1-map}
	Use notation as in \autoref{notation:graded-E}. Suppose in addition that $N_\star^\bullet$ is the Harder-Narasimhan filtration of $E_\star$. Fix $i$ with $0<i<m$ and let $j,k,$ and $$\psi_{j+1, k}: T_C(-D)\to E_\star^{j+1,k}$$ be the data constructed in \autoref{lemma:nonzero-induced-connection}. If the filtration $N_\star^\bullet$ extends to a filtration on the restriction of
$(\mathscr E_\star,\widetilde{\nabla} )$ to the first-order neighborhood of $(C,D)=(\mathscr{C}, \mathscr{D})_0\subset (\mathscr{C}, \mathscr{D})$, 
then the map
$H^1(C, T_C(-D))\to H^1(C, E_\star^{j+1,k})$
	induced by $\psi_{j+1, k}$
vanishes.
\end{lemma}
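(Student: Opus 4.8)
The plan is to deduce this from \autoref{lemma:h1-map} by factoring the map induced by $\psi_{j+1,k}$ through the map induced by \eqref{q-nabla-map}, and then to use that $N_\star^\bullet$ is the Harder--Narasimhan filtration to control the relevant subquotient of $Q_\star := \sheafend(E_\star)_\star/\sheafend(E_\star, N_\star^\bullet)_\star$. Write $\overline\psi : T_C(-D) \to Q_\star$ for the composite \eqref{q-nabla-map}; by \autoref{lemma:h1-map} and our extension hypothesis, $(\overline\psi)_* = 0$ on $H^1$. It therefore suffices to produce a parabolic subsheaf $Q'_\star \subseteq Q_\star$ such that (a) $\overline\psi$ factors as $T_C(-D)\xrightarrow{\overline\psi'}Q'_\star\xhookrightarrow{\iota}Q_\star$, (b) there is a map $\pi : Q'_\star \to E_\star^{j+1,k}$ with $\pi \circ \overline\psi' = \psi_{j+1,k}$, and (c) $\iota_* : H^1(C, Q'_\star) \to H^1(C, Q_\star)$ is injective; for then $\iota_*\circ(\overline\psi')_* = (\overline\psi)_* = 0$ forces $(\overline\psi')_* = 0$, hence $(\psi_{j+1,k})_* = \pi_* \circ (\overline\psi')_* = 0$.

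To construct $Q'_\star$, recall from the proof of \autoref{lemma:nonzero-induced-connection} that $j$ is maximal with $\nabla(N^j)\subseteq N^i\otimes\Omega^1_C(\log D)$ and $k$ is minimal with $\nabla(N^{j+1})\subseteq N^k\otimes\Omega^1_C(\log D)$, and that $j < i \le k-1$; in particular $\nabla(N^j)\subseteq N^{k-1}\otimes\Omega^1_C(\log D)$ and $j+1\le k-1$. Since every section of $\sheafend(E_\star, N_\star^\bullet)$ preserves each step of $N_\star^\bullet$, the conditions ``$\phi(N^{j}_\star)\subseteq N^{k-1}_\star$'' and ``$\phi(N^{j+1}_\star)\subseteq N^{k}_\star$'' descend to well-defined conditions on sections of $Q_\star$; let $Q'_\star\subseteq Q_\star$ be the parabolic subsheaf they cut out. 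Unwinding \eqref{q-nabla-map}: the image under $\overline\psi$ of a local section $v$ of $T_C(-D)$ is the class of the $\mathscr O_C$-linear ``off-diagonal part'' of $\nabla_v$, and the two displayed inclusions for $\nabla$ say exactly that this class lies in $Q'_\star$, which gives (a). Condition (i) makes the restriction to $N^{j+1}_\star$ of a section of $Q'_\star$ factor through $\on{gr}^{j+1}_{N_\star}(E_\star)$, and condition (ii) forces its image into $\on{gr}^{k}_{N_\star}(E_\star)$; this defines a (surjective) parabolic map $\pi : Q'_\star \to E_\star^{j+1,k}$, and comparing with the construction of $\psi_{j+1,k}$ in \autoref{lemma:nonzero-induced-connection} yields $\pi\circ\overline\psi' = \psi_{j+1,k}$, which is (b).

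For (c), use the long exact sequence of $0\to Q'_\star\to Q_\star\to Q_\star/Q'_\star\to 0$, which reduces injectivity of $\iota_*$ on $H^1$ to $H^0(C, Q_\star/Q'_\star)=0$. Using the filtration of $Q_\star$ with graded pieces $E_\star^{a,b}=\sheafhom(\on{gr}^a_{N_\star}(E_\star),\on{gr}^b_{N_\star}(E_\star))_\star$ (for $a<b$) recalled in \autoref{notation:graded-E}, a short computation — cleanest after passing to the associated graded of the ``off-diagonal degree'' filtration, or after choosing a local splitting of $N_\star^\bullet$ — identifies $Q_\star/Q'_\star$ as an iterated extension of parabolic sheaves of the form $E_\star^{a,b}$ with $a<b$ (concretely, those with $a\le j<k\le b$ together with those $E_\star^{j+1,b}$ with $b>k$). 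Since $N_\star^\bullet$ is the Harder--Narasimhan filtration of $E_\star$, each $\on{gr}^a_{N_\star}(E_\star)$ is parabolically semistable and the parabolic slopes $\mu_\star(\on{gr}^a_{N_\star}(E_\star))$ are strictly decreasing in $a$, so for $a<b$ there is no nonzero parabolic morphism $\on{gr}^a_{N_\star}(E_\star)\to\on{gr}^b_{N_\star}(E_\star)$; hence $H^0(C, E_\star^{a,b}) = \on{Hom}(\on{gr}^a_{N_\star}(E_\star),\on{gr}^b_{N_\star}(E_\star))=0$, and therefore $H^0(C, Q_\star/Q'_\star)=0$.

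The main obstacle is the bookkeeping in the middle paragraph: verifying that the two conditions genuinely define a parabolic subsheaf $Q'_\star$, that the off-diagonal part of $\nabla_v$ satisfies them (this is the only place the specific choice of $j$ and $k$ is used in an essential way, beyond \autoref{lemma:h1-map}), that $\pi$ is well-defined as a map of parabolic sheaves, and that the graded pieces of $Q_\star/Q'_\star$ are precisely the claimed $E_\star^{a,b}$ with $a<b$ — all of which requires care with the induced parabolic structures. The only genuinely new input (relative to \autoref{lemma:h1-map}) is the slope inequality coming from the Harder--Narasimhan hypothesis, used exactly to get $H^0(C, E_\star^{a,b})=0$ for $a<b$.
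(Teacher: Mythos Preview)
Your proof is correct and follows essentially the same approach as the paper. Both arguments start from \autoref{lemma:h1-map}, then use the Harder--Narasimhan slope inequalities (your final paragraph, the paper's \autoref{lemma:hn-sections-vanishing}) to obtain the requisite $H^0$-vanishing, and propagate the $H^1$-vanishing through a short diagram chase. The only organizational difference is that the paper first passes to the quotient $\sheafhom(N^{j+1}_\star,E_\star/N^{k-1}_\star)_\star$ and then applies two successive injections (via \autoref{lemma:h1-injection}) to reach $E_\star^{j+1,k}$, whereas you package both injection steps into a single one by defining $Q'_\star$ and computing $H^0(Q_\star/Q'_\star)=0$ in one go; the content is the same.
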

For the proof, we will need the following two  lemmas.
\begin{lemma}
	\label{lemma:hn-sections-vanishing}
	Suppose $F_\star, G_\star$ are parabolic vector bundles on a smooth
	proper curve connected $C$ with respect to a divisor $D$
	such that the Harder Narasimhan filtrations $0 = N^0_\star \subset N^1_\star
	\subset \cdots \subset N^u_\star= F_\star$ and 
	$0 = M^0_\star \subset M^1_\star
	\subset \cdots \subset M^v_\star= G_\star$
	satisfy
 $\mu_\star(\on{gr}_{N_\star^\bullet}^i(F_\star)) >
\mu_\star(\on{gr}_{M_\star^\bullet}^j(G_\star))$
for any $1 \leq i \leq u$ and $1 \leq j \leq v$.
Then $H^0(C, F_\star^\vee \otimes G_\star) = 0$.
\end{lemma}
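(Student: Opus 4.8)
The plan is to reduce to the parabolic analogue of the familiar statement that there are no nonzero maps from a semistable bundle of slope $\mu$ to a semistable bundle of slope $<\mu$. First I would identify $H^0(C, F_\star^\vee \otimes G_\star)$ with the space $\on{Hom}(F_\star, G_\star)$ of parabolic morphisms. This is a matter of unwinding the definitions of parabolic $\sheafhom$, dual, and tensor product, using that these operations behave in the expected way for locally free parabolic sheaves (cf.\ \cite{yokogawa:infinitesimal-deformation}): one has a natural isomorphism $F_\star^\vee \otimes G_\star \simeq \sheafhom(F_\star, G_\star)_\star$ of parabolic sheaves, whose degree-zero part is the sheaf $\sheafhom(F_\star, G_\star)$ of parabolic homomorphisms, so that $H^0(C, F_\star^\vee \otimes G_\star) = \on{Hom}(F_\star, G_\star)$.

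Now suppose for contradiction that $\phi\colon F_\star \to G_\star$ is a nonzero parabolic morphism. Using that parabolic bundles admit Harder--Narasimhan filtrations (\autoref{subsubsection:harder-narasimhan-parabolic}), let $0 = N^0_\star \subset \cdots \subset N^u_\star = F_\star$ be the HN filtration of $F_\star$, whose graded pieces have strictly decreasing parabolic slopes, and let $a$ be minimal with $\phi|_{N^a_\star} \neq 0$. Then $\phi|_{N^a_\star}$ vanishes on $N^{a-1}_\star$, hence factors through a nonzero parabolic morphism $\on{gr}^a_{N_\star}(F_\star) \to G_\star$, where $\on{gr}^a_{N_\star}(F_\star)$ is given its induced quotient parabolic structure (\autoref{subsubsection:induced-subbundle}) and is parabolically semistable. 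Symmetrically, writing $0 = M^0_\star \subset \cdots \subset M^v_\star = G_\star$ for the HN filtration of $G_\star$, let $b$ be maximal such that the composite $\on{gr}^a_{N_\star}(F_\star) \to G_\star \to G_\star/M^{b-1}_\star$ is nonzero; then its image is contained in $\ker(G_\star/M^{b-1}_\star \to G_\star/M^b_\star) = \on{gr}^b_{M_\star}(G_\star)$, so we obtain a nonzero parabolic morphism $\psi\colon \on{gr}^a_{N_\star}(F_\star) \to \on{gr}^b_{M_\star}(G_\star)$ between parabolically semistable bundles.

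It remains to show that a nonzero parabolic morphism $\psi\colon A_\star \to B_\star$ between parabolically semistable bundles forces $\mu_\star(A_\star) \leq \mu_\star(B_\star)$; applying this to the $\psi$ just produced then contradicts the hypothesis that every HN graded piece of $F_\star$ has strictly larger parabolic slope than every HN graded piece of $G_\star$. To prove the claim, let $I$ be the image sheaf of $\psi$, equipped with the quotient parabolic structure $I^{q}_\star$ coming from $A_\star$ and, separately, the subbundle parabolic structure $I^{s}_\star$ coming from $B_\star$. By \autoref{lemma:quotient-semistable} together with semistability of $A_\star$ we get $\mu_\star(A_\star) \leq \mu_\star(I^{q}_\star)$, and by semistability of $B_\star$ we get $\mu_\star(I^{s}_\star) \leq \mu_\star(B_\star)$. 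Finally $\psi$ factors as $A_\star \twoheadrightarrow I^{q}_\star \xrightarrow{\iota} I^{s}_\star \hookrightarrow B_\star$ with $\iota$ the identity on underlying sheaves, and a parabolic morphism which is an isomorphism on underlying sheaves has $\on{par-deg}$ of its source $\leq \on{par-deg}$ of its target (parabolic morphisms point in the direction of increasing weights at the points of $D$); since $I^{q}_\star$ and $I^{s}_\star$ have the same rank this gives $\mu_\star(I^{q}_\star) \leq \mu_\star(I^{s}_\star)$, and chaining the three inequalities yields $\mu_\star(A_\star) \leq \mu_\star(B_\star)$.

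The step I expect to be the main obstacle, and the one I would write out most carefully, is the comparison $\on{par-deg}(I^{q}_\star) \leq \on{par-deg}(I^{s}_\star)$ of the two canonical parabolic structures on the image of $\psi$; this is standard and implicit in Seshadri's treatment (\cite[Part 3, \S1.A]{seshadri:fibres-vectoriels-sur-les-courbes-algebriques}), but it requires a short pointwise check that the quotient weights are bounded above by the subbundle weights at each point of $D$. Everything else is routine bookkeeping with Harder--Narasimhan filtrations and \autoref{lemma:quotient-semistable}.
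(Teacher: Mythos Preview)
Your proof is correct and follows essentially the same approach as the paper: reduce to showing there is no nonzero parabolic map between the semistable HN graded pieces, then derive a slope contradiction via the image. The paper phrases the reduction by directly filtering $F_\star^\vee \otimes G_\star$ with associated graded pieces $\on{gr}^i_{N_\star}(F_\star)^\vee \otimes \on{gr}^j_{M_\star}(G_\star)$ and uses the saturation of the image as a single parabolic subbundle $H_\star$ (so that $\mu_\star(\on{gr}^i)\leq \mu_\star(H_\star)\leq \mu_\star(\on{gr}^j)$), whereas you identify with $\on{Hom}(F_\star,G_\star)$ and compare the quotient and sub parabolic structures on the image; these are equivalent packagings of the same argument.
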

\begin{proof}
Observe that $F_\star^\vee \otimes G_\star$ has a filtration whose associated graded pieces are
	$\on{gr}_{N_\star^\bullet}^i(F_\star)^\vee \otimes
	\on{gr}_{M_\star^\bullet}^j(G_\star)$,
	so it is enough to show the latter have vanishing $H^0$.

	An nonzero element of $H^0(C, \on{gr}_{N_\star^\bullet}^i(F_\star)^\vee \otimes
\on{gr}_{M_\star^\bullet}^j(G_\star))$ would yield a nonzero map
	$\on{gr}_{N_\star^\bullet}^i(F_\star) \to
	\on{gr}_{M_\star^\bullet}^j(G_\star)$.
	The saturation of the image of this map would be a parabolic bundle
$H_\star$. By semistability, $\mu_\star(\on{gr}_{N_\star^\bullet}^i(F_\star)) \leq
\mu_\star(H_\star) \leq \mu_\star(\on{gr}_{M_\star^\bullet}^j(G_\star))$,
	contradicting the assumption that 
	$\mu_\star(\on{gr}_{N_\star^\bullet}^i(F_\star)) >
	\mu_\star(\on{gr}_{M_\star^\bullet}^j(G_\star))$.
\end{proof}
\begin{lemma}
	\label{lemma:h1-injection}
	Suppose $$0\to F_\star \to G_\star \to H_\star\to 0$$ is a short exact sequence of
	parabolic sheaves on a smooth proper connected curve $C$ with respect to a divisor
	$D$ and we are given a map $E \to F_\star$
	inducing the $0$ map $H^1(C, E) \to H^1(C, F_\star) \to H^1(C,
	G_\star)$.
	If additionally $H^0(C, H_\star) = 0$ then $H^1(C, E) \to H^1(C,F_\star)$ vanishes.
\end{lemma}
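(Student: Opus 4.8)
\textbf{Proof proposal for \autoref{lemma:h1-injection}.} The plan is a short diagram chase using the long exact sequence in cohomology, which is available for parabolic sheaves since parabolic cohomology is defined as a right derived functor (following \cite{yokogawa:infinitesimal-deformation}), and since a short exact sequence of parabolic sheaves induces a short exact sequence on the level of these derived functors. First I would write down the relevant segment of the long exact sequence associated to $0 \to F_\star \to G_\star \to H_\star \to 0$, namely
\begin{equation*}
\cdots \to H^0(C, H_\star) \to H^1(C, F_\star) \xrightarrow{\ \iota_*\ } H^1(C, G_\star) \to \cdots.
\end{equation*}

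The hypothesis $H^0(C, H_\star) = 0$ then shows that $\iota_* : H^1(C, F_\star) \to H^1(C, G_\star)$ is injective. Next I would invoke the second hypothesis: the composite $H^1(C, E) \to H^1(C, F_\star) \xrightarrow{\iota_*} H^1(C, G_\star)$ is the zero map (here $H^1(C, E)$ denotes $H^1(C, E_0)$, and the map $E \to F_\star$ is understood in the sense of \autoref{remark:map-vector-bundle-to-parabolic}). Since $\iota_*$ is injective, precomposing the zero map with an injection forces the first factor $H^1(C, E) \to H^1(C, F_\star)$ to vanish, which is exactly the assertion of the lemma.

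I do not expect any real obstacle here: the only point requiring care is the existence of the long exact sequence in parabolic cohomology and the exactness of the relevant functors, both of which are standard consequences of the derived-functor formalism recalled in \autoref{subsubsection:parabolic-sheaf} (and the identity $H^i(C, E_\star) = H^i(C, E_0)$). Everything else is a one-line consequence of injectivity of $\iota_*$.
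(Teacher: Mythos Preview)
Your proof is correct and follows exactly the same approach as the paper: use the long exact sequence to deduce that $H^1(C,F_\star)\to H^1(C,G_\star)$ is injective from $H^0(C,H_\star)=0$, and then conclude that the map $H^1(C,E)\to H^1(C,F_\star)$ vanishes since its composite with this injection is zero.
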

\begin{proof}
	The vanishing of $H^0(C, H_\star)$ yields an injection $H^1(C, F_\star)
	\to H^1(C, G_\star)$. If the composition $H^1(C, E) \to H^1(C, F_\star)
	\to H^1(C, G_\star)$ vanishes then so does $H^1(C, E) \to H^1(C,
	F_\star)$.
\end{proof}

We now proceed with the proof of \autoref{proposition:filtered-h1-map}.

\begin{proof}[Proof of \autoref{proposition:filtered-h1-map}]
	The proof is a diagram chase.
	Recall that the nonzero map in \autoref{lemma:nonzero-induced-connection} was
	constructed by beginning with the map $T_C(-D) \to
	\sheafend(E_\star)_\star/\sheafend(E_\star, N_\star^\bullet)_\star$, and then showing the induced map
	$T_C(-D) \to
	\sheafhom(N^{j+1}_\star, E_\star/N^{k-1})_\star$
	factors through a map
	$T_C(-D) \to
	\sheafhom(N^{j+1}_\star, N^k_\star/N^{k-1})_\star$,
	which in turn factors through
	$T_C(-D) \to
	\sheafhom(N^{j+1}_\star/N^j_\star, N^k_\star/N^{k-1})_\star$.

	We will show each of the above three maps vanishes on $H^1$.
	By \autoref{lemma:h1-map}, 
	$T_C(-D) \to
	\sheafend(E_\star)_\star$
	vanishes on $H^1$.
	Next, the injection of parabolic sheaves $N^{j+1}_\star \to E_\star$ induces a
surjection of parabolic sheaves $\sheafend(E_\star)_\star\twoheadrightarrow
\sheafhom(N_\star^{j+1}, E_\star)_\star$. 
From this, we obtain
a surjection
$$\sheafend(E_\star)_\star /\sheafend(E_\star,N^\bullet_\star)_\star \twoheadrightarrow \sheafhom(N_\star^{j+1},
E_\star/N_\star^{k-1})_\star.$$ 
Note that $H^2$ of parabolic sheaves vanishes on a curve, because the same is true
for usual sheaves. We therefore obtain a surjection
$$H^1(C,\sheafend(E_\star)_\star/\sheafend(E_\star,N^\bullet_\star)_\star)\twoheadrightarrow H^1(C,\sheafhom(N_\star^{j+1},
E_\star/N_\star^{k-1})_\star).$$ Thus the composition $T_C(-D)\to
\sheafend(E_\star)_\star/\sheafend(E_\star,N^\bullet_\star)_\star\to
\sheafhom(N_\star^{j+1}, E_\star/N_\star^{k-1})_\star$ induces
the zero map on $H^1$, because the first map does by \autoref{lemma:h1-map}. 

We next show the natural map $T_C(-D)\to \sheafhom(\on{gr}_{N_\star}^{j+1}(E_\star),
E_\star/N_\star^{k-1})_\star$ to be described below, vanishes on $H^1$.
Using \autoref{lemma:hn-sections-vanishing}, we find that
$$H^0(C, \sheafhom(N_\star^{j}, E_\star/N_\star^{k-1})_\star)=0.$$
Therefore, applying \autoref{lemma:h1-injection} to the short exact sequence $$0\to \sheafhom(\on{gr}_{N_\star}^{j+1}(E_\star),
E_\star/N_\star^{k-1})_\star \to \sheafhom(N_\star^{j+1},
E_\star/N_\star^{k-1})_\star \to
\sheafhom(N_\star^{j}, E_\star/N_\star^{k-1})_\star \to 0$$ 
and the map $f: T_C(-D) \to \sheafhom(\on{gr}_{N_\star}^{j+1}(E_\star),
E_\star/N_\star^{k-1})_\star$
shows that $f$ induces the $0$ map on $H^1$.

We conclude by showing the map $$\psi_{j+1, k}: T_C(-D)\to
\sheafhom(\on{gr}^{j+1}_{N_\star}(E_\star), \on{gr}^k_{N_\star}(E_\star))_\star=E_\star^{j+1,k}$$
vanishes on $H^1$.
Again by \autoref{lemma:hn-sections-vanishing},
$$H^0(C, \sheafhom(\on{gr}_{N_\star}^{j+1}(E_\star),
E_\star/N_\star^k)_\star) = 0.$$
Applying \autoref{lemma:h1-injection} to the exact sequence 
\begin{equation}
\begin{tikzpicture}[baseline= (a).base]
\node[scale=.7] (a) at (0,0){
				\begin{tikzcd}
		0 \ar {r} &  \sheafhom(\on{gr}^{j+1}_{N_\star}(E_\star),
\on{gr}^k_{N_\star}(E_\star))_\star  \ar {r} & \sheafhom(\on{gr}_{N_\star}^{j+1}(E_\star),
E_\star/N_\star^{k-1})_\star  \ar {r} & \sheafhom(\on{gr}_{N_\star}^{j+1}(E_\star),
E_\star/N_\star^k)_\star \ar {r} & 0 
				 \end{tikzcd}   
};
\end{tikzpicture}
\end{equation}
and the map $\psi_{j+1, k}$ shows that $\psi_{j+1,k}$ vanishes on $H^1$, as
desired.
\end{proof}

We now show that if the map $H^1(C, T_C(-D)) \to H^1(C, E_\star^{j+1,k})$
vanishes, we will be able to produce a coparabolic  bundle which is not generically
globally generated.
We will later apply \autoref{proposition:generic-parabolic-global-generation}
to this bundle
to obtain \autoref{theorem:hn-constraints-parabolic}(1).

\begin{lemma}
	\label{lemma:not-generically-globally-generated}
	With notation as in \autoref{notation:graded-E},
	suppose the map 
	$H^1(C, T_C(-D)) \to 
	H^1(C, E_\star^{j+1,k})$ (induced by the non-zero map $\psi_{j+1, k}: T_C(-D)\to E_\star^{j+1,k}$ of \autoref{lemma:nonzero-induced-connection}) vanishes.
Then the coparabolic bundle $\reallywidehat{(E_\star^{j+1,k})^\vee} \otimes \omega_C(D)$ is not generically globally
	generated.
\end{lemma}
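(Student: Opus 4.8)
Here is the plan. The idea is to pass to the Serre dual of everything in sight, reinterpreting the vanishing of $H^1(\psi_{j+1,k})$ as the statement that every global section of $\reallywidehat{(E_\star^{j+1,k})^\vee}\otimes\omega_C(D)$ is killed by a fixed nonzero map to a line bundle, which immediately forces those sections into a proper subbundle.

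Write $M_\star := E_\star^{j+1,k}$, and let $L := T_C(-D)$ regarded as a parabolic bundle with trivial parabolic structure, so that $\psi := \psi_{j+1,k}\colon L \to M_\star$ is a nonzero map of parabolic sheaves. Applying the contravariant functor $F_\star \mapsto \reallywidehat{F_\star^\vee}\otimes\omega_C(D)$ to $\psi$ produces a map $\psi^D\colon W_\star \to W'_\star$, where $W_\star := \reallywidehat{M_\star^\vee}\otimes\omega_C(D)$ is exactly the coparabolic bundle appearing in the statement (its underlying vector bundle being $W := W_0$) and $W'_\star := \reallywidehat{L^\vee}\otimes\omega_C(D)$. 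The first step is to note, from functoriality of the Serre duality isomorphism of \autoref{proposition:serre-duality}, that $H^1(C,\psi)$ is canonically the $\mathbb{C}$-linear dual of $H^0(C,\psi^D)\colon H^0(C, W_\star) \to H^0(C, W'_\star)$; hence the hypothesis $H^1(C,\psi)=0$ is equivalent to $H^0(C,\psi^D)=0$.

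Next I would establish two small facts. First, $W'_\star$ has rank one, since $L$ does and each of $(-)^\vee$, $\reallywidehat{(-)}$, and $\otimes\,\omega_C(D)$ preserves rank; a direct computation with \autoref{lemma:parabolic-dual-formula} in fact gives $W'_0 \cong \omega_C^{\otimes 2}(D)$, although only the rank is needed. Second, the underlying sheaf map $\psi^D_0\colon W_0 \to W'_0$ is nonzero: since $\psi$ is a nonzero map out of a line bundle, it restricts on $C\setminus D$ to a nonzero, hence generically split, injection of bundles, so its dual is generically surjective there; and none of the three operations above changes the restriction of a parabolic sheaf to $C\setminus D$, so $\psi^D_0$ is nonzero on $C\setminus D$, hence nonzero. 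Finally I would conclude: for every $s \in H^0(C, W_0)$ we have $\psi^D_0 \circ s = H^0(C,\psi^D)(s) = 0$, so the image of the evaluation map $H^0(C,W_0)\otimes \mathscr{O}_C \to W_0$ is contained in $\ker(\psi^D_0)$; since $\psi^D_0$ is a nonzero map into a line bundle, $\ker(\psi^D_0)$ has rank $\on{rk}(W_0)-1$, so the image of the evaluation map has rank strictly less than $\on{rk}(W_0)$, its cokernel is therefore not torsion, and $W_0$ is not generically globally generated.

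The step I expect to require the most care is the parabolic–coparabolic bookkeeping: checking that $F_\star \mapsto \reallywidehat{F_\star^\vee}\otimes\omega_C(D)$ is functorial in precisely the way that makes the Serre duality square commute (so that $H^1(C,\psi)$ is genuinely dual to $H^0(C,\psi^D)$), and tracking that nonzeroness of $\psi$ survives parabolic dualization, the coparabolic colimit, and the twist. Once these formal points are in place, the geometric core—that a bundle all of whose sections die under a nonzero map to a line bundle cannot be generically globally generated—is essentially immediate.
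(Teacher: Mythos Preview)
Your proof is correct and follows essentially the same route as the paper's: both take the Serre dual of $\psi_{j+1,k}$ to obtain a nonzero map $\reallywidehat{(E_\star^{j+1,k})^\vee}\otimes\omega_C(D)\to\omega_C\otimes\omega_C(D)$, observe via functoriality of Serre duality that it vanishes on $H^0$, and conclude that all global sections lie in its corank-one kernel. You are simply more explicit than the paper about the functoriality of parabolic Serre duality and about why the dual map remains nonzero at the level of underlying sheaves.
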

\begin{proof}
	Since
	$\psi_{j+1, k}: T_C(-D) \to E_\star^{j+1,k}$ is nonzero,
we obtain a nonzero Serre dual map
\begin{align}
	\label{equation:serre-dual-map}
	\reallywidehat{(E_\star^{j+1,k})^\vee}
	\otimes \omega_C(D) \to \omega_C \otimes \omega_C(D),
\end{align}
which induces the $0$ map
\begin{align*}
	H^0(C,\reallywidehat{(E_\star^{j+1,k})^\vee} \otimes \omega_C(D) )
	\to H^0(C, \omega_C\otimes \omega_C(D))
\end{align*}
by Serre duality \autoref{proposition:serre-duality} and
\autoref{proposition:filtered-h1-map}.
In particular, 
$\reallywidehat{(E_\star^{j+1,k})^\vee} \otimes \omega_C(D)$ is not
generically globally generated. Indeed, any global section must lie in the kernel
of \eqref{equation:serre-dual-map}, which has corank one in 
$\reallywidehat{(E_0^{j+1,k})^\vee} \otimes \omega_C(D)$.
\end{proof}

This concludes our setup for proving
\autoref{theorem:hn-constraints-parabolic}(1).
To prove 
\autoref{theorem:hn-constraints-parabolic}(2), we will need the following
generalization of \autoref{lemma:global-generation} to the coparabolic setting.
\begin{lemma}
	\label{lemma:coparabolic-global-generation}
	If $V_\star$ is a semistable coparabolic bundle with respect to a
	divisor $D = x_1 + \cdots + x_n$, of coparabolic slope
	$\mu_\star(V_\star) > 2g - 1 + n$, then $V_\star$ is globally
	generated.
\end{lemma}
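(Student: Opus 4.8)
The plan is to run the proof of \autoref{lemma:global-generation} in the coparabolic setting, with \autoref{lemma:quotient-slope} playing the role of ordinary semistability. Write $V_\star = \widehat{W}_\star$ for a semistable parabolic bundle $W_\star$, and set $r := \mu_\star(V_\star) - n = \mu_\star(W_\star) - n$, so that the hypothesis $\mu_\star(V_\star) > 2g-1+n$ becomes $r > 2g-1$. Recall that ``$V_\star$ is globally generated'' means that the underlying vector bundle $V_0 = \widehat{W}_0$ is globally generated in the ordinary sense.

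First I would reduce to showing that $H^1(C, V_0(-p)) = 0$ for every point $p \in C$: given this, the exact sequence $0 \to V_0(-p) \to V_0 \to V_0|_p \to 0$ is exact on global sections, so $H^0(C,V_0)$ surjects onto every fiber $V_0|_p$, which is exactly global generation of $V_0$. By Serre duality on the smooth proper curve $C$ we have $H^1(C, V_0(-p))^\vee \simeq \on{Hom}_{\mathscr O_C}(V_0(-p), \omega_C) = \on{Hom}_{\mathscr O_C}(V_0, \omega_C(p))$, so it suffices to show there is no nonzero map $\psi \colon V_0 \to \omega_C(p)$. Suppose such a $\psi$ exists and let $Q := \on{im}(\psi) \subseteq \omega_C(p)$. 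As a nonzero torsion-free rank-one sheaf on the smooth curve $C$, $Q$ is a line bundle, and $\deg Q \leq \deg \omega_C(p) = 2g-1$. Since the kernel of a surjection onto a torsion-free sheaf is saturated, $Q$ is a genuine vector bundle quotient of $V_0$; hence \autoref{lemma:quotient-slope}, applied in the coparabolic case with $\mu_\star(V_\star) = r+n$, gives $\deg Q = \mu(Q) \geq r > 2g-1$, contradicting $\deg Q \leq 2g-1$. Therefore $\on{Hom}_{\mathscr O_C}(V_0, \omega_C(p)) = 0$ for all $p$, so $H^1(C, V_0(-p)) = 0$ for all $p$, and $V_0$ is globally generated. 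This argument is uniform in $g$ (for $g=0$ one uses $\deg \omega_C(p) = -1 < r$).

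I do not expect a serious obstacle here; the only points requiring care are the coparabolic bookkeeping --- namely identifying $V_0$ as the bundle relevant to the definition of global generation, and checking that $\on{im}(\psi)$ really is a vector bundle quotient of $V_0$ so that \autoref{lemma:quotient-slope} applies --- together with making sure we invoke that lemma in its coparabolic form, where the conclusion $\mu(Q) \geq r$ is only non-strict; this is nonetheless enough, since already $r > 2g-1 = \deg \omega_C(p)$.
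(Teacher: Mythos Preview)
Your proof is correct and follows the same skeleton as the paper's: fix a point $p$, show $H^1(C, V_0(-p)) = 0$, and conclude via the short exact sequence $0 \to V_0(-p) \to V_0 \to V_0|_p \to 0$. The only difference is in how the vanishing is obtained: the paper applies parabolic Serre duality (\autoref{proposition:serre-duality}) to identify $H^1(C, V_\star(-p))$ with $H^0(C, E_\star^\vee(p)\otimes\omega_C(D))^\vee$ and then notes this parabolic bundle is semistable of negative slope, whereas you use ordinary Serre duality on $V_0$ and invoke \autoref{lemma:quotient-slope} to rule out a line bundle quotient of $V_0$ of degree at most $2g-1$. Your route is marginally more elementary in that it bypasses the parabolic Serre duality formalism, trading it for \autoref{lemma:quotient-slope}, which already packages the needed coparabolic bookkeeping.
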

\begin{proof}
	Let $p\in C$ be a point. 
	Suppose $V_\star = \widehat{E}_\star$ for $E_\star$ a parabolic bundle.	
	It suffices to show $V_\star$ is generated
	by global sections at $p$. Indeed,
	$V_\star(-p)$ is a semistable coparabolic bundle with coparabolic slope
	$\mu_\star(V_\star(-p)) > 2g-2+n$. 
	Therefore, $\mu_\star(E_\star^\vee(p) \otimes \omega_C(D)) < 0$,
	implying $H^0(C, E_\star^\vee(p) \otimes \omega_C(D)) = 0$,
	as any global section would be destabilizing.
	By Serre duality \autoref{proposition:serre-duality}, (taking
		$E_\star^\vee(p) \otimes \omega_C(D)$ in place of the parabolic
	vector bundle $E_\star$ in \autoref{proposition:serre-duality}),
	$H^1(C, V_\star(-p)) = H^0(C, E_\star^\vee(p) \otimes \omega_C(D))^\vee = 0$.
	As in \autoref{lemma:global-generation},
	the long exact sequence on cohomology associated $V(-p) \to V \to V|_p$
	implies $H^0(C, V) \otimes \mathscr O \to V \to V|_p$ is surjective, as
	desired.
\end{proof}

\subsubsection{}
We now prove \autoref{theorem:hn-constraints-parabolic}.
\label{subsection:proof-hn}
\begin{proof}[Proof of \autoref{theorem:hn-constraints-parabolic}]
We use notation as in \autoref{notation:graded-E}.  We aim first to show that if $(E_\star',
\nabla')$ is the isomonodromic deformation of $(E_\star, \nabla)$ to
an analytically
general nearby curve $C'$, and if $E'_*$ is not semistable, then for every $i$ there are some $j < i < k$ with 
$\rk \on{gr}^{j+1}_{HN}E_\star'\cdot \rk \on{gr}^k_{HN}E_\star'\geq g+1.$

By \cite[Lemma 5.1]{BHH:irregular},
the locus of bundles in a family $\mathscr E_\star$ on $\mathscr C \to \Delta$
which are not semistable form a closed analytic subset,
and if a general member is not semistable, then, after passing to an open subset
of $\Delta$, there is a filtration on $\mathscr E_\star$ restricting to the
Harder-Narasimhan filtration on each fiber. Thus after replacing $(C,D)$ with an
analytically general nearby curve $(C',D')$, and replacing $(E_\star,
\nabla)$ with
the restriction $(E_\star', \nabla')$ of the isomonodromic deformation to $(C',D')$,
we may assume the Harder-Narasimhan filtration $HN^\bullet$ of $E_\star'$ extends to a filtration of $\mathscr{E_\star}$ on a first-order neighborhood of $C'$. We set $(E'_\star)^{i,j} :=
\sheafhom(\on{gr}^i_{HN}(E'_\star),\on{gr}^{j}_{HN}(E'_\star))_\star$; note that $(E'_\star)^{i,j}$ is semistable by the definition of the Harder-Narasimhan filtration.

We next verify that for every $0 < i < m$, there is some $j < i < k$ for which
$\reallywidehat{((E'_\star)^{j+1,k})^\vee} \otimes \omega_C(D)$ is not generically globally generated.
By \autoref{lemma:nonzero-induced-connection}, 
for every $0 < i < m$, 
there is some $j < i$ and $k \geq i+1$ so that the map $T_{C'}(-D') \to
\sheafend(E_\star')/\sheafend_{HN^\bullet}(E_\star')$
induces a nonzero map
$$T_{C'}(-D') \to (E'_\star)^{j+1,k}:=\sheafhom(\on{gr}^{j+1}_{HN}E_\star', \on{gr}^k_{HN} E_\star')_\star.$$
By \autoref{proposition:filtered-h1-map},
$H^1(C', T_{C'}(-D')) \to 
H^1(C', (E'_\star)^{j+1,k})$ vanishes.
Hence, by
\autoref{lemma:not-generically-globally-generated},
$\reallywidehat{((E'_\star)^{j+1,k})^\vee} \otimes \omega_C(D)$
is not generically globally generated. Note that 
$\reallywidehat{((E'_\star)^{j+1,k})^\vee} \otimes \omega_C(D)$
has slope $> 2g - 2 + n$ by \autoref{lemma:degree-in-duals}.

We are finally in a position to prove \autoref{theorem:hn-constraints-parabolic}(1).
It follows from \autoref{proposition:generic-parabolic-global-generation} that $$\rk
\on{gr}^{j+1}_{HN}E_\star'\cdot \rk \on{gr}^k_{HN}E_\star'= \rk
\reallywidehat{((E'_\star)^{j+1,k})^\vee}
\otimes \omega_C(D) \geq g+1.$$ Thus \autoref{theorem:hn-constraints-parabolic}(1) holds.

We now conclude by verifying \autoref{theorem:hn-constraints-parabolic}(2).
By \autoref{lemma:coparabolic-global-generation},
\begin{align*}
\reallywidehat{((E')_\star^{j+1,k})^\vee} \otimes \omega_C(D) =
	\reallywidehat{\sheafhom(\on{gr}^{j+1}_{HN}E_\star',
	\on{gr}^{k}_{HN} E_\star')^\vee} \otimes \omega_C(D)
\end{align*}
must have slope at most $2g-1+n$, since it is not generically globally generated.
As $\sheafhom(\on{gr}^{j+1}_{HN}E_\star', \on{gr}^{k}_{HN} E_\star')$ has
negative parabolic slope by the
definition of the Harder-Narasimhan filtration and
\autoref{lemma:degree-in-duals}, we find
\begin{align*}
	-1 \leq \mu_\star(\sheafhom(\on{gr}^{j+1}_{HN}(E_\star'),
	\on{gr}^{k}_{HN}(E_\star'))) < 0.
\end{align*}

Using \autoref{lemma:degree-in-duals},
the parabolic slope of a tensor product of parabolic vector bundles is the sum of their
parabolic slopes, and taking duals negates parabolic slope.
Therefore,
\begin{align*}
	0 < \mu_\star(\on{gr}^{j+1}_{HN}({E}_\star')) -
	\mu_\star(\on{gr}^{k}_{HN}({E}_\star'))
	\leq 1.
\end{align*}

Since 
\begin{align*}
	\mu_\star(\on{gr}^{j+1}_{HN}({E}_\star')) \geq
\mu_\star(\on{gr}^{i}_{HN}({E}_\star')) \geq 
\mu_\star(\on{gr}^{i+1}_{HN}({E}_\star')) \geq
\mu_\star(\on{gr}^{k}_{HN}({E}_\star')),
\end{align*}
we also conclude 
\[
	0 < \mu_\star(\on{gr}^{i}_{HN}({E}_\star')) -
	\mu_\star(\on{gr}^{i+1}_{HN}({E}_\star')) \leq 1.\qedhere
\]
\end{proof}

\subsubsection{}
\label{subsubsection:corollary-proof}
We now prove \autoref{cor:stable-parabolic}, using
\autoref{theorem:hn-constraints-parabolic}
and the AM-GM inequality.
\begin{proof}[Proof of \autoref{cor:stable-parabolic}]

If $(E_\star', \nabla')$ is an isomonodromic deformation of $(E_\star,\nabla)$ to an
	analytically general nearby curve which is not semistable, it follows from
	\autoref{theorem:hn-constraints-parabolic}(1) that for each $i$, there will be $j,k$ with
	$j<i<k$ so that the Harder-Narasimhan filtration $HN$ of $E_\star'$ satisfies
$\rk \on{gr}^{j+1}_{HN}E_\star'\cdot \rk
	\on{gr}^k_{HN}E_\star'\geq g+1.$
	Since $\rk \on{gr}^{j+1}_{HN}E_\star' + \rk \on{gr}^k_{HN}E_\star' \leq \rk E_\star' = \rk
	E_\star$,
	it follows from the arithmetic mean-geometric mean inequality that
	$$g+1\leq \rk \on{gr}^{j+1}_{HN}E_\star'\cdot \rk \on{gr}^k_{HN}E_\star'\leq \left(\frac{\rk
E_\star}{2} \right)^2.$$
So
$\rk E_\star \geq 2 \sqrt{g+1}$ as desired.
\end{proof}

\subsubsection{}
\label{subsection:proof-intro-stable}
We conclude by proving \autoref{theorem:hn-constraints} and
\autoref{cor:stable}.
\begin{proof}[{Proof of \autoref{theorem:hn-constraints} and \autoref{cor:stable}}]
	These follow immediately from \autoref{theorem:hn-constraints-parabolic} and \autoref{cor:stable-parabolic} respectively, taking
	$E_\star$ to have the trivial parabolic structure.
\end{proof}

\section{Variations of Hodge structure on an analytically general curve}\label{section:hodge-theoretic-results}
We prove \autoref{theorem:isomonodromic-deformation-CVHS} in
\autoref{subsection:iso-def-proof},
\autoref{theorem:very-general-VHS} in \autoref{subsection:very-general-vhs}, 
\autoref{corollary:abelian-schemes} in
\autoref{subsubsection:geometric-origin-proof},
and
\autoref{corollary:geometric-local-systems} in
\autoref{subsubsection:abelian-scheme-proof}.
Finally, we prove an additional application concerning maps from very general
curves to Hilbert modular varieties in \autoref{subsection:very-general-vhs}.

\subsection{The proof of \autoref{theorem:isomonodromic-deformation-CVHS}}
\label{subsection:iso-def-proof}
We start with the following lemma, which gives a useful criterion for showing a
monodromy representation is unitary.
\begin{lemma}
	\label{lemma:unitary}
	Suppose $(C, x_1, \ldots, x_n)$ is an $n$-pointed hyperbolic curve and $(E,
	\nabla)$ is a flat vector bundle on $C\setminus\{ x_1, \ldots, x_n\}$
	underlying a polarizable complex variation of Hodge structure. Let
	$(\overline{E}_\star, \overline{\nabla})$ be the Deligne canonical extension of $(E, \nabla)$ to a flat vector bundle on $C$ with regular singularities at the $x_i$, with the parabolic structure associated to $\overline{\nabla}$.
	If $\overline{E}_\star$ is semistable, then the representation of $\pi_1(C\setminus\{x_1, \cdots, x_n\})$ 
	associated to $(E, \nabla)$ is unitary.
\end{lemma}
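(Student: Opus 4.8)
\textbf{The plan.} The point is that parabolic semistability of $\overline{E}_\star$ is extremely restrictive for Deligne canonical extensions of polarizable complex variations of Hodge structure, by the preparatory results of \autoref{section:hodge-theoretic-preliminaries}: together with the positivity of Hodge bundles, it forces the Hodge filtration to be trivial, and a variation with trivial Hodge filtration is pure of a single Hodge type, whence its polarization is a definite monodromy-invariant Hermitian form. First I would reduce to the case that the local system $\mathbb{V} := \ker\overline{\nabla}$ is irreducible. By \autoref{prop:basic-facts}(1) and (2), $\mathbb{V}$ is semisimple and decomposes as $\bigoplus_i \mathbb{L}_i \otimes W_i$, where each $\mathbb{L}_i$ is an irreducible complex local system underlying a polarizable complex variation of Hodge structure and each $W_i \neq 0$. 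Since the Deligne canonical extension and its associated parabolic structure are compatible with direct sums of local systems (the trivial local system $W_i$ having Deligne extension $\mathscr O_C \otimes W_i$), we get $\overline{E}_\star \simeq \bigoplus_i (\overline{E}_{\mathbb{L}_i})_\star^{\oplus \dim W_i}$, and each factor $(\overline{E}_{\mathbb{L}_i})_\star$ has parabolic degree zero by \autoref{prop:basic-facts}(3) applied to $\mathbb{L}_i$. Hence any parabolic subbundle of $(\overline{E}_{\mathbb{L}_i})_\star$ of positive parabolic degree would embed (via one of the $\dim W_i \geq 1$ summands) into $\overline{E}_\star$ and destabilize it; so semistability of $\overline{E}_\star$ forces each $(\overline{E}_{\mathbb{L}_i})_\star$ to be parabolically semistable. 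It therefore suffices to treat the case $\mathbb{V}$ irreducible, since a finite direct sum of unitary representations is unitary (and the monodromy of $\mathbb{L}_i \otimes W_i$ factors through that of $\mathbb{L}_i$).

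Now assume $\mathbb{V}$ is irreducible with $\overline{E}_\star$ parabolically semistable. I claim the Hodge filtration $F^\bullet$, extended to $\overline{E}$ via \autoref{prop:basic-facts}(4), is trivial, i.e., $V = V^{p,q}$ for a single bidegree $(p,q)$. Suppose not, and let $i$ be maximal with $F^i\overline{E}_\star \neq 0$; then $F^i\overline{E}_\star$ is a proper sub parabolic bundle of $\overline{E}_\star$. By the remark following \autoref{lemma:positive-Hodge-bundle}, irreducibility of the monodromy forces the Higgs field $\theta_i \colon F^i\overline{E}_\star \to \on{gr}^{i-1}_{F^\bullet}\overline{E}_\star \otimes \Omega^1_C(\log Z)$ to be nonzero. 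Then \autoref{corollary:unstable} applies (its hypotheses, via \autoref{lemma:positive-Hodge-bundle}, are exactly that $i$ is maximal with $F^i$ nontrivial and $\theta_i \neq 0$) and shows that $\overline{E}_\star$ is not parabolically semistable, a contradiction. Hence $F^\bullet$ is trivial.

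Finally, with $V$ pure of Hodge type $(p,q)$, the polarization $\psi$ from \autoref{definition:complex-variation} is a flat Hermitian form on $V$ such that $(-1)^p\psi$ is positive definite on $V = V^{p,q}$. Being flat, $(-1)^p\psi$ is preserved by the monodromy representation of $\pi_1(C\setminus\{x_1,\dots,x_n\})$, and being positive definite, it exhibits the image of this representation inside a compact unitary group. Thus the representation associated to $(E,\nabla)$ is unitary. The only step needing care is the bookkeeping in the reduction to irreducible monodromy — the compatibility of Deligne canonical extensions, parabolic structures, and parabolic degrees with the decomposition of \autoref{prop:basic-facts}(2); everything else is a direct application of \autoref{lemma:positive-Hodge-bundle} and \autoref{corollary:unstable}.
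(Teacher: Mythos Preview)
Your proof is correct and follows essentially the same approach as the paper's: reduce to irreducible monodromy via \autoref{prop:basic-facts}(2), use positivity of the top Hodge bundle (packaged as \autoref{corollary:unstable} in your argument, as \autoref{lemma:positive-Hodge-bundle} directly in the paper's) to force the Hodge filtration to be trivial, then observe the polarization is definite. Your reduction step is in fact spelled out more carefully than the paper's, which leaves the transfer of semistability to the summands $(\overline{E}_{\mathbb{L}_i})_\star$ implicit.
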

\begin{remark}
	\label{remark:simpson-reference}
	As pointed out by a referee, \autoref{lemma:unitary} can also be deduced from the results of
	\cite{simpson:iterated-destabilizing-modifications} if $D=\emptyset$, see \cite[\S5.4]{simpson:iterated-destabilizing-modifications}. 
	 
	Presumably a parabolic version of Simpson's results would imply  \autoref{lemma:unitary} in general, but as far as we know such results have not yet appeared in the literature. 
\end{remark}

\begin{proof}[Proof of \autoref{lemma:unitary}]
	By \autoref{prop:basic-facts}(2), we may write
	$\mathbb{V}:=\ker(\nabla)$ as $$\mathbb{V}:=\bigoplus_i
	\mathbb{L}_i\otimes W_i$$ where the $\mathbb{L}_i$ each have irreducible
	monodromy and also underlie polarizable variations of Hodge structure,
	and the $W_i$ are constant complex Hodge structures.
	It suffices to show the representation associated to each $\mathbb L_i$ has unitary monodromy.
	We may therefore reduce to the case that $\mathbb{V} = \mathbb L_i$ and assume
	that $(E,\nabla)$ has irreducible monodromy.

Let $i$ be maximal such that $F^i\overline{E}_\star$ is non-zero. 
Since $\overline{E}_\star$ is semistable, it follows from
\autoref{corollary:unstable} that
that the natural map 
\begin{align*}
	F^i\overline{E}_\star \to
	F^{i-1}\overline{E}_\star/F^i\overline{E}_\star\otimes \omega_C
\end{align*}
induced by the connection is zero, i.e.~the connection preserves
$F^i\overline{E}_\star$. By irreducibility
of the monodromy of $(E, \nabla)$, we must have that $F^i\overline{E}_\star$ equals
$\overline{E}_\star.$
But in this case $({E}, \nabla)$ is unitary, as the monodromy preserves the polarization, a definite Hermitian form.
\end{proof}

\subsubsection{}
We now recall  the setup of
\autoref{theorem:isomonodromic-deformation-CVHS}. Let
$(C, x_1, \cdots, x_n)$ 
be an $n$-pointed
hyperbolic curve 
of genus $g$.
Let $({E}, \nabla)$ be a flat vector bundle on
$C$ with $\on{rk}{E}<2\sqrt{g+1}$ such that $(E,\nabla)$ has regular singularities at the $x_i$. Our goal is to show that if an isomonodromic
deformation of ${(E, \nabla)}$ to an analytically general nearby $n$-pointed curve underlies a polarizable complex variation of Hodge structure, then $({E},\nabla)$ has unitary monodromy.

\begin{proof}[Proof of \autoref{theorem:isomonodromic-deformation-CVHS}]
As the hypothesis is about the restriction of $(E, \nabla)$ to $C\setminus \{x_1, \cdots, x_n\}$, we may without loss of generality assume $(E,\nabla)$ is the Deligne canonical extension of $(E, \nabla)|_{C\setminus \{x_1, \cdots, x_n\}}$. As local systems underlying complex polarizable variations of Hodge structure are semisimple by \autoref{prop:basic-facts}(2), we may moreover assume that $(E, \nabla)|_{C\setminus \{x_1, \cdots, x_n\}}$ has irreducible monodromy.

Endow $E$ with the parabolic structure associated to $\nabla$, and denote the
corresponding parabolic bundle by $E_\star$.
After replacing $(C, x_1, \cdots, x_n)$ with an analytically general nearby
curve, and $(E,\nabla)$ with its isomonodromic deformation to this curve, we may
assume by \autoref{cor:stable-parabolic} that $E_\star$ is semistable.	Thus $(E, \nabla)$ has unitary monodromy by \autoref{lemma:unitary}.
\end{proof}

\subsection{The proof of \autoref{theorem:very-general-VHS}} 
\label{subsection:very-general-vhs}
The proof of \autoref{theorem:very-general-VHS} follows  
from the integrality assumption and the following lemma; this lemma is well-known and appears e.g.~in the course of the proofs of \cite[Proposition 4.2.1.3]{katz:pcurvature-and-hodge} or \cite[Proposition 8.2]{esnault2020rigid}, but we have included a proof in the interests of self-containedness.

\begin{lemma}
	\label{lemma:integral-and-unitary-implies-finite}
	Suppose $\Gamma$ is a group,
	$K$ is a number field, and $\rho$ is
	a representation $$\rho: \Gamma\to \on{GL}_m(\mathscr{O}_K).$$
	If for each embedding $\iota: K\hookrightarrow \mathbb{C}$ the 
	representation $\rho\otimes_{\mathscr{O}_K, \iota} \mathbb{C}$ is unitary, then
	$\rho$ has finite image.
\end{lemma}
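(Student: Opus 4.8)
The plan is to show that $\rho$ has finite image by exhibiting it as a discrete subgroup of a compact group. First I would embed $\mathrm{GL}_m(\mathscr{O}_K)$ diagonally into the product $\prod_{\iota} \mathrm{GL}_m(\mathbb{C})$ over the finitely many embeddings $\iota : K \hookrightarrow \mathbb{C}$. Concretely, choosing a $\mathbb{Z}$-basis of $\mathscr{O}_K$ realizes $\mathrm{GL}_m(\mathscr{O}_K)$ as a subgroup of $\mathrm{GL}_{md}(\mathbb{Z})$, where $d = [K:\mathbb{Q}]$, and the latter is discrete in $\mathrm{GL}_{md}(\mathbb{R})$ since $\mathbb{Z}^{(md)^2}$ is discrete in $\mathbb{R}^{(md)^2}$ and the determinant condition is a closed (in fact locally constant near $\mathrm{GL}$) constraint. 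Hence the image of $\Gamma$ under $g \mapsto (\iota(\rho(g)))_\iota$ is a discrete subgroup of $\prod_\iota \mathrm{GL}_m(\mathbb{C})$.

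Next I would use the unitarity hypothesis. For each embedding $\iota$, the representation $\rho \otimes_{\mathscr{O}_K,\iota}\mathbb{C}$ has image with compact closure in $\mathrm{GL}_m(\mathbb{C})$ — this is exactly what "unitary" means in the paper's convention (image preserves a definite Hermitian form, hence lies in a conjugate of $\mathrm{U}(m)$). Therefore the image of $\Gamma$ in the product $\prod_\iota \mathrm{GL}_m(\mathbb{C})$ lands in $\prod_\iota \overline{\mathrm{im}(\rho\otimes_\iota \mathbb{C})}$, which is a product of compact sets and hence compact.

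Combining the two observations: the image of $\rho$, viewed inside $\prod_\iota \mathrm{GL}_m(\mathbb{C})$, is simultaneously discrete and contained in a compact set, hence finite. Since the diagonal map $\mathrm{GL}_m(\mathscr{O}_K) \to \prod_\iota \mathrm{GL}_m(\mathbb{C})$ is injective (already a single embedding $\iota_0$ is injective on $\mathscr{O}_K$, being a ring embedding of a domain of characteristic zero), finiteness of the image in the product implies finiteness of $\mathrm{im}(\rho)$ itself. I do not expect any serious obstacle here; the only point requiring a little care is making precise the claim that $\mathrm{GL}_m(\mathscr{O}_K)$ is discrete in $\prod_\iota \mathrm{GL}_m(\mathbb{C})$, which is the standard fact that a number ring embeds as a lattice (more precisely, a discrete subring) in $\prod_\iota \mathbb{C}$ via its archimedean places, extended entrywise to matrices.
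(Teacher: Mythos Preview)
Your proposal is correct and follows essentially the same approach as the paper: embed the image diagonally in $\prod_\iota \mathrm{GL}_m(\mathbb{C})$, observe it is discrete there (the paper argues this via the norm bound $|N_{K/\mathbb{Q}}(\alpha)|\geq 1$ for nonzero $\alpha\in\mathscr{O}_K$, you via restriction of scalars to $\mathrm{GL}_{md}(\mathbb{Z})$), and use unitarity to get containment in a compact set, whence finiteness. The two discreteness arguments are standard and interchangeable.
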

\begin{proof}
Indeed, for $\iota: K\hookrightarrow\mathbb{C}$ an embedding, let $$\rho_\iota:
\Gamma\to \on{GL}_m(\mathbb{C})$$ be the corresponding representation $\rho\otimes_{\mathscr{O}_K, \iota} \mathbb{C}$. First,
$$\prod_\iota \rho_\iota: \Gamma\to \prod_\iota \on{GL}_m(\mathbb{C})$$ has
image contained in a compact set by the definition of being unitary. 
Moreover, the image of $$\mathscr{O}_K\hookrightarrow \prod_\iota \mathbb{C}$$
is discrete, since the difference of any two distinct elements of
$\mathscr{O}_K$ has norm at least $1$.
Hence the image of $\prod_\iota \rho_\iota$ is discrete and has compact closure, and is therefore finite.
\end{proof}

Let $K$ be a number field with ring of integers
$\mathscr{O}_K$. Let $(C, x_1, \cdots, x_n)$ be an analytically general
hyperbolic $n$-pointed curve of genus $g$, and let $\mathbb{V}$ be a $\mathscr{O}_K$-local system on $C\setminus\{x_1, \cdots, x_n\}$ with infinite monodromy.
 Suppose that for each embedding $\iota: \mathscr{O}_K\hookrightarrow
 \mathbb{C}$, $\mathbb{V}\otimes_{\mathscr{O}_K, \iota}\mathbb{C}$ underlies a
 polarizable complex variation of Hodge structure. Our goal is to prove
 \autoref{theorem:very-general-VHS}, which states that $$\on{rk}_{\mathscr{O}_K}(\mathbb{V})\geq 2\sqrt{g+1}.$$

\begin{proof}[Proof of \autoref{theorem:very-general-VHS}]
	We use 
	$\mathscr{T}_{g,n}$ to denote the universal cover of $\mathscr{M}_{g,n}$.
For a fixed representation
$\rho: \pi_1(C\setminus\{x_1, \cdots, x_n\}))\to \on{GL}_m(\mathscr{O}_K)$
let $T_\rho$ denote the set of $[(C', x_1', \cdots, x_n')]\in
\mathscr{T}_{g,n}$,
for which the associated $\mathscr O_K$-local system $\mathbb V$ has the
following property:
for each embedding $\iota: \mathscr{O}_K\hookrightarrow \mathbb{C}$,
$\mathbb{V}\otimes_{\mathscr{O}_K, \iota}\mathbb{C}$ underlies a polarizable
complex variation of Hodge structure on $C' \setminus \{x_1', \ldots, x_n'\}$.
Let $M_\rho$ denote the image of $T_\rho$ under the covering map
$\mathscr{T}_{g,n} \to \mathscr M_{g,n}$.

Our goal is to show that an analytically very general point of $\mathscr M_{g,n}$ lies in the complement of
the union of the $M_\rho,$ where $\rho$ ranges over the set of representations
of $\pi_1(C\setminus\{x_1, \cdots, x_n\})\to \on{GL}_r(\mathscr{O}_K)$, with
infinite image, for $K$ a number field, and $r<2\sqrt{g+1}$.
Since there are only countably many such representations $\rho$,
it is enough to show that an analytically very general point lies in the complement
of $M_\rho$ for fixed $\rho$.
Since $(C, x_1, \ldots, x_n)$ is hyperbolic, 
$\mathscr{T}_{g,n} \to \mathscr M_{g,n}$
is a covering space of countable degree, and so the image of a closed analytic
set is locally contained in a countable union of closed analytic subsets.
It therefore suffices to show that for any $\rho$ with infinite monodromy and
rank $< 2 \sqrt{g+1}$, $T_\rho$ is contained in a closed analytic subset
of $\mathscr T_{g,n}$.

We now show such $T_\rho$ as above are contained in a closed analytic subset of
$\mathscr{T}_{g,n}$.
Indeed, suppose $\mathbb V$ is the local system associated to $\rho$ on some curve $C\setminus\{x_1, \cdots, x_n\}$, and that for each embedding $\iota: \mathscr{O}_K\hookrightarrow \mathbb{C}$, $\mathbb{V}\otimes_{\mathscr{O}_K, \iota} \mathbb{C}$, underlies a polarizable complex variation of Hodge structure. 
It is enough to show this complex polarizable variation of Hodge structure does
not extend to an analytically general nearby curve.
Indeed, if it did,
\autoref{theorem:isomonodromic-deformation-CVHS} implies
$\prod_{\iota: \mathscr O_K \to \mathbb C} \rho_\iota$
has unitary monodromy,
and
\autoref{lemma:integral-and-unitary-implies-finite} implies its monodromy is
finite.
This contradicts our assumption that $\rho$ has infinite monodromy. \end{proof}

\subsection{}\label{subsubsection:geometric-origin-proof}
\begin{proof}[Proof of \autoref{corollary:geometric-local-systems}]
Let $g\geq 1$ be an integer and let $(C, x_1, \cdots, x_n)$ be an analytically
general hyperbolic $n$-pointed curve of genus $g$. Let $U\subset C\setminus\{x_1, \cdots, x_n\}$ be a dense Zariski-open subset. Let $f: Y\to U$ be a smooth proper morphism, $i\geq 0$ an integer, and suppose $\mathbb{V}$ is a complex local system on $(C, x_1, \cdots, x_n)$ with infinite monodromy such that $\mathbb{V}|_U$ is a summand of $R^if_*\mathbb{C}$. Then we wish to show that $\dim_{\mathbb{C}}\mathbb{V}\geq 2\sqrt{g+1}$.

It suffices to show that $\mathbb{V}$ satisfies the hypotheses of
\autoref{theorem:very-general-VHS}. The existence of an
$\mathscr{O}_K$-structure follows from the fact that $R^if_*\mathbb{C}$ has a
$\mathbb{Z}$-structure. 
Let $\mathbb{W}$ be the corresponding $\mathscr{O}_K$-local system. All that remains is to verify that for each embedding
$\iota:\mathscr{O}_K\hookrightarrow \mathbb{C}$, the corresponding complex local
system $\mathbb{W}_\iota:=\mathbb{W}\otimes_{\mathscr{O}_K, \iota} \mathbb{C}$
underlies a polarizable complex variation of Hodge structure. But  each such
embedding yields a summand $\mathbb{W}_\iota|_U$ of $R^if_*\mathbb{C}$,
Galois-conjugate to the original embedding $\mathbb{W}|_U\subset
\mathbb{V}|_U\subset R^if_*\mathbb{C}.$ Any such summand underlies a polarizable
complex variation of Hodge structure, by \autoref{prop:basic-facts}(2). Now
$\mathbb{W}_\iota|_U$ extends from $U$ to $C\setminus \{x_1, \cdots, x_n\}$ and
so the same is true for the corresponding complex polarizable variation of Hodge
structure by \cite[Corollary 4.11]{schmid1973variation}, which completes the proof.
\end{proof}

\subsection{}
\label{subsubsection:abelian-scheme-proof}
\begin{proof}[Proof of \autoref{corollary:abelian-schemes}]
	The statement for relative curves $h: S \to C \setminus \{x_1, \ldots, x_n\}$ reduces to the statement for abelian varieties
	upon taking the relative Jacobian $\on{Pic}^0_h$, as the Torelli theorem implies that
	$h$ is isotrivial if $\on{Pic}^0_h$ is.
	Hence, it suffices to show that any abelian scheme $f: A \to C \setminus\{x_1,
	\ldots, x_n\}$ of relative dimension $r < \sqrt{g+1}$ is isotrivial.
	If $r < \sqrt{g+1}$ then $\rk R^1 f_* \mathbb C = 2r < 2 \sqrt{g+1}$, so $R^1 f_* \mathbb C$ has
	finite monodromy by \autoref{corollary:geometric-local-systems}.
	To show $f$ is isotrivial, it is enough to show $f$ is trivial after a finite base change. 
	After passing to a finite \'etale cover of
	$C \setminus\{x_1,\ldots, x_n\}$, we may therefore assume
	$R^1 f_* \mathbb C$ has trivial monodromy.
	In this case, the theorem of the fixed part 
	\cite[Corollaire 4.1.2]{deligne:hodge-ii}
	implies that the first part of the Hodge filtration $F^1(R^1 f_* \mathbb
	C) \subset R^1 f_* \mathbb C$ is also a trivial local system.
	Since $A$ is a quotient of 
	$F^1(R^1 f_* \mathbb C)^\vee$ by the constant local system
	$(R^1 f_* \mathbb Z)^\vee$,
	(on fibers this local system is identified with the first homology of $f$
	via Poincar\'e duality,)
	$f$ is also trivial.
\end{proof}

\subsection{}\label{subsubsection:non-density-proof}
We next prove that local systems of geometric origin with low rank are not
Zariski dense in the character variety of an analytically very general genus $g$
curve.
Recall that we use $\mathscr{M}_{B,r}(X)$ for the character variety
parametrizing conjugacy classes of semisimple representations $$\rho:
\pi_1(C\setminus\{x_1, \cdots, x_n\})\to \on{GL}_r(\mathbb{C}).$$

\begin{proof}[Proof of \autoref{corollary:non-density-of-geometric-local-systems}]
Let $(C, x_1, \cdots, x_n)$ be an analytically very general hyperbolic
$n$-pointed curve of genus $g$, and fix an integer $r$ with $1<r<2\sqrt{g+1}$. 
Note that we must have $g \neq 0$ because there are no integers $r$ with $1 < r
< 2 \sqrt{0 + 1}$.
By \autoref{corollary:geometric-local-systems}, the points of
$\mathscr{M}_{B,r}(C\setminus\{x_1, \cdots, x_n\})$ of geometric origin
correspond to representations $\rho$ with finite image whenever $r < 2
\sqrt{g+1}$. 
We wish to show these finite image representations are not Zariski dense in
$\mathscr{M}_{B,r}(C\setminus\{x_1, \cdots, x_n\})$.
This follows from
by \autoref{lemma:finite-image-not-dense}.
upon choosing a topological identification $C\setminus\{x_1, \cdots, x_n\} \simeq
\Sigma_{g,n}$. 
\end{proof}

\begin{lemma}
	\label{lemma:finite-image-not-dense}
	Let $\Sigma_{g,n}$ be a topological $n$-punctured genus $g$ surface with
	basepoint $p \in \Sigma_{g,n}$. For $r > 1$, the set of representations $\rho: \pi_1(\Sigma_{g,n},p) \to
	\on{GL}_r(\mathbb C)$ with finite image are not Zariski dense in the character
	variety $\mathscr{M}_{B,r}(\Sigma_{g,n}, p)$.
\end{lemma}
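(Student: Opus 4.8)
The plan is to produce finitely many \emph{proper} Zariski-closed subsets of the character variety whose union already contains every finite-image class; since a finite union of proper closed subsets of a variety is again a proper closed subset, this will force the finite-image locus not to be Zariski dense. Throughout I will assume $\pi := \pi_1(\Sigma_{g,n},p)$ is non-abelian — this is automatic whenever $\Sigma_{g,n}$ underlies a hyperbolic curve, which is the only case used — and write $G := \on{GL}_r(\mathbb C)$, so that by definition $\mathscr M_{B,r}(\Sigma_{g,n},p) = \on{Hom}(\pi,G)\sslash G$, with quotient map $q\colon \on{Hom}(\pi,G)\to\mathscr M_{B,r}$. I will freely use that, $G$ being reductive and $\on{Hom}(\pi,G)$ affine, $q$ is surjective and carries Zariski-closed $G$-invariant subsets to Zariski-closed subsets.

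First I would invoke Jordan's theorem: there is a constant $J=J(r)$ so that every finite subgroup of $G$ contains an abelian normal subgroup of index at most $J$. Since $\pi$ is finitely generated it has only finitely many normal subgroups of index at most $J$, say $\pi^{(1)},\dots,\pi^{(t)}$. For each $j$ fix a finite generating set of $\pi^{(j)}$ and let $\on{Hom}_j\subset\on{Hom}(\pi,G)$ be the locus of $\rho$ for which the $\rho$-images of these generators pairwise commute — equivalently, $\rho|_{\pi^{(j)}}$ has abelian image. This is visibly Zariski-closed and $G$-invariant, so $W_j := q(\on{Hom}_j)$ is Zariski-closed in $\mathscr M_{B,r}$. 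Now if $\rho\colon\pi\to G$ has finite image $H$, choose an abelian normal $A\trianglelefteq H$ with $[H:A]\le J$; then $\rho^{-1}(A)$ is a normal subgroup of $\pi$ of index at most $J$ on which $\rho$ has abelian image, hence $\rho\in\on{Hom}_j$ and $[\rho]\in W_j$ for some $j$. Thus the set $F$ of conjugacy classes of finite-image representations satisfies $F\subseteq W_1\cup\dots\cup W_t$.

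It then remains to show each $W_j$ is a proper subset of $\mathscr M_{B,r}$; this is where $r>1$ enters. Since $\pi$ is a non-abelian free or surface group it admits a surjection $\pi\twoheadrightarrow F_2$ onto the free group of rank two; composing with a representation $F_2\to\on{GL}_2(\mathbb C)$ whose image is Zariski-dense in $\on{GL}_2(\mathbb C)$ (valid as $r\ge2$: two sufficiently general invertible $2\times2$ matrices generate such a subgroup) and with the block embedding $\on{GL}_2(\mathbb C)\hookrightarrow G$, $A\mapsto\on{diag}(A,I_{r-2})$, I obtain a semisimple representation $\rho_0=\sigma\oplus\mathbf 1^{\oplus(r-2)}$ with $\sigma$ irreducible and $\sigma(\pi)$ Zariski-dense in $\on{GL}_2(\mathbb C)$. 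For every $j$ the subgroup $\pi^{(j)}$ has finite index in $\pi$, so $\sigma(\pi^{(j)})$ has finite index in the Zariski-dense subgroup $\sigma(\pi)$, and since $\on{GL}_2(\mathbb C)$ is connected $\sigma(\pi^{(j)})$ is again Zariski-dense, hence non-abelian. As $\rho_0$ is semisimple — so its class has no other representatives up to isomorphism — and $\rho_0|_{\pi^{(j)}}$ has non-abelian image for every $j$, we get $[\rho_0]\notin\bigcup_j W_j$. Hence $\bigcup_j W_j$ is a proper Zariski-closed subset of $\mathscr M_{B,r}$ containing $F$, proving the lemma. (The argument collapses precisely at $r=1$: there $G$ is abelian, no such $\rho_0$ exists, and indeed torsion characters are Zariski dense in $\mathscr M_{B,1}=\on{Hom}(H_1(\Sigma_{g,n}),\mathbb C^\times)$.)

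The step I expect to demand the most care is the bookkeeping around the GIT quotient: verifying that $W_j=q(\on{Hom}_j)$ is genuinely Zariski-closed (one must use that a reductive GIT quotient sends closed invariant sets to closed sets) and that $F$ really lands in $\bigcup_j W_j$ \emph{as subsets of $\mathscr M_{B,r}$} — for which one should note that semisimplification does not leave $\on{Hom}_j$, since the restriction to $\pi^{(j)}$ of a representation with abelian (hence triangulable) image has diagonal, in particular abelian, semisimplification, so membership in $W_j$ can be tested on the semisimple representative. The remaining ingredients — Jordan's theorem, finiteness of subgroups of bounded index in a finitely generated group, and the existence of a representation of $\pi$ with Zariski-dense image — are standard.
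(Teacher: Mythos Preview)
Your proof is correct and rests on the same engine as the paper's --- Jordan's theorem to trap every finite-image representation inside a closed locus, followed by an explicit witness lying outside that locus --- but your execution differs in two ways that are worth noting.

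First, the paper sidesteps the GIT quotient entirely by passing to the \emph{framed} character variety $\mathscr{M}_{B,r}^\square=\on{Hom}(\pi,\on{GL}_r(\mathbb C))$: non-density upstairs immediately gives non-density downstairs, and one never has to worry about whether images of closed invariant sets are closed, or about semisimplification preserving membership in the closed locus. Your final paragraph correctly flags this bookkeeping as the delicate point of your argument; the paper's move simply eliminates it.

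Second, rather than enumerate the finitely many normal subgroups $\pi^{(j)}$ of index at most $J$ and impose abelianness of $\rho|_{\pi^{(j)}}$, the paper packages everything into a \emph{single} closed condition: for $s=m_r!$, it takes $V_s=\{\rho:[\rho(x)^s,\rho(y)^s]=1\ \text{for all }x,y\}$. Jordan's theorem guarantees that every finite-image $\rho$ lands in $V_s$, because $s$-th powers of the image lie in the abelian normal subgroup. The witness showing $V_s$ is proper is then entirely explicit: send two generators to $\begin{psmallmatrix}1&1\\0&1\end{psmallmatrix}$ and $\begin{psmallmatrix}1&0\\1&1\end{psmallmatrix}$ (block-embedded when $r>2$) and observe their $s$-th powers never commute. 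Your Zariski-density argument for the witness is perfectly fine, but the paper's choice lets one verify non-membership by a single $2\times 2$ computation rather than invoking connectedness of $\on{GL}_2$ and finite-index stability of Zariski-density.
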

\begin{proof}

 For $s\in \mathbb{Z}_{>0}$, let $V_s\subset
 \mathscr{M}_{B,r}(\pi_1(\Sigma_{g,n},p))$ be the closed subvariety
	 consisting of those representations $\rho$ such that $$\on{Tr} [\rho(x)^s,
	 \rho(y)^s]=\on{Tr}(\on{id})$$ for all $x,y\in \pi_1(\Sigma_{g,n},p)$.

By Jordan's theorem (\cite[p.~91]{jordan1878memoire} or \cite[Theorem
36.13]{curtis1966representation}) on finite subgroups of $\on{GL}_r(\mathbb{C})$,
there is some constant $m_r$ such that for each finite subgroup $G\subset
\on{GL}_r(\mathbb{C})$, there exists an abelian normal subgroup $H\subset G$ of index
at most $m_r$. Hence $V_{(m_r!)}$ contains all representations with finite
image. Thus it suffices to show $V_s$ is not all of
$\mathscr{M}_{B,r}(\Sigma_{g,n})$ for any $s>0$. 

We now write $$\pi_1(\Sigma_{g,n},p)=\left\langle a_1, \cdots, a_g, b_1,\cdots,
b_g, \gamma_1, \cdots, \gamma_n \mid \prod_{i=1}^g [a_i,b_i]\cdot \prod_{j=1}^n
\gamma_j\right\rangle$$ for the standard presentation of the fundamental group.
If $g\geq 2$, let $$\rho: \pi_1(\Sigma_{g,n},p)\to \on{GL}_2(\mathbb{C})$$ be the representation defined by $$a_1 \mapsto \begin{pmatrix} 1 & 1 \\ 0 & 1 \end{pmatrix}$$
$$a_2 \mapsto \begin{pmatrix} 1 & 0 \\ 1 & 1 \end{pmatrix}$$
and sending all other generators to the identity.  If $g=1$, hyperbolicity of
$\Sigma_{g,n}$ implies $n>0$. Then $\pi_1(\Sigma_{g,n},x)$ is free on $n+1$ generators, and we may set $\rho$ to be a representation sending two of the generators to the matrices above, and all other generators to the identity. 
Then $\rho$ does not lie in $V_s$ for any $s>0$ by direct computation. This completes the case $r = 2$. For the case that $r > 2$, the representation $\rho\oplus \on{triv}^{\oplus r-2}$ lies outside of $V_s$ for all $s$, for the same reason.
\end{proof}

\subsection{An application to Hilbert modular varieties}
\label{subsection:hilbert-modular}

Let $K$ be a totally real number field of degree $h$ over $\mathbb Q$, $\mathscr O_K$ its ring of integers, and 
use $\mathscr X_K$ to denote the Hilbert modular stack parameterizing
$h$-dimensional principally polarized abelian varieties $A$ with an injection
$\mathscr O_K \hookrightarrow \on{End}(A)$.
Using \autoref{corollary:abelian-schemes}, we find there are no nonconstant maps
from an analytically very general $n$-pointed curve of genus $g$
to the moduli stack of principally polarized abelian varieties of dimension $h$
whenever $h \leq \sqrt{g+1}$, or equivalently when $g > h^2$.
We now show that the analogous statement for Hilbert modular stacks holds
whenever $g \geq 1$.
In particular, this improved bound is independent of $h$.
We thank Alexander Petrov for pointing out the following application.

\begin{proposition}
	\label{proposition:hilbert-modular}
	Let $\mathscr X_K$ be the Hilbert modular stack associated to a totally
	real field $K$.
	Let $(C, x_1, \ldots, x_n)$ be an analytically very general hyperbolic $n$-pointed
	curve of genus $g \geq 1$.
	Any map $\phi: C\setminus \{x_1, \ldots, x_n\} \to \mathscr X_K$ is constant.
\end{proposition}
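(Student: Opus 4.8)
The plan is to show that a map $\phi: C\setminus\{x_1,\ldots,x_n\}\to\mathscr X_K$ gives rise, via pullback of the universal abelian scheme, to a local system whose rank is too small to have infinite monodromy (by \autoref{corollary:geometric-local-systems}), unless a Hilbert-modular feature forces the rank bound to be sharpened. Concretely, suppose $\phi$ is nonconstant and let $A\to C\setminus\{x_1,\ldots,x_n\}$ be the pullback of the universal $h$-dimensional abelian scheme, so $A$ has relative dimension $h$ and $\mathbb V:=R^1f_*\mathbb C$ has rank $2h$, where $f:A\to C\setminus\{x_1,\ldots,x_n\}$. The key extra structure is that $\mathscr O_K\hookrightarrow\on{End}(A)$ makes $\mathbb V$ into a module over $K\otimes_{\mathbb Q}\mathbb C\simeq\mathbb C^h$, hence $\mathbb V$ decomposes as a direct sum $\bigoplus_{\sigma}\mathbb V_\sigma$ of $h$ local systems, indexed by the embeddings $\sigma:K\hookrightarrow\mathbb C$, each $\mathbb V_\sigma$ of rank $2$ and each underlying a polarizable complex variation of Hodge structure (the Hodge decomposition is compatible with the $\mathscr O_K$-action since $\mathscr O_K$ acts by endomorphisms). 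Moreover these summands are permuted by the action of $\on{Gal}(\overline{\mathbb Q}/\mathbb Q)$ on the $\mathbb Z$-structure, so they all have the same (finite or infinite) monodromy behavior.

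Next I would argue that if $\phi$ is nonconstant then at least one $\mathbb V_\sigma$ has infinite monodromy: otherwise every $\mathbb V_\sigma$, hence $\mathbb V$ itself, has finite monodromy, so after a finite étale base change $\mathbb V$ becomes trivial; then by the theorem of the fixed part (as in the proof of \autoref{corollary:abelian-schemes}), $F^1\mathbb V$ is a trivial local system and $A$ is isotrivial after finite base change, which means $\phi$ is constant as a map to the stack $\mathscr X_K$, contradiction. So some $\mathbb V_\sigma$ has rank $2$ and infinite monodromy. Since $\mathbb V_\sigma$ underlies a polarizable complex variation of Hodge structure and has a $\mathscr O_K$-structure — more precisely, after replacing $K$ by a number field over which $\mathbb V_\sigma$ and its conjugates are defined, the collection $\{\mathbb V_\sigma\}$ assembles into an $\mathscr O_L$-local system for a suitable number field $L$, each of whose complex embeddings underlies a polarizable complex variation of Hodge structure — we may apply \autoref{theorem:very-general-VHS}. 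But $\on{rk}\mathbb V_\sigma = 2 < 2\sqrt{g+1}$ as soon as $g\geq 1$, contradicting \autoref{theorem:very-general-VHS}.

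The main obstacle, and the step requiring the most care, is the bookkeeping around the $\mathscr O_K$-structure and Galois descent: one must check that the rank-$2$ pieces $\mathbb V_\sigma$ genuinely carry an integral (i.e. $\mathscr O_L$-) structure for a single number field $L$, with every complex embedding of that structure underlying a polarizable complex VHS, so that \autoref{theorem:very-general-VHS} applies directly rather than just its complex-coefficient shadow \autoref{corollary:geometric-local-systems}. Equivalently, one should verify that the eigenspace decomposition of $\mathbb V\otimes\overline{\mathbb Q}$ under $K$ is defined over a number field and is Galois-stable, so that a nonconstant $\phi$ produces exactly the input \autoref{theorem:very-general-VHS} needs; the argument that $\mathbb V_\sigma$ has infinite monodromy when $\phi$ is nonconstant (via the theorem of the fixed part and Torelli) is then essentially the same as in \autoref{corollary:abelian-schemes}. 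Once this is in place, the rank count $2 < 2\sqrt{g+1}$ for $g\geq 1$ finishes the proof; note this is why the hypothesis here is merely $g\geq 1$ rather than $g>h^2$ as in the non-endomorphism case.
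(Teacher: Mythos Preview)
Your proposal is correct and follows essentially the same approach as the paper: pull back the universal abelian scheme, use the $\mathscr O_K$-action to cut the rank-$2h$ local system down to a rank-$2$ object with integral structure, verify that every complex embedding underlies a polarizable complex VHS, and then invoke \autoref{theorem:very-general-VHS} with the inequality $2<2\sqrt{g+1}$ for $g\geq 1$, concluding constancy via the theorem of the fixed part.

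The one place where the paper is cleaner is precisely the step you flag as the main obstacle. Rather than decomposing $\mathbb V\otimes\mathbb C$ into eigenspaces $\mathbb V_\sigma$ and then arguing a posteriori that these carry a compatible $\mathscr O_L$-structure, the paper works directly with the integral lattice: $R^1f_*\mathbb Z$ is a locally free $\mathscr O_K$-module of rank $2$, and after tensoring up to $\mathscr O_{K'}$ for a suitable finite extension $K'/K$ (for instance the Hilbert class field, over which every locally free rank-$2$ $\mathscr O_K$-module becomes free), one obtains an honest $\mathscr O_{K'}$-local system of rank $2$. The various complex embeddings of $K'$ then recover exactly your $\mathbb V_\sigma$'s, each of which underlies a polarizable complex VHS by construction. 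This sidesteps the Galois-descent bookkeeping you anticipated and gives the integral structure for free.
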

\begin{proof}
	We first use the map $\phi$ to produce a rank $2$ integral local system
	on $C \setminus \{x_1, \ldots, x_n\}$.
	A map $\phi: C\setminus \{x_1, \ldots, x_n\} \to \mathscr X_K$ yields an
	abelian scheme $f: A \to C\setminus \{x_1, \ldots, x_n\}$ with an $\mathscr
	O_K$-action. In particular, $R^1 f_* \mathbb Z$ is a 
	complex PVHS of rank $2h$ with an	$\mathscr O_K$-action.
	If $K$ has degree $h$, then this rank $2h$ local system may not be a
	$\mathscr O_K$-local system, as its fibers may only be locally free, as opposed to free.
	However, after passing to a finite extension $K'$ of $K$, such as the
	Hilbert class field of $K$, any such rank $2$ locally free module
	becomes a free module,
	and so $R^1 f_* \mathbb Z
	\otimes_{\mathscr O_K} \mathscr O_{K'}$ is an $\mathscr O_{K'}$-local system of rank $2$.
	
	We now show the map $\phi$ must be constant.
	By construction,
	for any embedding $\mathscr O_{K'} \to \mathbb C$, the local system
$R^1 f_* \mathbb Z\otimes_{\mathscr O_K} \mathscr O_{K'} \otimes_{\mathscr
O_{K'}} \mathbb C$ underlies a $\mathbb{C}$-PVHS. Hence by \autoref{theorem:very-general-VHS}, $R^1 f_* \mathbb Z\otimes_{\mathscr O_K} \mathscr O_{K'}$ has finite monodromy, as 
\begin{align*}
2 = \rk_{\mathscr O_K'}R^1 f_* \mathbb Z\otimes_{\mathscr O_K} \mathscr O_{K'} < 2 \sqrt{g+1}.
\end{align*}
Hence, the same holds for
$R^1 f_* \mathbb Z$. 
This implies the map $\phi$ is constant using the theorem of the fixed part
\cite[Corollaire 4.1.2]{deligne:hodge-ii} (see the proof
of \autoref{corollary:abelian-schemes} for a similar argument).
\end{proof}

\section{Questions}\label{section:questions}

We conclude with some open questions related to our results.
\subsection{Bounds}
	
\begin{question}\label{question:bounds}
Is the bound of $2\sqrt{g+1}$ appearing in \autoref{cor:stable},
\autoref{theorem:very-general-VHS} and 
\autoref{theorem:isomonodromic-deformation-CVHS} sharp? If not, can one
explicitly construct low rank geometric variations of Hodge structure with
infinite monodromy on a general curve or general $n$-pointed curve?
Do there exist counterexamples to the above results if one replaces $2
\sqrt{g+1}$ by a linear function of $g$?
\end{question}
We have no reason to believe the bound is sharp. The Kodaira-Parshin trick (as
used in \autoref{section:counterexample}, for example) is one source of
variations of Hodge structure on $\mathscr{M}_{g,n}$ of rank bounded in terms of
$g, n$, but it is not the only one. For example, we expect that the representations constructed in
in \cite{koberda2016quotients} are cohomologically rigid and hence underlie
integral variations of Hodge structure by \cite[Theorem
1.1]{esnault2018cohomologically} and \cite[Theorem 3]{simpson1992higgs}.
Assuming Simpson's motivicity conjecture (\cite[Conjecture,
p.~9]{simpson1992higgs}) these constructions should be geometric in nature, though this is not clear from the construction. Of course it would be extremely interesting to prove that these representations arise from algebraic geometry.

The representations constructed in \cite{koberda2016quotients} have rank
growing exponentially in $g$ \cite[Corollary 4.5]{koberda2016quotients}.
It is natural, given our results, to ask if one can use their methods to produce representations of smaller rank.

We also raise a related question about bounds on maps to the moduli space of
curves.
\begin{question}
	\label{question:}
	Fix an integer $g\geq 2$.
	What is the smallest integer $h \geq 2$ for which the generic genus $g$ curve,
	i.e., the generic fiber of 
	$\mathscr M_{g,1} \to \mathscr M_g$,
	has a
	non-constant map to
	$\mathscr M_h$?
\end{question}
\begin{remark}
	\label{remark:}
	Since a map $C \to \mathscr{M}_h$ corresponds to a family of
smooth curves of genus $h$ over $C$, by considering the associated family of
Jacobians, it follows from \autoref{corollary:abelian-schemes} that $h \geq
\sqrt{g+1}$.
The Kodaira-Parshin trick 
\autoref{proposition:kodaira-parshin} 
does not a priori apply to construct maps from the generic curve to $\mathscr{M}_h$, because as written it produces disconnected covers.
But one can apply a variant where one takes a cover defined by a characteristic quotient of the fundamental group
to show there is some (fairly large) value of $h$ for which the generic genus
$g$ curve has a non-constant map to $\mathscr M_h$.
See \cite[Theorem
1.4]{mcmullen:from-dynamics-on-surfaces-to-rational-points-on-curves} for more
details.
\end{remark}

\subsection{Non-abelian Hodge loci}
Let $(C, x_1, \cdots, x_n)$ be an $n$-pointed curve, $\mathbb{V}$ a
$\mathbb{Z}$-local system on $C\setminus\{x_1, \cdots, x_n\}$ with
quasi-unipotent local monodromy around the $x_i$, and let $(\mathscr{E}, \nabla)$ be the associated flat vector bundle. We refer to the locus $H_{\mathbb{V}}$ in $\mathscr{T}_{g,n}$ where the corresponding isomonodromic deformation of $(\mathscr{E}, \nabla)$ underlies a polarizable variation of Hodge structure as an \emph{non-abelian Hodge locus}. By analogy to the famous result on algebraicity of Hodge loci of Cattani-Deligne-Kaplan \cite{cattani1995locus}, it is natural to ask:
\begin{question}[Compare to {\cite[Conjecture 12.3]{simpson62hodge}}] \label{question:non-abelian-Hodge}
Let $Z$ be an irreducible component of $H_{\mathbb{V}}$. Is the image of $Z$ in $\mathscr{M}_{g,n}$ algebraic? 
\end{question}
This would follow if all $\mathbb{Z}$-local systems which underlie polarizable variations of Hodge structure arise from geometry, which is perhaps a folk conjecture (and is conjectured explicitly in \cite[Conjecture 12.4]{simpson62hodge}). Just as \cite{cattani1995locus} provides evidence for the Hodge conjecture, a positive answer to \autoref{question:non-abelian-Hodge} would provide evidence for this conjecture.

When we refer to an analytically very general curve we mean in the sense
of \autoref{definition:general}.
A positive answer to \autoref{question:non-abelian-Hodge} would allow us to replace this with the usual algebraic notion of a very general curve in \autoref{theorem:very-general-VHS}. 
It seems plausible that one can make this replacement in \autoref{corollary:geometric-local-systems} without requiring input from \autoref{question:non-abelian-Hodge}, using the main result of \cite{cattani1995locus}. 
\bibliographystyle{alpha}
\bibliography{bibliography-isomonodromy}

\end{document}